\newlength{\dhatheight}
\newcommand{\doublehat}[1]{%
    \settoheight{\dhatheight}{\ensuremath{\hat{#1}}}%
    \addtolength{\dhatheight}{-0.35ex}%
    \hskip1pt\widehat{\vphantom{\rule{1pt}{\dhatheight}}%
    \smash{\hskip0pt\widehat{#1}}}}
\theoremstyle{plain}
\newtheorem{theorem}{Theorem}[section]
\newtheorem*{theorem*}{Theorem}
\newtheorem{lemma}[theorem]{Lemma}
\newtheorem{corollary}[theorem]{Corollary}
\newtheorem*{corollary*}{Corollary}
\newtheorem{proposition}[theorem]{Proposition}
\theoremstyle{definition}
\newtheorem{example}[theorem]{Example}
\newtheorem{definition}[theorem]{Definition}
\theoremstyle{remark}
\newtheorem{remark}[theorem]{Remark}
  \newenvironment{proofofcase}[1][\claimproofname]
  {\begin{proof}[#1]}
  {\end{proof}}
 \newcommand\CC{{\mathbb{C}}}
 \newcommand\FF{{\mathbb{F}}}
 \def\KK{{\mathbb{K}}}
 \def\kk{{\mathbbm{k}}}
 \def\sd{\mathrm{sd}}
 \newcommand\ZZ{{\mathbb{Z}}}
 \def\A{\mathcal{A}}
 \def\D{{\mathcal D}}
 \def\F{{\mathcal F}}
 \def\G{{\mathcal G}}
 \def\I{{\mathcal I}}
 \def\K{{\mathcal K}}
 \def\P{{\mathcal P}}
 \def\Q{{\mathcal Q}}
 \def\S{{\mathcal S}}
 \DeclareMathOperator\Lk{Lk}
 \def\op{\mathrm{op}}
 \newcommand{\GF}[1]{\mathbb{F}_{#1}}
 \DeclareMathOperator{\chara}{char}
 \DeclareMathOperator\Tr{Trace}
 \DeclareMathOperator\GL{GL}
 \DeclareMathOperator\PSL{PSL}
 \DeclareMathOperator\PGL{PGL}
 \DeclareMathOperator\SL{SL}
 \DeclareMathOperator{\GU}{GU}
 \DeclareMathOperator{\SU}{SU}
 \DeclareMathOperator{\PGU}{PGU}
 \DeclareMathOperator{\PSU}{PSU}
 \DeclareMathOperator{\Sym}{Sym}
 \def\redF{\widehat{\mathcal{F}}}
 \def\redS{\mathring{\mathcal{S}}} 
 \DeclareMathOperator\Rad{Rad}
 \DeclareMathOperator\Aut{Aut}
 \DeclareMathOperator\Id{Id}
 \DeclareMathOperator{\codim}{codim}
 \DeclareMathOperator\diam{diam}
 \DeclareMathOperator\Fix{Fix}
 \newcommand{\tq}{\mathrel{{\ensuremath{\: : \: }}}}
 \DeclareMathOperator\Stab{Stab}
 \def\Ap{\mathcal{A}_p}
 \def\groupiso{\cong}
 \newcommand\gen[1]{\left\langle#1\right\rangle}
\begin{document}

\title [Homotopy properties of the complex of frames of a unitary space]{Homotopy properties of the complex of frames of a unitary space}
   \author{Kevin I. Piterman}
   \address{Philipps-Universit\"at Marburg \\
   Fachbereich Mathematik und Informatik \\
   35032 Marburg, Germany \\
   and\\
   Departamento de Matem\'atica-IMAS\\
   FCEyN\\
   Universidad de Buenos Aires\\
   Buenos Aires, Argentina}
\email{kpiterman@dm.uba.ar}
  \author{Volkmar Welker}
   \address{Philipps-Universit\"at Marburg \\
   Fachbereich Mathematik und Informatik \\
   35032 Marburg, Germany}
   
 \email{welker@mathematik.uni-marburg.de}

\begin{abstract}
Let $V$ be a finite dimensional vector space equipped with a non-degenerate Hermitian form over a field $\KK$.
Let $\G(V)$ be the graph with vertex set the $1$-dimensional non-degenerate subspaces of $V$ and adjacency relation given by orthogonality.
We give a complete description of when $\G(V)$ is connected in terms of the dimension of $V$ and the size of the ground field $\KK$.
Furthermore, we prove that if $\dim(V) > 4$ then the clique complex $\F(V)$ of $\G(V)$ is simply connected.
For finite fields $\KK$, we also compute the eigenvalues of the adjacency matrix of $\G(V)$.
Then by Garland's method, we conclude that $\tilde{H}_m(\F(V);\kk) = 0$ for all $0\leq m\leq \dim(V)-3$, where $\kk$ is a field of characteristic $0$, provided that $\dim(V)^2 \leq |\KK|$.
Under these assumptions, we deduce that the barycentric subdivision of $\F(V)$ deformation retracts to the order complex of the certain rank selection of $\F(V)$ which is Cohen-Macaulay over $\kk$.

Finally, we apply our results to the Quillen poset of elementary abelian $p$-subgroups of a finite group and to the study of geometric properties of the poset of non-degenerate subspaces of $V$ and the poset of orthogonal decompositions of $V$.
\end{abstract}

\subjclass[2020]{05E45, 20J05, 51E24}

\keywords{Hermitian forms, frame complex, Cohen-Macaulay complex, graph spectrum}

\maketitle

\section{Introduction}

Let $V$ be a vector space of dimension $n$ over a field $\KK$. 
We equip $V$ with a non-degenerate Hermitian form $\Psi(\bullet,\bullet)$ with respect to a non-trivial involution $\tau\in \Aut(\KK)$, and we say that $(V,\Psi)$, or just $V$, is a unitary space.

We consider the (undirected) simple graph $\G(V)$ whose vertices are the non-degenerate $1$-dimensional subspaces of $V$, and two vertices are adjacent 
if and only if they are orthogonal with respect to the Hermitian form. 
The \textit{frame complex} $\F(V)$ is the clique complex of $\G(V)$, i.e., the simplicial complex on the vertex set of $\G(V)$ whose simplices, which we call \textit{partial frames}, are the subsets 
of the vertex set which form a clique in $\G(V)$. 
Note that $\F(V)$ is a pure simplicial complex of dimension $n-1$. 

\medskip

Our study of $\G(V)$ and $\F(V)$ is motivated from two directions. 

First, showing that for $\KK=\GF{q^ 2}$ the 
homology of $\F(V)$ in dimension $n-2$ is non-zero is a step on the path outlined by Aschbacher and Smith \cite[Section 4]{AS93} to remove 
a restriction on unitary components in the groups covered by their proof of a conjecture by Quillen \cite{Qui78} on the poset of non-trivial (elementary abelian) $p$-subgroups of a finite group for primes $p > 5$.
By the recent proof of the conjecture for $p=3,5$ under identical restrictions \cite{PS}, the same applies to these primes. Hence, Theorem \ref{coroHomologyCohenMacaulay} can be seen as a step in this program towards eliminating the hypothesis on unitary groups.
While our results do not yet fully suffice to yield new 
cases of the original conjecture, they provide new insights into the study of the Quillen dimension property for unitary groups.
In particular, we establish the Quillen dimension property for $\PGU_7(3)$ and $p=2$ in Example \ref{ex:qdp}, which is a case not covered by the methods of \cite{AS93}.

The work of Aschbacher and Smith \cite{AS93} indicates the connection between the poset of
non-trivial orthogonal decompositions of an $n$-dimensional unitary space over $\GF{q^2}$ and the poset of non-trivial $p$-subgroups of the associated unitary group $\GU_n(q)$.
They show that, for $p$ dividing $q+1$, the decomposition poset is homotopy equivalent 
to the poset of elementary abelian $p$-subgroups of the (general) unitary group strictly containing the order $p$ subgroup of the center. For odd $p$, or odd $n$ if $p=2$, this then provides an injection of top homology groups of the decomposition poset and the poset of elementary abelian $p$-subgroups of the projective unitary group.
Here the implications on the Quillen conjecture are even more direct than in the case of
$\F(V)$.
By Proposition \ref{prop:CMimplicationFrames}, 
the Cohen-Macaulay property is transported from $\F(V)$ to the poset of non-degenerate subspaces, and thus, by Corollary \ref{coro:connectionCM}, to the decomposition poset and to this homotopy equivalent $p$-subgroup poset when in addition $p\mid q+1$.
From known and new results on this poset (see next paragraph), we deduce a unitary-analog of the Cohen-Macaulayness of the poset of elementary abelian $p$-subgroups in the general linear group from Quillen's original paper \cite{Qui78}.
Even beyond unitary spaces, $\F(V)$, the decomposition poset and the poset of non-trivial 
non-degenerate subspaces are closely related. This is also demonstrated in Section \ref{sec:forms}. 

Indeed, the poset of non-trivial non-degenerate subspaces of a unitary space provides our second motivation, and
$\F(V)$ can be used to analyze its order complex. 
In \cite{DGM} Devillers, Gramlich and M\"uhlherr determine the homotopy type of this complex for infinite fields and certain finite fields. In Proposition \ref{coro:applicationOtherPosets}, we exhibit the relation between $\F(V)$ and the poset of non-degenerate subspaces, and extend the results from \cite{DGM} for certain finite fields. Finally, the unitary analog of the
Stiefel complex studied by Vogtmann in \cite{Vo82} for non-degenerate bilinear forms can be analyzed using $\F(V)$ (this will be exploited in an upcoming work \cite{PitWel2}).

\medskip

Our main results are the following. In Section \ref{sec:walk} we prove:

\begin{theorem}[Connectivity]
  \label{connectedFramePoset}
  Let $V$ be a unitary space of dimension $n\geq 2$ over some field $\KK$.
  Then the following are equivalent:
  \begin{itemize}
    \item[(i)] $\G(V)$ is connected.
    \item[(ii)] $\F(V)$ is connected. 
    \item[(iii)] One of the following conditions holds:
      \begin{itemize}
        \item[(a)] $n = 2$ and $\KK = \GF{2^2}$,
        \item[(b)] $n = 3$ and $\KK\neq \GF{2^2}$,
        \item[(c)] $n \geq 4$.
      \end{itemize}
  \end{itemize}

  The diameter of $\G(V)$ is 
  $1$ in case (a) and $2$ in case (c).
  Also in case (b), the diameter is $3$ if $V$ contains non-zero isotropic vectors, and it is $2$ otherwise.
\end{theorem}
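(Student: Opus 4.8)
The theorem has two halves: the equivalence (i)$\Leftrightarrow$(ii)$\Leftrightarrow$(iii), and the diameter computations. The equivalence (i)$\Leftrightarrow$(ii) is essentially free: $\F(V)$ is the clique complex of $\G(V)$, and a clique complex is connected if and only if its $1$-skeleton — which is exactly $\G(V)$ — is connected. So the real work is to characterize connectivity of the graph $\G(V)$ and then, in each connected case, to compute the diameter.

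**Low-dimensional cases first.** I would dispatch $n=2$ and $n=3$ by hand. For $n=2$: pick an orthonormal basis $\{e_1,e_2\}$; the non-degenerate lines are spanned by vectors $v = e_1 + \lambda e_2$ (plus the line $\gen{e_2}$), and $\gen{v}\perp\gen{w}$ forces a single linear condition on the coordinates. Counting: $\gen{e_1}$ is orthogonal only to $\gen{e_2}$, so each vertex has a unique neighbor, i.e. $\G(V)$ is a perfect matching; it is connected iff it has exactly one edge, i.e. exactly two non-degenerate lines. A short count of non-degenerate lines in a $2$-dimensional unitary space over $\KK$ shows this happens precisely when $\KK = \GF{2^2}$ (where there are $3$ lines total, of which exactly $2$ are non-degenerate — actually one must recount carefully, but the arithmetic is elementary), giving case (a) and diameter $1$. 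For $n = 3$: given two non-orthogonal lines $\gen{v}, \gen{w}$, their span $W$ is $2$- or $3$-dimensional; if $W^\perp$ is a non-degenerate line it is a common neighbor, giving distance $2$. The subtlety is when the relevant orthogonal complements are degenerate, which is exactly the isotropic-vector phenomenon, pushing the diameter to $3$; and when $\KK = \GF{2^2}$ one checks directly (again by a small count, or by an explicit pair of lines) that connectivity fails. This is the fiddliest bookkeeping in the proof.

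**The case $n \geq 4$: connectivity and diameter $2$.** Here I would prove the strong statement directly: any two distinct non-degenerate lines $\gen{v}, \gen{w}$ have a common neighbor, i.e. there is a non-degenerate line orthogonal to both. Equivalently, writing $U = \gen{v,w}$ (dimension $2$ if $v\perp w$, else dimension $2$ as well unless $w \in \gen v$), the orthogonal complement $U^\perp$ has dimension $\geq n-2 \geq 2$, and I must produce a non-degenerate line inside $U^\perp$. The point is that $U^\perp$ is a unitary space (possibly degenerate) of dimension $\geq 2$; its radical $\Rad(U^\perp)$ has some dimension $r$, and $U^\perp/\Rad(U^\perp)$ is non-degenerate of dimension $\geq 2 - r$. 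If $U^\perp$ is itself non-degenerate it certainly contains a non-degenerate line (e.g. from an orthonormal basis, using (ON)). If it is degenerate one has to work slightly harder: pick a vector outside the radical whose self-form is nonzero — such a vector exists because the form on $U^\perp$ is not identically zero once $\dim U^\perp > \dim \Rad U^\perp$, and one can add a suitable radical vector to kill a bad value — I would lean on (ON) and the existence of orthonormal bases for the non-degenerate part. Once a common neighbor exists for every pair, $\G(V)$ is connected and has diameter at most $2$; it is exactly $2$ (not $1$) because $\G(V)$ is not complete: $\gen v$ and $\gen w$ need not be orthogonal.

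**Main obstacle.** The genuinely delicate point is controlling the \emph{degenerate} orthogonal complements that arise when $V$ carries isotropic vectors — this is what separates diameter $2$ from diameter $3$ in the $n=3$ case and is the only place where the argument is not a one-line complement-dimension count. I would isolate a small lemma: \emph{in a unitary space $W$ satisfying (ON) with $\dim W - \dim\Rad(W) \geq 1$, there is a non-degenerate line in $W$; and if moreover $\dim W \geq 2$, for any line $\gen u \leq W$ there is a non-degenerate line in $W$ orthogonal to $\gen u$ unless a small exceptional configuration occurs.} Proving this cleanly — tracking exactly when the exception forces distance $3$ and ruling it out for $n \geq 4$ by the extra dimension in $U^\perp$ — is where the care goes; everything else is orthonormal-basis linear algebra and finite counting over $\GF{q^2}$.
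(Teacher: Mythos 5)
Your reduction (i)$\Leftrightarrow$(ii), your treatment of $n=2$ (perfect matching, connected iff the $2$-frame is unique), and your argument for $n\geq 4$ (the span $U=S+W$ has $\dim U^\perp=n-2\geq 2$ while $\dim\Rad(U^\perp)=\dim\Rad(U)\leq 1$, so $U^\perp$ contains a non-degenerate line) are all sound and in fact parallel what the paper extracts from its computation of $l_2(S,W)$. For $n=2$ over an infinite field the "short count" should be replaced by the algebraic argument that uniqueness of the $2$-frame forces $\tau(a)=-ca^{-1}$ for all $a\in\KK^\times$ and hence $\Fix(\tau)=\GF{2}$, but that is a repairable gloss.

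The genuine gap is in case (b) when $V$ contains isotropic vectors and $S+W$ is degenerate. There $(S+W)^\perp=\Rad(S+W)$ is totally isotropic, so $S$ and $W$ have no common neighbour and $d(S,W)\geq 3$; but you never show the distance is \emph{finite}, which is exactly what connectivity requires. Concretely one must produce a non-degenerate line $T\leq S^\perp$ with $T+W$ non-degenerate, after which $S$--$T$--$(T+W)^\perp$--$W$ is a path of length $3$. Writing $T=\gen{v}$ and normalizing $|w|=1$, the condition "$T+W$ degenerate" is the quadratic condition $|v|=\Psi(v,w)\tau(\Psi(v,w))$, and the whole difficulty of the theorem is to show that not every non-degenerate $v\in S^\perp$ satisfies it unless $\KK=\GF{2^2}$. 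This is where the paper spends its effort (in the proof of its Theorem on $l_3$): assuming every such $v$ gives a degenerate sum, it deduces $\chara(\KK)=2$, that the relevant parameter set equals $\Fix(\tau)$, and then a computation with $\tau(b^3)$ forces $b^2+b+1=0$, i.e.\ $\KK=\GF{2^2}$. Your "small lemma" does not capture this obstruction: it asks for a non-degenerate line \emph{orthogonal} to a given line, whereas the actual obstruction is non-degeneracy of the \emph{sum} $T+W$ for some $T$ already constrained to lie in $S^\perp$ — a different and strictly harder condition. There is no soft dimension count that settles it (over $\GF{3^2}$, for instance, exactly half of the six candidate lines $T$ in $S^\perp$ give degenerate sums), so deferring it to "fiddliest bookkeeping" leaves both the connectivity and the diameter-$3$ claim of case (b) unproven. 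Relatedly, your identification of $\GF{2^2}$, $n=3$ as the disconnected case is asserted by example rather than derived; the paper gets it cleanly from the observation that over $\GF{2^2}$ every $(n-2)$-frame extends to a unique full frame, so the complex retracts onto the discrete set of full frames.
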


The theorem implies that $\F(V)$ is disconnected if and only if 
either $V \cong \GF{2^2}^3$ or
$V \cong \KK^2$ for $\KK \neq \GF{2^2}$. 
In particular, for $n \leq 3$ we can easily determine the homotopy type
of $\F(V)$.

In Section \ref{sec:simple} we study the simple connectivity of $\F(V)$
and prove:

\begin{theorem}[Simple connectivity]
  \label{simplyConnectedFramePoset}
  Let $V$ be a unitary space of dimension $n\geq 2$ over $\KK$.
  The following assertions hold:
  \begin{itemize}
    \item[(i)] If $n\geq 5$ and $(n,\KK)\neq (6,\GF{2^2})$, then $\F(V)$ is simply connected.
    \item[(ii)] If $n = 6$ and $\KK = \GF{2^2}$, then $\pi_1(\F(V)) \groupiso C_2\times C_2$.
    \item[(iii)] If $n = 4$ and $\KK=\GF{2^2}$ or $\GF{3^2}$, then $\F(V)$ is not simply connected. In the former case $\F(V)$ is a wedge of $81$ spheres of dimension $1$, and in the latter case $\tilde{H}_*(\F(V),\ZZ)$ is non-zero in degrees $2$ and $1$.
    \item[(iv)] If $n = 4$ and $\KK = \GF{q^ 2}$ for $q \geq 4$, then $\pi_1(\F(V))$ has Kazhdan's property (T). In particular $\pi_1(\F(V))$ is not a non-trivial free group, and its abelianization is a finite group.

    \item[(v)] If $n = 3$ then $\F(V)$ is homotopy equivalent to a non-trivial wedge of $1$-spheres if $\KK \neq \GF{2^ 2}$, and to a non-trivial wedge of $0$-spheres if $\KK= \GF{2^ 2}$.
  \end{itemize}
\end{theorem}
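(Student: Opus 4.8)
Throughout, the key structural input is that the link of a partial frame $\sigma$ in $\F(V)$ equals $\F(W^\perp)$, where $W$ is the span of $\sigma$; in particular every closed star $\overline{\St}(L)=L\join\F(L^\perp)$ is a cone and hence contractible, and Theorem~\ref{connectedFramePoset} controls connectivity of all the links and star-intersections that appear. \emph{For Part (v):} when $n=3$, every edge $\{L_1,L_2\}$ of $\F(V)$ lies in a \emph{unique} $2$-simplex, namely $\{L_1,L_2,(L_1+L_2)^\perp\}$ (here $L_1+L_2$ is non-degenerate, so its orthogonal complement is a non-degenerate line). Hence every edge is a free face, and collapsing each $2$-simplex across one of its edges exhibits $\F(V)$ as homotopy equivalent to a graph $\Gamma$ on the non-degenerate lines having $\#\{\text{full frames}\}$ fewer edges than $\F(V)$. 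If $\KK\neq\GF{2^2}$ then $\F(V)$, hence $\Gamma$, is connected by Theorem~\ref{connectedFramePoset}, so $\F(V)\homotequiv\bigvee S^1$, and a count of vertices, edges and frames (using $\#\text{edges}=3\cdot\#\text{frames}$) gives $1-\chi(\F(V))>0$ circles. If $\KK=\GF{2^2}$ then $\G(V)$ is a disjoint union of triangles, so $\F(V)$ is a disjoint union of solid $2$-simplices, i.e.\ a non-trivial wedge of $0$-spheres.

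\emph{For Part (iii):} over $\KK=\GF{2^2}$ the collapsing idea iterates---when $n=4$ every $2$-simplex lies in a unique $3$-simplex (the fourth line being the orthogonal complement of the span of the first three), so $\F(V)$ first collapses onto a $2$-complex, and since $\F(\GF{2^2}^2)$ is a single edge, in that $2$-complex every remaining $2$-simplex still has all of its edges free, so a second round of collapses yields a graph; as $\F(V)$ is connected it is then a wedge of circles, and the Euler-characteristic count over $\GF{2^2}$ ($\chi=-80$) gives $1-\chi=81$ circles. Over $\KK=\GF{3^2}$ the second collapse fails because $\F(\GF{3^2}^2)$ is three disjoint edges; here I would instead establish $\tilde H_1(\F(V);\ZZ)\neq 0$ (failure of simple connectivity, via the cover argument of Parts~(i)--(ii) below, since the relevant star-intersections turn out to be disconnected) and $\tilde H_2(\F(V);\ZZ)\neq 0$, the latter by exhibiting a non-bounding $2$-cycle---for instance inside the subcomplex of partial frames lying in a degenerate $4$-space with one-dimensional radical, which is an inflation of $\F(\GF{3^2}^3)\homotequiv\bigvee S^1$ and is itself a wedge of $2$-spheres. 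The exact homology would be pinned down using the $\SU_4(3)$-action, or confirmed by direct computation, the complex being finite.

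\emph{For Part (iv):} I would invoke the spectral criterion of \.{Z}uk (equivalently the Ballmann--\'{S}wi\k{a}tkowski criterion, i.e.\ Garland's method for property (T)). Since $\pi_1(\F(V))=\pi_1(\F(V)^{(2)})$ and the link of a vertex $L$ in $\F(V)^{(2)}$ is the graph $\G(L^\perp)\groupiso\G(\GF{q^2}^3)$, it suffices that this graph be connected---which holds for $q\geq 3$ by Theorem~\ref{connectedFramePoset}---and have least positive normalized-Laplacian eigenvalue greater than $\tfrac12$. Substituting the spectrum of $\G(\GF{q^2}^3)$ (computed later in the paper for finite fields) into this inequality reduces it to an explicit estimate in $q$, which holds precisely when $q\geq 4$; then $\pi_1(\F(V))$ has property (T), hence is not a non-trivial free group and has finite abelianization.

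\emph{For Parts (i) and (ii):} I would contract loops of $\G(V)$ inside $\F(V)$ directly, or equivalently cover $\F(V)$ by the contractible closed stars $\overline{\St}(L)$ and apply van Kampen's theorem. The relevant intersections are $\overline{\St}(L)\cap\overline{\St}(L')=\F_V((L+L')^\perp)$ and $\overline{\St}(L)\cap\overline{\St}(L')\cap\overline{\St}(L'')=\F_V((L+L'+L'')^\perp)$, where $\F_V(W)$ is the complex of partial frames of $V$ contained in $W$: it equals $\F(W)$ when $W$ is non-degenerate, and in general is an $|\KK|^{\dim\Rad(W)}$-fold inflation of $\F(W/\Rad(W))$ (each vertex replaced by $|\KK|^{\dim\Rad(W)}$ copies with adjacency inherited), hence connected exactly when $\F(W/\Rad(W))$ is. A dimension count together with Theorem~\ref{connectedFramePoset} shows that for $n\geq 5$ the component-refined nerve of this cover is simply connected, apart from finitely many (up to symmetry) ``small'' intersections---those for which $(L+\cdots)^\perp$ becomes totally isotropic or acquires a large radical---which must be handled by hand; this yields (i). \textbf{The main obstacle is precisely this last analysis}: many pairwise and triple intersections are genuinely \emph{disconnected}, and over $\GF{2^2}$ this is forced for \emph{every} non-orthogonal pair $L,L'$, because there the norm map $\GF{2^2}\to\GF{2}$ sends every unit to $1$, so the Gram determinant of $\{u,v\}$ vanishes whenever $\Psi(u,v)\neq 0$ and thus $L+L'$---and hence $(L+L')^\perp$---is degenerate. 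One must show that the components of these intersections are bridged by neighbouring stars strongly enough that the loops they would create are still null-homotopic; this succeeds for $n=5$ and for $n=6$ with $q\geq 3$, while for $(n,\KK)=(6,\GF{2^2})$ a residual obstruction survives, and tracking it---using the $\SU_6(2)$-symmetry to organise the finitely many components and gluing maps---identifies $\pi_1(\F(V))\groupiso C_2\times C_2$, which is (ii).
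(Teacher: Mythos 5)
Your treatments of (iv), (v) and the $\GF{2^2}$ half of (iii) coincide with the paper's: (iv) is exactly the Zuk/Ballmann--\'Swi\c{a}tkowski spectral criterion applied to the vertex links $\G(L^\perp)\cong\G(\KK^3)$, whose smallest positive normalized-Laplacian eigenvalue $1-\frac{1}{q-1}$ exceeds $\frac{1}{2}$ precisely for $q\geq 4$; (v) and the $81$-circle count come from collapsing co-maximal faces (Lemma \ref{lemmaHatPoset}, iterated over $\GF{2^2}$ via Lemma \ref{doubleJumpDimensionInField2}) together with the Euler characteristic. One caveat: your literal claim that after the first collapse of $\F(\GF{2^2}^4)$ ``every remaining $2$-simplex still has all of its edges free'' is shaky, since an edge there lies in \emph{two} triangles; the paper's closure-operator argument in Lemma \ref{doubleJumpDimensionInField2} (each $(n-2)$-frame lies in a unique maximal frame) is the correct way to make that second reduction.

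The genuine gap is in parts (i) and (ii). The paper does not use a nerve/van Kampen cover by closed stars; it observes that $\diam(\G(V))=2$ for $n\geq 4$, so $\pi_1(\F(V))$ is generated by tailed $m$-gons with $m\leq 5$, and then contracts every $4$-gon and $5$-gon explicitly (Lemmas \ref{lemmaTriangulableSquares} and \ref{lemmaTriangulablePentagons}): for a $4$-gon $v_1v_2v_3v_4$, either $\G((v_1+v_3)^\perp)$ is connected and a diagonal walk triangulates the square, or a short case analysis (which forces $n=5$, or is excluded outright over $\GF{2^2}$) produces a vertex orthogonal to all four $v_i$; a $5$-gon is cut into two squares and a triangle by a vertex orthogonal to $v_1,v_3,v_4$. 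Your cover argument stalls exactly where you flag it: once pairwise and triple star intersections are disconnected, the nerve lemma no longer applies, and the assertions that the resulting loops are ``still null-homotopic\dots for $n=5$ and for $n=6$ with $q\geq3$'' and that tracking the residual obstruction for $(6,\GF{2^2})$ ``identifies $\pi_1(\F(V))\groupiso C_2\times C_2$'' are precisely the content of the theorem and are nowhere established (the paper itself obtains (ii) only by a GAP computation). A smaller gap of the same kind sits in (iii) over $\GF{3^2}$: a non-bounding $2$-cycle of the subcomplex supported on a degenerate $4$-space need not remain non-bounding in $\F(V)$, so your proposed witness for $\tilde{H}_2\neq 0$ does not close without further argument --- here too the paper simply records machine-computed Betti numbers.
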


It remains open the question of whenever $\F(V)$ is simply connected if $\dim(V) = 4$ and $|\KK| \geq 4^2$.

\medskip

We study in Section \ref{sec:eigen} the eigenvalues of the adjacency matrix of the graph $\G(V)$ when $\KK$ is a finite field.
Let $d_n:=\frac{q^{n-2}(q^{n-1}-(-1)^{n-1})}{q+1}$, which is the number of $1$-dimensional non-degenerate subspaces of a unitary space of dimension $n-1$ over $\GF{q^2}$.
Then, for an $n$-dimensional unitary space $V$ over $\GF{q^2}$, $\G(V)$ is a $d_n$-regular graph with $d_{n+1}$ vertices.

\begin{theorem}
[{Eigenvalues of $\G(V)$}]
\label{theoremEigenvalues}
Let $V$ be a unitary space of dimension $n\geq 2$ over $\GF{q^2}$.
Let
\[ \mu_1 := d_n, \quad \mu_2 := q^{n-2}, \quad \mu_3 := (-1)^n q^{n-3}, \quad \mu_4 := -q^{n-2}.\]
The eigenvalues of the adjacency matrix of the graph $\G(V)$ are as follows.
\begin{itemize}
\item[(i)] If $n = 2$, the eigenvalues are $\mu_1$ and $\mu_4$.
\item[(ii)] If $n \geq 3$ and $q\neq 2$, the eigenvalues are $\mu_1$, $\mu_2$, $\mu_3$ and $\mu_4$.
\item[(iii)] If $n = 3$ and $q = 2$, the eigenvalues are $\mu_1$ and $\mu_3$.
\item[(iv)] If $n > 3$ and $q = 2$, the eigenvalues are $\mu_1$, $\mu_3$ and $(-1)^n \mu_4$.
\end{itemize}
\end{theorem}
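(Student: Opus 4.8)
The plan is to show that the adjacency matrix $A$ of $\G(V)$ on the set $X$ of non-degenerate $1$-dimensional subspaces satisfies a polynomial of degree at most $3$ on the hyperplane $\mathbf 1^{\perp}$ (it already satisfies $A\mathbf 1=d_n\mathbf 1$ and $\G(V)$ is vertex-transitive), so that it has at most four distinct eigenvalues, and then to identify these and their multiplicities by direct counting. Write $I,J$ for the identity and all-ones matrices and $\mathrm{N}\co\GF{q^2}\to\GF q$, $\mathrm{N}(x)=x\tau(x)=x^{q+1}$, for the norm. For distinct $P=\langle u\rangle,Q=\langle v\rangle\in X$ normalised so that $\Psi(u,u)=\Psi(v,v)=1$, the scalar $c(P,Q):=\mathrm{N}(\Psi(u,v))\in\GF q$ governs the $\GU_n(q)$-orbit of $(P,Q)$ by Witt's theorem: $c(P,Q)=0$ means $P\perp Q$; the Gram matrix of $\langle u,v\rangle$ has determinant $1-c(P,Q)$, so $c(P,Q)=1$ characterises the \emph{tangent} pairs (those whose span is degenerate, with $1$-dimensional isotropic radical); pairs with $c(P,Q)\notin\{0,1\}$ I call \emph{generic}. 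I will use three counts: a non-degenerate Hermitian $\GF{q^2}$-space of dimension $m$ has $d_{m+1}$ non-isotropic points; one of dimension $m$ with a $1$-dimensional radical has $q^2d_m$ non-isotropic points; and the number of vectors of any fixed nonzero norm in a non-degenerate $\GF{q^2}$-space of dimension $m$ is $(q+1)d_{m+1}$, independently of the value.

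The first key computation counts, for distinct $P,Q$, the points orthogonal to both: the answer is $d_{n-1}$ when $\langle P,Q\rangle$ is non-degenerate (so for $P\perp Q$ and for generic pairs) and $q^2d_{n-2}$ when $P,Q$ are tangent. Together with $(A^2)_{PP}=d_n$ and the arithmetic identity $q^2d_{n-2}-d_{n-1}=(-1)^nq^{n-3}=\mu_3$ (for $n\geq3$), this gives
\[
A^2=d_nI+d_{n-1}(J-I)+\mu_3 B,
\]
where $B$ is the $0/1$-matrix of the tangency relation. The second key computation is of $(AB)_{PQ}=\#\{R\in X:\ R\perp P\text{ and }(R,Q)\text{ tangent}\}$. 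Putting $(P,Q)$ into standard position for an orthonormal basis $e_1,\dots,e_n$, with $P=\langle e_1\rangle$ and $Q=\langle\alpha e_1+\beta e_2\rangle$, one finds that $R=\langle r\rangle$ is counted exactly when $r_1=0$, $r_2\neq0$ and $\sum_{i\geq3}\mathrm{N}(r_i)=-c(P,Q)\,\mathrm{N}(r_2)$; for a generic pair the right-hand side runs over a fixed nonzero element of $\GF q$ as $r_2$ varies, so by the third count above $(AB)_{PQ}$ is \emph{independent of} $c(P,Q)$, equal to $(q+1)d_{n-1}$. Since $(AB)_{PP}=0$ and, by transitivity of $\GU_n(q)$ on orthogonal pairs and on tangent pairs, $(AB)_{PQ}$ is also constant on each of those two types, we get $AB\in\langle I,A,B,J\rangle$. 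Now $A^3=A\cdot A^2=d_nA+d_{n-1}(d_nJ-A)+\mu_3 AB$, and substituting $B=\mu_3^{-1}\bigl(A^2-(d_n-d_{n-1})I-d_{n-1}J\bigr)$ (legitimate since $\mu_3\neq0$ for $n\geq3$) shows $A^3\in\langle I,A,A^2,J\rangle$. Thus $A$ on $\mathbf 1^{\perp}$ is a root of an explicit monic cubic, and using the symmetric-function identities $\mu_2+\mu_3+\mu_4=\mu_3$ and the analogues for the lower coefficients one checks that its roots are exactly $\mu_2,\mu_3,\mu_4$; with $\mu_1=d_n$ on $\mathbf 1$ this confines the spectrum of $A$ to $\{\mu_1,\mu_2,\mu_3,\mu_4\}$.

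It remains to decide which $\mu_i$ occur, and with what multiplicity. For $q=2$ the norm is trivial on $\GF{4}^{\times}$, so every non-orthogonal pair is tangent, $B=J-I-A$, and the displayed identity for $A^2$ already exhibits $A$ on $\mathbf 1^{\perp}$ as a root of a quadratic; its roots are $\mu_3$ and $(-1)^{n}\mu_4$. For $n=3$ these together with $\mu_1=d_3=2$ reduce to $\{\mu_1,\mu_3\}=\{2,-1\}$ (and $\mu_2=2=\mu_1$), which is (iii); for $n>3$ the three values $\mu_1,\mu_3,(-1)^n\mu_4$ are distinct and $\mu_2$ does not occur, which is (iv). For $q\neq2$ and $n\geq3$ the four values $\mu_1,\dots,\mu_4$ are pairwise distinct and all occur, which is (ii); and the case $n=2$, where $\G(V)$ is a perfect matching since each non-isotropic $1$-space $P$ has the unique non-isotropic orthogonal point $P^{\perp}$, is immediate from $A^2=I$, giving (i) with $\mu_1=1$ and $\mu_4=-1$. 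In every case the multiplicities are then forced by $\sum_i m_i\mu_i^{k}=\operatorname{tr}(A^k)=d_{n+1}(A^k)_{PP}$ for $k=0,1,2,3$, where $(A^2)_{PP}=d_n$ and $(A^3)_{PP}=d_nd_{n-1}$ are the relevant closed-walk counts; in the disconnected cases $n=2$ and $(n,q)=(3,2)$ one also uses the number of connected components.

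The main obstacle is the two intersection-number computations, above all the $c$-independence of $(AB)_{PQ}$ on generic pairs — equivalently, that the $q-2$ distinct $\GU_n(q)$-orbits of generic ordered pairs fuse, together with the diagonal, orthogonality and tangency relations, into a $3$-class association scheme. When $q\geq5$ this fusion is not induced by automorphisms of $\G(V)$ (field automorphisms then act trivially on the parameter $c$), so it genuinely rests on the homogeneity of the norm form recorded above rather than on a symmetry argument.
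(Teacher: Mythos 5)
Your proposal is correct, and it takes a genuinely different route from the paper. The paper computes the walk counts $l_k(S,W)$ for $k\le 4$ in each of the four relative positions ($=$), ($\perp$), (ND), (D), writes down an explicit quartic $m(n,q)$, verifies $m(n,q)(A)=0$ entry by entry in four long calculations, and then pins down the minimal polynomial using the diameter bound and the characterization of strongly regular graphs (for $q=2$ the graph \emph{is} strongly regular, which is how the quadratic on $\mathbf 1^{\perp}$ arises there). You instead exhibit the coherent algebra spanned by $I,A,B,J-I-A-B$: the identity $A^2=d_nI+d_{n-1}(J-I)+\mu_3B$ is the paper's $l_2$ computation in disguise, and your count of $(AB)_{PQ}$ is exactly the paper's $\eta_3(S,W)$ (your generic value $(q+1)d_{n-1}$ matches the (ND) entry of Table 1). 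The crucial point in both arguments is the same — that these intersection numbers depend only on the coarse class and not on the finer $\GU_n(q)$-orbit parametrized by $c\in\GF q\setminus\{0,1\}$ — but you prove it by the direct norm-form count, where the paper uses an explicit isometry argument (its Lemma \ref{lem:welldefined} with Remark \ref{rk:characterisationE2}). Your payoff is a cubic annihilator on $\mathbf 1^{\perp}$ obtained without ever computing length-$4$ walks, a transparent structural reason for the small spectrum (the generic orbits fuse into a $3$-class association scheme), and a uniform treatment of occurrence and multiplicities via the Vandermonde trace system; the paper's payoff is that its walk counts $l_3,l_4$ are reused elsewhere (Theorem \ref{connectedFramePoset}) and its multiplicity computation (Corollary \ref{multiplicitiesEigenvaluesDimAtLeast3}) drops out of the same data.

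Two small gaps of completeness, neither affecting correctness of the method: to actually write down the cubic and ``check that its roots are exactly $\mu_2,\mu_3,\mu_4$'' you also need the values of $(AB)_{PQ}$ on the orthogonal and tangent classes, which you assert are constant (correctly, by Witt transitivity) but do not compute — they are $I_{n-2}$ and $q^{2n-5}$, i.e. the remaining entries of the paper's $\eta_3$ column; and the final root identification is left as a routine verification, comparable in weight to the paper's Cases 1--4. Your handling of $n=2$, of $(3,\GF{2^2})$, and of the $q=2$ collapse $B=J-I-A$ all check out against the paper's statements.
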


We compute the multiplicities of these eigenvalues in Corollary \ref{multiplicitiesEigenvaluesDimAtLeast3} for $n > 2$, and in Proposition \ref{walksAndEigenvaluesLowDim2} for $n = 2$.
Note that the eigenspaces are $\GU(V)$-modules.

In Section \ref{sec:garland} we use Theorem \ref{theoremEigenvalues} to conclude
by Garland's method (see \cite{BS,Garland, Zuk}):

\begin{theorem}
\label{coroHomologyCohenMacaulay}
  Let $V$ be a unitary space of dimension $n\geq 2$ over $\GF{q^2}$.
  Let $\kk$ be a field of characteristic $0$.
  If $n < q+1$, then $\tilde{H}_i(\F(V),\kk)=0$ for every $0\leq i \leq n-3$.
  In particular, $\widehat{\F}(V)$ is Cohen-Macaulay over $\kk$ if $n < q+1$.
\end{theorem}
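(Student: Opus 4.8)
The plan is to apply Garland's method to $X=\F(V)$, fed by the spectral data of Theorem~\ref{theoremEigenvalues} and the recursive structure of the frame complex: $\F(V)$ is pure of dimension $n-1$, and the link in $\F(V)$ of a partial frame $\sigma$ of size $j+1$ is the frame complex $\F(\sigma^\perp)$ of a unitary space of dimension $n-j-1$ over $\GF{q^2}$. Concretely I will invoke the form of Garland's method (see \cite{BS,Garland,Zuk}) asserting that if $X$ is a finite pure simplicial complex such that the link of every face of dimension $\le k-1$ is connected and, for every $(k-1)$-face $\sigma$, the smallest positive eigenvalue $\lambda_1$ of the normalized Laplacian of the $1$-skeleton of $\Lk(\sigma)$ satisfies $\lambda_1>\tfrac{k}{k+1}$, then $\tilde H^k(X;\RR)=0$. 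Since $\kk$ has characteristic $0$, vanishing of $\tilde H_k(X;\kk)$ is equivalent to vanishing of $\tilde H^k(X;\RR)$ (universal coefficients), so it suffices to verify these hypotheses for $1\le k\le n-3$; the case $i=k=0$ is connectivity of $\F(V)$, which holds by Theorem~\ref{connectedFramePoset} because $n<q+1$ forces $q\ge 3$ and hence $\KK=\GF{q^2}\ne\GF{2^2}$. When $n\le 3$ the range $0\le i\le n-3$ contains at most $i=0$, so we may assume $n\ge 4$, whence also $q\ge n\ge 4$.

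Fix $1\le k\le n-3$ and a $(k-1)$-face $\sigma$, and put $m:=n-k\ge 3$, so $\Lk(\sigma)\cong\F(V')$ with $\dim V'=m$. The links of faces of dimension $\le k-1$ are the complexes $\F(V')$ with $\dim V'\ge m\ge 3$, and each is connected by Theorem~\ref{connectedFramePoset} -- case (b) (using $\KK\ne\GF{2^2}$) when $\dim V'=3$, and case (c) when $\dim V'\ge 4$. It is essential here that we never need connectivity of the links of codimension-$2$ faces, namely the graphs $\G(V')$ with $\dim V'=2$: these are $1$-regular on $q^2-q$ vertices, hence disconnected for $q\ge 3$, and this is precisely why one must use the version of Garland's method whose connectivity requirements stop at codimension equal to the target degree. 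Since $q\ne 2$ and $m\ge 3$, Theorem~\ref{theoremEigenvalues}(ii) gives the adjacency spectrum of $\G(V')$ as $\{d_m,\ q^{m-2},\ (-1)^m q^{m-3},\ -q^{m-2}\}$, with largest eigenvalue $d_m$ and second largest $q^{m-2}$; as $\G(V')$ is $d_m$-regular this yields $\lambda_1(\Lk(\sigma))=1-\frac{q^{m-2}}{d_m}=1-\frac{q+1}{q^{m-1}-(-1)^{m-1}}$.

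There remains the inequality $1-\frac{q+1}{q^{m-1}-(-1)^{m-1}}>\frac{k}{k+1}$ with $m=n-k$, equivalently $(k+1)(q+1)<q^{m-1}-(-1)^{m-1}$. Since $k\le n-3$ we have $k+1\le n-2\le q-2$, so the left-hand side is at most $(q-2)(q+1)=q^2-q-2<q^2-1$, while $m-1=n-k-1\ge 2$ makes the right-hand side at least $q^{m-1}-1\ge q^2-1$; hence the inequality holds throughout $1\le k\le n-3$, and at the extreme value $k=n-3$ (that is, for the links of $3$-dimensional spaces) it collapses exactly to $n-2<q-1$, i.e.\ to the hypothesis $n<q+1$, so the bound is sharp. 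Garland's method then gives $\tilde H^k(\F(V);\RR)=0$, hence $\tilde H_k(\F(V);\kk)=0$, for $1\le k\le n-3$, which together with connectivity yields $\tilde H_i(\F(V);\kk)=0$ for all $0\le i\le n-3$. The ``in particular'' clause follows by applying this vanishing inside every link $\F(V')$ (legitimate since $\dim V'\le n<q+1$) together with the description of $\widehat{\F}(V)$ and its links.

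The step needing the most care is the bookkeeping around Garland's method rather than any delicate estimate: one must invoke the form of the theorem whose connectivity hypotheses do not reach down to the disconnected codimension-$2$ links, and must line up the spectral threshold $\tfrac{k}{k+1}$ with the eigenvalue ratio so that the hypothesis $n<q+1$ emerges exactly at $k=n-3$. The eigenvalue input is already packaged in Theorem~\ref{theoremEigenvalues}, and the remaining inequalities are elementary.
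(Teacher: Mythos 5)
Your proposal is correct and follows essentially the same route as the paper: Garland's method applied with the link identification $\Lk(\sigma)\cong\F(\gen{\sigma}^\perp)$, the spectral gap $1-\frac{q+1}{q^{m-1}-(-1)^{m-1}}$ extracted from Theorem \ref{theoremEigenvalues}, connectivity of the relevant links from Theorem \ref{connectedFramePoset}, and Lemma \ref{lemmaHatPoset} for the Cohen--Macaulay statement. The only cosmetic difference is that you verify the threshold inequality $(k+1)(q+1)<q^{m-1}-(-1)^{m-1}$ directly for all $k$ in range, whereas the paper first establishes monotonicity in $k$ and checks only the extreme case $k=n-3$; both reduce to the same sharp condition $n<q+1$.
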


Here $\widehat{\F}(V) \subseteq \F(V)$ is the poset of all partial frames of size $\neq 0,n-2$ ordered by inclusion.
Section \ref{sec:garland} also contains more general results
about $\kk$-homological connectivity of $\F(V)$.

Finally, in Remark \ref{rk:nonCM} we prove that $\tilde{H}_{n-3}(\F(V),\kk)\neq 0$ for every $q\geq 3$ if, for instance,  $n = q(q-1)+1$. In particular, $\redF(V)$ is not Cohen-Macaulay over any field $\kk$ if $n\geq q(q-1)+1$. Therefore, the upper bound for $n$ provided in Theorem \ref{coroHomologyCohenMacaulay} cannot be extended beyond $q(q-1)+1$.

\section*{Acknowledgments}
This work was carried out during different research stays of the first author in IMAS-CONICET as CONICET Postdoctoral Fellow,
the Mathematisches Forschungsinstitut Oberwolfach as Oberwolfach Leibniz Fellow, 
Institut Mittag-Leffler as Junior Postdoctoral Fellow, and, currently, in Philipps-Universität Marburg supported by the Alexander von Humboldt Foundation.
The first author is very grateful to these institutions for their support.

\section{Spaces with forms and associated posets and simplicial complexes}
\label{sec:basics}

\subsection{Spaces with forms.} \label{sec:forms}
Our main results are for finite-dimensional unitary spaces or, equivalently, non-degenerate Hermitian forms in finite-dimensional vector spaces. Nevertheless, we will formulate each intermediate result in the 
generality which is allowed when following the proof in the
Hermitian case.

Many of our main objects will be posets or simplicial complexes. Indeed
we will identify a poset $\P$ with the simplicial complex of all chains in $\P$
which is called the \textit{order complex} of $\P$. In particular, we can
talk about homology or homotopy of posets, and, hence, about the Cohen-Macaulay property. We refer to 
\cite{Bjo95} for the definition and a general introduction to combinatorial methods for studying the 
topology and homology of simplicial complexes and order complexes of posets.
For a poset $\P$ and
$p \in \P$, we will write $\P_{\leq p}$ for $\{ q\in \P\,|\,q \leq p\}$. 
Analogously defined are $\P_{< p}$, $\P_{\geq p}$ and $\P_{> p}$.
We also set $[p,p'] = \P_{\leq p'} \cap \P_{ \geq p}$ and $(p,p')= \P_{< p'} \cap \P_{ > p}$ for $p \leq p'$.
For a simplicial complex
$\K$ and a simplex $\sigma \in \K$ we write $\Lk_\K(\sigma)$ for 
the link of $\sigma$ in $\K$. For a number $m \geq 0$ we write $\K^m$
for the $m$-skeleton of $\K$ and $\K_m$ for the set of $(m-1)$-dimensional
simplices in $\K$. For a poset $\P$ and a simplicial complex $\K$ we
write $|\P|$ and $|\K|$ to denote the geometric realization of the order
complex of $\P$ and $\K$ respectively. Recall also that a simplicial complex
is called pure if all its maximal faces have the same dimension.

We write $(V,\Psi)$ to denote a finite-dimensional vector space $V$ over a field $\KK$ together with a form $\Psi : V \times V \rightarrow \KK$.
For a fixed field automorphism $\tau : \KK \rightarrow \KK$ of order $\leq 2$ and $\epsilon \in \{ \pm 1\}$, the form $\Psi$ is 
$\epsilon$-Hermitian if $\Psi(v,w) = \epsilon 
\tau(\Psi(w,v))$ and $\Psi(\alpha v + \beta v' ,w) = 
\alpha\Psi(v,w)+\beta \Psi(v',w)$.
In addition, if $\chara(\KK) = 2$ and $\tau$ is the identity, we also require $\Psi(v,v) = 0$ for all $v\in V$.
From now on we assume:
\[ \text{{\bf (Forms)} $\Psi$ is a } \epsilon \text{-Hermitian form for a field automorphism } \tau \text{ of order } \leq 2 \text{ and }  \epsilon \in \{ \pm 1 \} . \]
It follows that all our forms are reflexive; i.e., for all $v,w\in V$ we have $\Psi(v,w) = 0$ if and only if 
$\Psi(w,v) = 0$.
Note that if $\tau$ has order $2$ and $\epsilon = 1$ then 
$\Psi$ is a Hermitian form in the sense used in the introduction.
The assumptions also imply that Witt's lemma holds (see \cite[p.81]{AscFGT}) and so every isometry between two
subspaces of $V$ can be extended to an isometry of $V$.

The symbol $\perp$ denotes the orthogonality relation between vectors or subspaces.
That is, for $v,w\in V$, $v\perp w$ if and only if $\Psi(v,w) = 0$.
For $S,T\subseteq V$, write $S\perp T$ if and only if $v\perp w$ for all $v\in S$ and $w\in T$.
Note that since $\Psi$ is a reflexive form, the relation $\perp$ is symmetric. 

For a subspace $S\leq V$, let 
\[ \Rad(S) := \{v\in S\tq v\perp w \,\, \text{for all } w\in S\}\]
denote the radical of $S$.
We say that $S$ is a non-degenerate subspace if $\Rad(S) = 0$, and $\Psi$ is a non-degenerate form if $V$ is non-degenerate.
Let $S^\perp:=\{v\in V\tq~v\perp~w~\text{ for all } w\in S\}$.
Then $\Rad(S) = S\cap S^\perp$.
Thus, $\Psi$ is non-degenerate if and only if $\Rad(V)  = V^\perp = 0$.

A vector $v\in V$ is called degenerate or isotropic if $\Psi(v,v) = 0$.
A totally isotropic subspace is a subspace $S\leq V$ such that $S = \Rad(S)$.
We say that $V$ is anisotropic if it contains no non-zero isotropic vectors.

We write $|v|:=\Psi(v,v)$ for a 
vector $v\in V$. This notation is non-standard and conflicts with 
standard conventions, but it will be very useful for our considerations. 

Note that a vector $v\in V$ spans a non-degenerate $1$-dimensional subspace if and only if $|v|\neq 0$.
If $\{S_i\}_{i\in I}$ is a set of pairwise orthogonal non-degenerate subspaces of $V$, then its span is the direct sum $\bigoplus_{i\in I} S_i$, which is also non-degenerate.

An isometry between two vector spaces with forms $(V,\Psi)$ and $(W,\Psi')$ is an isomorphism $\alpha:V\to W$ such that $\Psi'(\alpha(v),\alpha(w)) = \Psi(v,w)$ for all $v,w\in V$.

We will use the following well-known facts about subspaces of a finite-dimensional vector space with a non-degenerate form (see
for example \cite[Section 19]{AscFGT}).

\begin{proposition}
\label{dimensionTheorem}
  Let $(V,\Psi)$ be a finite-dimensional vector space equipped with a non-degenerate form $\Psi$, and let $S\leq V$ be a subspace.
  The following assertions hold:
  \begin{itemize}
    \item[(i)] $\dim(S^\perp) = \dim(V) - \dim(S) = \codim(S)$;
    \item[(ii)] $S$ is non-degenerate if and only if $S^\perp$ is     non-degenerate, if and only if $S\oplus S^\perp = V$;
    \item[(iii)] $(S^\perp)^\perp = S$;
    \item[(iv)] $\Rad(S^\perp) = \Rad(S)$;
    \item[(v)] $\dim(\Rad(S)) \leq \dim(V)/2$;
    \item[(vi)] there exists an orthogonal decomposition $S =   
      \Rad(S) \oplus S'$, where $(S',\Psi|_{S'})$ is non-degenerate. In particular $S/\Rad(S)$ is (naturally) isometric to $S'$;
    \item[(vii)] $\codim(S) > \dim(\Rad(S))$, $\epsilon=1$ and $(|\tau|,\chara(\KK))\neq (1,2)$, if and only if $S^\perp$  
      contains non-isotropic vectors.
  \end{itemize}
\end{proposition}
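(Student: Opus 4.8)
The plan is to take (i) as the engine and read off (ii)--(v) as formal consequences, then prove (vi) by choosing a complement and (vii) by a polarization argument. For (i), the key point is that non-degeneracy of $\Psi$ says precisely that the map $\rho\colon V\to V^*$, $v\mapsto\big(w\mapsto\Psi(w,v)\big)$, has trivial kernel. This $\rho$ is linear in $w$ (since $\Psi$ is linear in its first slot) and $\tau$-semilinear in $v$, and a $\tau$-semilinear injective map between finite-dimensional spaces of equal dimension is bijective. Composing $\rho$ with the surjective restriction $V^*\to S^*$ produces a surjection $V\to S^*$ whose kernel is exactly $S^\perp$ --- here one uses reflexivity of $\Psi$ to identify the two possible descriptions of $S^\perp$ --- so $\dim S^\perp=\dim V-\dim S$.

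Granting (i), the remaining identities are bookkeeping. For (iii): $S\subseteq(S^\perp)^\perp$ is immediate, and applying (i) twice gives $\dim(S^\perp)^\perp=\dim S$, forcing equality. For (iv): $\Rad(S^\perp)=S^\perp\cap(S^\perp)^\perp=S^\perp\cap S=\Rad(S)$, using (iii) and the identity $\Rad(T)=T\cap T^\perp$. For (ii): by (i), the sum $S+S^\perp$ is direct if and only if $S\cap S^\perp=\Rad(S)$ is zero, and a direct sum of total dimension $\dim V$ equals $V$; combining with (iv) gives all three equivalences. For (v): $\Rad(S)$ embeds into both $S$ and $S^\perp$, so $2\dim\Rad(S)\le\dim S+\dim S^\perp=\dim V$.

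For (vi) I would choose any vector-space complement $S'$ of $\Rad(S)$ inside $S$, so that $S=\Rad(S)\oplus S'$. This decomposition is automatically orthogonal, since $\Rad(S)$ is orthogonal to all of $S$ and in particular to $S'$. Moreover $S'$ is non-degenerate: if $v\in\Rad(S')$ then $v\perp S'$ and, since $v\in S$, also $v\perp\Rad(S)$, hence $v\perp S$, so $v\in\Rad(S)\cap S'=0$. Finally the composite $S'\hookrightarrow S\twoheadrightarrow S/\Rad(S)$ is a linear isomorphism, and it is an isometry because the form induced on $S/\Rad(S)$ --- well defined exactly because $\Rad(S)$ is the radical --- agrees with $\Psi$ on coset representatives and restricts on $S'$ to $\Psi|_{S'}$.

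For (vii), using (i) and (iv) the hypothesis $\codim(S)>\dim\Rad(S)$ becomes $\dim S^\perp>\dim\Rad(S^\perp)$, so (vi) yields $S^\perp=\Rad(S^\perp)\oplus W$ with $W$ a \emph{nonzero} non-degenerate subspace; it then suffices to show that a nonzero non-degenerate $\epsilon$-Hermitian space with $\epsilon=1$ contains a non-isotropic vector. This is the only step with real content, and the only one that uses $\epsilon=1$. Take $0\ne w\in W$; if $w$ is non-isotropic we are done, so assume it is isotropic and pick $u\in W$ with $a:=\Psi(w,u)\ne0$, which exists by non-degeneracy of $W$. If $u$ is non-isotropic we are done; otherwise, since $\Psi(u,w)=\epsilon\tau(a)=\tau(a)$, one computes $\Psi(w+\lambda u,w+\lambda u)=\tau(\lambda)a+\lambda\tau(a)=b+\tau(b)$ with $b=\tau(\lambda)a$, and as $\lambda$ runs over $\KK$ so does $b$ (because $a\ne0$), so this expression realizes every value $b+\tau(b)$, $b\in\KK$. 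Not all of these vanish unless $\tau=\id$ and $\chara\KK=2$ --- the alternating case, which does not occur in the Hermitian setting of this paper and where one would instead cite \cite[Section 19]{AscFGT}. The only place any care is needed is in tracking which slot of $\Psi$ is linear versus $\tau$-semilinear and in the use of reflexivity in (i); the rest is routine linear algebra.
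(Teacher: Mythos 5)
Your proof is correct, but note that the paper does not actually prove this proposition: it is stated as a collection of well-known facts with a pointer to \cite[Section 19]{AscFGT}, so there is no internal argument to compare against. Your write-up is a sound self-contained substitute. The reduction of (ii)--(v) to (i) via the semilinear isomorphism $\rho\colon V\to V^*$ is the standard route and all the bookkeeping checks out, as does the complement argument for (vi). The one place deserving comment is (vii). Your computation $\Psi(w+\lambda u,w+\lambda u)=b+\tau(b)$ with $b=\tau(\lambda)a$ is right, and you correctly isolate the only failure mode: the trace $b\mapsto b+\tau(b)$ vanishes identically exactly when $\tau=\id$ and $\chara\KK=2$. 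In that case the statement of (vii) is genuinely \emph{false} as written (e.g.\ $V=\FF_2^2$ with the symmetric form $x_1y_2+x_2y_1$ and $S=0$: every vector is isotropic), so "one would instead cite \cite[Section 19]{AscFGT}" slightly understates the issue --- no citation can rescue it. This is a defect of the proposition's hypotheses rather than of your argument, and it is harmless for the paper, since (vii) is only invoked in Section 4 for unitary spaces, where $\tau$ has order exactly $2$ and your trace argument goes through.
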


Next, we define combinatorial objects associated to 
vector spaces with forms.

By $\S(V,\Psi)$ we denote the poset of non-degenerate 
subspaces of $V$ ordered by inclusion.
Note that $0 \in \S(V,\Psi)$ is the unique minimal element 
of $\S(V,\Psi)$ and if $V$ is non-degenerate then $V \in \S(V,\Psi)$ is the unique maximal element of $\S(V,\Psi)$.
We write $\mathring{\S}(V,\Psi)$ for $\S(V,\Psi) \setminus \{0,V\}$. 
The next proposition is a simple fact from linear algebra (see also Lemma \ref{lm:dimensions}).

\begin{proposition} \label{jordanhoelder}
    Let $(V,\Psi)$ a finite-dimensional vector space equipped
    with a possibly degenerate form $\Psi$.
    If we have two decompositions
    $$\Rad(V) \oplus V_1 \oplus \cdots \oplus V_r = V =
    \Rad(V) \oplus W_1 \oplus \cdots \oplus W_s,$$ 
    where $V_1,\ldots, V_r$ and $W_1,\ldots, W_s$ are
    two sets of mutually orthogonal minimal elements of
    $\mathring{\S}(V,\Psi)$, then $r=s$ and, after reindexing,
    $\dim(V_i) = \dim(W_i)$ for $i=1,\ldots, r=s$.
\end{proposition}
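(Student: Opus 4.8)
The plan is to reduce to the non-degenerate case, recognise the minimal elements as the orthogonally indecomposable non-degenerate subspaces, and then run a Krull--Schmidt/Witt-type uniqueness argument that is essentially trivial in the situations relevant to this paper. For the reduction, I would pass to $\bar V:=V/\Rad(V)$ with its induced, now non-degenerate, form. Since each $V_i$ is non-degenerate we have $V_i\cap\Rad(V)\subseteq\Rad(V_i)=0$, so the quotient map restricts to an isometric isomorphism from $V_i$ onto its image $\bar V_i$; the $\bar V_i$ are pairwise orthogonal, span $\bar V$, and each is again minimal in $\mathring{\S}(\bar V)$, because a proper nonzero non-degenerate subspace of $\bar V_i$ pulls back through this isometry to one of $V_i$, contradicting minimality of $V_i$. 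The same applies to the $W_j$, and neither the number of summands nor their dimensions change under $V_i\mapsto\bar V_i$; hence it suffices to prove the statement for $\bar V$, so we may assume $\Rad(V)=0$.

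Next I would observe that, for a non-degenerate $U\le V$, minimality of $U$ in $\mathring{\S}(V)$ is equivalent to $U$ being orthogonally indecomposable: by Proposition~\ref{dimensionTheorem}(ii) applied inside $U$, a proper nonzero non-degenerate $T\le U$ gives the nontrivial orthogonal splitting $U=T\perp(T^\perp\cap U)$ with both pieces non-degenerate, and conversely such a splitting produces a proper non-degenerate subspace. So the two decompositions in the statement are orthogonal direct sums of $V$ into indecomposable non-degenerate subspaces. In every situation used in this paper the minimal elements are moreover $1$-dimensional: when (ON) holds (in particular for Hermitian forms over a finite field or over $\CC$), any non-degenerate subspace of dimension $\ge 2$ admits an orthonormal, hence in particular a $1$-dimensional non-degenerate, subspace and is therefore not minimal. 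Under (ON) the proposition is then immediate, since $r=\dim V=s$ and $\dim V_i=\dim W_i=1$ for all $i$.

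For an arbitrary $\epsilon$-Hermitian $\Psi$ I would argue by induction on $\dim V$. By Proposition~\ref{dimensionTheorem}(ii) the decomposition $V=W_1\perp\cdots\perp W_s$ gives orthogonal projections $\pi_j\co V\to W_j$; composing with the projection $\rho_1\co V\to V_1$ from the other decomposition yields endomorphisms $\phi_j:=\rho_1\circ\pi_j|_{V_1}$ of $V_1$ with $\sum_j\phi_j=\id_{V_1}$, and a short computation using orthogonality of the $W_j$ and reflexivity of $\Psi$ shows that each $\phi_j$ is self-adjoint for $\Psi|_{V_1}$. From this one extracts an index $j_0$ for which $\pi_{j_0}|_{V_1}\co V_1\to W_{j_0}$ is an isometric isomorphism: its image is a nonzero non-degenerate subspace of the indecomposable $W_{j_0}$, hence all of $W_{j_0}$. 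Then Witt's lemma extends the isometry $V_1\to W_{j_0}$ to an isometry $g$ of $V$; since $g(V_1^\perp)=W_{j_0}^\perp$, the subspaces $g(V_2),\dots,g(V_r)$ decompose $W_{j_0}^\perp=\bigoplus_{j\ne j_0}W_j$ orthogonally into indecomposables, and the inductive hypothesis (with $\dim g(V_i)=\dim V_i$) gives $r=s$ and, after reindexing, $\dim V_i=\dim W_i$.

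The main obstacle is the exchange step: producing the index $j_0$ for which $\pi_{j_0}|_{V_1}$ is an isometry onto $W_{j_0}$. This is the Krull--Schmidt heart of the statement; the self-adjoint operators $\phi_j$ with $\sum_j\phi_j=\id_{V_1}$ are the natural input, but deducing from this identity that one of them yields an isometry (and not merely an isomorphism) of the indecomposable $V_1$ onto some $W_{j_0}$ uses indecomposability in an essential way, and this is where any low-characteristic subtleties enter. Everything else---the reduction to $\Rad(V)=0$, the identification of minimal with indecomposable, and the bookkeeping of dimensions---is routine, which is presumably why the authors record the statement as a simple fact of linear algebra; and in the case that the minimal elements are $1$-dimensional the whole argument collapses to the dimension count given above.
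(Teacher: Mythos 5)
The paper records this proposition without proof (``a simple fact from linear algebra''), so there is nothing to compare against; I can only assess your argument on its own terms. Your reduction to $\Rad(V)=0$, the identification of minimal elements of $\mathring{\S}(V,\Psi)$ with orthogonally indecomposable non-degenerate subspaces, and the dimension count under (ON) are all correct, and the dimension count in fact settles every case in which the statement is true: whenever $\tau\neq\id$, a non-degenerate subspace all of whose vectors are isotropic is forced to be totally isotropic (expand $\Psi(\lambda v+w,\lambda v+w)=0$ and vary $\lambda\notin\Fix(\tau)$), so minimal elements are $1$-dimensional; for $\tau=\id$, $\epsilon=1$ in characteristic $\neq 2$ orthogonal bases exist and minimal elements are again $1$-dimensional; for alternating forms they are hyperbolic planes, hence all $2$-dimensional. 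In each of these cases $r$, $s$ and all the $\dim V_i$, $\dim W_j$ are determined by $\dim V-\dim\Rad(V)$ alone, and no exchange argument is needed.

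The genuine gap is the exchange step you yourself flag as ``the main obstacle'': you never produce the index $j_0$, so the inductive argument for a general $\epsilon$-Hermitian form is not a proof. Worse, that gap cannot be closed, because in the one remaining case --- non-alternating symmetric bilinear forms in characteristic $2$ --- the proposition is false. Take $V=\GF{2}^3$ with the symmetric form whose Gram matrix is $\left(\begin{smallmatrix}1&0&0\\0&0&1\\0&1&0\end{smallmatrix}\right)$. Then $\gen{e_2,e_3}$ is non-degenerate with every vector isotropic, hence a minimal element of $\mathring{\S}(V,\Psi)$, and $V=\gen{e_1}\perp\gen{e_2,e_3}$ is a decomposition with $r=2$ and dimensions $(1,2)$; but the vectors $e_1+e_2$, $e_1+e_3$, $e_1+e_2+e_3$ are pairwise orthogonal of norm $1$, giving a second decomposition with $s=3$ and dimensions $(1,1,1)$. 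So the Krull--Schmidt route is not merely incomplete but unavailable: the correct completion of your argument is to extend the uniform-dimension observation as above and to record that the characteristic-$2$ non-alternating symmetric case is an actual exception to the statement (harmless for the paper, which only ever applies the proposition to Hermitian and to the tame orthogonal/alternating situations, but worth making explicit).
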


Note that in a decomposition as in Proposition
\ref{jordanhoelder}, the conclusion that after reindexing
one can achieve that $V_i$ and $W_i$ are isometric is false in general. 
For example, when $\Psi$ is an orthogonal form over a finite field of odd characteristic this conclusion is
wrong already for a non-degenerate space of dimension $2$. 

The conclusions of Proposition \ref{jordanhoelder} are also false if we drop the assumption $\Psi(v,v) = 0$ for all $v\in V$ when $\chara(\KK)=2$ and $\tau=1$, as the following  example shows.

\begin{example}
    Let $V = \KK^3$ with $\chara(\KK)=2$, and let $\Psi(v,w) = \sum_{i=1}^3 v_i w_i$, for $v,w\in V$.
    Let $e_i$ denote the canonical vectors.
    Then $\Psi(e_i,e_i)=1$ and $\Psi(e_i,e_j) = 0$ for $i\neq j$.
    If $E_i = \gen{e_i}$, we see that $V = E_1\oplus E_2\oplus E_3$ is a decomposition of $V$ into mutually orthogonal minimal elements of $\redS(V,\Psi)$.

    On the other hand, let $v = (1,1,1)$ and $S = \gen{v}$.
    Then $\Psi(v,v) = 1\neq 0$.
    Now $w\in S^\perp$ if and only if $\sum_i w_i = 0$.
    Since the characteristic is $2$, for all $w\in S^\perp$ we have $\Psi(w,w) = \sum_i w_i^2 = \left(\sum_i w_i\right)^2 =0$.
    Hence $S^\perp$ is non-degenerate but contains no $1$-dimensional non-degenerate subspace.
    Thus $V = S \oplus S^\perp$ is also a decomposition of $V$ into mutually orthogonal minimal elements of $\redS(V,\Psi)$.
\end{example}

Next, we collect decompositions of $V$ into 
mutually orthogonal subspaces in a poset. 
Let $\D(V,\Psi)$ be the
poset of all direct sum 
decompositions $\Rad(V)\oplus V_1\oplus \cdots \oplus V_s$ for 
$V_i \in {\S}(V,\Psi) \setminus \{0\}$, such that
$V_1,\ldots, V_r$ are pairwise orthogonal and the order of the $V_i$ 
does not matter. We order the elements
by refinement, that is, we set
$\Rad(V) \oplus V_1\oplus \cdots \oplus V_s \preceq \Rad(V) \oplus W_1 \oplus \cdots \oplus W_r$ if for each $V_i$ there is a $W_j$ with
$V_i \subseteq W_j$.  We write $\mathring{\D}(V,\Psi) = \D(V,\Psi) 
\setminus \{ V\}$. Thus $\mathring{\D}(V,\Psi) \neq \D(V,\Psi)$ if and
only if $V$ is non-degenerate.

Denote by $\Pi_r$ the partially ordered set
of all set-partitions of $[r]:=\{1,\ldots, r\}$ ordered by refinement. The following proposition
is a simple consequence of the definitions and Proposition \ref{jordanhoelder}.

\begin{proposition} \label{prop:elementary}
  Let $(V,\Psi)$ be a finite-dimensional vector space equipped with a non-degenerate form $\Psi$.
  \begin{itemize}
  \item[(i)]
  The order complex of the poset $\D(V,\Psi)$ is pure of
  dimension $\dim(\S(V,\Psi))-1$.
  \item[(ii)] For 
  $\pi = V_1\oplus \cdots \oplus V_r \in \D(V,\Psi)$ we have that
  \[ \D(V,\Psi)_{\succeq \pi}  \cong \Pi_r,\]
  \[ \D(V,\Psi)_{\preceq \pi}  
  \cong \D(V_1,\Psi|_{V_1}) \times\cdots \times \D(V_r,\Psi|_{V_r}).\]
  \end{itemize}
\end{proposition}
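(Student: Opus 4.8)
The plan is to deduce everything from the definitions, the only non-elementary ingredient being Proposition~\ref{jordanhoelder}, which will pin down the combinatorics of the finest decompositions. Since $V$ is non-degenerate we have $\Rad(V)=0$, so an element of $\D(V,\Psi)$ is just an orthogonal decomposition $V=V_1\oplus\cdots\oplus V_s$ into nonzero non-degenerate subspaces (each $\Psi|_{V_i}$ is then automatically non-degenerate), and I write $b(\pi)$ for the number of blocks of $\pi\in\D(V,\Psi)$.

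I would first treat the two isomorphisms of part~(ii), which are essentially bookkeeping. Fix $\pi=V_1\oplus\cdots\oplus V_r$. If $\pi'\geq\pi$, then by definition each $V_i$ lies in a unique block of $\pi'$; grouping indices accordingly gives a set partition $\{I_W\}$ of $[r]$ with $\bigoplus_{i\in I_W}V_i\subseteq W$ for each block $W$ of $\pi'$, and comparing the sum of the dimensions of the blocks of $\pi'$ with $\dim V=\sum_i\dim V_i$ forces $W=\bigoplus_{i\in I_W}V_i$. Conversely every set partition of $[r]$ produces such a $\pi'$ (an orthogonal sum of non-degenerate subspaces is non-degenerate), and refinement of decompositions matches refinement of partitions, so $\D(V,\Psi)_{\geq\pi}\cong\Pi_r$. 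Dually, if $\pi'\leq\pi$ then each block of $\pi'$ sits inside a unique $V_i$; the blocks of $\pi'$ contained in $V_i$ form, again by a dimension count, an orthogonal decomposition $\pi'_i$ of $V_i$, and $\pi'\mapsto(\pi'_1,\dots,\pi'_r)$ is the claimed isomorphism $\D(V,\Psi)_{\leq\pi}\cong\prod_{i=1}^r\D(V_i,\Psi|_{V_i})$, with inverse given by taking the union of the blocks (legitimate since distinct $V_i$ are mutually orthogonal).

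For part~(i), note that $\{V\}$ is the unique maximal element of $\D(V,\Psi)$, while the minimal elements are exactly the decompositions into orthogonally indecomposable subspaces; by Proposition~\ref{dimensionTheorem}(ii) a non-degenerate subspace is orthogonally indecomposable precisely when it admits no proper nonzero non-degenerate subspace, i.e.\ when it is $V$ itself or a minimal element of $\mathring{\S}(V,\Psi)$. Using the isomorphism $\D(V,\Psi)_{\geq\pi}\cong\Pi_{b(\pi)}$ just proved, a covering relation $\pi\lessdot\pi'$ corresponds to a covering in a partition lattice, hence merges exactly two blocks, so $b$ decreases by exactly $1$ along covers. Thus a maximal chain of $\D(V,\Psi)$ runs through covers from a minimal element $\pi_0$ to $\{V\}$ and has length $b(\pi_0)-1$; and by Proposition~\ref{jordanhoelder}, applied with $\Rad(V)=0$, the number $b(\pi_0)$ is independent of the minimal element chosen, say $b(\pi_0)=r$ (the case where $V$ itself is indecomposable, giving $r=1$, is checked directly, since Proposition~\ref{jordanhoelder} literally applies only once there are at least two blocks, which then are automatically minimal elements of $\mathring{\S}(V,\Psi)$). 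Hence every maximal chain of $\D(V,\Psi)$ has length $r-1$, so its order complex is pure of dimension $r-1$.

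Finally I would identify $r$ with $\dim(\S(V,\Psi))$. For the lower bound, a minimal element $U_1\oplus\cdots\oplus U_r$ yields the chain $0\subsetneq U_1\subsetneq U_1\oplus U_2\subsetneq\cdots\subsetneq V$ of non-degenerate subspaces, of length $r$. For the upper bound, given a chain $0=S_0\subsetneq\cdots\subsetneq S_m=V$ in $\S(V,\Psi)$, put $T_i:=S_i\cap S_{i-1}^\perp$; Proposition~\ref{dimensionTheorem}(ii) gives $S_i=S_{i-1}\oplus T_i$ with $T_i$ nonzero non-degenerate, and $T_i\perp T_j$ for $i<j$ since $T_i\subseteq S_i\subseteq S_{j-1}$ and $T_j\subseteq S_{j-1}^\perp$; hence $V=T_1\oplus\cdots\oplus T_m\in\D(V,\Psi)$, which refines to a minimal element with at least $m$ blocks, whence $m\leq r$ by Proposition~\ref{jordanhoelder}. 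Therefore $\dim(\S(V,\Psi))=r$ and the order complex of $\D(V,\Psi)$ is pure of dimension $\dim(\S(V,\Psi))-1$. The only real care needed — the ``main obstacle'', such as it is — lies in making the dimension counts of part~(ii) airtight and in verifying the hypotheses of Proposition~\ref{jordanhoelder} in part~(i), namely that every minimal element of $\D(V,\Psi)$ with more than one block has all blocks equal to minimal elements of $\mathring{\S}(V,\Psi)$.
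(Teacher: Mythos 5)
Your proof is correct, and it follows exactly the route the paper intends: the paper gives no proof, stating only that the proposition is "a simple consequence of the definitions and Proposition \ref{jordanhoelder}", and your argument fills in precisely those details (the dimension-count bookkeeping for the two isomorphisms in (ii), and Jordan--H\"older to equate the block count of all minimal decompositions with $\dim(\S(V,\Psi))$ for purity in (i)).
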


Assume that $\Psi$ is non-degenerate.
For an element $\pi = V_1 \oplus\cdots \oplus V_r \in \D(V,\Psi)$, we write $|\pi|=r$ and 
let $\S_\pi = \{ \bigoplus_{i\in I} V_i : \emptyset \neq I \subsetneq [r]\} \subseteq\mathring{\S}(V,\Psi)$.
It is easily seen that $\S_\pi$ is
isomorphic to the proper part of the lattice of subsets of an $r$-element set.
We can then define the map 
$g$ from the order complex of
$\mathring{\S}(V,\Psi)$ to the poset $\mathring{\D}(V,\Psi)$
which sends a simplex $S_0 < \cdots < S_r$ to 
$S_0 \oplus S_{1}' \oplus \cdots \oplus S_r' \oplus S_{r+1}'$
for $S_i = S_{i-1} \oplus S_i'$, where $S_i' = S_{i-1}^\perp \cap S_i$ and $S_{r+1} := V$.
We call $g$ the decomposition map.

\begin{lemma}  \label{lem:fiber}
  Let $V$ be a finite-dimensional vector space with a non-degenerate form $\Psi$.
  Then the decomposition map
  $g$ from the order complex of 
  $\mathring{\S}(V,\Psi)$ to $\mathring{\D}(V,\Psi)^\op$  is a poset map.
  
  Furthermore, for $\pi = V_1 \oplus\cdots \oplus V_r \in \mathring{\D}(V,\Psi)$
  we have that 
  $g^{-1}(\mathring{\D}(V,\Psi)_{\succeq \pi})$ is the
  order complex of 
  $\S_\pi$. 
 \end{lemma}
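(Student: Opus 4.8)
The plan is to verify the two assertions directly from the definitions. First I would check that $g$ is well-defined and order-reversing. Given a chain $S_0 < \cdots < S_r$ in $\mathring{\S}(V,\Psi)$, the subspaces $S_i' = S_{i-1}^\perp \cap S_i$ (with $S_{-1} = 0$, $S_{r+1} = V$, $S_0' = S_0$) are pairwise orthogonal: for $i < j$ we have $S_i' \subseteq S_i \subseteq S_{j-1}$ while $S_j' \subseteq S_{j-1}^\perp$. By Proposition \ref{dimensionTheorem}(ii) applied inside each non-degenerate $S_i$, the space $S_{i-1}$ being non-degenerate gives $S_i = S_{i-1} \oplus (S_{i-1}^\perp \cap S_i)$ with $S_i'$ non-degenerate; iterating, $S_0 \oplus S_1' \oplus \cdots \oplus S_r' \oplus S_{r+1}'$ is a genuine orthogonal decomposition of $V$ into non-degenerate pieces, hence an element of $\D(V,\Psi)$. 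It is a proper decomposition (so it lies in $\mathring{\D}(V,\Psi)$) since $r \geq 0$ and $S_0 \neq V$ forces at least two blocks. For monotonicity: if $S_0 < \cdots < S_r$ is a subchain of $T_0 < \cdots < T_m$, I would argue that every block of $g(S_\bullet)$ is a sum of blocks of $g(T_\bullet)$ — intuitively, inserting more subspaces into the flag only subdivides the blocks — so $g(T_\bullet) \leq g(S_\bullet)$ in $\D(V,\Psi)$, i.e. $g(S_\bullet) \leq g(T_\bullet)$ in $\mathring{\D}(V,\Psi)^{\op}$. This makes $g$ a simplicial map that is monotone on chains, which is what ``poset map from the order complex'' means here.

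For the fiber statement, fix $\pi = V_1 \oplus \cdots \oplus V_r \in \mathring{\D}(V,\Psi)$. I claim $g^{-1}(\mathring{\D}(V,\Psi)_{\geq \pi})$ is exactly the order complex of $\S_\pi = \{\bigoplus_{i \in I} V_i : \emptyset \neq I \subsetneq [r]\}$. For the inclusion $\supseteq$: a chain in $\S_\pi$ corresponds to a strictly increasing sequence of proper non-empty subsets $\emptyset \neq I_0 \subsetneq I_1 \subsetneq \cdots \subsetneq I_k \subsetneq [r]$, and a direct computation of the $S_i'$ for this flag shows $g$ sends it to the decomposition whose blocks are $\bigoplus_{i \in I_0} V_i$, then $\bigoplus_{i \in I_j \setminus I_{j-1}} V_i$ for $j = 1, \dots, k$, then $\bigoplus_{i \notin I_k} V_i$ — each block a union of $V_i$'s, so this decomposition refines to $\pi$, i.e. lies in $\D(V,\Psi)_{\geq \pi}$. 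For $\subseteq$: suppose a chain $S_0 < \cdots < S_k$ has $g(S_\bullet) \geq \pi$ in $\D(V,\Psi)$, meaning each block of $g(S_\bullet)$ is a sum of the $V_i$. Since $S_0$ is the first block and $S_j = S_0 \oplus S_1' \oplus \cdots \oplus S_j'$ is a sum of the first $j+1$ blocks, every $S_j$ is of the form $\bigoplus_{i \in I_j} V_i$ for some $I_j$; the chain condition forces $I_0 \subsetneq \cdots \subsetneq I_k$ and $S_0 \neq 0$, $S_k \neq V$ force these to be non-empty and proper. Hence each $S_j \in \S_\pi$ and the chain lies in the order complex of $\S_\pi$.

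The main obstacle I anticipate is bookkeeping in the monotonicity argument: one must check carefully that refining a flag by inserting a single intermediate subspace $T$ with $S_{i-1} < T < S_i$ has the effect of splitting exactly the block $S_i'$ into two orthogonal non-degenerate pieces $(S_{i-1}^\perp \cap T) \oplus (T^\perp \cap S_i)$ and leaves all other blocks unchanged — this uses Proposition \ref{dimensionTheorem}(ii),(iii) repeatedly to manipulate the perp-operations inside nested non-degenerate subspaces. Once this ``single insertion'' claim is established, both the monotonicity of $g$ and the computation of $g$ on chains of $\S_\pi$ follow by induction on the number of insertions, and the fiber identification is then a matter of matching up the combinatorics of $\S_\pi$ with the proper-part of the Boolean lattice on $[r]$ as already noted in the text preceding the lemma.
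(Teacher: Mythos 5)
Your proposal is correct and follows essentially the same route as the paper's (very terse) proof: direct verification that $g$ is order-reversing via the block-splitting behaviour under flag refinement, and identification of the fiber by observing that $g(S_\bullet)\geq\pi$ forces each $S_j$, being a partial sum of blocks, to be a sum of $V_i$'s, with the converse computed explicitly on chains of $\S_\pi$. The only difference is that you spell out the "single insertion" bookkeeping and well-definedness that the paper dismisses as "easily seen."
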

 \begin{proof} 
   The decomposition map is easily seen to be a poset map to $\mathring{\D}(V,\Psi)^\op$.
   
   If $\sigma = W_1 \oplus\cdots \oplus W_{s+1} \in \D(V,\Psi)_{\succeq \pi}$
   and $S_1 < \cdots < S_s$ is a
   chain in $\mathring{\S}(V,\Psi)$ mapped to $\sigma$ then, after possible renumbering, we can assume
   $S_i = W_1 \oplus \cdots \oplus W_i$. But each $W_i$ is a sum of some of the $V_j$, so $S_1 < \cdots < S_s$ is a chain in $\S_\pi$.
   Conversely, each chain in $\S_\pi$ is mapped to some
   $\sigma \succeq \pi$. 
 \end{proof}
 
We use the symbol ``$\ast$'' to denote the join of simplicial complexes or posets, and ``$\bigvee$'' to denote the wedge of spaces.
 
 \begin{proposition} \label{prop:decomp}
Let $\Psi$ be a non-degenerate form on a finite-dimensional vector space $V$ over a finite field. Then
   $\mathring{\S}(V,\Psi)$ 
   is homotopy equivalent to
   $$\bigvee_{\pi \in \D(V,\Psi)}
   S^{|\pi|-2} * \D(V,\Psi)_{\prec \pi},$$ where $S^m$ denotes the $m$-sphere.
 \end{proposition}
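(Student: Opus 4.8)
The strategy is to apply a standard fiber/homotopy-colimit argument to the decomposition map $g \colon \mathring{\S}(V,\Psi) \to \mathring{\D}(V,\Psi)^{\op}$ from Lemma \ref{lem:fiber}, combined with the computation of the fibers already available. Concretely, I would invoke the version of Quillen's Theorem A (or the ``Projection Lemma'' / homotopy colimit decomposition over a poset, as in Bj\"orner's survey \cite{Bjo95}) which says that the order complex of $\mathring{\S}(V,\Psi)$ is homotopy equivalent to the homotopy colimit over $\mathring{\D}(V,\Psi)^{\op}$ of the fibers $g^{-1}(\mathring{\D}(V,\Psi)^{\op}_{\leq \pi}) = g^{-1}(\mathring{\D}(V,\Psi)_{\geq \pi})$. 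By Lemma \ref{lem:fiber} this fiber is exactly the order complex of $\S_\pi$, which by the remark following the definition is the proper part of the Boolean lattice on $[r]$ with $r = |\pi|$, hence homotopy equivalent to $S^{r-2} = S^{|\pi|-2}$.

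Next I would identify this homotopy colimit with the claimed wedge. The key point is that each fiber over $\pi$ is \emph{contractible-free but spherical}, and the maps in the diagram are inclusions of subcomplexes. One clean way: the homotopy colimit of a diagram of spaces over a poset $\Q$ with values $X_\pi$ is homotopy equivalent to $\bigvee_{\pi \in \Q} (X_\pi \ast |\Q_{<\pi}|)$ \emph{provided} each $X_\pi$ is, up to homotopy, already a wedge and the structure maps are sufficiently trivial — but here a more robust route is the following well-known lemma (essentially the ``wedge lemma'' of Bj\"orner–Wachs–Welker, or Ziegler–\v{Z}ivaljevi\'c): if $f\colon \Delta(P) \to Q$ is a poset map such that for each $q \in Q$ the fiber $f^{-1}(Q_{\leq q})$ is homotopy equivalent to a space $X_q$, and moreover these fibers fit together nicely (each inclusion $f^{-1}(Q_{\leq q}) \hookrightarrow f^{-1}(Q_{\leq q'})$ for $q \leq q'$ is null-homotopic onto a point, or the fibers over strictly smaller elements are contractible inside), then $\Delta(P) \simeq \bigvee_{q \in Q} X_q \ast \Delta(Q_{<q})$. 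Here $Q = \mathring{\D}(V,\Psi)^{\op}$, and $\Delta(Q_{<q})$ in the opposite poset is $\D(V,\Psi)_{>\pi}^{\op} \simeq \D(V,\Psi)_{>\pi}$; but one must be careful: the statement to be proved has $\D(V,\Psi)_{<\pi}$, not $\D(V,\Psi)_{>\pi}$, so the wedge index should run over all of $\D(V,\Psi)$ (closed, including $V$) and the join factor is $\D(V,\Psi)_{<\pi}$ — which matches because $\mathring{\D} = \D \setminus \{V\}$ and in the opposite poset the ``down-set'' of $\pi \in \mathring{\D}^{\op}$ among the \emph{non-maximal} elements corresponds exactly to $\D(V,\Psi)_{<\pi}$ with the top element $V$ reinstated as an index. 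So I would first prove the decomposition over $\mathring{\D}^{\op}$ with join factors $\mathring{\D}_{<\pi}^{\op}$ and then reconcile indexing conventions to land on the stated form, handling the $\pi = V$ term (fiber $S^{n-2} \ast \emptyset = S^{n-2}$, contributing the top-dimensional sphere) separately.

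To make the ``nicely fit together'' hypothesis rigorous I would verify that for $\pi < \pi'$ in $\D(V,\Psi)$ (so $\pi > \pi'$ in the opposite poset, meaning $\pi'$ is coarser), the fiber $\S_{\pi'}$ includes into $\S_{\pi}$ as a subposet whose order complex is contractible inside — indeed $\S_{\pi'} \subseteq \S_\pi$ and $\S_{\pi'}$ sits inside $\S_\pi$ below a cone point? Actually the cleanest verification: $\S_\pi$ is the proper part of a Boolean lattice, and $\S_{\pi'}$ is the proper part of a sub-Boolean-lattice obtained by merging blocks, and the inclusion $|\S_{\pi'}| \hookrightarrow |\S_\pi|$ is null-homotopic because any element $\bigoplus_{i \in I} V_i$ of $\S_{\pi'}$ with $I$ a union of $\pi'$-blocks can be contracted toward a fixed singleton block of $\pi$. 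I would spell this out via an explicit poset homotopy (a zig-zag of inequalities through a fixed vertex), which gives the contractibility needed to apply the wedge lemma. \textbf{The main obstacle} I anticipate is precisely this bookkeeping: getting the hypotheses of the wedge/homotopy-colimit lemma verified cleanly (null-homotopy of the fiber inclusions, or equivalently that $g$ restricted over each closed principal down-set has the stated homotopy type \emph{and} the connecting maps are as required), and then matching the indexing set $\D(V,\Psi)$ and the join factor $\D(V,\Psi)_{<\pi}$ against what the homotopy colimit over $\mathring{\D}^{\op}$ naturally produces, including the correct treatment of the minimal/maximal elements. The finite-field hypothesis enters only through finiteness of the posets (so the wedge is finite and the homotopy colimit formula applies without set-theoretic caveats) and is otherwise not essential to the argument.
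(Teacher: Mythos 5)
Your proposal is correct and is essentially the paper's own argument: the paper's proof consists of applying the Bj\"orner--Wachs--Welker wedge lemma \cite[Theorem 2.5]{BWW} to the decomposition map $g$ of Lemma \ref{lem:fiber}, with the fibers $g^{-1}(\mathring{\D}(V,\Psi)_{\geq \pi})$ identified as the proper parts of Boolean lattices and hence as spheres $S^{|\pi|-2}$. The bookkeeping you flag works out exactly as you describe --- the $\pi=V$ term contributes $S^{-1}*\mathring{\D}(V,\Psi)=\mathring{\D}(V,\Psi)$, matching the $\Delta(Q)$ summand of the theorem, and the null-homotopy/connectivity hypothesis is automatic because each fiber is $(|\pi|-3)$-connected while the subfibers over coarser decompositions have dimension at most $|\pi|-3$.
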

 \begin{proof}
 By Lemma \ref{lem:fiber}, the decomposition map $g$ from the order complex of
   $\mathring{\S}(V,\Psi)$ to $\D(V,\Psi)^{\op}$ fulfils the assumptions of \cite[Theorem 2.5]{BWW}, so the assertion follows from the theorem.
 \end{proof}
 
The previous proposition is formulated for vector spaces over finite fields since its proof relies on \cite[Theorem 2.5]{BWW} which is proved for finite posets.
Nevertheless, \cite[Theorem 2.5]{BWW} extends to a certain family of infinite posets, making Proposition \ref{prop:decomp} hold for infinite fields.
We will provide further details in \cite{PitWel2}.

In Section \ref{sec:further} we will compare results that follow from part (i) of the following corollary with results by
Devillers, Gramlich and M\"uhlherr in \cite[Main Theorem]{DGM}.

It will be convenient to use the following terminology.
A poset or simplicial complex $X$ is said to be spherical (resp. spherical over $\kk$) if $X$ is $(\dim(X)-1)$-connected or, equivalently, homotopy equivalent to a wedge of spheres of dimension $\dim(X)$ (resp. all homology groups over $\kk$ of degree $\leq \dim(X)-1$ vanish).
 
 \begin{corollary}
 \label{coro:decomp}
 Let $\Psi$ be a non-degenerate form on a finite-dimensional vector space $V$ over
a finite field and let $\kk$ be a ring. Then

$$\S(V,\Psi) \text{ is homotopy Cohen-Macaulay (resp., Cohen-Macaulay over } \kk \text{) }$$ $$ \Updownarrow $$ 
$$\D(V,\Psi) \text{ is homotopy Cohen-Macaulay (resp., Cohen-Macaulay over } \kk \text{)}.$$
 \end{corollary}
 \begin{proof}
   First, we recall some facts about intervals.
   
   \noindent (D) ~~ Any interval 
   in $\D(V,\Psi)$, which is not 
   of the form $\D(V,\Psi)_{\prec \pi}$
   for some $\pi \in \D(V,\Psi)$, is
   an interval in a set partition lattice and hence homotopically
   Cohen-Macaulay (see \cite{Bjo95}). 

   \noindent (S)~~ The interval between $U$ and $U'$ in $\S(V,\Psi)$ is
       isomorphic to $\S(U_1,\Psi|_{U_1})$ for the orthogonal complement $U_1$ of $U$ in $U'$. In particular, if $\S(V,\Psi)$ is Cohen-Macaulay (resp. Cohen-Macaulay over $\kk$), then every interval is.
       
   \begin{itemize}
       \item[($\Downarrow$)] By (D) it suffices to show that any interval $\D(V,\Psi)_{\prec \pi}$ is spherical (resp., spherical over $\kk$). 
       
       Let $\pi = V_1 \oplus \cdots \oplus V_r$. Then by (S) we have that $\S(V_i,\Psi|_{W_i})$ is isomorphic to an the interval between 
       $U = V_1\oplus \cdots V_{i-1} \oplus V_{i+1} \oplus \cdots \oplus V_r$
       and $V$ in $\S(V,\Psi)$ and hence homotopically Cohen-Macaulay (resp., Cohen-Macaulay over $\kk$). From Proposition \ref{prop:decomp} it then follows that $\mathring{\D}(V_i,\Psi|_{V_i})$ is spherical (resp. spherical over $\kk$). Since $$\D(V,\Psi)_{\prec \pi} \cong \Big(~\D(V_1,\Psi|_{V_1}) \times \cdots \times \D(V_r,\Psi|_{V_r})~\Big) \setminus \{(V_1,\ldots, V_r)\}$$ it follows from \cite[(9.7)]{Bjo95} that 
       $\D(V,\Psi)_{\prec \pi}$ is also spherical (resp. spherical over $\kk$). 

       This shows that $\D(V,\Psi)$ is homotopically Cohen-Macaulay (resp., Cohen-Ma\-cau\-lay over $\kk$).
       
       \item[($\Uparrow$)]
       Consider an interval between $U$ and $U'$ in $\S(V,\Psi)$. Then by (S) this interval is
       isomorphic to $\S(U_1,\Psi|_{U_1})$ for the orthogonal complement $U_1$ of $U$ in $U'$.
       
       By Proposition \ref{prop:decomp}, $\S(U_1,\Psi|_{U_1})$ is spherical (resp. spherical over $\kk$) if each lower interval $\D(U_1,\Psi|_{U_1})_{\prec\pi}$ is, for $\pi \in \D(U_1,\Psi|_{U_1})$.
       Fix an orthogonal decomposition $V = U_1\oplus U_2\oplus \ldots\oplus U_k$ such that $U_i$ is minimal in $\mathring{\S}(V,\Psi)$ for $i\geq 2$.
       For $\pi \in \D(U_1,\Psi|_{U_1})$, let $\pi' = \pi \oplus U_2\oplus\ldots\oplus U_k \in \D(V,\Psi)$.
       Then $\D(U_1,\Psi|_{U_1})_{\prec\pi} = \D(V,\Psi)_{\prec \pi'}$ is spherical (resp. spherical over $\kk$).
\end{itemize}
    
 \end{proof}

The \textit{subspace orthogonality graph} $\G(V,\Psi)$ of $(V, \Psi)$ is the (undirected) graph whose vertices are the minimal elements of $\mathring{\S}(V,\Psi)$ with edges 
between vertices which are orthogonal subspaces.
We sometimes write $S \in \G(V,\Psi)$ and mean that  $S$ is in the vertex set of $\G(V,\Psi)$ or, equivalently, a minimal non-zero non-degenerate subspace of $V$.

The \textit{frame complex} $\F(V,\Psi)$ of $(V,\Psi)$ is the clique complex of $\G(V,\Psi)$. That is, the simplicial complex whose simplices are the subsets of the vertex set of
$\G(V,\Psi)$ which form a clique.

We call a simplex $\sigma = \{S_1,\ldots,S_r\}$ of $\F(V,\Psi)$
of dimension $r-1$ a \textit{partial frame} or \textit{partial $r$-frame}. The subspaces 
from the simplex 
$\sigma$ span a non-degenerate subspace of dimension $\dim(S_1)+\cdots + \dim(S_r) \leq \dim(V/\Rad(V))$.
If $\sigma$ 
spans a complement to $\Rad(V)$ then we call $\sigma$ a \textit{frame}. 
It follows that $\sigma$ is 
maximal if and only if $V = S_1\oplus \cdots \oplus S_r \oplus \Rad(V)$ or 
equivalently, $\dim(S_1) + \cdots +  \dim(S_r) = \dim(V/\Rad(V))$.

The next fact is an immediate consequence of Proposition \ref{jordanhoelder}.

\begin{corollary} 
\label{coro:pureFrameComplex}
  $\F(V,\Psi)$ is a pure simplicial complex.
\end{corollary}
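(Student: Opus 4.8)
The plan is to show that every maximal simplex of $\F(V,\Psi)$ has the same dimension, which is exactly the statement that $\F(V,\Psi)$ is pure. First I would recall the characterization of maximal simplices established just before the corollary: a partial frame $\sigma = \{S_1,\ldots,S_r\}$ is maximal in $\F(V,\Psi)$ if and only if $V = S_1 \oplus \cdots \oplus S_r \oplus \Rad(V)$, equivalently $\dim(S_1) + \cdots + \dim(S_r) = \dim(V/\Rad(V))$. So it suffices to prove that this common value, namely $\dim(V/\Rad(V))$, forces all maximal simplices to have the same dimension $r-1$; in other words, any two maximal partial frames have the same number of parts.

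The key step is to invoke Proposition \ref{jordanhoelder}. Given a maximal partial frame $\sigma = \{S_1,\ldots,S_r\}$, the decomposition $V = \Rad(V) \oplus S_1 \oplus \cdots \oplus S_r$ expresses $V$ as $\Rad(V)$ plus pairwise orthogonal subspaces. Since each $S_i$ is a $1$-dimensional non-degenerate subspace, it is in particular a minimal element of $\mathring{\S}(V,\Psi)$ (minimal non-degenerate subspaces are precisely the non-degenerate $1$-spaces, being of dimension $1$). Hence the decomposition is of the form required in Proposition \ref{jordanhoelder}. If $\tau = \{T_1,\ldots,T_s\}$ is another maximal partial frame, the same reasoning applies, and Proposition \ref{jordanhoelder} yields $r = s$. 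Therefore $\dim \sigma = r-1 = s-1 = \dim \tau$, and all maximal simplices have dimension $n - \dim(\Rad(V)) - 1$ where $n = \dim(V)$; in the non-degenerate case this is $n-1$, recovering the statement from the introduction.

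I would then spell out the one point that needs a small remark: minimal elements of $\mathring{\S}(V,\Psi)$ need not be $1$-dimensional in general (e.g. for orthogonal forms over finite fields an anisotropic plane can be minimal), but the vertices of $\G(V,\Psi)$, by definition, are the minimal elements of $\mathring{\S}(V,\Psi)$, and the simplices of $\F(V,\Psi)$ are cliques among these, so in whatever generality we are working the parts $S_i$ of a partial frame are automatically minimal non-degenerate subspaces — exactly the hypothesis of Proposition \ref{jordanhoelder}. Thus the argument goes through verbatim.

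The only mild obstacle is bookkeeping: making sure the correspondence between "maximal simplex of $\F(V,\Psi)$" and "decomposition $\Rad(V) \oplus (\text{mutually orthogonal minimal non-degenerate subspaces})$" is stated precisely, so that Proposition \ref{jordanhoelder} applies on the nose. Once that is in place, purity is immediate and no computation is required. Concretely, the proof reads: "By the discussion preceding the corollary, a partial frame $\sigma = \{S_1,\ldots,S_r\}$ is maximal if and only if $\Rad(V) \oplus S_1 \oplus \cdots \oplus S_r = V$. Since the $S_i$ are pairwise orthogonal minimal elements of $\mathring{\S}(V,\Psi)$, Proposition \ref{jordanhoelder} shows that the number $r$ of parts is the same for any two maximal partial frames. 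Hence all maximal simplices of $\F(V,\Psi)$ have the same dimension $r-1$, i.e. $\F(V,\Psi)$ is pure."
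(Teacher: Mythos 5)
Your proof is correct and is exactly the argument the paper intends: the paper presents this corollary as an immediate consequence of Proposition \ref{jordanhoelder}, and you supply precisely the bookkeeping (maximal simplex $\Leftrightarrow$ decomposition of a complement to $\Rad(V)$ into mutually orthogonal minimal non-degenerate subspaces) needed to make that immediate. One tiny caveat: your parenthetical claim that the common dimension equals $n-\dim(\Rad(V))-1$ is valid only when minimal non-degenerate subspaces are $1$-dimensional (as in the unitary case); in the general $\epsilon$-Hermitian setting purity follows already from $r=s$ without that numerical value, and your final concrete write-up correctly does not rely on it.
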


The following lemma follows from the simple fact that  
the graph $\G(V,\Psi)$ 
can be considered as the $1$-skeleton of $\F(V,\Psi)$.

\begin{lemma} \label{lem:conn}
  $\G(V,\Psi)$ is connected as a graph if and only if $\F(V,\Psi)$ is connected as a simplicial complex.
\end{lemma}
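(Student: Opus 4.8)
The statement to prove is Lemma \ref{lem:conn}: $\G(V,\Psi)$ is connected as a graph if and only if $\F(V,\Psi)$ is connected as a simplicial complex.

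The plan is as follows. The key observation, already foreshadowed in the text preceding the lemma, is that the graph $\G(V,\Psi)$ is precisely the $1$-skeleton of the clique complex $\F(V,\Psi)$: the vertices of $\F(V,\Psi)$ are by definition the vertices of $\G(V,\Psi)$, and the $1$-simplices of $\F(V,\Psi)$ are exactly the $2$-element cliques of $\G(V,\Psi)$, i.e.\ the edges of $\G(V,\Psi)$. First I would record this identification explicitly. Then the lemma reduces to the general topological fact that a simplicial complex $\K$ is connected if and only if its $1$-skeleton $\K^1$ is connected, together with the elementary fact that a simplicial complex of dimension $\leq 1$ (a graph) is connected as a simplicial complex if and only if it is connected as a graph.

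The two directions are then both short. If $\G(V,\Psi) = \F(V,\Psi)^1$ is connected, then since $\F(V,\Psi)$ contains its $1$-skeleton and has the same vertex set, $\F(V,\Psi)$ is connected: any two vertices are joined by an edge-path in $\G(V,\Psi)$, which is in particular a path in $|\F(V,\Psi)|$. Conversely, if $\F(V,\Psi)$ is connected, then $|\F(V,\Psi)|$ is path-connected, and since inclusion of the $1$-skeleton $|\F(V,\Psi)^1| \hookrightarrow |\F(V,\Psi)|$ induces a bijection on $\pi_0$ (a standard fact about CW/simplicial complexes — any path between vertices can be homotoped, rel endpoints, into the $1$-skeleton, or one simply notes that $\pi_0$ only depends on the $1$-skeleton), the $1$-skeleton is connected, hence $\G(V,\Psi)$ is connected.

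There is essentially no obstacle here — the lemma is a formality once the identification $\G(V,\Psi) = \F(V,\Psi)^1$ is made. The only point requiring a sentence of care is invoking the standard fact that connectedness of a simplicial complex is detected by its $1$-skeleton; one can either cite a reference such as \cite{Bjo95} or give the one-line argument that a path in $|\K|$ between vertices crosses only finitely many simplices and can be replaced by a sequence of edges. I would phrase the proof in a couple of sentences, citing the $1$-skeleton fact and leaving the rest as immediate.

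\begin{proof}
  As noted above, the graph $\G(V,\Psi)$ coincides with the $1$-skeleton $\F(V,\Psi)^1$ of $\F(V,\Psi)$: both have the same vertex set, and the edges of $\G(V,\Psi)$ are exactly the $2$-element cliques of $\G(V,\Psi)$, i.e.\ the $1$-simplices of $\F(V,\Psi)$. It is a standard fact (see \cite{Bjo95}) that a simplicial complex $\K$ is connected if and only if its $1$-skeleton $\K^1$ is connected; indeed, any path in $|\K|$ joining two vertices meets only finitely many simplices and hence can be replaced by an edge-path in $\K^1$, while conversely $|\K^1| \subseteq |\K|$ has the same vertex set. Applying this with $\K = \F(V,\Psi)$ gives that $\F(V,\Psi)$ is connected if and only if $\F(V,\Psi)^1 = \G(V,\Psi)$ is connected, and the latter condition is precisely connectedness of the graph $\G(V,\Psi)$.
\end{proof}
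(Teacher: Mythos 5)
Your proof is correct and takes exactly the same route as the paper, which derives the lemma from the single observation that $\G(V,\Psi)$ is the $1$-skeleton of its clique complex $\F(V,\Psi)$ together with the standard fact that connectedness of a simplicial complex is detected on the $1$-skeleton. Your write-up just makes this one-line argument explicit.
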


In 
$\widehat{\F}(V,\Psi)$ we collect all non-empty simplices $\sigma = \{S_1,\ldots,S_r\}$ for which there
is no minimal element $S$ of $\mathring{\S}(V,\Psi)$ such that
$V = S_1 \oplus \cdots \oplus S_r \oplus S \oplus \Rad(V)$. We consider $\widehat{\F}(V,\Psi)$ as
a poset ordered by inclusion and
call it the \textit{reduced frame poset} of $V$.

\begin{lemma}\label{lemmaHatPoset}
  Let $(V,\Psi)$ be finite-dimensional 
  and non-degenerate. Then 
  \begin{itemize}
      \item[(i)] 
  if $\dim(\F(V,\Psi)) \geq 1$ then $\F(V,\Psi)$ deformation retracts onto a subcomplex of
  dimension $\dim(\F(V,\Psi))-1$;
      \item[(ii)] if $\sigma$ is a simplex in $\F(V,\Psi)$ 
      such that $\dim(\Lk_{\F(V,\Psi)}(\sigma)) \geq 1$ 
      then $\Lk_{\F(V,\Psi)}(\sigma)$ is homotopy equivalent to the order complex
      of all $\sigma' \in \widehat{\F}(V,\Psi)$ which properly contain $\sigma$. In particular, if $\dim(\F(V, \Psi)) \geq 1$ then $\F(V,\Psi)$ is
      homotopy equivalent to the order complex of $\widehat{\F}(V,\Psi)$. 
  \end{itemize}
\end{lemma}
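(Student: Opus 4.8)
Set $\F:=\F(V,\Psi)$, which is pure (as observed above), and let $d:=\dim\F$; by hypothesis $d\ge1$. For a non-degenerate subspace $W\le V$ write $\F_W$, $\widehat{\F}_W$, $\G_W$, $\mathring{\S}_W$ for the corresponding objects attached to $(W,\Psi|_W)$. The engine for both parts is:
\emph{every $(d-1)$-dimensional face of $\F$ is contained in exactly one $d$-dimensional face.}
Indeed, a $(d-1)$-face $\tau=\{T_1,\dots,T_d\}$ is not maximal, so by purity it lies in some $d$-face $\tau\cup\{T\}$, which is a frame, so $\mathrm{span}(\tau)\oplus T=V$. Since $\mathrm{span}(\tau)$ is non-degenerate, Proposition~\ref{dimensionTheorem} gives $V=\mathrm{span}(\tau)\oplus W$ with $W:=\mathrm{span}(\tau)^\perp$ non-degenerate, and $T\subseteq\mathrm{span}(\tau)^\perp=W$ forces $T=W$ by dimensions. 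So $W$ is a minimal element of $\mathring{\S}(V,\Psi)$ and $\tau\cup\{W\}$ is the unique $d$-face above $\tau$. The same computation shows that a non-empty face $\sigma$ of $\F$ lies outside $\widehat{\F}(V,\Psi)$ exactly when $\mathrm{span}(\sigma)^\perp$ is a minimal element of $\mathring{\S}(V,\Psi)$, and — by purity and dimension counting again — this happens if and only if $\dim\sigma=d-1$. Thus $\widehat{\F}(V,\Psi)$ is the poset of all non-empty faces of $\F$ with the $(d-1)$-faces removed.

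\emph{Part (i).} Fix a total order on the vertex set of $\G(V,\Psi)$; for a $d$-face $\sigma$ let $v(\sigma)$ be its smallest vertex and put $\mu(\sigma):=\sigma\setminus\{v(\sigma)\}$. By the engine, $\mu(\sigma)$ is a free face of $\F$ whose unique proper coface is $\sigma$, and $\sigma\mapsto\mu(\sigma)$ is injective since a $(d-1)$-face determines the unique $d$-face containing it. Hence I can perform the elementary collapses $(\mu(\sigma),\sigma)$ successively, in any order: when $\sigma$ is processed, $\mu(\sigma)$ is still present and still has $\sigma$ as its only coface. This exhibits $\F$ as collapsing, hence deformation retracting, onto the subcomplex $\F'$ obtained by deleting all $d$-faces and all the faces $\mu(\sigma)$. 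As a $d$-face has $d+1\ge2$ facets but is paired with just one of them, some $(d-1)$-face survives, so $\dim\F'=d-1$.

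\emph{Part (ii).} I first reduce the link statement to the ``in particular'' assertion. Assume $\dim\Lk_{\F}(\sigma)\ge1$ and put $W:=\mathrm{span}(\sigma)^\perp$. Then $W$ is non-degenerate (Proposition~\ref{dimensionTheorem}), nonzero, and not a minimal element of $\mathring{\S}(V,\Psi)$ — otherwise $\Lk_{\F}(\sigma)$ would be void or a single vertex — so the minimal elements of $\mathring{\S}(V,\Psi)$ contained in $W$ are exactly the vertices of $\G_W$, the identity on vertices extends to an isomorphism $\Lk_{\F}(\sigma)\cong\F_W$, and in particular $\dim\F_W\ge1$. The same bookkeeping with orthogonal complements — using $\mathrm{span}(\sigma\cup\rho)^\perp=\mathrm{span}(\rho)^{\perp_W}$ and the fact that minimality in $\mathring{\S}_W$ coincides with minimality in $\mathring{\S}(V,\Psi)$ — shows that $\rho\mapsto\sigma\cup\rho$ is a poset isomorphism from $\widehat{\F}_W$ onto $\{\tau\in\widehat{\F}(V,\Psi):\tau\supsetneq\sigma\}$. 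So it suffices to prove that $\F_W\simeq|\widehat{\F}_W|$ whenever $(W,\Psi|_W)$ is non-degenerate with $\dim\F_W\ge1$ (and then apply this to the $W$ above and to $W=V$). For this, let $P$ be the poset of non-empty faces of $\F_W$, so $\Delta(P)=\sd\F_W\simeq\F_W$, and by the engine $\widehat{\F}_W=P\setminus\{(d'-1)\text{-faces}\}$ with $d':=\dim\F_W$. Delete the $(d'-1)$-faces from $P$ one at a time, in any order: at the step that removes a face $\mu$, the current upper set $P'_{>\mu}$ is the singleton $\{\sigma_\mu\}$, $\sigma_\mu$ being the unique $d'$-coface of $\mu$ (a $d'$-face is never deleted); hence $\Delta(P'_{<\mu})\ast\Delta(P'_{>\mu})$ is a cone, so contractible, and by the standard deletion principle for posets (see \cite{Bjo95}) removing $\mu$ leaves the homotopy type of the order complex unchanged. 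After all deletions $|\widehat{\F}_W|\simeq\Delta(P)\simeq\F_W$; and $\widehat{\F}_W\ne\emptyset$ because $d'\ge1$ forces $d'$-faces to exist, and these lie in $\widehat{\F}_W$.

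The genuinely delicate step is the first one: identifying the faces outside $\widehat{\F}$ with the codimension-one faces of frames, and — in (ii) — matching up ``$\Lk_{\F}(\sigma)$'', ``$\F_W$'' and ``$\widehat{\F}$ above $\sigma$'', where one must carefully discard the degenerate cases where $W=\mathrm{span}(\sigma)^\perp$ is zero or minimal; these are precisely the cases excluded by the hypothesis $\dim\Lk_{\F}(\sigma)\ge1$. Granting this combinatorial dictionary, (i) becomes a bare-hands collapsing argument and (ii) reduces to the standard fact that deleting a poset element with contractible (here one-point) upper interval does not change the homotopy type of the order complex.
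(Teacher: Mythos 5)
Your proof is correct, and both parts follow the paper's strategy: part (i) is the same free-face/elementary-collapse argument (the paper chooses one facet per maximal face and collapses all pairs simultaneously, while you fix the choice via a vertex ordering and collapse sequentially — a cosmetic difference), and part (ii) makes the same reduction to $\sigma=\emptyset$ via $\Lk_{\F(V,\Psi)}(\sigma)\cong\F(\gen{\sigma}^\perp)$ after identifying $\widehat{\F}(V,\Psi)$ with the face poset minus the co-maximal faces. The only substantive divergence is the tool used to finish (ii): the paper packages the removal of co-maximal faces as a closure operator and invokes \cite[Corollary 10.12]{Bjo95}, whereas you delete the co-maximal faces one at a time and invoke the standard deletion principle (removing a poset element $\mu$ with $\Delta(P_{<\mu})*\Delta(P_{>\mu})$ contractible preserves the homotopy type), the upper link being a single point here. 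The two lemmas are of equal strength for this purpose and both come from the same source; your version is slightly more hands-on, and you are also more explicit than the paper about why the link identification carries $\{\tau\in\widehat{\F}(V,\Psi):\tau\supsetneq\sigma\}$ onto $\widehat{\F}(\gen{\sigma}^\perp)$, which is a point worth spelling out.
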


\begin{proof}
\begin{itemize}
  \item[(i)] 
  Let $\sigma\in \F(V,\Psi)$ be a simplex of dimension $\dim(\F(V,\Psi))-1$. 
  Then it is easily checked that $\hat{\sigma} = \sigma \cup \gen{\sigma}^\perp$ is the unique maximal simplex of
  $\F(V,\Psi)$ containing $\sigma$. Therefore $\sigma$ is a free face of $\sigma'$. 
  
This allows us to apply elementary collapses to $\F(V,\Psi)$ (see \cite{Bjo95} for terminology and details). Choosing for every maximal face a unique face of dimension
$\dim(\F(V,\Psi))-1$ contained in it, we can collapse $\F(V,\Psi)$ to a $(\dim(\F(V,\Psi))-1)$-dimensional subcomplex.
We can perform all the collapses at the same time, and clearly, this produces a (strong) deformation retract. The assertion then follows.
\item[(ii)] Since $\Lk_{\F(V,\Psi)}(\sigma)$ is isomorphic to 
$\F(\gen{\sigma}^\perp,\Psi|_{\gen{\sigma}^\perp})$ it suffices to show the assertion for
$\sigma = \emptyset$.
Consider $\F(V,\Psi)$ as a poset ordered by inclusion.  The map which
sends a co-maximal simplex $\sigma$ to the unique maximal simplex $\hat{\sigma}$ containing $\sigma$
and which is the identity on all other simplices is a closure operator with image $\widehat{\F}(V,\Psi)$. By \cite[Corollary 10.12]{Bjo95}
it follows that $\F(V,\Psi)$
and the order complex of $\widehat{\F}(V,\Psi)$ are homotopy equivalent.
\end{itemize}
\end{proof}

The subcomplex constructed in Lemma \ref{lemmaHatPoset}(i) is not unique since for each maximal face, any of its maximal proper 
subfaces can be used for the collapsing step.
We will mostly work with $\widehat{\F}(V,\Psi)$ 
since it avoids this non-canonical choice.

The following lemma determines the dimensions of $\F(V,\Psi)$, $\S(V,\Psi)$ and $\D(V,\Psi)$.

\begin{lemma}
\label{lm:dimensions}
Let $(V,\Psi)$ be finite-dimensional over $\KK$. Then
\begin{enumerate}
    \item A minimal element of $\redS(V,\Psi)$ has dimension $1$ or $2$.
    \item All minimal elements of $\redS(V,\Psi)$ have the same dimension.
    \item $\dim(\F(V,\Psi)) = \dim(\redS(V,\Psi))+1=\dim(\mathring{\D}(V,\Psi))+1$.
\end{enumerate}
\end{lemma}

\begin{proof}
Items (1) and (2) are straightforward from the basic properties of the $\epsilon$-Hermitian forms and Proposition \ref{jordanhoelder}.
Item (3) follows from Corollary \ref{coro:pureFrameComplex} and Proposition \ref{jordanhoelder} since the full frames correspond to the minimal elements of $\D(V,\Psi)$.
\end{proof}

We analyse next the Cohen-Macaulay property on $\widehat{\F}(V,\Psi)$ and the relation with $\S(V,\Psi)$.

\begin{remark}
    \label{rk:intervalsofFramesCM}
    Assume that $\widehat{\F}(V,\Psi)$ is homotopically Cohen-Macaulay.
    We show that $\widehat{\F}(S,\Psi|_S)$ is also Cohen-Macaulay for all $S\in \S(V,\Psi)$.
    Fix a full frame $\sigma_0$ of $S^\perp$.
    Since lower intervals in $\widehat{\F}(S,\Psi|_S)$ are always spherical, this poset is Cohen-Macaulay if and only if the upper intervals are spherical.
    Thus, for a frame $\rho\in\widehat{\F}(S,\Psi|_S)$, let $\rho':=\rho\cup \sigma_0$, which is also a frame.
    Then it is easy to see that $\sigma\in \widehat{\F}(S,\Psi|_S)_{>\rho}\mapsto \sigma \cup \sigma_0\in \widehat{\F}(V,\Psi)_{>\rho'}$ is an isomorphism with inverse $\sigma\mapsto \sigma \setminus \sigma_0$.
    Since $\widehat{\F}(V,\Psi)$ is Cohen-Macaulay, the interval $\widehat{\F}(V,\Psi)_{>\rho'}$ is spherical, and hence $\widehat{\F}(S,\Psi|_S)_{>\rho}$ is spherical.
    This shows that $\widehat{\F}(S,\Psi|_S)$ is also Cohen-Macaulay.

    The same proof shows that if $\widehat{\F}(V,\Psi)$ is Cohen-Macaulay over $\kk$, then $\widehat{\F}(S,\Psi|_S)$ is Cohen-Macaulay over $\kk$ for all $S\in \S(V,\Psi)$.
\end{remark}

\begin{proposition}
\label{prop:CMimplicationFrames}
Let $(V,\Psi)$ be finite-dimensional and non-degenerate.
If $\widehat{\F}(V,\Psi)$ is homotopy Cohen-Macaulay (resp. Cohen-Macaulay over $\kk$), then $\S(V,\Psi)$ and $\D(V,\Psi)$ are.
\end{proposition}

For the proof we use the following generalized Quillen's fiber-Theorem (see \cite[Proposition A.1]{PS}).

\begin{proposition}
\label{quillensFiberTheoremGeneral}
Let $f:\P\to \Q$ be a map between posets of finite height, $\kk$ a ring, and $m\geq -1$.
If for all $y\in Q$ we have that $f^{-1}(\Q_{\leq y}) * \Q_{>y}$ is ($\kk$-homologically) $m$-connected, then $f$ is a ($\kk$-homologically) $(m+1)$-equivalence.
\end{proposition}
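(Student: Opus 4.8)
The plan is to replace $f$ by the inclusion of $|\P|$ into the non-Hausdorff mapping cylinder of $f$, and then to peel that cylinder back onto $|\P|$ by deleting the vertices of $\Q$ one at a time, in an order chosen so that the links that appear are \emph{exactly} the joins $f^{-1}(\Q_{\leq y})\ast\Q_{>y}$ from the hypothesis. Concretely, let $M=M(f)$ be the poset on the disjoint union $\P\sqcup\Q$ which keeps the orders of $\P$ and of $\Q$ and additionally has $p<y$ whenever $p\in\P$, $y\in\Q$ and $f(p)\leq y$. The map $r\colon M\to\Q$ that is the identity on $\Q$ and sends $p\mapsto f(p)$ is order preserving, satisfies $r\circ\iota_\Q=\id_\Q$, and $\id_M\leq\iota_\Q\circ r$ (for $p\in\P$ one has $p<f(p)$ in $M$); since comparable poset maps induce homotopic maps on order complexes, $|\iota_\Q|$ and $|r|$ are mutually inverse homotopy equivalences. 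As $r\circ\iota_\P=f$, it suffices to prove that $|\iota_\P|\colon|\P|\to|M|$ is an $(m+1)$-equivalence.

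After reducing to finite $\Q$ by a direct-limit argument (homology and $\pi_1$ commute with the relevant filtered colimits; finite height is what keeps linear extensions manageable), fix a linear extension $y_1,\dots,y_N$ of $\Q$ and put $K_0=M$ and $K_j=M\setminus\{y_1,\dots,y_j\}$ (full subposet), so $K_N=\P$. The heart of the argument is that, for every $j$,
\[
\Lk_{K_{j-1}}(y_j)\;=\;\bigl|f^{-1}(\Q_{\leq y_j})\bigr|\ \ast\ \bigl|\Q_{>y_j}\bigr|.
\]
Indeed, the vertices of $K_{j-1}$ comparable to $y_j$ are, on the $\P$-side, exactly $f^{-1}(\Q_{\leq y_j})$ — the $\Q$-elements below $y_j$ form $\Q_{<y_j}$, all of which have already been deleted since $\{y_1,\dots,y_{j-1}\}$ is downward closed for the linear extension — and, on the $\Q$-side, exactly $\Q_{>y_j}$, because no element of $\P$ lies above an element of $\Q$ in $M$ and $\Q_{>y_j}\subseteq\{y_{j+1},\dots,y_N\}$. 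Moreover every $p\in f^{-1}(\Q_{\leq y_j})$ satisfies $f(p)\leq y_j<y_i$ for each $y_i\in\Q_{>y_j}$, so in this subposet the whole $\P$-part lies below the whole $\Q$-part; hence its order complex is the asserted join. By hypothesis this link is ($\kk$-homologically) $m$-connected.

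Deleting a vertex $v$ from a complex $L$ writes $|L|=|L\setminus v|\cup\overline{\St}_L(v)$, with $\overline{\St}_L(v)$ a cone on $\Lk_L(v)$ meeting $|L\setminus v|$ exactly in $\Lk_L(v)$, so $|L|/|L\setminus v|\cong\Sigma\Lk_L(v)$. Taking $v=y_j$, the relative homology $H_i(|K_{j-1}|,|K_j|;\kk)\cong\widetilde H_{i-1}(\Lk_{K_{j-1}}(y_j);\kk)$ vanishes for $i\leq m+1$ because suspension shifts $\kk$-acyclicity by one; running the long exact sequences along $|\P|=|K_N|\hookrightarrow\cdots\hookrightarrow|K_0|=|M|$ then shows $|\iota_\P|$ is a $\kk$-homological $(m+1)$-equivalence, which is the homological form of the proposition. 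For the homotopy form the only extra point is to upgrade, at each deletion step, "relative homology vanishes up to degree $m+1$" to "relative homotopy vanishes up to degree $m+1$": when $m\geq1$ one is attaching a cone on a simply connected complex, so van Kampen keeps $\pi_1$ unchanged and relative Hurewicz applies inductively; when $m\in\{-1,0\}$ the claim is merely surjectivity on $\pi_0$, respectively on $\pi_1$, which is immediate because the cone is glued along a nonempty, respectively connected, subcomplex.

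The real difficulty is spotting the dismantling order: deleting the vertices of $\Q$ along a linear extension is precisely what forces $\Lk_{K_{j-1}}(y_j)=|f^{-1}(\Q_{\leq y_j})|\ast|\Q_{>y_j}|$. The more obvious strategy — induct on the height of $\Q$ by removing a single maximal (or minimal) element $y_0$ and gluing $|\P|=|f^{-1}(\Q\setminus y_0)|\cup|f^{-1}(\Q_{\leq y_0})|$ over $|\Q|=|\Q\setminus y_0|\cup|\Q_{\leq y_0}|$ via a Mayer--Vietoris square — does not go through, because the hypotheses are not inherited by the restricted maps $f^{-1}(\Q\setminus y_0)\to\Q\setminus y_0$ and $f^{-1}(\Q_{<y_0})\to\Q_{<y_0}$ (that $f^{-1}(\Q_{\leq z})\ast\Q_{>z}$ is $m$-connected does not force $f^{-1}(\Q_{\leq z})\ast(\Q_{>z}\cap D)$ to be $m$-connected for a sub-ideal $D$), and one can produce small examples where such an induction fails although the conclusion still holds. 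Once the link identification is in hand, the remaining ingredients — the non-Hausdorff mapping cylinder, the homotopy effect of a vertex deletion, van Kampen and relative Hurewicz — are all standard.
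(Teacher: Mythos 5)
The paper never proves this proposition: it is imported as an external tool with a citation to \cite[Proposition A.1]{PS}, so there is no internal argument to compare yours against. Your route -- replace $f$ by the inclusion of $\P$ into the non-Hausdorff mapping cylinder $M$, retract $|M|$ onto $|\Q|$ via the comparable poset maps $\id_M\leq \iota_{\Q}\circ r$, and then peel $M$ down to $\P$ by removing the elements of $\Q$ along a linear extension so that each removed vertex $y$ has link exactly $|f^{-1}(\Q_{\leq y})|*|\Q_{>y}|$ -- is the standard proof of fiber theorems of this kind, and its core steps are correct: the link identification (the deleted set being a down-set of $\Q$ is exactly what makes the lower part $f^{-1}(\Q_{\leq y})$ and the upper part $\Q_{>y}$, and the lower part lies entirely below the upper part, so the order complex is the asserted join), the excision computation $H_i(|K_{j-1}|,|K_j|;\kk)\cong\widetilde H_{i-1}(\Lk_{K_{j-1}}(y_j);\kk)$, and the long-exact-sequence bookkeeping. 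For finite posets (the only case the paper uses) this already gives both versions, modulo the homotopy-theoretic point below.

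Two steps are glossed in a way you should repair. First, the ``reduction to finite $\Q$ by a direct-limit argument'' does not work as stated, for exactly the reason you articulate in your last paragraph: the hypothesis that $f^{-1}(\Q_{\leq y})*\Q_{>y}$ is $m$-connected is not inherited by restrictions of $f$ over subposets of $\Q$, so $|\P|\to|M|$ is not exhibited as a colimit of maps already known to be $(m+1)$-equivalences. The fix is not to restrict at all but to run the same filtration transfinitely: since $\Q$ has finite height, well-order each (co)rank level and adjoin the elements of $\Q$ to $\P$ from the top down, so the filtration is increasing; at every successor stage the link of the new vertex is still exactly $|f^{-1}(\Q_{\leq y})|*|\Q_{>y}|$, and compactness of spheres and disks is invoked only at limit stages. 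Second, for the homotopy statement, ``van Kampen keeps $\pi_1$ unchanged and relative Hurewicz applies inductively'' hides the usual subtlety that $|K_j|$ need not be simply connected, so vanishing of the integral relative homology only controls $\pi_*(|K_{j-1}|,|K_j|)$ up to the $\pi_1$-action. Either pass to universal covers (legitimate since for $m\geq 1$ the glued cone is a cone on a simply connected complex, so $\pi_1$ is unchanged and the preimage of the cone upstairs is a disjoint union of cones on copies of the link, whence the relative homology of the covers vanishes in degrees $\leq m+1$ and relative Hurewicz applies there), or quote homotopy excision / CW-approximation of the $(m+1)$-connected pair $(CA,A)$ to get that coning off an $m$-connected subcomplex yields an $(m+1)$-connected pair. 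With these two repairs the argument is complete and self-contained.
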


\begin{proof}[Proof of Proposition \ref{prop:CMimplicationFrames}]
We assume that $\redF(V,\Psi)$ is homotopy Cohen-Macaulay, and show that $\S(V,\Psi)$ is also homotopy Cohen-Macaulay.
The proof for Cohen-Macaulay over $\kk$ is analogous.
Moreover, the conclusions for $\D(V,\Psi)$ will follow easily by Corollary \ref{coro:decomp}.

Any interval of $\S(V,\Psi)$ is isomorphic to $\redS(S,\Psi|_S)$, for some $S\in \S(V,\Psi)$.
Since by Remark \ref{rk:intervalsofFramesCM}, $\redF(S,\Psi|_S)$ is homotopy Cohen-Macaulay,  then we can conclude by induction that $\S(S,\Psi|_S)$ is homotopy Cohen-Macaulay when $\dim(S)<\dim(V)$.
Therefore, it remains to show that $\redS(V,\Psi)$ is spherical.

Let $m = \dim(\redS(V,\Psi))$.
Then $\F(V,\Psi)$ is $(m-1) = (\dim(\widehat{\F}(V,\Psi))-1)$-connected and $m+2$ is the size of any full frame of $V$.
Let $X$ denote the face poset of the $m$-skeleton of $\F(V,\Psi)$, that is, the poset of non-empty and non-full frames.
In particular, $X$ is spherical of dimension $m$.

Consider the poset map $\phi:X\to \redS(V,\Psi)$ that sends a (non-full) frame  to its span.
We apply Quillen's fiber-Theorem \ref{quillensFiberTheoremGeneral} to conclude that $\S(V,\Psi)$ is spherical.

Let $S \in \redS(V,\Psi)$.
Then $\phi^{-1}(\redS(V,\Psi)_{\leq S}) = \F(S,\Psi|_S)\simeq \redF(S,\Psi|_S)$, and the latter is spherical of dimension $m_S$, where $m_S+2$ is the size of a full frame of $S$ by Lemma \ref{lm:dimensions}.
On the other hand, $\redS(V,\Psi)_{>S}$ is isomorphic to $\redS(S^\perp, \Psi|_{S^\perp})$.
Since $\redF(S^\perp,\Psi|_{S^\perp})$ is Cohen-Macaulay by Remark \ref{rk:intervalsofFramesCM}, by induction we conclude that $\redS(V,\Psi)_{>S}$ is spherical of dimension $m_{S^\perp}$.
As before, $m_{S^\perp}+2$ is the size of a full frame in $S^\perp$.
Thus $\phi^{-1}(\redS(V,\Psi)_{\leq S})*\redS(V,\Psi)_{>S}$ is $m_S + m_{S^\perp}  = (m_S-1)+(m_{S^\perp}-1)+2$ connected.
Now, note that the $m - 2 = m_S + m_{S^\perp}$.
By Quillen's fiber-Theorem \ref{quillensFiberTheoremGeneral} we conclude that $\phi$ is an $(m-1)$-equivalence.
Since $X$ is $(m-1)$-connected, $\redS(V,\Psi)$ is $(m-1)$-connected and thus spherical.
This concludes the proof of the proposition.
\end{proof}

From now on, the form $\Psi$ will be determined from the context, so we can just write $\S(V)$ for $\S(V,\Psi)$ and $\F(V)$ for $\F(V,\Psi)$.

\subsection{Unitary spaces.}
In this section we work with Hermitian forms $\Psi$ over a finite-dimensional vector space $V$, i.e., $\epsilon = 1$ and $\tau$ is an automorphism of $\KK$
of order $2$. We call a vector space equipped with a non-degenerate Hermitian form $\Psi$ a 
\textit{unitary space}.

For a given field $\KK$ the automorphisms $\tau$ of order $2$ are in bijection with subfields $\FF$ of $\KK$ such that $\KK\slash \FF$ is a separable field extension of degree $2$. Note that $\tau$ is not determined by $\KK$ in general. 
Nevertheless, this is the case for important examples.
A finite field $ \KK$ has an automorphism $\tau$ of order $2$ if and only if $|\KK|$ is a square. In that case, we will usually write $\KK = \GF{q^2}$, where $|\KK|=q^2$, and $\tau$ is uniquely determined: it is given by $\tau(x) = x^q$.
If $\KK$ is the field of complex numbers then the unique $\tau$ of order $2$ is the complex conjugation.
It follows that finite unitary spaces are unique up to isometry, and complex unitary spaces are determined by their signature (see \cite[(21.6)]{AscFGT}).

Sometimes we will work with the following condition:
$$\text{(ON) ~~ For all } v \in V \text{ there is } x \in \KK \text{ such that } \Psi(v,v) = x \tau(x).$$

Under this assumption, $V$ has an orthonormal basis and thus the unitary structure on $V$ is uniquely determined by $\tau$ (see (19.9) and (19.11) in \cite{AscFGT}). For example, (ON) holds for unitary spaces over finite fields.
If $\KK = \CC$ then (ON) is satisfied if and only if $\Psi$ is equivalent to the standard scalar product.

When (ON) holds, after fixing the dimension $n$, the field $\KK$ and the
automorphism $\tau$, every unitary space of dimension $n$ over $\KK$ is isometric to the unitary space $\KK^n$ equipped with the canonical non-degenerate Hermitian form
\begin{equation*}
\Psi(v,w) := \sum_{i=1}^n v_i \,\tau(w_i).
\end{equation*}

For a unitary space $V$, we denote by $\GU(V)$ the isometry group of $V$. The following discussion summarizes a few simple facts and consequences of Witt's extension theorem (see \cite[p.81]{AscFGT}).

\begin{remark}
\label{rem:elemproperties}
Let $V$ be an $n$-dimensional vector space equipped with a (possibly degenerate) Hermitian form $\Psi$.
Then:
\begin{enumerate}[label=(\roman*)]
    \item The minimal elements
    of $\mathring{\S}(V)$ have dimension $1$.
    \item $\dim(\F(V)) = 
    n-\dim(\Rad(V))-1$. In particular, if $V$ is non-degenerate then $\dim (\F(V)) = n-1$ and $\dim(\widehat{\F}(V)) = n-2$.
\end{enumerate}
If in addition (ON) holds, then
\begin{enumerate}[label=(\roman*)]
    \item The group $\GU(V)$ acts flag-transitively on $\S(V)$.
    \item $\GU(V)$ acts transitively on the set of frames of a fixed size.
    \item If $S\in \G(V)$ is a vertex, $\Stab_{\GU(V)}(S)$ is transitive on the set of vertices adjacent to $S$.
\end{enumerate}
\end{remark}

We saw that the frame complex of a vector space with a non-degenerate form collapses to a subcomplex of one dimension less.
For unitary spaces over $\KK = \GF{2^2}$, we can collapse one extra dimension.
Due to this particular property, most of our results on finite unitary spaces will need a special treatment for the case $\KK = \GF{2^2}$ and they usually have different answers.
Let $\doublehat{\F}(V)$ be the poset of non-empty frames of size $\neq \dim(V)-1,\dim(V)-2$.

\begin{lemma}
\label{doubleJumpDimensionInField2}
Let $V$ be a unitary space of dimension $n\geq 3$ over $\GF{2^2}$.
Then $\doublehat{\F}(V)$ has dimension $n-3$ and it is homotopy equivalent to $\F(V)$. In particular,
for $n=3$ we have that
$\F(V)$ and $\G(V)$ are disconnected with $4$ connected components corresponding to each $3$-frame.
\end{lemma}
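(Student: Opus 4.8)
The plan is to exhibit an explicit closure operator on the poset $\F(V) \setminus \{\emptyset\}$ whose image is $\doublehat{\F}(V)$, analogous to the one used in Lemma~\ref{lemmaHatPoset}(ii), but exploiting the special feature of $\GF{2^2}$ that a partial frame of size $n-2$ can also be uniquely completed. The key field-theoretic fact is the following: over $\GF{2^2}$, a non-degenerate unitary space $W$ of dimension $2$ contains exactly one orthogonal pair of non-degenerate $1$-spaces, equivalently exactly one full $2$-frame. Indeed, $W$ has $q^3-q^2 = 4$ anisotropic vectors up to scalars (using $d_2 = q(q-1)/(q+1)\cdot\ldots$; more directly one counts the non-degenerate $1$-spaces of $\GF{2^2}^2$ and finds there are exactly $3$ of them, partitioned into... let me recompute). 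First I would verify this count carefully: the number of non-degenerate $1$-dimensional subspaces of a $2$-dimensional unitary space over $\GF{q^2}$ is $d_3 = q^3$, and for $q=2$ this gives... in fact the relevant statement is that in $\GF{2^2}^2$ the orthogonality graph on non-degenerate $1$-spaces is a disjoint union of edges, so each such $1$-space has a \emph{unique} orthogonal complement that is non-degenerate. This is precisely the $n=3$ base case asserted at the end of the lemma, and it is what powers the "extra collapse".

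Granting that base case, here is the inductive structure. For $\sigma$ a partial frame of size $n-2$ in $V$, the orthogonal complement $\gen{\sigma}^\perp$ is a non-degenerate unitary space of dimension $2$ over $\GF{2^2}$, hence contains a unique $2$-frame $\{S,S'\}$; therefore $\sigma$ is contained in a unique partial frame $\hat\sigma := \sigma \cup \{S,S'\}$ of size $n$ (a full frame), and moreover $\sigma$ is contained in exactly two partial frames of size $n-1$, namely $\sigma\cup\{S\}$ and $\sigma\cup\{S'\}$. The map $c$ that sends every partial frame of size $n-2$ to its associated full frame $\hat\sigma$, every partial frame of size $n-1$ to the full frame obtained by adjoining its unique orthogonal complement vertex (as in Lemma~\ref{lemmaHatPoset}), and is the identity on all frames of size $\neq n-1, n-2$, is a poset map with $c \geq \id$ and $c\circ c = c$; that is, a closure operator. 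Its image is exactly $\doublehat{\F}(V)$. I would check monotonicity and idempotence by a short case analysis on the sizes of $\sigma \subseteq \rho$: the only nontrivial cases are when $\sigma$ has size $n-2$ and $\rho$ size $n-1$ or $n$, where one uses that the unique completion of $\sigma$ refines (equals, in the size-$n$ case) the unique completion of $\rho$. Then \cite[Corollary 10.12]{Bjo95} gives that the order complex of $\F(V)\setminus\{\emptyset\}$ is homotopy equivalent to the order complex of $\doublehat{\F}(V)$; combined with Lemma~\ref{lemmaHatPoset}(ii) (whose conclusion identifies $|\F(V)|$ with the order complex of $\F(V)\setminus\{\emptyset\}$ via barycentric subdivision) this yields $\F(V) \homotequiv \doublehat{\F}(V)$. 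The dimension statement $\dim \doublehat{\F}(V) = n-3$ is immediate from Remark~\ref{rem:elemproperties}(v): the longest chain of frames omitting sizes $n-1, n-2$ has length $n-2$, giving a simplex of dimension $n-3$, and by purity (the full-frame completion argument shows every chain extends to one of this length) this is the top dimension.

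For the final sentence, when $n=3$: here $\doublehat{\F}(V)$ is the poset of frames of size $\neq 1, 2$, i.e. just the full $3$-frames, with no order relations among them — a discrete set. By the count of $3$-frames in a $3$-dimensional unitary space over $\GF{2^2}$ (which I would compute as $|\GU_3(2)| / |\text{stabilizer of a frame}|$, or combinatorially; the claimed answer is $4$), this discrete set has $4$ points, so $\F(V) \homotequiv \doublehat{\F}(V)$ is homotopy equivalent to $4$ points, hence disconnected with $4$ components; by Lemma~\ref{lem:conn} the same holds for $\G(V)$, and since each component of $\F(V)$ is the full frame complex on the three $1$-spaces of one $3$-frame — a single $2$-simplex — the correspondence with $3$-frames is exact.

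The main obstacle I anticipate is \emph{not} the homotopy-theoretic machinery, which is a routine closure-operator argument, but rather pinning down the two combinatorial facts over $\GF{2^2}$: (1) that a $2$-dimensional unitary space over $\GF{2^2}$ has a \emph{unique} full $2$-frame, and (2) the count of $3$-frames equalling $4$. Both are finite computations — the first amounts to checking that among the non-degenerate $1$-spaces of $\GF{2^2}^2$, the orthogonality relation is a perfect matching, which one can do by explicitly listing anisotropic vectors (those $v$ with $v_1\tau(v_1) + v_2\tau(v_2) \neq 0$, i.e. $v_1^3 + v_2^3 \neq 0$ in $\GF 4$) up to scalars and checking orthogonality; and (2) then follows since each $3$-frame is determined by choosing a non-degenerate $1$-space $S$ (there are $d_4 = \frac{q^2(q^3+1)}{q+1} = q^2(q^2-q+1)$ of them, giving $4\cdot 3 = ...$; one must divide by the number of orderings, $3! = 6$, after noting the complement $2$-frame is forced) — I would set this up carefully and let the arithmetic produce $4$. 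These special-field computations are exactly why, as the authors note, $\GF{2^2}$ requires separate treatment throughout.
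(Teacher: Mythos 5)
Your proposal is correct and takes essentially the same route as the paper: both rest on the fact that over $\GF{2^2}$ a $2$-dimensional non-degenerate space has a unique $2$-frame (so an $(n-2)$-frame lies in a unique full frame), and both conclude via a closure operator together with \cite[Corollary 10.12]{Bjo95}, the only cosmetic difference being that you package the two collapses into a single closure operator on $\F(V)\setminus\{\emptyset\}$ whereas the paper first passes to $\widehat{\F}(V)$ via Lemma \ref{lemmaHatPoset} and then collapses again. The two finite counts you defer (one $2$-frame in dimension $2$, four $3$-frames in dimension $3$) are exactly what Equation \eqref{eqNumberMFrames} delivers, namely $q(q-1)/2=1$ and $|\GU_3(2)|/(3^3\cdot 3!)=4$.
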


\begin{proof}
From Lemma \ref{lemmaHatPoset} we know that $\F(V)$ is homotopy equivalent to $\widehat{\F}(V)$.
If $\sigma \in \widehat{\F}(V) - \doublehat{\F}(V)$, then $\sigma$ is an $(n-2)$-frame.
The elements above $\sigma$ are the maximal simplices (i.e., frames) containing $\sigma$.
Hence they arise from $\sigma$
by adding a frame of
$\gen{\sigma}^\perp$. Now $\gen{\sigma}^\perp$ is a 
unitary space of dimension $n - (n-2) = 2$ over $\GF{2^2}$.
It is easily checked that there is a unique frame (see e.g., Equation \eqref{eqNumberMFrames}) for $\gen{\sigma}^\perp$. 
Thus $\sigma$ is contained in a unique maximal frame.
Again, using \cite[Corollary 10.12]{Bjo95}, this implies that  $\widehat{\F}(V)$ and $\doublehat{\F}(V)$ are 
homotopy equivalent.

For $n=3$ the poset $\doublehat{\F}(V)$ is a set of $4$ points corresponding to the $4$ 
$3$-frames of $V$ (see Appendix \ref{appendixCountingVectorsAndSubspaces})
and hence disconnected. The first part
of the claim implies that $\F(V)$ is disconnected and hence $\G(V)$ is disconnected.
\end{proof}

We close this section with a reduction of the connectedness of $\F(V,\Psi)$ for a general Hermitian form $\Psi$ to the unitary case.

\begin{proposition}
\label{cor:degenerateConnected}
Let $V$ be a vector space of dimension $n$ and $\Psi$ a Hermitian form on $V$ such that $\dim(V / \Rad(V)) \geq 2$.
Then $\F(V)$ is connected if and only if $\F(V/\Rad(V))$ is connected.
\end{proposition}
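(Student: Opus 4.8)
The plan is to compare $\F(V)$ directly with $\F(\overline{V})$, where $\overline{V} := V/\Rad(V)$, by relating their vertices and edges. First I would observe that there is a natural map on vertices: if $R := \Rad(V)$ and $S \leq V$ is a non-degenerate $1$-dimensional subspace, then $S \cap R = 0$ (since $\Psi|_S$ is non-degenerate but $\Psi$ vanishes on $R$), so the quotient map $\pi\co V \to \overline{V}$ sends $S$ isometrically onto a non-degenerate $1$-dimensional subspace $\overline{S} \leq \overline{V}$. Moreover $\Psi(s, s') = \Psi(\pi(s),\pi(s'))$ for representatives, so $S \perp S'$ in $V$ implies $\overline{S} \perp \overline{S'}$ in $\overline{V}$; hence $\pi$ induces a simplicial map $\F(V) \to \F(\overline{V})$. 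In particular, if $\F(V)$ is connected then so is its image, and one checks this image is all of $\F(\overline{V})$: given a non-degenerate line $\overline{T} \leq \overline{V}$, any preimage vector $t \in V$ with $\pi(t) = \overline{t}$ spanning $\overline{T}$ satisfies $|t| = |\overline{t}| \neq 0$, so $\gen{t}$ is a non-degenerate line mapping onto $\overline{T}$. Thus surjectivity on vertices holds, and connectivity of $\F(V)$ forces connectivity of $\F(\overline{V})$.

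For the converse I would argue that the fibers of $\pi$ on vertices, together with the edge structure, let one lift paths. The key local claim is: for a fixed non-degenerate line $\overline{T} \leq \overline{V}$, the set of non-degenerate lines $S \leq V$ with $\pi(S) = \overline{T}$ is nonempty and, crucially, any two such lines $S, S'$ are connected in $\G(V)$ — indeed I expect they can be joined through a common orthogonal neighbor. The reason: the preimage $\pi^{-1}(\overline{T})$ is a subspace $U \leq V$ of dimension $\dim(R)+1$ with $\Rad(U) = R$ and $U/R \cong \overline{T}$ one-dimensional non-degenerate; by Proposition \ref{dimensionTheorem}(vi) applied inside a suitable complement, or directly, all the non-degenerate lines in $U$ form a single "bundle" over $\overline{T}$, and one shows they have a common orthogonal partner by passing to $U^\perp$ (which has codimension $\dim(R)+1 > \dim(\Rad(U^\perp)) = \dim(R)$ when $\dim(\overline{V}) \geq 2$, so by Proposition \ref{dimensionTheorem}(vii) $U^\perp$ contains non-isotropic vectors, giving a line orthogonal to all of $U$ and hence to every $S$ in the fiber). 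With this, given an edge $\overline{S}\, \overline{S'}$ in $\G(\overline{V})$, one picks any lifts $S, S'$; they need not be orthogonal in $V$, but choosing lifts that are literal isometric images under a fixed splitting $\overline{V} \hookrightarrow V$ (possible since $\overline{V}$ is non-degenerate, so a complement to $R$ exists and carries an isometric copy) makes them orthogonal. Then a path in $\G(\overline{V})$ lifts edge-by-edge to a path in $\G(V)$, using the fiber-connectivity claim to reconcile the choices of lifts at each vertex.

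The cleanest way to organize the converse is therefore: fix an orthogonal splitting $V = R \oplus W$ with $W$ non-degenerate and $\pi|_W\co W \to \overline{V}$ an isometry; this identifies $\F(\overline{V})$ with $\F(W)$, and $\F(W)$ is a full subcomplex of $\F(V)$ (every non-degenerate line of $W$ is one of $V$, and orthogonality is inherited). So connectivity of $\F(\overline{V}) \cong \F(W)$ gives a connected subgraph of $\G(V)$ spanning all lines of $W$. It then remains to connect an arbitrary non-degenerate line $S \leq V$ to some line of $W$: write $S = \gen{s}$ with $s = r + w$, $r \in R$, $w \in W$; then $|s| = |w|$, so $w \neq 0$ and $\gen{w} \in \G(W)$. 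One shows $S$ and $\gen{w}$ have a common neighbor in $\G(V)$ — e.g. any non-degenerate line inside $S^\perp \cap \gen{w}^\perp = (S + \gen{w})^\perp$, which is nonempty by Proposition \ref{dimensionTheorem}(vii) since $S + \gen{w}$ has codimension $\geq n - 2 \geq \dim(\Rad(V)) + (\dim(\overline{V}) - 2) \geq \dim(\Rad(S+\gen{w}))$ when $\dim(\overline{V}) \geq 2$ (here one must check the radical of $S + \gen{w}$ is small, which it is because $S+\gen{w}$ meets $R$ in at most a line). This reduces everything to the spanning connected subgraph $\G(W)$.

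The main obstacle I anticipate is the bookkeeping in that last dimension-count: verifying that $S + \gen{w}$, or more generally the span of the lines involved, has codimension strictly exceeding the dimension of its radical, so that Proposition \ref{dimensionTheorem}(vii) applies and produces the common orthogonal neighbor. This needs the hypothesis $\dim(V/\Rad(V)) \geq 2$ in an essential way — if $\dim(\overline{V}) = 1$ there is only one vertex upstairs up to the fiber and no edges, and indeed the statement would be vacuous or false — so the careful point is to track exactly where the inequality $\dim(\overline{V}) \geq 2$ is consumed, and to handle the small case $\dim(\overline{V}) = 2$ (where $S + \gen{w}$ might already be most of $V$) possibly by a direct argument or by invoking Theorem \ref{connectedFramePoset} for $\overline{V}$ itself. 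Everything else is routine linear algebra with the reflexive form.
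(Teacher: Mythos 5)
Your proposal is correct and follows essentially the same route as the paper: the projection gives a surjective simplicial map for the forward direction, and for the converse you split $V = \Rad(V)\oplus W$, view $\F(W)\cong\F(V/\Rad(V))$ as a connected spanning subcomplex of sorts, and join an arbitrary line $\gen{r+w}$ to $\gen{w}$ through a common orthogonal neighbor. At your flagged ``main obstacle'' the paper simply takes that neighbor to be a non-degenerate line in $\gen{w}^\perp\cap W$, which is non-degenerate of dimension $\dim(V/\Rad(V))-1\geq 1$ and is automatically orthogonal to $r$; this sidesteps Proposition \ref{dimensionTheorem}(vii) (stated only for non-degenerate ambient spaces, so not directly applicable to the degenerate $V$) and all the codimension bookkeeping.
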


\begin{proof}
By Proposition \ref{dimensionTheorem}, we can write $V = \Rad(V) \oplus V'$ where $V'$ is non-degenerate.
Then $V/\Rad(V)$ is a unitary space isomorphic to $V'$.
From now on, we identify $V/\Rad(V)$ with $V'$.

The projection map $p:V\to V'$ extends to a surjective simplicial map $p_*:\F(V)\to \F(V')$.
In particular, if $\F(V)$ is connected then $\F(V')$ is connected.

Now suppose that $\F(V')$ is connected, which by Lemma \ref{lem:conn} is equivalent to $\G(V')$ being connected. 
Analogously, $\F(V)$ is connected if and only if $\G(V)$ is connected.
Since $\G(V')$ is a connected subgraph of $\G(V)$, it suffices to prove that every vertex of $\G(V)$ can be connected to a vertex of $\G(V')$.
Let $S\in \G(V)$ be a vertex, and write $S = \gen{x}$.
Then $x = r + v$ with $r\in\Rad(V)$ and $v\in V'$.
Since $S$ is non-degenerate, $0\neq |x| = |r| + |v| = |v|$.
Hence $v$ is a non-degenerate vector of $V'$.
On the other hand, $\dim(V')\geq 2$ by hypothesis, so $\gen{v}^\perp\cap V'$ is non-degenerate of dimension $\geq 1$.
Let $y\in\gen{v}^\perp\cap V'$ be a non-degenerate vector.
Then $S \perp \gen{y}$ and $\gen{y}\perp \gen{v}$, that is, $S$ and $\gen{v}\in\G(V')$ are connected in $\G(V)$.
This shows that every vertex $S\in \G(V)$ can be connected to a vertex in $\G(V')$.
Therefore $\G(V)$ is connected.
\end{proof}

\section{Counting walks in $\G(V)$
and connectedness of $\G(V)$}
\label{sec:walk}

In this section, we count the number of walks of length $\leq 4$ between two vertices in $\G(V)$. By a walk we understand a sequence of vertices
such that two consecutive vertices are joined by an edge.
In particular, the repetition of vertices and edges is allowed.
The length of a walk is the number of its edges.

Our count of walks will yield
a proof of Theorem \ref{connectedFramePoset} and allow us to clarify the homotopy type of $\F(V)$ for $V$ of dimension $\leq 3$ in Proposition \ref{lowDimensionalCases}. 
The calculations will also play a key role in Section \ref{sec:eigen} when the
eigenvalues of the adjacency matrix of $\G(V)$ are determined for $\KK= \GF{q^2}$.
For the explicit computation of the number of walks, we will work under condition (ON).

For the whole section, $V$ will be an $n$-dimensional unitary space over a field $\KK$.
We write $\Psi(\bullet,\bullet)$ for its non-degenerate Hermitian form and
$\tau$ for the underlying field
automorphism of order $2$.

We introduce first some useful counting functions, which may take the value $\infty$.
Let $S$ be a subspace of $V$.

\begin{itemize}
\item For $k \geq 0$, let $l_k(T,W)$ be the number of walks of length $k$ between vertices $T,W$ in $\G(V)$. 

\item For the set $\F(S^\perp)_1$ of $1$-dimensional non-degenerate subspaces of $S^\perp$, let $d(S) = \#\F(S^\perp)_1$.
In particular, if $S \in \G(V)$, $d(S)$ is the degree of $S$ in the graph $\G(V)$.

\item Let $\I(S)$ be the set of non-zero isotropic vectors in $S$ and set $I(S) = \# \I(S)$.
\end{itemize}

Any of the numbers above can be infinite. We will use the convention 
$\infty + \infty = \infty$, $\infty \cdot \infty = \infty$ and
$a \cdot \infty = \infty$ for any real $a > 0$.

\bigskip

If in addition (ON) holds and $S$ has dimension $m$ with radical of dimension $R$, then:
\begin{itemize}
\item We write instead $d_{m+1}^R := \# \F(S^\perp)_1$. When $S$ is non-degenerate (i.e., $R = 0$), we let $d_{m+1}:=d_{m+1}^0$.
The shift from dimension $m$ to index $m+1$ is motivated by the fact that $d_{m}^R$ is the degree of each vertex in $\G(S)$.
Hence $\G(S)$ is a $d_m^R$-regular graph.
Note that $d^1_1 = d^1_2= 0$.
\item Also let $I_m^R = \# \I(S)$. When $R = 0$, we just write $I_m$ for $I_m^0$.
\end{itemize}
By (ON), the values of $d^R_m$ and $I_m^R$ are independent of the subspace $S$ chosen in their definition and just depend on $m$ and $R$.
Moreover, the values of $d^R_m$ and $I_m^R$ for a finite field $\KK = \GF{q^ 2}^n$ can be found in the Appendix \ref{appendixCountingVectorsAndSubspaces}, where we also list some useful equalities.

It will turn out that for $S,W \in \G(V)$, the values of $l_k(S,W)$ will frequently depend on the following cases.

\begin{itemize}
\item[($=$)] $S=W$,
\item[($\perp$)] $S \perp W$,
\item[(ND)] $S\neq W$, $S\not\perp W$ and $S+W$ non-degenerate,
\item[(D)] $S+W$ degenerate.
\end{itemize}
We will prove this assertion when (ON) is satisfied.

\bigskip

Now, we proceed with the computation of walks of length $1$ and $2$.

\begin{lemma}
  \label{walksLength1and2}
  Let $V$ be a unitary space of dimension $n\geq 2$ over a field $\KK$.
  Let $S,W\in \G(V)$ be two vertices.
  Then the following hold:
  \begin{equation*}
    l_1(S,W) = 
    \begin{cases}
        1 & S \text{ and } W \text{ satisfy ($\perp$)},\\
        0 & \text{otherwise},
    \end{cases}
  \end{equation*}
  \begin{equation*}
    l_2(S,W) = \# \F( (S + W)^\perp )_1 = d(S+W).
  \end{equation*}
  In particular, $l_2(S,W) > 0$ if and only if $S=W$, or $n\geq 3$ and $S+W$ is non-degenerate, or $n\geq 4$ and $S+W$ is degenerate.

  Moreover, if (ON) holds, then
  \begin{equation*}
    l_2(S,W) = \begin{cases}
      d_n & S = W,\\
      d_{n-1} & S \text{ and } W \text{ satisfy ($\perp$) or (ND)},\\
      d^1_{n-1} & S \text{ and } W \text{ satisfy (D)}.\\
    \end{cases}
  \end{equation*}

\end{lemma}

\begin{proof}
  The value of $l_1(S,W)$ follows immediately from the definition of $\G(V)$.
  
  For $l_2(S,W)$, note that a vertex $T$ is adjacent to both $S$ and $W$ if and only if $T\in \F( (S + W)^\perp)_1$.
  Thus $l_2(S,W) = \#  \F( (S + W)^\perp)_1 =d(S+W)$.
  Now, the size of this latter set depends on the dimension of $S + W$ and if the sum is degenerate or not.
 In view of Proposition \ref{dimensionTheorem}(vii), 
  \begin{equation}
      \label{eq:positivel2}
      l_2(S,W) > 0 \text{ if and only if } \codim(S+W) > \dim(\Rad(S+W)).
  \end{equation}
  We will use this equivalence to show the In particular part of the lemma.
  \begin{itemize}
    \item[($=$)] If $S=W$ then $\dim(S + W) = 1$ and $l_2(S,W)$ is just the degree of $S=W$ in $\G(V)$.
    This number is positive for all $n\geq 2$ by Eq. (\ref{eq:positivel2}).
    
    \item[($\perp)$, (ND)] If $S+W$ is non-degenerate of dimension $2$, then $(S + W)^\perp$ is a unitary space of dimension $n-2$.
    By Eq. (\ref{eq:positivel2}), $l_2(S,W) > 0$ if and only if $n \geq 3$.

    \item[(D)] If $S+W$ is degenerate, $(S + W)^\perp$ is a vector space of dimension $n-2$ equipped with a Hermitian form whose radical has dimension $1$.
    By Eq. (\ref{eq:positivel2}), $l_2(S,W) > 0$ if and only if $n \geq 4$.
\end{itemize}
If $V$ verifies (ON), the values for $l_2(S,W)$ follow by definition.
This finishes the proof of the lemma.
\end{proof}

Next, we aim at counting walks of length $3$ and $4$.
For that, the following obvious recursive formula will be useful. 
We use the convention that a summand equal to $\infty$
yields the sum $\infty$.

\begin{equation}
\label{eqInductiveCounting}
l_k(S,W) = \sum_{T\in \G(S^\perp) } l_{k-1}(T,W).
\end{equation}

In order to apply the recursion, we need to know all $l_{k-1}(T,W)$ as $T$ runs over the $1$-dimensional non-degenerate subspaces of $V$ orthogonal to $S$.
Under condition (ON), we will show that for given $S, W \in \G(V)$,
the number of $T$'s such that the relative position of
$W$ and $T$ is described by a fixed case from ($=$), ($\perp$), (D) or (ND)
only depends on which case ($=$), ($\perp$), (D), (ND) describes the relative position of $S$ and $W$, 
and, that among each case, the number
$l_{k-1}(T,W)$ is fixed.

We start by investigating the dependence of the number of $T$'s
on the relative position of $S$ and $W$.
We introduce the following notation:

\begin{itemize}
    \item $E_0(S,W) = \{T \in \G(S^\perp) \tq T + W \text{ non-degenerate}\}$,
    \item $E_1(S,W) = \{T \in \G(S^\perp) \tq T\perp W \}$,
    \item $E_2(S,W) = \{T \in \G(S^\perp) \tq T\neq W,T\not\perp W, T + W \text{ non-degenerate}\}$,
    \item $E_3(S,W) = \{T \in \G(S^\perp) \tq T+ W \text{ degenerate}\}$.
\end{itemize}
For $i=0,1,2,3$ we set $\eta_i(S,W) = \# E_i(S,W)$. 
The following lemma considers a case where $E_3(S,W)$ is empty.

\begin{lemma} \label{lem:tau-1}
  Let $V$ be a unitary vector space of dimension $n\geq 2$ over a field $\KK$.
  Then $V$ is anisotropic if and only if there are no orthogonal vectors $v,w\in V$ such that $|v| |w| = -1$.
  In particular, in such cases, every subspace is non-degenerate.
  
  Moreover, if $V$ satisfies (ON), $V$ is anisotropic if and only if there is no $\zeta \in \KK$ such that $\zeta \tau(\zeta) =-1$.
\end{lemma}

\begin{proof}
Suppose that there exist orthogonal vectors $v,w\in V$ such that $|v| |w| = -1$.
Equivalently, $|v|^{-1} + |w| = 0$.
Let $x = \frac{v}{|v|} + w$.
Then $x\neq 0$ and $|x| = \frac{|v|}{|v|^2} + |w| = 0$, that is, $x$ is a non-zero isotropic vector, so $V$ is not anisotropic.

Conversely, assume that $x\in V$ is a non-zero isotropic vector.
Then we can pick two non-zero orthogonal vectors $u,w\in V$ such that $x = u + w$.
If $v = |u|^{-1}u$ then $v,w$ are two orthogonal vectors such that $|v||w| = -1$.
This proves the equivalence of the lemma.

The assertion on the non-degenerate subspaces is immediate from this.

If $V$ satisfies (ON), then $|v| |w| = x\tau(x) y\tau(y) = (xy)\tau(xy)$ for suitable elements $x,y\in \KK$, so we can take $\zeta = xy$ in the statement.
\end{proof}

In the following lemma we provide additional information on the $E_i(S,W)$, $i=0,1,2,3$, needed for the calculation of
the corresponding $\eta_i(S,W)$.

\begin{lemma}
\label{mainlemmaCountingDegAndNonDeg}
Let $V$ be a unitary space of dimension $n\geq 2$ over a field $\KK$ satisfying (ON), and let $S,W \in \G(V)$.
Then we have:

\begin{align}
    \nonumber E_0(S,W) &\cap E_3(S,W) = \emptyset, E_1(S,W) \cap E_3(S,W) = \emptyset \text{ and } E_1(S,W) \cap E_2(S,W) = \emptyset, \\
    \label{eq:set1} \F(S^\perp)_1 & = E_0(S,W) \cup E_3(S,W),\\
    \label{eq:set2} E_0(S,W) & = E_1(S,W) \cup E_2(S,W) \cup  \begin{cases}
    \{ W\} & S \perp W,\\
    \emptyset & S \not\perp W,
    \end{cases} \\
    \nonumber E_1(S,W) & = \F(S^ \perp)_1 \text{ if } S,W \text{ satisfy } (=),\\
    \nonumber E_1(S,W) & = \F((S+W)^ \perp)_1 \text{ if } S,W \text{ satisfy $(\perp)$, (ND) or (D)},\\
    \nonumber E_3(S,W) & = \emptyset \text{ if } S,W \text{ satisfy } (=), \\
    \nonumber E_3(S,W) & \cong \I((S+W)^\perp) \text{ if } S,W \text{ satisfy } (\perp), \\
    \nonumber E_3(S,W) & \cong \Big( \I(S^ \perp) \setminus \I((S+W)^ \perp\Big)/\KK  \text{ if } S,W \text{ satisfy (ND) or (D)}.
\end{align}

In particular (for $n\geq 3$) we have:
\begin{itemize}
    \item[(i)] $E_0(S,W)$ and $E_1(S,W)$ are non-empty except when $n=3$ and $S,W$ satisfy (D).
    \item[(ii)] $E_3(S,W)$ is 
empty if $\I(S^\perp)$ is empty.
\end{itemize}
\end{lemma}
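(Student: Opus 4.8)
The plan is to verify the set-theoretic identities essentially by unwinding the definitions of the $E_i(S,W)$, and then to extract the isometry/bijection statements for $E_1$ and $E_3$ from Proposition \ref{dimensionTheorem} together with Witt's lemma. First I would dispose of the three disjointness claims and the two union identities \eqref{eq:set1} and \eqref{eq:set2}. Every $T\in\F(S^\perp)_1$ is a non-degenerate line orthogonal to $S$, so $T+W$ is either non-degenerate (putting $T$ in $E_0$) or degenerate (putting $T$ in $E_3$), and these are mutually exclusive; this is \eqref{eq:set1} and the disjointness $E_0\cap E_3=E_1\cap E_3=\emptyset$. For \eqref{eq:set2}, if $T\in E_0(S,W)$ then $T+W$ is non-degenerate, and exactly one of three things happens: $T\perp W$ (so $T\in E_1$), or $T\not\perp W$ and $T\ne W$ (so $T\in E_2$), or $T=W$ — but $T=W$ forces $W\in\F(S^\perp)_1$, i.e. $S\perp W$. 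The disjointness $E_1\cap E_2=\emptyset$ is immediate.

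Next I would handle the descriptions of $E_1(S,W)$. If $S=W$ then $\{T\in\G(S^\perp)\tq T\perp W\}$ is just $\F(S^\perp)_1$ since $T\in\G(S^\perp)$ already means $T\perp S=W$. If instead $S,W$ satisfy $(\perp)$, (ND) or (D), then $T\perp S$ and $T\perp W$ together say $T\leq (S+W)^\perp$, and conversely any non-degenerate line in $(S+W)^\perp$ lies in $\F(S^\perp)_1$ and is orthogonal to $W$; hence $E_1(S,W)=\F((S+W)^\perp)_1$. For $E_3(S,W)$: when $S=W$, a line $T\perp S$ with $T+W=T+S$ degenerate is impossible because $T$ and $S$ are orthogonal non-degenerate lines, so their sum is non-degenerate; hence $E_3=\emptyset$. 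When $S\perp W$, a line $T\in\F(S^\perp)_1$ has $T+W$ degenerate iff the rank-$2$ space $T+W$ (with $T,W$ orthogonal non-degenerate lines spanning it — wait, they need not be orthogonal) is degenerate; here the right bookkeeping is that $T\perp S$ and $T+W$ degenerate with $W$ non-degenerate forces the radical of $T+W$ to be a line spanned by an isotropic vector, and mapping $T$ to (the span of) that isotropic vector, which lies in $W^\perp\cap S^\perp=(S+W)^\perp$, gives the claimed identification $E_3(S,W)\cong\I((S+W)^\perp)$ after quotienting by scalars. In the (ND) and (D) cases $S$ and $W$ are not orthogonal, and a similar analysis — a non-degenerate line $T\leq S^\perp$ with $T+W$ degenerate corresponds to an isotropic vector of $S^\perp$ that is not already in $(S+W)^\perp$ — yields $E_3(S,W)\cong(\I(S^\perp)\setminus\I((S+W)^\perp))/\KK$.

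Finally, the two ``in particular'' items follow quickly: item (ii) is immediate because $E_3(S,W)\subseteq\F(S^\perp)_1$ consists of lines whose sum with $W$ is degenerate, and by the identifications just established such a line always produces a nonzero isotropic vector of $S^\perp$, so $E_3(S,W)=\emptyset$ whenever $\I(S^\perp)=\emptyset$. For item (i), when $n\geq 3$ one has $\dim(S^\perp)=n-1\geq 2$, so $\F(S^\perp)_1\neq\emptyset$ by (ON); and $E_1(S,W)$ equals $\F(S^\perp)_1$ in the $(=)$ case and $\F((S+W)^\perp)_1$ in the $(\perp)$, (ND), (D) cases. The space $(S+W)^\perp$ has dimension $n-2$ in the $(\perp)$ and (ND) cases (where $S+W$ is non-degenerate of dimension $2$) and dimension $n-2$ with a $1$-dimensional radical in the (D) case; so $E_1(S,W)\neq\emptyset$ as long as that space contains a non-degenerate line, which by the values of $d_{n-1}$ and $d^1_{n-1}$ recorded in Lemma \ref{walksLength1and2} fails only when $n=3$ and $S,W$ satisfy (D); and then $E_0(S,W)\supseteq E_1(S,W)$ is likewise empty (indeed in that case $\F(S^\perp)_1=E_3(S,W)$). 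The only genuinely delicate point is getting the bijections for $E_3$ exactly right — in particular keeping track of which isotropic vectors of $S^\perp$ do and do not lie in $(S+W)^\perp$ in the (ND)/(D) cases, and checking that scaling by $\KK^\times$ is the correct equivalence so that the counts $\eta_3(S,W)$ come out right; I expect this to be the main obstacle, and it is handled by writing down the rank-$2$ (or rank-$3$) subspace $T+W$ explicitly and computing its radical.
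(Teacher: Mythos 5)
Your handling of the set-theoretic identities, the descriptions of $E_1$, the case ($=$), and the two ``in particular'' items matches the paper and is fine; the real content of the lemma is the two bijections for $E_3$, and there your proposal has a genuine error in one case and only a restatement of the claim in the other.

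In the case ($\perp$) the lemma asserts $E_3(S,W)\cong \I((S+W)^\perp)$, a bijection with the set of non-zero isotropic \emph{vectors} (this is what makes $\eta_3=I_{n-2}$ in Table \ref{tab:etaValues}). Your map sends $T$ to the span of the radical of $T+W$ and you then ``quotient by scalars''; but that map is $(q^2-1)$-to-one, since for a fixed isotropic line $\gen{v'}\leq (S+W)^\perp$ all the lines $\gen{e_2+\lambda v'}$, $\lambda\in\KK^\times$, lie in $E_3(S,W)$ and have $\Rad(T+W)=\gen{v'}$. Identifying $E_3$ with isotropic lines therefore undercounts by a factor of $q^2-1$. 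The correct bijection has no quotient: since $T\not\perp W$ and $T\perp S$, the line $T$ has a \emph{unique} generator of the form $e_2+v'$ with $v'\in(S+W)^\perp$, and $T\mapsto v'$ is the bijection onto $\I((S+W)^\perp)$.

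In the cases (ND) and (D) your proposed correspondence --- $T$ goes to an isotropic vector of $S^\perp$ outside $(S+W)^\perp$, found by ``computing the radical of $T+W$'' --- cannot work as stated, because here $W\not\leq S^\perp$, so $T+W$ is not contained in $S^\perp$ and $\Rad(T+W)$ need not meet $S^\perp$ at all. The paper's argument is of a different nature: it translates degeneracy of $\gen{v,w}$ into the scalar equation $|v|=\Psi(v,w)\tau(\Psi(v,w))$, rewrites this using an element $\zeta$ with $\zeta\tau(\zeta)=-1$ (which exists unless $V$ is anisotropic, in which case $E_3=\emptyset$ anyway), shows the non-isotropic solutions biject with $\I(\gen{e_1-\zeta\tilde w}^\perp)\setminus\I((S+W)^\perp)$ modulo scalars, and finally applies a unitary transformation carrying the non-degenerate vector $e_1-\zeta\tilde w$ to $e_1$ to land in $\I(S^\perp)\setminus\I((S+W)^\perp)$. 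This chain --- and in particular the reason the quotient by $\KK$ appears here but not in the ($\perp$) case --- is exactly the part your proposal defers, so the main step of the lemma remains unproved.
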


\begin{proof}
The assertions about the intersections follow immediately from the definitions.
The same holds for \eqref{eq:set1} and \eqref{eq:set2}, 
the assertions on $E_i(S,W)$, $i=1,2,3$, in the case ($=$), and the 
identity $E_1(S,W) = \F( (S+W)^\perp)_1$ in cases ($\perp$), (ND) and (D). 

Since in \eqref{eq:set1} we have a disjoint union, it follows that
once $\F(S^\perp)_1$ is fixed, $E_3(S,W)$ and $E_0(S,W)$ determine each other.
Since we already know $E_1(S,W)$ again by
disjointness in \eqref{eq:set2}, we can calculate $E_2(S,W)$ from $E_0(S,W)$.
Thus given $E_1(S,W)$ and $\F(S^\perp)_1$ we have that $E_3(S,W)$ determines $E_2(S,W)$. 

Thus it remains to determine $E_3(S,W)$ in case ($\perp$), (D) and (ND). 
Also $E_3(S,W) = \emptyset$ if $V$ is anisotropic.
Thus, by Lemma \ref{lem:tau-1}, we can assume there is $\zeta \in \KK$
such that $\zeta \tau(\zeta) = -1$.

From now on, since we are working under (ON) and to simplify the notation, we assume that $\Psi$ is the canonical non-degenerate Hermitian form of $V = \KK^n$.
By the transitivity of the action of $\GU(V)$ (see Remark \ref{rem:elemproperties}) we may also assume that $S = \gen{e_1}$, where $e_i$ denotes the $i$th vector of the canonical basis of $\KK^n$.

\medskip

\noindent\textbf{Case 1.} If $S \perp W$ then $E_3(S,W)$ is in bijection with the set $\I((S+W)^\perp)$.

\begin{proofofcase}[Proof of Case 1.]

Write $W = \gen{w}$.
Let $T\in E_3(S,W)$.
Since $T\not\perp W$ and $T\perp S$, there is a unique generator $t$ of $T$ such that $t = w + v'_T$, with $v'_T$ a non-zero vector orthogonal to $S+W$.
In particular, since $T+W$ is degenerate, $v'_T$ is isotropic.
Thus $T+W = \gen{w,v'_T}$ and its radical is exactly $\gen{v'_T}$.
This defines a bijective map $T\in E_3(S,W) \mapsto v'_T \in \I((S+W)^\perp)$,  with inverse given by $v' \mapsto \gen{w+v'}$.
\end{proofofcase}

\noindent\textbf{Case 2.} If $S, W$ satisfy (ND) or (D), then $E_3(S,W) \cong \left( \I(S^ \perp) \setminus \I((S+W)^ \perp) \right)/\KK$.

\begin{proofofcase}[Proof of Case 2.]
Assume $W = \gen{w}$ for $w = 
w_1e_1 + \tilde{w}$ and some 
$\tilde{w} \in S^\perp$. Since $S\neq W$ and $S\not\perp W$ we have $w_1\neq 0$ and $\tilde{w}\neq 0$.

Let $T\in \G(S^\perp)$. 
Observe that, if $T = \gen{v}$, then $T + W = \gen{v,w} = \gen{v, w_1e_1+\tilde{w}}$ is degenerate if and only if there exists a non-zero vector $u = \alpha v + \beta w$, $\alpha,\beta\in\KK$, such that:
\[\begin{cases}
0 = \Psi(u,v) = \alpha |v| + \beta \Psi(w,v),\\
0 = \Psi(u,w) = \alpha \Psi(v,w) + \beta |w|.
\end{cases}\]
Such non-zero vector $u$ exists if and only if the determinant of the underlying matrix is zero, that is,
\begin{equation}
\label{eq1}
|v||w| = \Psi(v,w)\tau(\Psi(v,w)) = \Psi(v,\tilde{w}) \tau(\Psi(v,\tilde{w})).
\end{equation}
Therefore, we need to determine the vectors $v$ which yield \eqref{eq1}.

By (ON), without loss of generality, we can assume $|w|=1$.
Then, using $\zeta\in\KK$ with $\zeta\tau(\zeta) = -1$, equation \eqref{eq1} takes the following equivalent form:
\begin{align}
\label{eq2}
0 & = (\tau(\zeta)  \Psi(v,\tilde{w}))\,\tau( \tau(\zeta)  \Psi(v,\tilde{w})) + |v| \\
 \nonumber & = (\Psi(v,\zeta\tilde{w})\,\tau(\Psi(v,\zeta\tilde{w})) + \sum_{i=2}^n v_i\tau(v_i).
\end{align}

We will show that the solutions 
$v$ for \eqref{eq2} satisfying $|v| \neq 0$ are in one-to-one correspondence with the solutions of the following system of equations.
For $x\in \KK^n$, write $x = x_1e_1 + \tilde{x}$, $\tilde{x}\in \{0\}\times \KK^{n-1}$.
With this notation, $v = \tilde{v}$.
Consider the equations:
\begin{equation}
\label{eq3}
\begin{cases}
0 = |x| = x_1\tau(x_1) + \sum_{i=2}^n x_i \tau(x_i) = x_1\tau(x_1) + |\tilde{x}|,\\
0 = \Psi(x, e_1 - \zeta\tilde{w} ) = x_1 - \Psi(\tilde{x},\zeta\tilde{w}) ,\\
|\tilde{x}| \neq 0.
\end{cases}
\end{equation}
Note that if $v$ satisfies \eqref{eq2} then we get a solution $x$ to the system \eqref{eq3} by defining $x_1=\Psi(v,\zeta\tilde{w})$ 
and $\tilde{x} = \tilde{v}$.
Conversely, if $x$ satisfies \eqref{eq3}, then the vector $v = \tilde{v} =\tilde{x}$ satisfies $|v|=|\tilde{v}|=|\tilde{x}|\neq 0$ and
\[ \Psi(v,\zeta\tilde{w})\tau(\Psi(v,\zeta\tilde{w})) + |v| = \Psi(\tilde{x},\zeta\tilde{w})\tau(\Psi(\tilde{x},\zeta\tilde{w})) + |\tilde{x}| = x_1\tau(x_1) + |\tilde{x}| = 0.\]
This implies that \eqref{eq2} holds.

It remains to determine the solutions to the system  \eqref{eq3}.
First, we determine the non-zero solutions of the first two equations
allowing the possibility $|\tilde{x}|=0$.
These are the isotropic vectors 
in the orthogonal complement of $e_1 - \zeta\tilde{w}$.
Therefore, the solutions are given by $\I(\gen{e_1 - \zeta\tilde{w}}^\perp)$.
Moreover, from
\[ |e_1 - \zeta\tilde{w}| = 1 - |\tilde{w}| = 1 - ( |w| - w_1\tau(w_1)) = 1 - ( 1 - w_1\tau(w_1))=w_1\tau(w_1)\neq 0,\]
we see that $e_1 - \zeta\tilde{w}$ is a non-degenerate vector.

Next, we determine non-zero solutions with $|\tilde{x}|= 0$.
Note that, since $|x|=0$, we have that:
\begin{align*}
 |\tilde{x}| = 0 \, \Leftrightarrow \, x_1\tau(x_1) = 0 \, \Leftrightarrow \, x_1 = 0 \, \Leftrightarrow \, \Psi(\tilde{x},\tilde{w}) = 0.
\end{align*}
Then the solutions $x$ with $|\tilde{x}| = 0$ are given by the isotropic vectors lying in the orthogonal complement of $\tilde{w}\neq 0$ in the non-degenerate space $S^\perp = \{0\} \times \KK^ {n-1}$. This is the set $\I(\gen{e_1,\tilde{w}}^\perp) = \I((S+W)^\perp)$.
A unitary transformation mapping $\gen{e_1 - \zeta\tilde{w}}$ to $\gen{e_1}$ and $\gen{e_1,\tilde{w}}$ to itself establishes then a bijection between $E_3(S,W)$ and the set $\left( \I(S^ \perp) \setminus \I((S+W)^ \perp) \right)/\KK$.
This finishes the proof of Case 2.
\end{proofofcase}

Assertion (ii) is a simple consequence of the definition.
We briefly indicate how to deduce (i).
If $E_0(S,W)$ is empty then so is $E_1(S,W)$.
Moreover, $E_1(S,W)$ is empty if and only if $(S+W)^\perp$ is totally isotropic.
Since $n\geq 3$, this can only happen if $n=3$ and $S+W$ is degenerate.
\end{proof}

\begin{remark}
\label{rk:characterisationE2}
Let $V$ be a unitary space of dimension $n\geq 2$ over $\KK$ and let $S,W \in \G(V)$.
Suppose in addition that $S\not\perp W$.
Write $S = \gen{s}$ and $W = \gen{s+\tilde{w}}$ for some $\tilde{w}\perp s$.
Then we have the following characterization of $E_2(S,W)$: a $1$-dimensional subspace $\gen{v}$ belongs to $E_2(S,W)$ if and only if the following conditions hold:
\begin{enumerate}
    \item (Non-isotropic) $|v|\neq 0$,
    \item ($\perp S$) $\Psi(v,s) = 0$,
    \item ($\not\perp W$) $\Psi(v,\tilde{w})\neq 0$,
    \item (ND sum) $\Psi(v,\tilde{w})\Psi(\tilde{w},v)\neq |v||s+\tilde{w}|$.
\end{enumerate}
To see the equivalence, note that the first three conditions are equivalent to $\gen{v}\in \G(S^\perp)$ with $\gen{v}\not\perp W$.
It remains to show that $\gen{v}+W$ is non-degenerate if and only if the fourth condition (ND sum) holds.
But this is exactly the negation of the condition given in equation (\ref{eq1}) for the sum to be degenerate (note that (ON) is not needed for this equivalence).
\end{remark}

The following corollary restates the results from Lemma \ref{mainlemmaCountingDegAndNonDeg} in enumerative terms 
for the case that $\KK$ is a finite field.

\begin{corollary}
\label{lemmaCountingDegAndNonDeg}
Let $S,W\in \G(V)$, where $V$ is a unitary space of dimension $n\geq 3$ over a field $\KK$ satisfying (ON).
Then we have the equalities:
\begin{align*}
    d_n & = \eta_0(S,W)+ \eta_3(S,W),\\
    \eta_0(S,W) & = \eta_1(S,W)+\eta_2(S,W) + \begin{cases}
    1 & S \perp W,\\
    0 & S \not\perp W.
    \end{cases}
\end{align*}

Moreover, if $\KK = \GF{q^2}$ is a finite field, Table \ref{tab:etaValues} records the values of $\eta_1(S,W)$, $\eta_2(S,W)$ and $\eta_3(S,W)$ and determines $\eta_0(S,W)$.

\begin{table}[ht]
\renewcommand{\arraystretch}{1.8}
    \centering
    \begin{tabular}{|c|c|c|c|}
    \hline
        & $\eta_1(S,W)$ & $\eta_2(S,W)$ & $\eta_3(S,W)$ \\
    \hline
        $S,W$ satisfy ($=$) & $d_n$ & $0$ & $0$ \\
        \hline
        $S, W$ satisfy ($\perp$) & $d_{n-1}$ & $d_n - d_{n-1}-I_{n-2}-1$ & $I_{n-2}$\\
        \hline
        $S,W$ satisfy (ND) & $d_{n-1}$ & \parbox{6cm}{\centering $d_n - d_{n-1}(q+2) = $\\
        $q^{2n-4} - 2 q^{2n-5} + 2q^{n-3}(-1)^n$} & $d_{n-1}(q+1)$ \\
        \hline
        $S,W$ satisfy (D)& $d^1_{n-1}$ & $q^{2n-4}-2q^{2n-5}$ & $q^{2n-5}$ \\
    \hline
    \end{tabular}
    
    \medskip
    
    \caption{Values of $\eta_1,\eta_2,\eta_3$ for a fixed pair $S,W\in \G(V)$.}
    \label{tab:etaValues}
\end{table}

\end{corollary}
\begin{proof}
The equalities for $d_n$ and $\eta_0$ follow immediately from the definitions and Lemma \ref{mainlemmaCountingDegAndNonDeg}.

Suppose then that $\KK = \GF{q^2}$ is a finite field.
Except for the entries for $\eta_2(S,W)$ and $\eta_3(S,W)$ in cases (ND) and (D),
  the entries in Table \ref{tab:etaValues} follow from the definitions and Lemma \ref{mainlemmaCountingDegAndNonDeg}.
  In cases (ND) and (D), the size of the set $E_3(S,W)$ can be computed by using Lemma \ref{mainlemmaCountingDegAndNonDeg}.
 That is, $\eta_3(S,W)$ equals the cardinality of $\left(\I(S^ \perp) \setminus \I((S+W)^ \perp)\right)/ \KK$, which depends on whenever $S+W$ is degenerate or not.
  We get
  \begin{align*}
    \# \Big(\left(\I(S^ \perp) \setminus \I((S+W)^ \perp)\right)/ \KK\Big) & = 
\begin{cases}
(I_{n-1} - I^1_{n-2})\frac{1}{q^{2}-1} & S+ W \text{ degenerate},\\
(I_{n-1} - I_{n-2})\frac{1}{q^{2}-1} & S+ W \text{ non-degenerate}.
\end{cases}
\end{align*}
Now, the values for $\eta_2(S,W)$ and $\eta_3(S,W)$ in rows (ND) and (D) of Table \ref{tab:etaValues} follow from the formulas provided in Appendix \ref{appendixCountingVectorsAndSubspaces}.
\end{proof}

Next, we show that the numbers $l_{k-1}(T,W)$ in \eqref{eqInductiveCounting} only depend on which of the cases
($=$), ($\perp$), (ND) or (D) holds for $T$ and $W$. 

\begin{lemma} \label{lem:welldefined}
  Let $V$ be a unitary space of dimension $n\geq 2$ over $\KK$ satisfying (ON). 
  Let $T,W,T',W' \in \G(V)$. Then for $k \geq 0$ we have:
  \begin{itemize}
      \item[($=$)] $l_{k}(T,T) = l_{k}(T',T')$.
      \item[($\perp$)] If $T \perp W$ and $T' \perp W'$ then 
      $l_{k}(T,W) = l_{k}(T',W')$.
      \item[(ND)] If both $T,W$ and $T',W'$ satisfy (ND) then $l_{k}(T,W) = l_{k}(T',W')$.
      \item[(D)] If $T+W$ and $T'+W'$ are degenerate then 
      $l_k(T,W) = l_k(T',W')$.
  \end{itemize}
\end{lemma}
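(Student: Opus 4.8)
The strategy is to reduce each claimed equality to a statement about the existence of a unitary transformation sending the pair $(T,W)$ to the pair $(T',W')$, and then invoke the $\GU(V)$-invariance of the walk-counting function $l_k$. Indeed, if $g \in \GU(V)$ satisfies $g(T) = T'$ and $g(W) = W'$, then $g$ induces a graph automorphism of $\G(V)$ (since it preserves orthogonality), hence a bijection on walks of any fixed length $k$ between the two vertices, giving $l_k(T,W) = l_k(T',W')$. So the entire content is: \emph{any two pairs of vertices in the same relative position $(=)$, $(\perp)$, (ND), (D) are $\GU(V)$-equivalent}. The main tool is Witt's extension theorem (cited on p.\ 81 of \cite{AscFGT} and already invoked in Remark~\ref{rem:elemproperties}): any isometry between subspaces of $V$ extends to an element of $\GU(V)$.

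First I would handle the four cases by constructing the required isometry $S+W \to S'+W'$ on spans and extending. For $(=)$: $T = \gen{t}$, $T' = \gen{t'}$ with $|t|, |t'| \neq 0$; by (ON) we may rescale so that $|t| = |t'|$ (both equal to some fixed norm, or more carefully, rescale $t'$ by $x$ with $x\tau(x) = |t|/|t'|$, which exists by (ON) applied to an appropriate vector — one must check this quotient is itself a norm, which follows since the set of norms is a subgroup of $\KK^\times$ under the hypothesis). Then $t \mapsto t'$ is an isometry of $1$-spaces, extend it. For $(\perp)$: pick generators $t,w$ of $T,W$ and $t',w'$ of $T',W'$ with $t\perp w$, $t'\perp w'$; rescale as above so $|t|=|t'|$ and $|w|=|w'|$; then $t\mapsto t'$, $w\mapsto w'$ is an isometry $\gen{t,w}\to\gen{t',w'}$, extend it. For (ND): here $S+W$ and $S'+W'$ are both non-degenerate $2$-dimensional unitary spaces, which are unique up to isometry by (ON) (as noted after Remark~\ref{rem:elemproperties}); but one must produce an isometry carrying $T$ to $T'$ and $W$ to $W'$ specifically. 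Using an orthonormal basis of the $2$-space adapted to $T$ (write $T=\gen{e_1}$, $W = \gen{ae_1 + be_2}$ with $|ae_1+be_2|\neq 0$), the pair is determined up to isometry by the ``angle'' data; I expect the cleanest route is to normalize both $W$ and $W'$ to the same generator relative to the respective orthonormal bases $\{e_1,e_2\}$, $\{e_1',e_2'\}$ of $S+W$, $S'+W'$, using that $\Stab_{\GU(S+W)}(T)$ acts transitively enough on the relevant $W$'s — which is essentially Remark~\ref{rem:elemproperties}(iv) applied inside the $2$-space, together with scaling. For (D): $S+W$ is a $2$-dimensional degenerate space, necessarily with $1$-dimensional radical $\Rad(S+W) = \gen{r}$; both $S$ and $W$ are non-degenerate $1$-spaces inside it, and the quotient $(S+W)/\gen{r}$ is $1$-dimensional. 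One constructs an isometry $S+W \to S'+W'$ sending $r \mapsto r'$ and matching the non-degenerate directions; the key point is that a $2$-dimensional space with a $1$-dimensional radical, containing two prescribed complementary non-degenerate lines, is determined up to isometry once norms are matched via (ON). Then extend by Witt.

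The step I expect to be the main obstacle is case (ND) — and to a lesser extent (D) — namely verifying that matching the relative position of a single pair of lines, rather than just the isometry type of the ambient $2$-space, can be arranged. The subtlety is that $\Stab_{\GU(S+W)}(T)$ might not act transitively on all $W$ in position (ND) with $T$; however, $l_k$ need only be shown constant on \emph{those} orbits it is actually being evaluated on, so if there are several orbits the argument would need to show $l_k$ agrees across them — but in fact I believe a single orbit suffices here, because after fixing $T = \gen{e_1}$ and an orthonormal complement, any (ND)-partner $W$ can be normalized: write $W = \gen{e_1 + \lambda e_2}$ (non-degenerate forces $1 + \lambda\tau(\lambda) \neq 0$... up to rescaling $e_2$, and up to rescaling the generator of $W$), and the residual freedom in choosing $e_2$ within $T^\perp\cap(S+W)$ plus rescaling lets us reduce to a canonical $W$ depending only on the fixed norm data, which by (ON) is itself canonical. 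An honest writeup would spell out exactly which scalars are being adjusted and cite Remark~\ref{rem:elemproperties}(iii)--(iv) and Witt at each extension; but the logical skeleton is: normalize generators via (ON), build the isometry on $\leq 2$-dimensional spans, extend via Witt, transport walks.
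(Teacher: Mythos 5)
Your reduction to $\GU(V)$-equivalence works for the cases $(=)$, $(\perp)$ and (D) — for (D) the paper does exactly what you sketch, normalizing generators to $t$ and $t+\tilde w$ with $|t|=1$ and $\tilde w$ isotropic and applying Witt — but it fails for (ND), and this is a genuine gap. The claim that ``a single orbit suffices'' is false: with $T=\gen{e_1}$ and $W=\gen{e_1+\lambda e_2}$ in an orthonormal basis of $T+W$, the stabilizer of $T$ in $\GU(T+W)$ only changes $\lambda$ by a norm-one scalar, so the quantity $\lambda\tau(\lambda)\in\Fix(\tau)^\times$ is an invariant of the pair. For $\KK=\GF{q^2}$ with $q\geq 4$ (and for $\CC$) this invariant takes several values compatible with (ND), so there are several $\GU(V)$-orbits of (ND)-pairs and no isometry carries $(T,W)$ to $(T',W')$ in general. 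The paper states this obstruction explicitly (``Here it is not always true that we can find a unitary transformation mapping $T$ onto $T'$ and $W$ onto $W'$''), so the equality $l_k(T,W)=l_k(T',W')$ in case (ND) genuinely cannot be obtained by transporting walks along a global isometry.

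What the paper does instead is prove the lemma by induction on $k$ via the recursion \eqref{eqInductiveCounting}: one partitions $\G(T^\perp)$ into the classes $E_j(T,W)$ according to the relative position with $W$, and shows the class sizes $\eta_j(T,W)$ depend only on the case $(=)$, $(\perp)$, (ND), (D). For (ND) this is done by choosing an isometry $\varphi$ matching the normalized data $(t,\tilde w)\mapsto(t',\tilde w')$ — which need not carry $W$ to $W'$ — and observing from the algebraic characterization of $E_2$ in Remark \ref{rk:characterisationE2} that these sets depend only on $(t,\tilde w)$ and not on the coefficient $a$ in $W=\gen{at+\tilde w}$, so $\varphi$ still bijects $E_j(T,W)$ with $E_j(T',W')$ for each $j$. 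Your proposal is missing both this inductive structure and the invariance argument that replaces transitivity; as written, case (ND) does not go through.
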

\begin{proof}
  Since by Remark \ref{rem:elemproperties}(iii) $\GU(V)$ acts transitively on the vertices of $\G(V)$ preserving orthogonality, the cases ($=$) and ($\perp$) are obvious.

  Note that Lemma \ref{walksLength1and2} shows that 
  $l_{k}(T,W) = l_{k}(T',W')$ for 
  $k=0,1,2$.
  If $k\geq 3$, by  \eqref{eqInductiveCounting} and induction, we see that
\begin{align}
    \label{eqInductivewalksAllEtas}
     l_{k}(T,W) = \,& l_{k-1}(W,W)\,\big(\#(E_0(T,W)\setminus( E_1(T,W) \cup E_2(T,W))\big) + \\
     \nonumber & l_{k-1}\,\big(T_{\perp},W) \,\eta_1(T,W\big) + \\ \nonumber & l_{k-1}(T_{ND},W) \, \eta_2(T,W) + l_{k-1}(T_D,W) \, \eta_3(T,W),
\end{align}
for some $T_\perp, T_{ND}, T_D$ in  
$\G(T^\perp)$ such that $T_\perp$ and $W$ satisfy ($\perp$),
$T_{ND}$ and $W$ satisfy (ND), and $T_D$ and $W$ satisfy (D).

Assume that we are in case (D).
Then there exist generators $t,t+\tilde{w},t',t'+\tilde{w}'$ of $T,W,T',W'$ respectively such that $|t| = 1 = |t'|$, and $\tilde{w}$ and $\tilde{w}'$ are non-zero isotropic vectors orthogonal to $t$ and $t'$ respectively; thus we can define a map $t\mapsto t'$ and $\tilde{w}\mapsto \tilde{w}'$ which is an isometry from $T+W$ onto $T'+W'$ and extends (by Witt's lemma) to an isometry of $V$.
Hence the cardinalities of the sets $E_j(T,W)$ do not depend on the choice of $T,W$ such that $T+W$ is degenerate.

Finally, suppose we are in case (ND).
Here it is not always true that we can find a unitary transformation mapping $T$ onto $T'$ and $W$ onto $W'$.
However, we show that the cardinalities of the sets $E_j(T,W)$ and $E_j(T',W')$ coincide for all $j$.

Since we are in case (ND), we can fix generators $t,a\cdot t+\tilde{w},t',a'\cdot t'+\tilde{w}'$ of $T,W,T',W'$ respectively such that $|t| = 1 = |t'|$, $a,a'\in\KK^\times$, and $|\tilde{w}|=1=|\tilde{w}'|$ with $t\perp \tilde{w}$ and $t'\perp \tilde{w}'$.
Let $\varphi$ be a unitary transformation of $V$ such that $\varphi(t) = t'$ and $\varphi(\tilde{w}) = \tilde{w}'$.
Although in general $\varphi(W)\neq \varphi(W')$, we have that $\varphi(E_2(T,W)) = E_2(T',W')$ by using the characterization of Remark \ref{rk:characterisationE2}.
That is, since $\varphi$ is an isometry, for $v\in V$ we have
\begin{enumerate}
    \item $|v| = |\varphi(v)|$,
    \item $\Psi(v,t) = \Psi(\varphi(v),t')$,
    \item $\Psi(v,\tilde{w}) = \Psi(\varphi(v),\tilde{w}')$,
    \item $\Psi(v,\tilde{w})\Psi(\tilde{w},v)-|v| = \Psi(\varphi(v),\tilde{w}')\Psi(\tilde{w}',\varphi(v))-|\varphi(v)|$.
\end{enumerate}
Hence $\gen{v}\in E_2(T,W)$ if and only if $\gen{\varphi(v)}\in E_2(T',W')$.
This shows that $\varphi(E_2(T,W)) = E_2(T',W')$.

Note also that $\varphi(T+W)=T'+W'$, so $\varphi(E_1(T,W)) = E_1(T',W')$.
This allows us to conclude that $\varphi(E_0(T,W)) = E_0(T',W')$.
Since in addition $\varphi(T) = T'$, we get $\varphi(\F(T)_1) = \F(T')_1$.
By definition of the sets $E_j(\bullet,\bullet)$ and Lemma \ref{mainlemmaCountingDegAndNonDeg}, we conclude that $\varphi$ is a bijection between $E_j(T,W)$ and $E_j(T',W')$ for $j=0,1,2,3$.

\end{proof}

The preceding Lemma \ref{lem:welldefined} shows that the following notation is well-defined:

\begin{definition}
Let $V$ be a unitary space of dimension $n\geq 2$ over $\KK$ satisfying (ON).
We define the following symbols for $k\geq 0$.
\begin{itemize}
    \item $l_k(=) := l_k(S,S)$, for $S\in \G(V)$.
    \item $l_k(\perp) := l_k(S,W)$, for $S,W\in\G(V)$ and $S \perp W$.
    \item $l_k(\text{ND}) := l_k(S,W)$, where $S,W\in\G(V)$, $S\neq W$, $S\not\perp W$ and $S+W$ is non-degenerate.
    \item $l_k(\text{D}) := l_k(S,W)$, where $S,W\in\G(V)$ and $S+W$ is degenerate.
\end{itemize}
If no choice of $S$ and $W$ is possible, then we define the corresponding symbol to be $0$.
\end{definition}

Using the relations between the $E_j(S,W)$ from Lemma \ref{mainlemmaCountingDegAndNonDeg} we can bring 
\eqref{eqInductivewalksAllEtas} from the proof of Lemma \ref{lem:welldefined}
into the following form, which we will use in the calculation of
walks of length $3$ and $4$.

\begin{equation}
    \label{eqInductivewalksEta}
     l_k(S,W) = l_{k-1}(=) \, l_1(S,W) + l_{k-1}(\perp)\, \eta_1(S,W) + l_{k-1}(\text{ND}) \,\eta_2(S,W) + l_{k-1}(\text{D}) \,\eta_3(S,W).
\end{equation}

Before we proceed with the computation of the number of walks of length $3$ and $4$ in $\G(V)$, we need to exclude
a certain situation for $\KK = \GF{2^2}$.

\begin{lemma}
\label{remarkCaseqEquals2}
Let $V$ be a unitary space of dimension $n\geq 2$ over $\GF{2^ 2}$.
If $S,W\in\G(V)$ are such that $S + W$ is non-degenerate, then either $S=W$ or $S\perp W$.
\end{lemma}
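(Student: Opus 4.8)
The plan is to exploit the very special arithmetic of the field $\GF{2^2}$, namely that the norm map $N\colon \GF{2^2}\to\GF{2}$, $N(x)=x\tau(x)=x^{1+2}=x^{3}$, is trivial on $\GF{2^2}^{\times}$ because $\GF{2^2}^{\times}$ is cyclic of order $3$. Concretely, this means $|v|=1$ for every vector $v$ spanning a vertex of $\G(V)$ (the only nonzero value $|v|$ can take), and $\alpha\tau(\alpha)=1$ for every $\alpha\in\GF{2^2}^{\times}$.

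First I would dispose of the trivial case $S=W$. So assume $S\neq W$ and write $S=\gen{s}$, $W=\gen{w}$ with $|s|=|w|=1$; this normalization is possible precisely because $S,W\in\G(V)$, using (ON). Since $S\neq W$ the vectors $s,w$ are linearly independent, so $S+W$ is $2$-dimensional with Gram matrix
\[
G=\begin{pmatrix}|s|&\Psi(s,w)\\[2pt]\Psi(w,s)&|w|\end{pmatrix}=\begin{pmatrix}1&\alpha\\[2pt]\tau(\alpha)&1\end{pmatrix},
\]
where $\alpha:=\Psi(s,w)$ and I used the Hermitian symmetry $\Psi(w,s)=\tau(\Psi(s,w))$.

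Then I would compute $\det(G)=1-\alpha\tau(\alpha)=1+\alpha\tau(\alpha)$, the last equality because $\chara(\KK)=2$. If $\alpha\neq 0$ then $\alpha\tau(\alpha)=N(\alpha)=1$, hence $\det(G)=0$; that is, $S+W$ is degenerate, contradicting the hypothesis. Therefore $\alpha=\Psi(s,w)=0$, which is exactly the assertion $S\perp W$. I expect no genuine obstacle here: the only point requiring (routine) care is the standard fact that non-degeneracy of the finite-dimensional space $S+W$ is equivalent to non-vanishing of the determinant of its Gram matrix in any chosen basis, which applies since $S+W$ is non-degenerate by assumption.
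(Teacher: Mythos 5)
Your proof is correct and rests on exactly the same arithmetic fact as the paper's: the norm $x\mapsto x\tau(x)=x^3$ on $\GF{2^2}$ takes only the values $0$ and $1$, so any nonzero $\Psi(s,w)$ forces the $2\times 2$ Gram determinant $1-\Psi(s,w)\tau(\Psi(s,w))$ to vanish. The paper phrases the same computation via the orthogonal decomposition $w=w_1e_1+\tilde w$ rather than the Gram matrix, but the two arguments are essentially identical.
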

\begin{proof}
We can assume that $V = \GF{2^2}^n$ with the canonical unitary structure, $S = \gen{e_1}$, $W = \gen{w}$ with $w= w_1e_1 + \tilde{w}$, $\tilde{w}\in \{0\} \times \GF{2^2}^{n-1}$, $|w|=1$, and $S\neq W$.
Then $\tilde{w}\neq 0$.
Now, if $S+ W=\gen{e_1,\tilde{w}}$ is non-degenerate, we have that $|\tilde{w}|\neq 0$.
But $|\tilde{w}|\in\GF{2}$ implies $|\tilde{w}|=1$.
Then $1 = |w| = w_1^{3} + |\tilde{w}|$ forces $w_1=0$ and $W\perp S$.
\end{proof}

Now we determine $l_3(\bullet)$ and $l_4(\bullet)$.

\begin{theorem}
\label{theoremwalks3Dim3orMore}
Let $V$ be a unitary space of dimension $n\geq 3$ over $\KK$.
For any $S,W \in \G(V)$ the following are equivalent:
\begin{itemize}
    \item[(i)] $l_3(S,W) > 0$,
    \item[(ii)] $(n,\KK) \neq (3,\GF{2^2})$ or $S+W$ is non-degenerate.
    \end{itemize}
For $\KK = \GF{q^2}$ the values of $l_3(\bullet)$ are given in Table \ref{tab:walks3}.

\begin{table}[ht]
    \centering
    \begin{tabular}{|c|c|}
        \hline
         & $l_3(\bullet)$\\
        \hline
        ($=$) & $d_nd_{n-1}$ \\
        ($\perp$) & $d_n d_{n-1} + q^{3n-8}(-1)^n -q^{2n-6} + q^{2n-4}$\\
        (ND), $q\neq 2$ & $d_n d_{n-1}  + q^{3n-8}(-1)^n - q^{2n-6}$\\
        (D) & $d_n d_{n-1}  + q^{3n-8}(-1)^n$\\ 
         \hline
    \end{tabular}
    
    \medskip
    
    \smallskip
    
    \caption{Walks of length $3$ for $\KK = \GF{q^2}$.}
    \label{tab:walks3}
\end{table}
\end{theorem}

\begin{proof}
First, suppose that $(n,\KK) = (3,\GF{2^2})$ and $S+W$ is degenerate.
By Lemma \ref{doubleJumpDimensionInField2} it follows that $l_3(S,W) = 0$.
This shows that if (ii) fails, then (i) fails.

Now assume that (ii) holds.
For $T\in \G(S^\perp)$, we have $l_3(S,W) \geq l_2(T,W)$.
By Lemma \ref{walksLength1and2}, $l_2(T,W)>0$ unless $n = 3$ and $T+W$ is degenerate.
Therefore, we assume that $n = 3$ and show that $l_3(S,W) > 0$ by exhibiting $T\in \G(S^\perp)$ such that $T+W$ is non-degenerate.
We prove that this is always possible unless $\KK = \GF{2^2}$ and $S+W$ is degenerate, which is excluded by hypothesis (ii).

Write $S = \gen{s}$.
If $S = W$ or $S\perp W$, then clearly such $T$ exists.
Also, if $S+W$ is non-degenerate, we can take $T\in \G( (S+W)^\perp)$ by Lemma \ref{walksLength1and2}, and for such $T$ the sum $T+W$ is non-degenerate.
Hence we suppose that $S+W$ is degenerate.
This means that we can take $w\in W$ such that $w = s + \tilde{w}$ and $\tilde{w} \in S^\perp$ is a non-zero isotropic vector.
By Remark \ref{rk:characterisationE2}, we need a non-isotropic vector $v\in V$ such that $\Psi(v,s) = 0$ and
\begin{equation}
\label{eq:ND_dim3_proof}
|v| |w| \neq \Psi(v,\tilde{w})\Psi(\tilde{w},v).
\end{equation}
From now on, we work in $S^\perp$.
Since $S^\perp$ is a $2$-dimensional unitary space, we can take a basis of non-isotropic orthogonal vectors $v_1,v_2$, and write $a := |v_1| \neq 0$ and $b := |v_2| \neq 0$.
Also let $u_1,u_2\in \KK$ be such that $\tilde{w} = u_1 v_1 + u_2v_2$ (note that $u_1,u_2\neq 0$ since $\tilde{w}$ is non-zero isotropic).
Write $x^*=x\tau(x)$ for $x\in \KK$.
Then $|xv_1+yv_2| = x^*a+y^*b$ for all $x,y\in \KK$.

We prove that there exists $v = xv_1+yv_2\in S^\perp$ such that:
\begin{enumerate}
    \item $x^*a+y^*b |v| \neq 0$,
    \item $1 = \Psi(v,\tilde{w}) = x\tau(u_1)a + y\tau(u_2)b$,
    \item $|v| |w| \neq 1$.
\end{enumerate}
Suppose that $x = 0$, and let $y\neq 0$ be such that $\Psi(v,\tilde{w}) =1$ and $y^*b|v|\neq 0$.
Thus (1) and (2) above hold and
\[ 1 = \Psi(v,\tilde{w}) = y\tau(u_2)b \] 
implies $y = \tau(u_2^{-1})b^{-1}$.
If (3) fails then
\[ 1 =   |v||w| = y^*b |w| = \tau(u_2^{-1})^*b^{-1} |w|.\]
Since $\tau(u_2)^* = u_2^*$, this vector $v$ is not a solution provided that
\[ |w| = u_2^* b.\]
If we proceed analogously with $y = 0$ and $x\neq 0$, we conclude that
\[ |w| = u_1^* a.\]
Therefore, assuming that (3) fails for the two choices $x = 0,y\neq 0$ and $x\neq 0,y = 0$ such that (1) and (2) hold, we conclude that
\[ 0 = |\tilde{w}| = u_1^*a + u_2^* b = 2 |w|.\]
Since $|w|\neq 0$, we see that $\KK$ has characteristic $2$ and $u_1^*a=u_2^*b$, that is, $a = (u_2 u_1^{-1})^*b$.

Now, suppose that any arbitrary non-isotropic vector $v = xv_1+yv_2$ with $\Psi(v,\tilde{w}) = 1$ also has $|v||w| = 1$.
Then
\[1 = \Psi(v,\tilde{w}) = x\tau(u_1)a+y\tau(u_2)b\]
implies that
\begin{align*}
   x & = (1-y\tau(u_2)b)\tau(u_1^{-1})a^{-1}\\
   & = (1-y\tau(u_2)b)\tau(u_1^{-1})(u_2^{-1} u_1)^*b^{-1}\\
   & = ( b^{-1}-y\tau(u_2))(u_2^{-1})^* u_1
\end{align*}
In particular,
\[ x^*  = (1-y\tau(u_2)b-\tau(y)u_2b+y^* u_2^*b^2) (u_1^{-1})^* a^{-2}.\]
On the other hand,
\begin{align*}
    |v| & = x^* a + y^*b\\
    & = (1-y\tau(u_2)b-\tau(y)u_2b+y^* u_2^*b^2) (u_1^{-1})^* a^{-1} + y^*b\\
    & = (1-y\tau(u_2)b-\tau(y)u_2b+y^* u_2^*b^2) (u_1^{-1})^* (u_2^{-1}u_1)^*b^{-1} + y^*b\\
    & =  (b^{-1}-y\tau(u_2)-\tau(y)u_2+y^* u_2^*b) (u_2^{-1})^* + y^*b\\
    & =  (b^{-1}-y\tau(u_2)-\tau(y)u_2) (u_2^{-1})^*.
\end{align*}
Therefore, $|v|\neq 0$ if and only if
\[ b^{-1} \neq y\tau(u_2)+\tau(y)u_2.\]
Now, since we are assuming $|v||w| = 1$, we have:
\begin{align*}
1 & = |v||w|\\
  & = (b^{-1}-y\tau(u_2)-\tau(y)u_2) (u_2^{-1})^* u_2^*b\\
  & = 1 - y\tau(u_2) b - \tau(y)u_2 b,
\end{align*}
which holds if and only if
\begin{align*}
     \tau(y) = y\tau(u_2)u_2^{-1}.
\end{align*}
Since by hypothesis $\KK\neq \GF{2^2}$, we have that $|\Fix(\tau)|\geq 3$.
Then there exists $y\in \Fix(\tau)$ such that $y\neq 0$ and
\[ b^{-1} \neq y\tau(u_2) + \tau(y)u_2 = y( \tau(u_2) + u_2)\]
since $|v| = 0$ has at most one solution in $\Fix(\tau)$.
For this element $y$, $|v|\neq 0$, and this implies that $1 = |v||w|$, or equivalently, $y \tau(u_2) = \tau(y)u_2 = y u_2$.
Thus $\tau(u_2) = u_2$.
Therefore, if $y\in \KK \setminus \Fix(\tau)$ and $|v|\neq 0$, $1 = |v||w|$, that is $y = \tau(y)$, which is a contradiction.
Hence, for all $y\in \KK\setminus \Fix(\tau)$ we have $|v| = 0$, that is
\[ b^{-1} = (y+\tau(y))u_2.\]
But then this means that
\[ \tau(y) = y + (u_2b)^{-1}\]
for all $y\in \KK \setminus \Fix(\tau)$.

Let $c := (u_2b)^{-1}$, and note that $c\in \Fix(\tau)$.
Hence $y+c\notin \Fix(\tau)$, which implies that
\[\tau(y^2) + c^2 = \tau(y^2+c^2) = \tau((y+c)^2) = \tau(y+c)^2 = (y+c+c)^2 = y^2.\]
Since $c^2\neq 0$, $\tau(y^2)\neq y^2$, so $\tau(y^2) = y^2+c$.
Then the previous equation implies that $c+c^2 = 0$, so $c = 1$ and $\tau(y) = y + 1$.
Now, by applying $\tau$ to $y^3$ we reach a contradiction:
\begin{itemize}
    \item If $y^3\notin \Fix(\tau)$, then
    \[y^3+1 = \tau(y^3) = \tau(y^2)\tau(y)= (y^2+1)(y+1) = y^3+y^2+y+1.\]
    This implies that $y^2+y=0$, whose only solutions are $y=0,1 \in \Fix(\tau)$.
    
    \item Therefore $y^3\in \Fix(\tau)$ for all $y\in\KK \setminus\Fix(\tau)$.
    In particular,
    \[ y^3 = \tau(y^2)\tau(y)  = y^3+y^2+y+1.\]
    Then $y^2+y+1 = 0$, which has at most $2$ solutions.
    Thus $|\KK\setminus\Fix(\tau)|\leq 2$, a contradiction.    
\end{itemize}

The final contradiction arose from the assumption that no non-isotropic vector $v\in S^\perp$ with $\Psi(v,\tilde{w}) = 1$ and $1\neq |v||w|$ exists.
Therefore such $v$ exists, and this implies that there exists $T\in \G(S^\perp)$ such that $T+W$ is non-degenerate.
Hence $l_3(S,W)\geq l_2(T,W) > 0$.
This completes the proof that (ii) implies (i).

Suppose now that $\KK = \GF{q^2}$ is a finite field. According to \eqref{eqInductivewalksEta} and Lemma \ref{walksLength1and2}, for $S,W\in \G(V)$ we have that
\begin{align}
\label{eq:rec}
    l_3(S,W) & = l_2(=)\cdot l_1(S,W) + l_{2}(\perp) \cdot \eta_1(S,W) + l_{2}(\text{ND}) \cdot \eta_2(S,W) + l_{2}(\text{D}) \cdot \eta_3(S,W)\\ \nonumber
    & = d_n \cdot l_1(S,W) + d_{n-1}\cdot (\eta_1(S,W)+\eta_2(S,W)) + d^1_{n-1}\cdot \eta_3(S,W).
\end{align}
Since the values of $\eta_j(S,W)$ only depend on the relative position of $S$ and $W$, so does the value of $l_3(S,W)$.
Thus, Table \ref{tab:walks3} follows easily from the recursion formula \eqref{eq:rec}, Table \ref{tab:etaValues} and the formulas given in Appendix \ref{appendixCountingVectorsAndSubspaces}:

\begin{itemize}
    \item[($=$)] If $S=W$, then $l_3(=) = l_3(S,W) = d_n d_{n-1}$.
    \item[($\perp$)] If $S\perp W$ then $l_3(\perp) = l_3(S,W) = d_n d_{n-1} + (d^{1}_{n-1}-d_{n-1})I_{n-2} + d_n -d_{n-1}$.
    \item[(ND)] If $S\neq W$, $S\not\perp W$ and $S + W$ is non-degenerate, then
\[ l_3(\text{ND}) = l_3(S,W) = d_n d_{n-1} + d_{n-1}(d^1_{n-1}-d_{n-1})(q+1) = d_n d_{n-1} + q^{3n-8}(-1)^n  - q^{2n-6}.\]
    Note that this situation cannot arise if $q = 2$ by Lemma \ref{remarkCaseqEquals2}.
    \item[(D)] If $S + W$ is degenerate, then
    \[l_3(\text{D}) = l_3(S,W) = d_n d_{n-1} + q^{2n-5}(d^1_{n-1}-d_{n-1})  = d_n d_{n-1} + q^{3n-8}(-1)^n.\]
\end{itemize}
\end{proof}

Next, we turn to the computation of walks of length $4$ in a unitary space of dimension $n \geq 3$.

\begin{theorem}
\label{theoremwalks4}
Let $V$ be a unitary space of dimension $n\geq 3$ over the field $\KK$. 
For any $S,W \in \G(V)$ the following are equivalent:
\begin{itemize}
    \item[(i)] $l_4(S,W) > 0$,
    \item[(ii)] $(n,\KK) \neq (3,\GF{2^2})$ or $S+W$ is non-degenerate.
\end{itemize}
For $\KK = \GF{q^2}$ the values of $l_4(\bullet)$ are given in Table  \ref{tab:walks4}.
 
\begin{table}[ht]
    \centering
    \begin{tabular}{|c|c|}
        \hline
         & $l_4(\bullet)$\\
        \hline
        ($=$) & $d_n l_3(\perp)$ \\
        ($\perp$) & $(d_n+l_3(\perp))d_{n-1} + l_3(\text{ND})(d_n-d_{n-1}-I_{n-2}-1) + l_3(\text{D})I_{n-2}$\\
        (ND), $q\neq 2$ & $l_3(\perp)d_{n-1} + l_3(\text{ND})(d_n-d_{n-1}) + (l_3(\text{D}) - l_3(\text{ND})) d_{n-1}(q+1) $\\ 
        (D) & $l_3(\perp)d^1_{n-1} + l_3(\text{ND})(d_n - d^1_{n-1}) + (l_3(\text{D})-l_3(\text{ND}))q^{2n-5} $\\ 
         \hline
    \end{tabular}
    
    \medskip
    
    \smallskip
    
    \caption{Walks of length $4$ for $\KK = \GF{q^2}$.}
    \label{tab:walks4}
\end{table}

\end{theorem}
\begin{proof}
Let $T \in \G(S^\perp)$.
By Theorem \ref{theoremwalks3Dim3orMore} we have
$l_3(T,W) >0$ if either $(n,\KK) \neq (3,\GF{2^2})$ or $T+W$ is non-degenerate.
It follows that in all those cases $l_4(S,W) > 0$.
Therefore, we reduce to the case $(n,\KK) = (3,\GF{2^2})$ and $T+W$ is degenerate for all $T \in \G(S^\perp)$.
Since $l_4(S,W) \geq l_2(S,W)$, by Lemma \ref{walksLength1and2}, $S+W$ is degenerate.

Conversely, if $(n,\KK) = (3,\GF{2^2})$ and $S+W$ is degenerate, then $l_4(S,W) = 0$ by Lemma \ref{doubleJumpDimensionInField2}.
This proves that (i) and (ii) are equivalent.

For $\KK=\GF{q^2}$, the values for $l_4(\bullet)$ in Table \ref{tab:walks4} can be deduced from the recursive formula \eqref{eqInductivewalksEta}, analogous to the proof of Theorem \ref{theoremwalks3Dim3orMore}.
We leave the details for the reader.
\end{proof}

Finally, we are in position to provide a proof of Theorem \ref{connectedFramePoset}.

\begin{proof}[Proof of Theorem \ref{connectedFramePoset}]
  Lemma \ref{lem:conn} shows the equivalence between (i) and (ii).
  
  Next, we show the equivalence between (i) and (iii).
  To that aim, we split the proof in the cases $n = 2$ and $n \geq 3$.
  
  \medskip
  
  \noindent\textbf{Claim.} Suppose that $n = 2$.
  Then $\G(V)$ is connected if and only if $\KK = \GF{2^2}$.
  In that case, the diameter is $1$.
  
  \begin{proofofcase}[Proof of Claim.]
  By Lemma \ref{lemmaHatPoset}, $\F(V)$ and $\redF(V)$ 
  are homotopy equivalent. Thus $\redF(V)$ has one connected component for each $2$-frame.
  Thus, since $n = 2$, $\G(V)$ is connected if and only if there is a unique $2$-frame (in which case the diameter must be $1$).
  
  Suppose that $\G(V)$ is connected and that $S = \gen{x},T = \gen{y}$ constitute the unique $2$-frame of $V$.
  Let $a\in \KK^{\times}$.
  Note that $0 = |x+ay| = |x|+a\tau(a) |y|$ if and only if $\tau(a) = (-|x||y|^{-1})a^{-1}$.
  Let $c:=|x||y|^{-1}$.
  If there is $a$ with $|x+ay|\neq 0$ then $\gen{x+ay},\gen{x+ay}^\perp$ is a different $2$-frame, contrary to the connectedness assumption.
  Hence no such $a$ exists, which means that $\tau(a) = -ca^{-1}$.
  In particular, for $a\in\Fix(\tau)^{\times}$ we get $a^2 + c = 0$.
  Hence $\Fix(\tau) = \GF{2}$ and $\KK = \GF{2^2}$.
  
  On the other hand, if $\KK = \GF{2^2}$, a direct computation shows that $\G(V)$ consists of two vertices connected
  by an edge. Hence the diameter is $1$ in this case.
  \end{proofofcase}
  
  If $n \geq 3$ then Theorem \ref{theoremwalks3Dim3orMore} shows that 
  $\G(V)$ is connected of diameter $\leq 3$ if and only if $n \neq 3$ or $\KK \neq \GF{2^2}$.
  Now, Lemma \ref{walksLength1and2} implies that the diameter is $2$ if and only if either $n \geq 4$, or
  $n=3$ and $V$ is anisotropic.
  Thus this also shows that the diameter is exactly $3$ if $n = 3$ and $\KK\neq\GF{2^2}$.
  This completes the proof of the equivalence of (i) and (iii).
\end{proof}

As a consequence of Theorem \ref{connectedFramePoset} we can deduce some facts about
the connectedness of $\redS(V)$. 

\begin{corollary}
\label{corollarySVConnected}
Let $V$ be a unitary space of dimension $n\geq 3$ over a field $\KK$.
Then the poset $\redS(V)$ is connected if and only if $n > 3$ or $\KK \neq \GF{2^2}$.
\end{corollary}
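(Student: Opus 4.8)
The plan is to reduce the connectedness of $\redS(V)$ to the connectedness of $\F(V)$, which is already characterized by Theorem \ref{connectedFramePoset}, and then read off the answer. First I would observe that $\redS(V)$ always contains, as a subposet, all the $1$-dimensional non-degenerate subspaces of $V$ together with all the $2$-dimensional non-degenerate subspaces: since $n\geq 3$, neither of these is empty. So the natural thing is to compare the comparability graph (or more precisely the connectivity) of $\redS(V)$ with the graph $\G(V)$.

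The key steps, in order, would be: (1) If $\F(V)$ (equivalently $\G(V)$) is connected, then $\redS(V)$ is connected. Indeed, two orthogonal $1$-dimensional non-degenerate subspaces $S,T$ span a $2$-dimensional non-degenerate subspace $S\oplus T \in \redS(V)$ (using that a sum of pairwise orthogonal non-degenerate subspaces is non-degenerate, as recorded in Section \ref{sec:forms}), so an edge of $\G(V)$ gives a path of length $2$ in $\redS(V)$ between the corresponding atoms. Hence a path in $\G(V)$ between any two atoms lifts to a path in $\redS(V)$; and any element of $\redS(V)$ is comparable to one of its $1$-dimensional non-degenerate subspaces (a non-degenerate subspace of dimension $\geq 1$ contains a non-degenerate vector, since over any field satisfying (ON) a non-degenerate space has an orthonormal basis), so every vertex of $\redS(V)$ is connected to an atom. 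Therefore $\redS(V)$ is connected whenever $\G(V)$ is, which by Theorem \ref{connectedFramePoset} covers all cases with $n>3$, and the case $n=3$ with $\KK\neq\GF{2^2}$.

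(2) For the converse, I need to show $\redS(V)$ is disconnected when $n=3$ and $\KK=\GF{2^2}$. By Lemma \ref{doubleJumpDimensionInField2}, for $n=3$ over $\GF{2^2}$ the $3$-frames are unique inside their span and there are exactly $4$ of them, and $\G(V)$ has $4$ connected components, one per $3$-frame. I would argue that in dimension $3$ over $\GF{2^2}$ every proper non-degenerate subspace has dimension $1$ or $2$, and a $2$-dimensional non-degenerate subspace, being a unitary space of dimension $2$ over $\GF{2^2}$, has a unique $2$-frame (again by the counting in Appendix \ref{appendixCountingVectorsAndSubspaces} or the $n=2$ case of Theorem \ref{connectedFramePoset}); hence each $2$-dimensional non-degenerate subspace $U$ contains exactly the two atoms of one edge of $\G(V)$, and those two atoms lie in a common $3$-frame. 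So comparabilities in $\redS(V)$ never connect atoms from different components of $\G(V)$, and since every element of $\redS(V)$ is comparable to one of its atoms, $\redS(V)$ inherits (at least) $4$ connected components. This gives disconnectedness in the exceptional case and completes the equivalence.

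The main obstacle I anticipate is step (2): one must be careful that a $2$-dimensional non-degenerate subspace over $\GF{2^2}$ really does decompose into an orthogonal pair of non-degenerate lines in only one way and that both of those lines sit in the same $\G(V)$-component — i.e. that the $4$ components of $\G(V)$ are genuinely separated at the level of $\redS(V)$ and are not merged through the $2$-dimensional subspaces. This is where the special behaviour of $\GF{2^2}$ in dimension $2$ (uniqueness of the frame) is doing all the work, and it should follow cleanly from Lemma \ref{doubleJumpDimensionInField2} and the vertex/edge counts, but it is the step that requires genuine input rather than formal manipulation.
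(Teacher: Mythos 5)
Your proof is correct and follows essentially the same route as the paper: both directions reduce to Theorem \ref{connectedFramePoset} by passing between partial frames and their spans (the paper phrases the forward direction via the surjective poset map $\F(V)^{n-2}\to\redS(V)$ from a connected complex, which is exactly your ``edges of $\G(V)$ lift to length-$2$ paths in $\redS(V)$, and every element of $\redS(V)$ lies above an atom''). Your step (2) is a welcome elaboration of the exceptional case $n=3$, $\KK=\GF{2^2}$, which the paper merely asserts; your key count ($d_3=q(q-1)=2$ non-degenerate lines in a non-degenerate plane over $\GF{2^2}$, forming its unique $2$-frame) is exactly the input needed to see that the four components are not merged in $\redS(V)$.
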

\begin{proof}
Consider the $(n-2)$-skeleton $\F(V)^{n-2}$ of $\F(V)$.
Then we have a surjective poset map  $\F(V)^{n-2}\to \redS(V)$.
If $n > 3$ or $\KK \neq \GF{2^2}$ then  $\F(V)$ is connected by Theorem \ref{connectedFramePoset}.
Moreover, $\F(V)^{n-2}\hookrightarrow \F(V)$ is an $(n-2)$-equivalence, so $\F(V)^{n-2}$ is connected.
Therefore $\redS(V)$ is connected.

If $n=3$ and $\KK=\GF{2^2}$ then $\redS(V)$ is disconnected with $4$ connected components.
\end{proof}

Finally, as a last application of Theorem \ref{connectedFramePoset} in this section we determine the homotopy type of $\F(V)$ for $n =2,3$. 

\begin{proposition}
\label{lowDimensionalCases}
Let $V$ be a unitary space of dimension $n$ over a field $\KK$.

\begin{itemize}
\item[(i)] If $n=3$ and $\KK\neq \GF{2^ 2}$ then $\F(V)$ is homotopy equivalent to a (non-trivial) wedge of spheres of dimension $1$. For $\KK=\GF{q^2}$, $q \neq 2$, there are $\frac{1}{3} (q^6 - 2 q^5 - q^4 + 2 q^3 - 3 q^2 + 3)$ spheres in the wedge. 

\item[(ii)] If $n=3$ and $\KK=\GF{2^2}$ then $\F(V)$ collapses to a discrete space of $4$ points, so it is a wedge of $3$ spheres of dimension $0$. 

\item[(iii)] If $n=2$ then $\F(V)$ is homotopy equivalent to a discrete space of points.
If $\KK= \GF{q^2}$ then there are $q(q-1)/2$ points. 
\end{itemize}
\end{proposition}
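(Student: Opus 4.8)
All three parts rest on the same reduction. Since $V$ is non-degenerate, $\F(V)$ is pure of dimension $n-1\in\{1,2\}$ (Remark~\ref{rem:elemproperties}(v)), so Lemma~\ref{lemmaHatPoset}(i) lets us deformation retract $\F(V)$ onto a subcomplex of dimension $n-2$: a discrete set of points when $n=2$, a graph when $n=3$. For $n=2$ this is transparent directly: $\G(V)$ is $d_2$-regular with $d_2=1$, hence it is the perfect matching pairing each vertex $S$ with $S^\perp$, so $\F(V)$ is a disjoint union of $d_3/2$ edges and is therefore homotopy equivalent to a discrete space with $d_3/2$ points; for $\KK=\GF{q^2}$ this equals $q(q-1)/2$ by Appendix~\ref{appendixCountingVectorsAndSubspaces}. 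This proves (iii). For $n=3$ and $\KK=\GF{2^2}$, Lemma~\ref{doubleJumpDimensionInField2} identifies $\F(V)$ up to homotopy with the discrete $4$-point poset $\doublehat{\F}(V)$, one point per full $3$-frame, so $\F(V)\simeq\bigvee_3 S^0$; for the sharper ``collapses'' statement note that here $\G(V)$ is $d_3=2$-regular on $d_4=12$ vertices with $4$ components, so each component is a $3$-cycle, hence a clique, hence spans a $2$-simplex of $\F(V)$, and a $2$-simplex collapses to a point. This proves (ii).

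For $n=3$ and $\KK\neq\GF{2^2}$, $\F(V)$ is connected by Theorem~\ref{connectedFramePoset}, so by the first paragraph it deformation retracts onto a connected graph and is therefore homotopy equivalent to $\bigvee_{i\in I}S^1$, where $|I|=1-\chi(\F(V))$ (the Euler characteristic is a homotopy invariant, and a connected graph collapses to a wedge of circles after contracting a spanning tree). For $\KK=\GF{q^2}$ one computes $\chi(\F(V))=f_0-f_1+f_2$ with $f_0=d_4$ vertices, $f_1=\tfrac12 d_4 d_3$ edges (as $\G(V)$ is $d_3$-regular), and $f_2=\tfrac16 d_4 d_3$ triangles, the last being the number of full $3$-frames: choose $S_1$ in $d_4$ ways, then $S_2\subseteq S_1^\perp$ in $d_3$ ways, note $S_3=\gen{S_1,S_2}^\perp$ is forced, and divide by $3!$ (cf.\ Appendix~\ref{appendixCountingVectorsAndSubspaces}). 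Substituting $d_4=q^2(q^2-q+1)$ and $d_3=q(q-1)$ and simplifying gives $\chi(\F(V))=\tfrac13 q^2(q^2-q+1)(3+q-q^2)$, so that $|I|=1-\chi(\F(V))=\tfrac13\bigl(q^6-2q^5-q^4+2q^3-3q^2+3\bigr)$, a positive integer since $q\neq 2$; in particular the wedge is non-trivial in the finite case.

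For a general field $\KK\neq\GF{2^2}$ one must still check $H_1(\F(V))\neq 0$. The structural input is that when $n=3$ every edge $\{S,T\}$ of $\G(V)$ lies in exactly one triangle of $\F(V)$, the full $3$-frame $\{S,T,\gen{S,T}^\perp\}$, so $H_1(\F(V))=H_1(\G(V))/\langle\,\partial\Delta:\Delta\text{ a full }3\text{-frame}\,\rangle$ and distinct triangle boundaries have disjoint edge-supports. Fixing an orthonormal basis $e_1,e_2,e_3$ and choosing $\lambda,\mu\in\KK^\times$ so that $1+\lambda\tau(\lambda)$, $1+\mu\tau(\mu)$ and $1+\lambda\tau(\lambda)+\mu\tau(\mu)$ are all non-zero, the subspaces $\gen{e_1}$, $\gen{e_2}$, $\gen{e_1+\mu e_3}$, $\gen{-\tau(\mu)e_1-\tau(\lambda)e_2+e_3}$, $\gen{e_2+\lambda e_3}$ form, in this cyclic order, a vertex-distinct $5$-cycle in $\G(V)$ no three consecutive vertices of which are pairwise orthogonal; comparing coefficients against the disjointly-supported triangle boundaries shows this cycle is not a boundary, hence non-zero in $H_1(\F(V))$. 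Such $\lambda,\mu$ exist whenever $\KK$ is infinite, since then the norm map $x\mapsto x\tau(x)$ has infinite image (it contains every square of the infinite field $\Fix(\tau)$), and for finite $\KK\neq\GF{2^2}$ non-triviality was already established by the explicit count.

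The conceptual content is modest — Lemma~\ref{lemmaHatPoset} together with the fact that a connected graph is a wedge of circles — so the real work is essentially bookkeeping: the three face-counts and the polynomial simplification in part (i), and, for the general-field non-triviality, checking that the displayed $5$-cycle is vertex-distinct, has the stated orthogonality pattern, and is not a boundary. This last verification, which relies on the uniqueness of the triangle through each edge when $n=3$, is the step I expect to demand the most care.
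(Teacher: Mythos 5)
Your proposal is correct and, for the core of each item, follows the same route as the paper: the collapse to a codimension-one subcomplex via Lemma \ref{lemmaHatPoset}(i) (resp.\ Lemma \ref{doubleJumpDimensionInField2} for $\GF{2^2}$), connectedness from Theorem \ref{connectedFramePoset}, and the Euler-characteristic count of Lemma \ref{eulerCharacteristicFrameComplex} in the finite case; your face counts $f_0=d_4$, $f_1=\tfrac12 d_4d_3$, $f_2=\tfrac16 d_4d_3$ agree with \eqref{eqNumberMFrames} and the polynomial simplification checks out. The one place where you genuinely go beyond the paper is the non-triviality of the wedge in part (i) for \emph{infinite} fields: the paper's proof only invokes the Euler characteristic ``in the finite case'' and leaves the infinite case implicit, whereas you supply an explicit $5$-cycle together with the observation that each edge of $\G(V)$ lies in a unique triangle with triangles pairwise edge-disjoint, so the cycle survives in $H_1(\F(V))$. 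I verified the orthogonality pattern, the non-degeneracy conditions ($|v_3|=1+\mu\tau(\mu)$, $|v_5|=1+\lambda\tau(\lambda)$, $|v_4|=1+\lambda\tau(\lambda)+\mu\tau(\mu)$), and the existence of suitable $\lambda,\mu$ when the norm image is infinite; this supplementary argument is sound and actually closes a small gap in the paper's own (very terse) proof.
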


\begin{proof}
Assertion (i) follows from Theorem \ref{connectedFramePoset}, the fact that $\F(V)$ collapses to a subcomplex of dimension $3-2=1$ by Lemma \ref{lemmaHatPoset} and the computation of the reduced Euler characteristic of $\F(V)$ by using Lemma \ref{eulerCharacteristicFrameComplex} in the finite case.

Assertion (ii) follows from Lemma \ref{doubleJumpDimensionInField2}.

Finally, if $V$ is a unitary space of dimension $2$, then $\F(V)$ collapses to a $0$-dimensional subposet by Lemma \ref{lemmaHatPoset}.
If in addition $\KK=\GF{q^2}$, the number of such points is $(q-1)/2$ by Lemma \ref{eulerCharacteristicFrameComplex}.
In particular, $\F(V)$ is homotopy equivalent to a discrete space, and this is contractible if and only if $\KK = \GF{2^2}$.

\end{proof}

\section{Simple connectivity of the frame complex}\label{sec:simple}

In this section, we work with the minimal closed walks in the graph $\G(V)$ and show that if the dimension of $V$ is \textit{big enough}, then they are homotopically trivial.
Since minimal closed walks generate the fundamental group of $\F(V)$, this will allow us to conclude simple connectivity.
A similar approach was employed by Das in \cite{Das} to study the simple connectivity of the poset $\S(V) \setminus \{0,V\}$ of proper non-zero non-degenerate subspaces of $V$.

As mentioned at the beginning of Section \ref{sec:walk}, in a walk we allow the repetition of edges and vertices, while this is not allowed 
in a cycle (except for the vertex closing the cycle). Clearly, a minimal 
closed walk is a cycle. 

An $m$-gon in a graph $\G$ is a closed walk $C$ in $\G$ such that if $v,v'\in C$ then $d_C(v,v') = d_{\G}(v,v')$, where $d_C$, $d_{\G}$ denote the distance between vertices in $C$ (regarded as a subgraph) and $\G$ respectively.
In particular, an $m$-gon is a cycle.
We say that a closed walk $C$ in the graph $\G$, based on a vertex $x\in \G$ is a \textit{tailed $m$-gon based on $x$} if there exists an $m$-gon $C'$ and a walk $\gamma$ from $x$ to some vertex $y\in C'$ such that $C = \gamma * C' * \gamma^{-1}$, where $*$ denotes the concatenation of paths.
That is, $C$ is the cycle that goes through $\gamma$ until the vertex $y$, then it goes through $C'$ and then it comes back via $\gamma^{-1}$ to $x$.
See Figure \ref{fig:exampleTailedMGon}.
Analogously we can define tailed cycles.

\begin{figure}[ht]

\centering
\begin{picture}(0,0)%
 \includegraphics[scale=0.5]{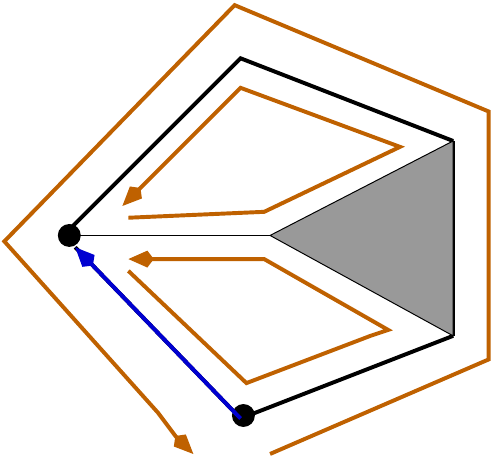}%
\end{picture}%
\setlength{\unitlength}{4144sp}%
\begingroup\makeatletter\ifx\SetFigFont\undefined%
\gdef\SetFigFont#1#2#3#4#5{%
  \reset@font\fontsize{#1}{#2pt}%
  \fontfamily{#3}\fontseries{#4}\fontshape{#5}%
  \selectfont}%
\fi\endgroup%
\begin{picture}(3486,3216)(1948,-3919)
\put(2850,-2830){\makebox(0,0)[lb]{\smash{{\SetFigFont{12}{14.4}{\rmdefault}{\mddefault}{\updefault}{\color[rgb]{.75,.38,0}$C'$}%
}}}}
\put(2850,-3450){\makebox(0,0)[lb]{\smash{{\SetFigFont{12}{14.4}{\rmdefault}{\mddefault}{\updefault}{\color[rgb]{.75,.38,0}$C''$}%
}}}}
\put(2800,-3950){\makebox(0,0)[lb]{\smash{{\SetFigFont{12}{14.4}{\rmdefault}{\mddefault}{\updefault}{\color{black}$x$}%
}}}}
\put(1700,-3140){\makebox(0,0)[lb]{\smash{{\SetFigFont{12}{14.4}{\rmdefault}{\mddefault}{\updefault}{\color[rgb]{.75,.38,0}$C$}%
}}}}
\put(2400,-3530){\makebox(0,0)[lb]{\smash{{\SetFigFont{12}{14.4}{\rmdefault}{\mddefault}{\updefault}{\color{blue} \Large{$\gamma$}}%
}}}}
\put(2060,-3130){\makebox(0,0)[lb]{\smash{{\SetFigFont{12}{14.4}{\rmdefault}{\mddefault}{\updefault}{\color{black} $y$}
}}}}
\put(4000,-3100){\makebox(0,0)[lb]{\smash{{\SetFigFont{12}{14.4}{\rmdefault}{\mddefault}{\updefault}{{$\simeq~$}\color[rgb]{.75,.38,0}$C''$\color{black}$*\,$\color{blue} $\gamma$ \color{black} $*$ \color[rgb]{.75,.38,0}$C'$ \color{black}$*$ \color{blue} $\gamma^{-1}$}%
}}}}
\end{picture}%

\bigskip

\caption{A pentagon loop $C$ based at $x$ is homotopic to a concatenation of two loops, $C''$ and $\gamma * C' * \gamma^{-1}$, where we can regard $C''$ and $C'$ as $4$-gons based on $x$ and $y$ respectively. Then $\gamma * C' * \gamma^{-1}$ is a tailed $4$-gon based on $x$.}
\label{fig:exampleTailedMGon}
\end{figure}

Note that the fundamental group of the clique complex $K(\G)$ of $\G$ is generated by homotopy classes of tailed $m$-gons in $\G$ (for possibly different values of $m$): every loop of $K(\G)$ is homotopic to a closed walk in the graph $\G$, which is in fact homotopic to a concatenation of tailed simple cycles, and each one of these tailed simple cycles can be homotopically carried to a concatenation of tailed $m$-gons.
Moreover, it is not hard to show that if $\diam(\G) = d$, then only $m$-gons with $m\leq 2d+1$ can exist: otherwise we could shorten the $m$-gon by a diagonal walk of length $\leq d$ between vertices at distance $>d$ inside the $m$-gon, contradicting the definition of $m$-gon.
Therefore, we see that, for a connected graph $\G$, $\pi_1(K(\G)) = 1$ if and only if every $m$-gon is null-homotopic, and $m \leq 2\diam(\G) + 1$.
See \cite{ASegev} for more details.
In contrast to \cite{ASegev, Das}, we will focus on showing that $m$-gons are homotopically trivial loops, although we will usually get that they are \textit{triangulable} in the sense of \cite{ASegev}. For the sake of simplicity, we have opted to omit the use of this language.

By Theorem \ref{connectedFramePoset}, if $\dim(V)\geq 4$ then $\G(V)$ is connected with $\diam(\G(V)) = 2$.
Therefore, in order to compute the (tailed) $m$-gons generating $\pi_1(\F(V))$ we only need to worry about $m$-gons with $m\in \{3,4,5\}$.
Moreover, note that a $3$-gon in a clique complex is always homotopically trivial.
Hence, in order to show that $\pi_1(\F(V)) = 1$, we need to prove that $4$-gons and $5$-gons are null-homotopic.

\begin{lemma}
\label{lemmaTriangulableSquares}
Let $V$ be a unitary space of dimension $n\geq 5$ over a field $\KK$ such that $(n,\KK)\neq (6,\GF{2^2})$.
Then every closed cycle of length $4$ in $\G(V)$ is null-homotopic.
\end{lemma}

\begin{proof}
Let $C:v_1,v_2,v_3,v_4,v_1$ be a closed cycle of length $4$ in $\G(V)$.
We can assume that $C$ is a $4$-gon.

Let $S = v_1+v_3$, so $v_2,v_4\leq S^\perp$.
Thus, if $\G(S^\perp)$ is connected, we get a walk in $\G(S^\perp)$ joining $v_2$ with $v_4$.
Since each vertex of this walk is joined with $v_1$ and $v_3$, we conclude that $C$ is null-homotopic.
See Figure \ref{fig:triangulationByConnectedness}.

\begin{figure}[ht]
\includegraphics[scale=0.4]{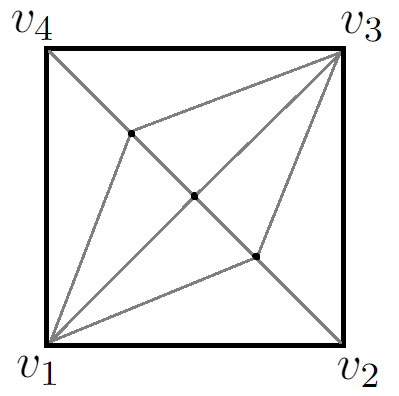}
\caption{Triangulation by diagonal walk if $\G((v_1+v_3)^\perp)$ is connected.}
\label{fig:triangulationByConnectedness}
\end{figure}

Assume then that $\G(S^\perp)$ is disconnected.
Since $C$ is a $4$-gon, $\dim(S) = 2$.
This implies that $\dim(S^\perp) \geq 3$ and also that $\Rad(S^\perp) = \Rad(S)$ has dimension $0$ or $1$.
Then $d:=\dim(S^\perp/\Rad(S^\perp))\geq 2$ and by Proposition \ref{cor:degenerateConnected}, $\G(S^\perp/\Rad(S^\perp))$ is disconnected.
By Theorem \ref{connectedFramePoset}, we have that either $d= 2$ and $\KK \neq \GF{2^2}$ or else $d = 3$ and $\KK = \GF{2^2}$.
We can proceed in a similar fashion with $T := v_2+v_4$ and hence make the same assumptions on $T$ as with $S$.

Then, we split the remaining of the proof into these two cases.
Fix generators $x,y,z,w$ of the subspaces $v_1,v_2,v_3,v_4$ respectively.

\medskip

\noindent\textbf{Case 1.} Assume $\KK \neq \GF{2^2}$. If $\dim(S^\perp/\Rad(S^\perp)) = 2$ and  $\dim(T^\perp/\Rad(T^\perp)) = 2$ then $C$ is null-homotopic.

\begin{proofofcase}[Proof of Case 1.]
Let $r = \dim(\Rad(S))$.
Then $2 = \dim(S^\perp) - \dim(\Rad(S)) = n-2 -r\geq 3-r \geq 2$ implies that $(n,r) = (5,1)$.
Thus we can write $\Rad(S) = \gen{x+bz}$ for some $b\in \KK^{\times}$.
Analogously we have $\Rad(T) = \gen{y+aw}$ for some $a\in \KK^{\times}$.

Consider $W = \gen{x,y,z,w}$.
Note that
\[ W = \gen{x,y} \oplus \gen{x+bz,y+aw},\]
and that indeed $\Rad(W) = \gen{x+bz,y+aw}$, which is orthogonal to the non-degenerate subspace $\gen{x,y}$.
Now, if $\dim(\Rad(W)) = 2$, then $\dim(W) = 4$ and
\[2  = \dim(\Rad(W)) \leq \dim(W^\perp) = n - \dim(W) = 1.\]
This is a contradiction which leads to $\dim(\Rad(W)) = 1$.
We conclude then that $\dim(W) = 3$ and that
\[ \dim(W^\perp / \dim(\Rad(W^\perp))) = 5 - 3 - 1 = 1 > 0.\]
Hence there exists a vector $u\in W^\perp$ with $|u|\neq 0$.
This yields a triangulation of $C$ as in Figure \ref{triang2}, where $v = \gen{u}$.
In particular, $C$ is null-homotopic.
\end{proofofcase}

Before moving to the other case, note that if $q = 2$ then $q^2-1 = q+1$.
This implies that if $u\in \GF{2^2}^n$, then 
\[ |u| = \sum_{i=1}^n u_i^{q+1} = \sum_{i\tq u_i\neq 0} 1.\]
Hence $u$ is non-degenerate if and only if it has an odd number of non-zero entries, if and only if $|u| = 1$.

Now we prove that the second case above cannot arise.

\medskip

\noindent\textbf{Case 2.} If $\KK = \GF{2^2}$ then $\dim(S^\perp/\Rad(S^\perp)) \neq 3$.

\begin{proofofcase}[Proof of Case 2.]
By the way of contradiction, suppose that $\dim(S^\perp/\Rad(S^\perp)) = 3$.
Let $r = \dim(\Rad(S))$, which is $0$ or $1$.
Then $\dim(S) = 2$ implies that $3 = n - 2 - r$, that is, $n = 5 + r$.
Hence either $(n,r) = (5,0)$ or else $(n,r) = (6,1)$.
The latter case is the exceptional case excluded in the hypotheses of the lemma.
Therefore we are in the situation $(n,r) = (5,0)$.
However, we will see that $r \geq 1$ and thus arrive to a contradiction.

Write $z = ax + \tilde{z}$, where $a\in\KK^{\times}$, $\tilde{z}\in \gen{x}^\perp$.
Also note that $\tilde{z}\neq 0$ since $v_1\neq v_3$ by hypothesis on $C$.
Since $x,z$ are non-degenerate and $\KK = \GF{2^2}$, we have
\[ 1 = |z| = |x| + |\tilde{z}| = 1 + |\tilde{z}|.\]
This proves that $|\tilde{z}|=0$, i.e. $\tilde{z}$ is isotropic.
Therefore $0 \neq \tilde{z}\in \Rad(S)$, a contradiction.
\end{proofofcase}

We have shown that in any case, a $4$-gon is null-homotopic (except when $n=6$ and $q=2$).
This concludes the proof of this lemma.
\end{proof} 

\begin{figure}[ht]
\centering
\includegraphics[scale=0.4]{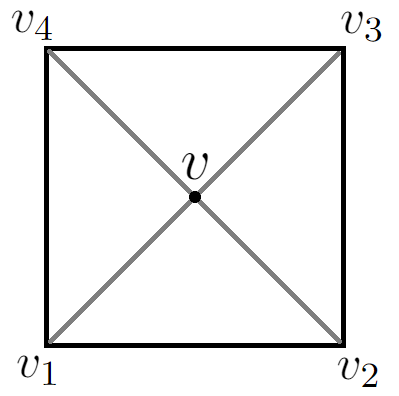}
\caption{Triangulation by ``starring'', with a  vertex $v$ orthogonal to all the vertices of the square.}
\label{triang2}
\end{figure}

Next we move to the case of $5$-gons.
We prove that a $5$-gon cycle is homotopic to a product of $4$-gons, squares and triangles.
Then by the previous lemma and the remarks at the beginning of this section, we will be able to conclude that $5$-gons are null-homotopic.

\begin{lemma}
\label{lemmaTriangulablePentagons}
Let $V$ be a unitary space over a field $\KK$ of finite dimension $n\geq 5$, with $(n,\KK)\neq (6,\GF{2^2})$.
Then every closed cycle of length $5$ in $\G(V)$ is null-homotopic.
\end{lemma}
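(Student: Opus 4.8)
The plan is to imitate the proof of Lemma \ref{lemmaTriangulableSquares}: given a $5$-gon $C\colon v_1,v_2,v_3,v_4,v_5,v_1$, I want to produce a vertex (or a short walk of vertices) that ``cuts'' the pentagon into triangles and squares, all of which we already know are null-homotopic (triangles trivially, squares by Lemma \ref{lemmaTriangulableSquares}). The most natural first move is to look at the subspace $S=v_1+v_2$ and its perp: since $\dim(V)=n\geq 5$, the space $S^\perp$ (or $S^\perp/\Rad(S^\perp)$) has dimension $\geq 3$, so by Theorem \ref{connectedFramePoset} its orthogonality graph is connected — unless we land in one of the small exceptional configurations. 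If $\G(S^\perp)$ is connected, then since $v_1,v_2\in S^\perp$ we can find a walk in $\G(S^\perp)$ from (a vertex adjacent to) the situation at the $v_1v_2$ edge to the rest of the pentagon; more concretely, pick a vertex $u\le S^\perp$ with $u\perp v_3,v_4,v_5$ if possible, and use $u$ to star the remaining $4$-cycle $v_2,v_3,v_4,v_5,u$ back to $v_1$. So the clean case is: find $u$ orthogonal to $v_1,v_2$ and to as many of $v_3,v_4,v_5$ as possible, reducing $C$ to triangles and a $4$-gon.

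The key linear-algebra step is a dimension count. Set $W=v_1+v_2+v_3+v_4+v_5=\gen{x,y,z,w,u'}$ (with chosen generators). Then $\dim(W)\leq 5$, and $W^\perp$ has dimension $\geq n-5\geq 0$; when $n\geq 6$ or when $W$ is degenerate we get a non-degenerate vector $u$ in $W^\perp$ by Proposition \ref{dimensionTheorem}(vii) (using $\epsilon=1$), and then $v=\gen{u}$ is adjacent to all five $v_i$, so $C$ cones off to a point and is null-homotopic — this is the ``starring'' picture of Figure \ref{triang2}. The remaining hard case is therefore $n=5$ and $W=V$ non-degenerate, i.e. $x,y,z,w,u'$ span all of $V$. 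Here starring the whole pentagon is impossible, so I would instead star a sub-path: take $S=v_i+v_{i+2}$ for a suitable $i$ (a ``diagonal'' pair), which is $2$-dimensional (the $v$'s are distinct), look at $\G(S^\perp)$ where $\dim S^\perp=3$, and split into the anisotropic vs. isotropic subcases exactly as in Lemma \ref{lemmaTriangulableSquares}. The subtlety is that a pentagon has no two ``opposite'' vertices, so there is always a vertex of $C$ outside $S\cup S^\perp$'s obvious reach; I expect to need to combine two such diagonal cuts — e.g. first cut off the triangle $v_1v_2v_3$-region using a vertex orthogonal to $v_1,v_3$, reducing to a square, then invoke Lemma \ref{lemmaTriangulableSquares}. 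One must check that the intermediate vertex can be chosen non-degenerate; here the count $\codim(S)=3>1\geq\dim\Rad(S)$ feeds Proposition \ref{dimensionTheorem}(vii) again, or when $S^\perp$ is anisotropic every vector works by Lemma \ref{lem:tau-1}.

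The main obstacle I anticipate is the low-dimensional endgame $n=5$, $\KK$ small (in particular $\KK=\GF{3^2}$, since $\GF{2^2}$ at $n=5$ is allowed by the hypothesis but $n=6,\GF{2^2}$ is excluded — one must be careful that $n=5,\GF{2^2}$ genuinely works). There the perp spaces are just big enough that Theorem \ref{connectedFramePoset} gives connectivity of $\G(S^\perp)$ with $\dim(S^\perp/\Rad)=3\neq\GF{2^2}$-exceptional, so a diagonal walk of length $\leq 2$ inside $\G(S^\perp)$ joining the two ``loose'' vertices of the pentagon should exist; each vertex of that walk is adjacent to both endpoints of the diagonal, and one then checks directly that the resulting decomposition of $C$ consists of triangles and $4$-gons, the latter killed by Lemma \ref{lemmaTriangulableSquares}. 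I would organize the write-up as: (1) reduce to $C$ a $5$-gon with $n=5$ and $\gen{C}=V$; (2) pick a diagonal $S=v_1+v_3$, note $\dim S^\perp=3$ and $\Rad(S^\perp)$ has dimension $\leq 1$, so $\dim(S^\perp/\Rad(S^\perp))\geq 2$ and by Theorem \ref{connectedFramePoset} (using $\KK$ is not the forbidden pair at this dimension) $\G(S^\perp/\Rad(S^\perp))$ is connected; (3) lift a walk $v_2=u_0,u_1,\dots,u_k=?$ — actually join $v_2$ to a vertex adjacent to $v_4,v_5$ — obtaining a fan of triangles plus one square $v_1,u_{\text{last}},v_4,v_5$; (4) apply Lemma \ref{lemmaTriangulableSquares} to that square and conclude $C\simeq *$.
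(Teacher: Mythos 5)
Your overall strategy (cut the pentagon with an auxiliary vertex, reduce to triangles and $4$-cycles, then invoke Lemma \ref{lemmaTriangulableSquares}) is the paper's strategy, but neither of the two cuts you propose is justified, and the existence statement your endgame hinges on is precisely what needs proof. First, the claim that Proposition \ref{dimensionTheorem}(vii) yields a non-degenerate vector in $W^\perp$ for $W=\gen{v_1,\dots,v_5}$ whenever $n\geq 6$ is not justified: that proposition requires $\codim(W)>\dim(\Rad(W))$, and for $n=6$, $\dim(W)=5$, $\dim(\Rad(W))=1$ one gets $1\not>1$, so starring the whole pentagon can fail. Second, and more seriously, your step (3) needs a vertex of $\G(S^\perp)$ (with $S=v_1+v_3$) that is also adjacent to $v_4$ and $v_5$, i.e.\ a non-degenerate line in $(v_1+v_3+v_4+v_5)^\perp$. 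For $n=5$ that orthogonal complement can be $1$-dimensional and equal to the radical of the span, hence totally isotropic, so no such vertex need exist; connectivity of $\G(S^\perp)$ is irrelevant if the target vertex is absent. This unestablished existence is the actual content of the lemma, and your sketch leaves it open.

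The paper's proof removes all case distinctions with one choice you did not consider: take the span of an \emph{edge together with the opposite vertex}, $S=v_1+v_3+v_4$. Then $\dim(S)\leq 3$, so $\dim(\Rad(S))\leq 1$ by Proposition \ref{dimensionTheorem}(v), while $\codim(S)\geq n-3\geq 2$; hence Proposition \ref{dimensionTheorem}(vii) always produces a non-degenerate $u\in S^\perp$. The vertex $v=\gen{u}$ is adjacent to $v_1,v_3,v_4$ and cuts $C$ into the filled triangle $\{v,v_3,v_4\}$ and the two $4$-cycles $v_1,v_2,v_3,v$ and $v_1,v,v_4,v_5$, which are null-homotopic by Lemma \ref{lemmaTriangulableSquares}; this is also the only place where the exclusion of $(6,\GF{2^2})$ enters. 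If you rewrite your argument with this three-vertex subspace in place of the diagonal $v_1+v_3$, the proof closes in one step.
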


\begin{proof}
Similar as in Lemma \ref{lemmaTriangulableSquares}, we can suppose that $C:v_1,v_2,v_3,v_4,v_5,v_1$ is a $5$-gon.
Let $S = v_1+v_3+v_4$.
Then $\dim(S)\leq 3$, $\dim(\Rad(S)) \leq 1$ and
\[ \dim(S^\perp/ \Rad(S^\perp)) = n - \dim(S) - \dim(\Rad(S)) \geq 5 - 3 - 1 > 0.\]
By Proposition \ref{dimensionTheorem}(7), there exists a vector $u\in S^\perp$ with $|u|\neq 0$.
For $v = \gen{u}$, we get a decomposition of $C$ into two squares and one triangle, as shown in Figure \ref{pentagon}.
Hence, by Lemma \ref{lemmaTriangulableSquares}, we conclude that $C$ is null-homotopic, except if $n = 6$ and $\KK = \GF{2^2}$.
\end{proof}


\begin{figure}[ht]
\centering
\begin{picture}(0,0)%
\includegraphics[scale=0.7]{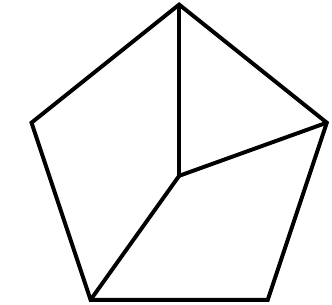}%
\end{picture}%
\setlength{\unitlength}{4144sp}%
\begingroup\makeatletter\ifx\SetFigFont\undefined%
\gdef\SetFigFont#1#2#3#4#5{%
  \reset@font\fontsize{#1}{#2pt}%
  \fontfamily{#3}\fontseries{#4}\fontshape{#5}%
  \selectfont}%
\fi\endgroup%
\begin{picture}(2550,2310)(1111,-2344)
\put(1100,-1300){\makebox(0,0)[lb]{\smash{{\SetFigFont{12}{14.4}{\rmdefault}{\mddefault}{\updefault}{\color[rgb]{0,0,0}$v_2$}%
}}}}
\put(2830,-1300){\makebox(0,0)[lb]{\smash{{\SetFigFont{12}{14.4}{\rmdefault}{\mddefault}{\updefault}{\color[rgb]{0,0,0}$v_4$}%
}}}}
\put(2000,-670){\makebox(0,0)[lb]{\smash{{\SetFigFont{12}{14.4}{\rmdefault}{\mddefault}{\updefault}{\color[rgb]{0,0,0}$v_3$}%
}}}}
\put(1450,-2450){\makebox(0,0)[lb]{\smash{{\SetFigFont{12}{14.4}{\rmdefault}{\mddefault}{\updefault}{\color[rgb]{0,0,0}$v_1$}%
}}}}
\put(2550,-2450){\makebox(0,0)[lb]{\smash{{\SetFigFont{12}{14.4}{\rmdefault}{\mddefault}{\updefault}{\color[rgb]{0,0,0}$v_5$}%
}}}}
\put(1940,-1600){\makebox(0,0)[lb]{\smash{{\SetFigFont{12}{14.4}{\rmdefault}{\mddefault}{\updefault}{\color[rgb]{0,0,0}$v$}%
}}}}
\end{picture}%

 \medskip
\caption{Pentagon cut into two squares and a triangle.}
\label{pentagon}
\end{figure}

By Lemmas \ref{lemmaTriangulableSquares} and \ref{lemmaTriangulablePentagons}, and the discussion at the beginning of this section, we conclude:

\begin{corollary}
\label{corollarySimplyConnectedDim5}
If $V$ is a unitary space of dimension $n\geq 5$ over $\KK$, with $(n,\KK)\neq (6,\GF{2^2})$, then $\F(V)$ is simply connected.
\end{corollary}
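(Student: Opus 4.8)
The plan is to combine Lemmas \ref{lemmaTriangulableSquares} and \ref{lemmaTriangulablePentagons} with the standard reduction of the fundamental group of a clique complex to its $m$-gons, as recalled in the discussion opening this section. Recall that $\pi_1(\F(V))$, being the fundamental group of the clique complex $K(\G(V))$, is generated by the homotopy classes of tailed $m$-gons in $\G(V)$; moreover, if $\diam(\G(V)) = d$ then no $m$-gon with $m > 2d+1$ can occur, since otherwise a diagonal walk of length $\leq d$ between two vertices of the cycle at cycle-distance $> d$ would contradict the defining property of an $m$-gon. Conjugating a null-homotopic loop by a walk leaves it null-homotopic, so it suffices to show that every $m$-gon occurring in $\G(V)$ is null-homotopic in $\F(V)$.

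First I would invoke Theorem \ref{connectedFramePoset}: since $n \geq 5 \geq 4$ we are in case (c), so $\G(V)$ is connected of diameter exactly $2$. Hence the only $m$-gons that can arise are those with $m \in \{3,4,5\}$. A $3$-gon is a triple of pairwise adjacent vertices of $\G(V)$, which therefore spans a $2$-simplex of the clique complex $\F(V)$; such a loop bounds a disk and is null-homotopic. For $4$-gons, Lemma \ref{lemmaTriangulableSquares} applies directly under the standing hypotheses $n \geq 5$ and $(n,\KK) \neq (6,\GF{2^2})$, and for $5$-gons, Lemma \ref{lemmaTriangulablePentagons} applies under the same hypotheses. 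Assembling the three cases, every generator of $\pi_1(\F(V))$ is trivial, so $\F(V)$ is simply connected.

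Since the substantive geometric work has already been carried out in the two lemmas, no real obstacle remains here; the only point needing care is the bookkeeping that the generating set of $\pi_1(\F(V))$ is genuinely exhausted by (tailed) $m$-gons with $m \leq 2\diam(\G(V))+1 = 5$, which is exactly the content of the opening discussion together with \cite{ASegev}. The exclusion of the pair $(6,\GF{2^2})$ is inherited verbatim from Lemmas \ref{lemmaTriangulableSquares} and \ref{lemmaTriangulablePentagons}, where that case genuinely fails (and is treated separately, giving $\pi_1(\F(V)) \cong C_2 \times C_2$ there).
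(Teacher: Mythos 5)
Your proposal is correct and follows exactly the paper's argument: the paper's proof of this corollary is precisely the combination of Lemmas \ref{lemmaTriangulableSquares} and \ref{lemmaTriangulablePentagons} with the opening discussion of Section \ref{sec:simple} reducing $\pi_1(\F(V))$ to $m$-gons with $m\leq 2\diam(\G(V))+1=5$. You have merely written out the bookkeeping that the paper leaves implicit.
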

%

\begin{example}
\label{exampleDim6GF2}
Let $V$ be a unitary space of dimension $6$ over $\GF{2^2}$.
Computations in GAP show that $\pi_1(\F(V))$ is isomorphic to the Klein four-group $C_2\times C_2$.
By Lemma \ref{doubleJumpDimensionInField2}, $\F(V)$ collapses to a $6-3=3$-dimensional complex, but this is not homotopic to a wedge of spheres.
\end{example}

\begin{example}
\label{exampleDim4Field2}
Let $V$ be a unitary space of dimension $4$ over $\GF{2^2}$.
Then $\F(V)$ is homotopy equivalent to a $4-3=1$ dimensional complex (by Lemma \ref{doubleJumpDimensionInField2}) and its reduced Euler characteristic is $-81$ by Lemma \ref{eulerCharacteristicFrameComplex}.
Since $\F(V)$ is connected by Theorem \ref{connectedFramePoset}, we conclude that $\F(V)$ is homotopy equivalent to a wedge of $81$ spheres of dimension $1$.
\end{example}

\begin{example}
\label{exampleDim4Field3}
Let $V$ be a unitary space of dimension $4$ over $\GF{3^2}$.
Then $\F(V)$ collapses to a $(n-1)-1=2$-dimensional subcomplex.
Its reduced Euler characteristic is $9044$.
Its second homology group is free abelian of rank $9114$, and its degree $1$ homology group is free abelian of rank $70$.
In particular, $\F(V)$ is not simply connected.
\end{example}

Now we have all the ingredients to prove Theorem \ref{simplyConnectedFramePoset}.

\begin{proof}[Proof of Theorem \ref{simplyConnectedFramePoset}]
Assertion (i) follows from Corollary \ref{corollarySimplyConnectedDim5}.
Assertion (ii) follows from Example \ref{exampleDim6GF2}.
Assertion (iii) follows from Examples \ref{exampleDim4Field2} and \ref{exampleDim4Field3}.
Assertion (iv) follows from Remark \ref{rk:propertyT}.
Finally, assertion (v) follows from Proposition \ref{lowDimensionalCases}.
\end{proof}

Therefore, in order to fully answer the question of when $\F(V)$ is simply connected, it remains to analyze the case $\dim(V) = 4$ and $|\KK| \geq 4^2$.

\bigskip

\noindent\textbf{Question.} For which fields $\KK$ is $\F(V)$ simply connected when $\dim(V)=4$?

\bigskip

The following proposition reduces the study of the above question to understanding when $4$-gons are null-homotopic.
To simplify the computations, we work under (ON).

\begin{proposition}
Let $V$ be a unitary space of dimension $n=4$ over $\KK$ satisfying (ON).
Then every cycle of length $5$ in $\F(V)$ is homotopic to a concatenation of squares.
Therefore, $\pi_1(\F(V)) = 1$ if and only if $4$-gons are null-homotopic.
\end{proposition}

\begin{proof}
Let $C:v_0,v_1,v_2,v_3,v_4,v_0$ be a cycle of length $5$.
As usual, we can suppose that $C$ is a $5$-gon.
We show then that $C$ is homotopic to a concatenation of squares.
Take generators $x_i$ of $v_i$.
The main assumption (ON) allows us to pick the $x_i$ such that $|x_i| = 1$.
Define $\alpha_{ij} = \Psi(x_i,x_j)$ and $\alpha_{ij}^* = \alpha_{ij}\alpha_{ji}$.
Then $\alpha_{ii} = 1$ for all $i$ and $\alpha_{ij} = 0$ if $j=i\pm 1$ modulo $5$.

For $0\leq i\leq 4$, define $S_i := \gen{x_i,x_{i+2},x_{i+3}}$ (with indexes modulo $5$).
It is not hard to show that $\dim(S_i) = 2$ or $3$ since $C$ is a $5$-gon.
If $S_i^\perp / \Rad(S_i) \neq 0$ then $S_i^\perp$ contains a non-degenerate vector $x\in S_i^\perp$.
This provides a decomposition of $C$ into two squares and a triangle, similarly as in Figure \ref{pentagon}.

Therefore we suppose that $S_i^\perp = \Rad(S_i)$ for all $i$.
This is equivalent to $S_i^\perp = \gen{ax_i + b x_{i+2} + c x_{i+3}}\neq 0$.
And indeed this is equivalent to having a non-trivial solution for the coefficients $a,b,c$ above subject to the condition that the vector $ax_i + b x_{i+2} + c x_{i+3}$ is orthogonal to $x_i$, $x_{i+2}$ and $x_{i+3}$.
This means that the following determinant is $0$:
\[ \det \left(\begin{matrix}
1 & \alpha_{(i+2)i} & \alpha_{(i+3)i}\\ 
\alpha_{i(i+2)} & 1 & 0\\ 
\alpha_{i(i+4)} & 0 & 1
\end{matrix}\right) = 1 - \alpha_{i(i+2)}^* - \alpha_{i(i+3)}^*. \]
Equating this determinant to $0$ for every $i=0,1,2,3,4$, we get the following equations:
\[\begin{cases}
1 = \alpha_{02}^* +  \alpha_{03}^*,\\
1 = \alpha_{13}^* +  \alpha_{14}^*,\\
1 = \alpha_{24}^* +  \alpha_{02}^*,\\
1 = \alpha_{03}^* +  \alpha_{13}^*,\\
1 = \alpha_{14}^* +  \alpha_{24}^*.
\end{cases}\]
From these conditions we see that $2 \alpha_{ij}^* = 1$ for all $j = i+2$ or $i+3$ (modulo $5$).
In particular, $\chara(\KK)\neq 2$ and $\alpha_{ij}^* = 2^{-1}$ for the indexes $i,j = i+2$ or $i+3$ modulo $5$.

Take $T = \gen{x_0,x_1,x_2}$ and let $x_2' := x_2 - \alpha_{20} x_0$.
Then
\begin{equation}
    \label{eq:x2primeNorm}
    |x_2'| = |x_2| - \alpha_{20}\alpha_{02} - \alpha_{20}\alpha_{02} + \alpha_{20}\alpha_{02} |x_0| = 1 - \alpha^*_{02} = 1-2^{-1} = 2^{-1} \neq 0. 
\end{equation}
This implies that $T$ is non-degenerate of dimension $3$, so there exists $x_5\in T^\perp$ with $|x_5| = 1$.
If $v_5:=\gen{x_5}$, we see that $C$ is homotopic to the pentagon $C':v_0,v_5,v_2,v_3,v_4,v_0$.
We prove then that $C'$ is homotopic to a concatenation of squares.
See Figure \ref{fig:pentagon1}.

\begin{figure}[ht]
    \centering
    \includegraphics[scale=0.4]{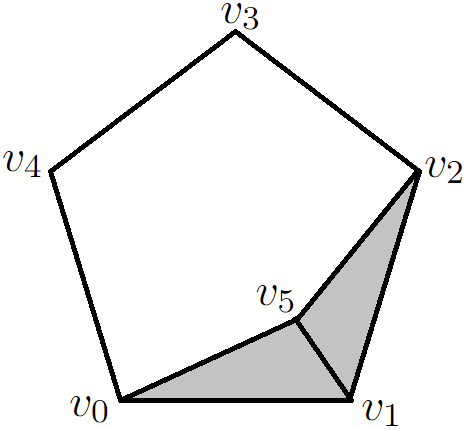}
    \caption{Addition of extra vertex $v_5 = \gen{x_5}$ orthogonal to $v_0,v_1,v_2$.}
    \label{fig:pentagon1}
\end{figure}

Similar as in the beginning of the proof, we show that $S' = \gen{x_0,x_5,x_3}$ is non-degenerate, i.e. $\Rad(S') = 0$, and hence that $C'$ can be decomposed into two squares and a triangle (see Figure \ref{pentagon}, and see also Figure \ref{fig:pentagon2}).
This will conclude the proof of the first part of the proposition.

\begin{figure}[ht]
    \centering
    \includegraphics[scale=0.4]{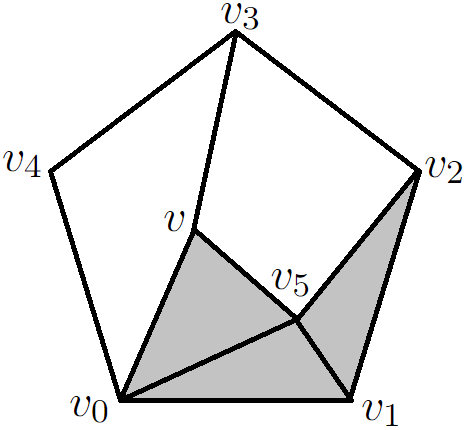}
    \caption{$5$-gon divided into filled triangles and squares, so it is homotopic to concatenation of squares.}
    \label{fig:pentagon2}
\end{figure}

Let $a\in \KK$ be such that $y:=ax_2'$ satisfies $|y| = 1$.
That is, $a\tau(a) = |x_2'|^{-1} = 2$ by \eqref{eq:x2primeNorm}.
Hence $\{x_0,x_1,y,x_5\}$ is an orthonormal basis of $V$.
We have that $x_3 = \alpha_{30} x_0 + \alpha_{31} x_1 + \Psi(x_3,y) y + \alpha_{53} x_5$.
Then
\[ S' = \gen{x_0,x_5,\alpha_{31} x_1 + \Psi(x_3,y) y} = \gen{x_0,x_5} \oplus \gen{\alpha_{31} x_1 + \Psi(x_3,y) y}, \]
where $\gen{x_0,x_5}$ and $\gen{\alpha_{31} x_1 + \Psi(x_3,y) y}$ are orthogonal.
In order to prove that $\Rad(S') = 0$, it is enough to show that $|\alpha_{31} x_1 + \Psi(x_3,y) y|\neq 0$.
But this follows by direct computation:
\begin{align*}
    |\alpha_{31} x_1 + \Psi(x_3,y) y| & = \alpha_{31}^* + \Psi(x_3,y)\Psi(y,x_3)\\
    & = 2^{-1} + \Psi(x_3,ax_2')\Psi(ax_2',x_3)\\
    & = 2^{-1} + a\tau(a) \Psi(x_3,x_2 - \alpha_{20} x_0)\Psi(x_2 - \alpha_{20} x_0,x_3)\\
    & = 2^{-1} + 2 \Psi(x_3,-\alpha_{20}x_0) \Psi(-\alpha_{20}x_0,x_3)\\
    & = 2^{-1} + 2(-\alpha_{02}) \alpha_{30} (-\alpha_{20}) \alpha_{03}\\
    & = 2^{-1} + 2 \alpha_{02}^* \alpha_{03}^*\\
    & = 2^{-1} + 2 \cdot 2^{-1} \cdot 2^{-1} = 1.
\end{align*}
This proves that $\Rad(S') = 0$.

We have shown that in any case, a (based) pentagon is homotopic to a concatenation of (tailed) squares and filled triangles.
In particular, we can conclude that $\pi_1(\F(V)) = 1$ if and only if $4$-gons are null-homotopic.
\end{proof}

We close this section with an application of this result to the fundamental group of $\redS(V)$.
Recall that by previous results (see \cite{BeS,Das,DGM}), for a finite field $\KK = \GF{q^2}$, $\redS(V)$ is simply connected if (and only if) $\dim(V)\geq 5$, or $\dim(V) = 4$ with $q>3$.
In the following corollary, we show that we can cover the case $\dim(V) \geq 5$ of this result by using our methods in conjunction with a version of
Quillen's fiber-Theorem.

\begin{corollary}
\label{corollarySVSimplyConnected}
Let $V$ be a unitary space of dimension $n\geq 4$ over a field $\KK$ distinct from $\GF{2^2}$.
Then $\phi: \F(V)^{n-2}\to \redS(V)$ is a $1$-equivalence.
The same conclusion holds for $\KK = \GF{2^2}$ if in addition $n\geq 5$.

In particular, if $n\geq 5$ then $\redS(V)$ is simply connected.
And if $n = 4$ and $\F(V)$ is simply connected (so $|\KK|\geq 4^2$), then $\redS(V)$ is simply connected.
\end{corollary}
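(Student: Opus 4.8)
The plan is to establish the $1$-equivalence first, by a loop-lifting argument, and then deduce the two simple-connectivity statements. Recall $\phi$ sends a partial frame $\sigma$ to its span $\gen{\sigma}$; it is an order-preserving surjection onto $\redS(V)$, since every $S\in\redS(V)$ is spanned by a full frame of $S$, which has $\dim S\le n-1$ vertices and hence lies in $\F(V)^{n-2}$. As $\F(V)$ is connected for $n\ge 4$ (Theorem~\ref{connectedFramePoset}) and $\F(V)^{n-2}\hookrightarrow\F(V)$ is an $(n-2)$-equivalence, $\F(V)^{n-2}$ is connected; and $\redS(V)$ is connected by Corollary~\ref{corollarySVConnected}. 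Thus $\phi$ is a bijection on $\pi_0$, and everything reduces to proving that $\phi_*\colon\pi_1(\F(V)^{n-2})\to\pi_1(\redS(V))$ is surjective.

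First I would represent a class of $\pi_1(\redS(V))$ by an edge-loop and put it into alternating form $S_0<S_1>S_2<\dots>S_{2k}=S_0$ (shortening monotone runs $a<b<c$ to $a<c$ through the $2$-simplices of $\redS(V)$), so that the $S_{2i}$ are valleys and the $S_{2i+1}$ peaks. Call a peak $U$ \emph{bad} if $\F(U)$ is disconnected; by Theorem~\ref{connectedFramePoset} and Lemma~\ref{lem:conn} this happens exactly when $\dim U=2$ and $\KK\ne\GF{2^2}$, or $\dim U=3$ and $\KK=\GF{2^2}$. The crucial elementary point is that if $A\subsetneq U\subsetneq W$ and $B\subsetneq U$, then the path $A<U>B$ is homotopic rel $\{A,B\}$ to the path $A<W>B$, by a standard edge-path homotopy through the $2$-simplices $\{A,U,W\}$ and $\{B,U,W\}$ of $\redS(V)$. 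Using Proposition~\ref{dimensionTheorem}(ii) and (ON) to produce a non-degenerate $W\supsetneq U$ with $\dim W=\dim U+1$ (which is possible precisely when $\dim U\le n-2$), I can then replace each bad peak of the loop by a peak one dimension larger, which, being of dimension $3$ when $\KK\ne\GF{2^2}$ or $4$ when $\KK=\GF{2^2}$, is no longer bad. This is exactly where the hypotheses enter: the replacement requires $n\ge 4$ when $\KK\ne\GF{2^2}$ and $n\ge 5$ when $\KK=\GF{2^2}$.

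After this reduction every peak $U$ of the homotopic loop satisfies that $\F(U)$ is connected. Now I lift: choose a partial frame $\widetilde S_{2i}$ spanning each valley $S_{2i}$; for a peak $S_{2i+1}$, use the orthogonal decomposition $S_{2i+1}=S_{2i}\oplus(S_{2i}^{\perp}\cap S_{2i+1})$ (Proposition~\ref{dimensionTheorem}(ii)) and a full frame of $S_{2i}^{\perp}\cap S_{2i+1}$ to extend $\widetilde S_{2i}$ to a partial frame $\widetilde S_{2i+1}^{\,L}$ spanning $S_{2i+1}$, and likewise obtain $\widetilde S_{2i+1}^{\,R}\supseteq\widetilde S_{2i+2}$ spanning $S_{2i+1}$; all of these have size $\le n-1$ and so are vertices of $\F(V)^{n-2}$. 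Join $\widetilde S_{2i+1}^{\,L}$ to $\widetilde S_{2i+1}^{\,R}$ by an edge-path inside the connected complex $\F(S_{2i+1})\subseteq\F(V)^{n-2}$. Concatenating these pieces gives an edge-loop $\widetilde\ell$ in $\F(V)^{n-2}$ whose $\phi$-image is the original loop, except that at each peak $S_{2i+1}$ it acquires an extra closed sub-walk contained in the cone $\redS(V)_{\le S_{2i+1}}$, which is null-homotopic; hence $\phi_*[\widetilde\ell]=[\ell]$ and $\phi$ is a $1$-equivalence. (This last step can be phrased as a $\pi_1$-version of Quillen's fiber theorem, the bad-peak reduction being the device that makes the relevant fibers $\F(U)$ connected.)

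For the consequences: when $n\ge 5$ and $(n,\KK)\ne(6,\GF{2^2})$, $\F(V)$ is simply connected by Theorem~\ref{simplyConnectedFramePoset}(i), hence so is $\F(V)^{n-2}$ (its $2$-skeleton coincides with that of $\F(V)$ since $n\ge 4$), and surjectivity of $\phi_*$ on $\pi_1$ forces $\redS(V)$ simply connected; the exceptional case $(6,\GF{2^2})$ is covered by the previously known results of \cite{Das,BeS}. For $n=4$, the same implication shows that $\redS(V)$ is simply connected whenever $\F(V)$ is, which by Theorem~\ref{simplyConnectedFramePoset} forces $|\KK|\ge 4^2$. I expect the bad-peak reduction to be the main obstacle: one has to check carefully that an arbitrary loop in $\redS(V)$ can always be homotoped, by moves internal to $\redS(V)$, into the range of subspace dimensions where $\F(U)$ is connected, while never exceeding the ambient dimension $n$; it is precisely this dimension budget that produces the thresholds $n\ge 4$ and $n\ge 5$.
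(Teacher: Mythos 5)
Your argument is correct, and it reaches the same structural insight as the paper by a more hands-on route. The paper disposes of the $1$-equivalence in a few lines by invoking the generalized Quillen fiber theorem (Proposition \ref{quillensFiberTheoremGeneral}) with $m=0$: one only has to check that $Z_S=\F(S)*\redS(S^\perp)$ is $0$-connected for every $S\in\redS(V)$, which for $\dim S\geq 3$ (resp.\ $\dim S\geq 4$ over $\GF{2^2}$) uses connectivity of $\F(S)$ from Theorem \ref{connectedFramePoset}, and for the small-dimensional $S$ with $\F(S)$ disconnected uses only that both $\F(S)$ and $\redS(S^\perp)$ are non-empty. Your zigzag loop-lifting is precisely a by-hand proof of the $\pi_1$-part of that fiber theorem in this instance: lifting over a ``good'' peak $U$ is the statement that the fiber $\F(U)$ is connected, and your bad-peak replacement $U\rightsquigarrow W=U\oplus\gen{v}$, $v\in U^\perp$ non-isotropic, is exactly the concrete use of $\redS(U^\perp)\neq\emptyset$, which is where the thresholds $n\geq 4$ (resp.\ $n\geq 5$ for $\GF{2^2}$) enter in both proofs. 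What the paper's packaging buys is brevity and reusability — the same fiber-theorem template is iterated for higher connectivity in Proposition \ref{coro:applicationOtherPosets}; what your version buys is self-containedness and a transparent view of where the dimension budget is spent. One small point in your favor: in deducing simple connectivity for $n\geq 5$ you correctly isolate the exceptional pair $(6,\GF{2^2})$, where $\pi_1(\F(V))\groupiso C_2\times C_2$ by Theorem \ref{simplyConnectedFramePoset}(ii), so that the $1$-equivalence alone only bounds $\pi_1(\redS(V))$ by a quotient of $C_2\times C_2$ and one must fall back on \cite{Das,BeS,DGM}; the paper's own proof glosses over this case by asserting $\pi_1(\F(V))=1$ for all $n\geq 5$.
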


\begin{proof}
We apply Proposition \ref{quillensFiberTheoremGeneral} to
the map $\phi:\F(V)^{n-2} \to \redS(V)$ with $m=0$.
Let $S\in \redS(V)$, and observe that $\phi^{-1}(\redS(V)_{\leq S}) = \F(S)^{n-2} = \F(S)$ and $\redS(V)_{>S} = \redS(S^\perp)$.
Let $Z_S := \F(S) * \redS(S^\perp)$.
We will prove that $Z_S$ is $0$-connected for any $S$.

We prove first that $Z_S$ is $0$-connected when $\KK\neq\GF{2^2}$ and $n\geq 4$.
Under these assumptions, the following assertions hold:
\begin{itemize}
    \item If $\dim(S) = 1$, then $\F(S) = \{S\}$, so $Z_S$ is contractible.
    \item If $\dim(S) = 2$, then $\F(S)$ is homotopy discrete, i.e. $(-1)$-connected. Since $\codim(S) = n-2\geq 2$, $\redS(S^\perp)$ is non-empty.
    Thus $Z_S$ is $0$-connected.
    \item If $\dim(S) \geq 3$, then $\F(S)$ is connected by Theorem \ref{connectedFramePoset}, so $Z_S$ is $0$-connected.
\end{itemize}
By Proposition \ref{quillensFiberTheoremGeneral} we conclude that $\phi$ is a $1$-equivalence.

Now suppose that $\KK = \GF{2^2}$ and $n\geq 5$.
Note that $\F(S)$ is connected if $\dim(S)\neq 3$ by Theorem \ref{connectedFramePoset}.
If $\dim(S) = 3$, then $\F(S)\neq\emptyset$ and $\redS(S^\perp)$ is non-empty since $\dim(S^\perp) = n-3\geq 2$.
In any case, $Z_S$ is $0$-connected.
Thus $\phi$ is a $1$-equivalence.

Finally, if $\dim(V)\geq 5$, $\pi_1(\F(V)) = 1$ by Theorem \ref{simplyConnectedFramePoset}, and therefore we also get $\pi_1(\redS(V)) = 1$, that is, $\redS(V)$ is simply connected.
\end{proof}

\section{Eigenvalues of the graph $\G(V)$}
\label{sec:eigen}

In this section we focus on computing the eigenvalues of the adjacency matrix of the graph $\G(V)$, for a unitary space $V$ over a finite field $\KK = \GF{q^2}$.
These computations will lead to the proof of Theorem \ref{theoremEigenvalues}.
Recall that our aim is to apply this theorem to conclude that some homology groups of $\F(V)$ have vanishing free part, by invoking Garland's method (see Theorem \ref{theoremGarland}).

Based on computer experiment we guessed that 
the eigenvalues of the adjacency matrix of $\G(V)$ have a very simple form
and there are at most $4$ of them (depending on $q$ and $n$). Thus the minimal polynomial of the adjacency matrix of $\G(V)$ has degree $\leq 4$. 


In order to carry out the verification of the minimal polynomial, we compute the powers of the adjacency matrix up to $4$.
Recall that the entry in row $i$ and column $j$ of the $k$\textsuperscript{th}-power of an adjacency matrix counts the number of walks of length $k$ between the vertices $i$ and $j$ and $\G(V)$. 
Thus in the following proof  we can resort to the computation from Section \ref{sec:walk}.

\begin{proof}[Proof of Theorem \ref{theoremEigenvalues}]
Write $m(n,q) = X^4 + c_3 X^3 + c_2 X^2 + c_1 X + c_0$, where
\begin{align*}
c_0 & = -d_n q^{3n-7} (-1)^{n},\\
c_1 & = d_n q^{2n-4} + q^{3n-7}  (-1)^{n},\\
c_2 & = d_nq^{n-3}(-1)^{n} - q^{2n-4},\\
c_3 & = - d_n - q^{n-3}(-1)^{n},\\
c_4 & = 1.
\end{align*}
Let $A_n$ be the adjacency matrix of $\G(V)$.
If the claim  of the theorem is correct then $m(n,q)(A_n) = 0$.
Therefore, we show for all $q$ that the following matrix identity holds:
\begin{equation}
\label{matrixEquality}
 A_n^4 + c_3 A_n^3 + c_2 A_n^2 + c_1 A_n + c_0 \Id = 0.
\end{equation}
As mentioned before the proof, the entry in row $S$ and column $W$ of $A_n^k$ represents the number of walks of length $k$ between $S$ and $W$.
We have computed these coefficients for $k=1,2,3,4$ in Lemma \ref{walksLength1and2} and Theorems \ref{theoremwalks3Dim3orMore} and \ref{theoremwalks4}.
Moreover, we have seen that, to compute these coefficients, we need to consider four different cases:
\begin{enumerate}
\item[($=$)] $S = W$,
\item[($\perp$)] $S \perp W$,
\item[(ND)] $S + W$ non-degenerate, $S\neq W$, $S\not\perp W$,
\item[(D)] $S + W$ degenerate.
\end{enumerate}
In order to establish (\ref{matrixEquality}), we show that each entry of the matrix at the left-hand side of \eqref{matrixEquality} is zero.
Therefore, we need to prove that the following equality holds
for $\bullet$ being one of $=$, $\perp$, ND or D:  
\begin{eqnarray*}
    \sum_{k=0}^4 c_k l_k(\bullet) & = 0.
\end{eqnarray*}

At many points of the proof, we will use the list of identities given in Lemma \ref{lem:identities}.

\medskip

\noindent\textbf{Case 1.} $\sum_{k=0}^4 c_k l_k(=) = 0$.


\begin{proofofcase}[Proof of Case 1:]
We replace the values of the sum on the left-hand side:
\begin{align*}
    d_n & l_3(\perp) - (d_n + q^{n-3}(-1)^n)d_n l_2(\perp) + (d_nq^{n-3}(-1)^n - q^{2n-4})l_2(=) - d_n q^{3n-7}(-1)^n = \\
    & = d_n\left( d_n + (d_{n-1})^2 + d_{n-1}(d_n-d_{n-1}-I_{n-2}-1) + d^1_{n-1}I_{n-2} \right.\\
    & \qquad \left.  - (d_n + q^{n-3}(-1)^n)d_{n-1} + d_nq^{n-3}(-1)^n - q^{2n-4} -q^{3n-7}(-1)^n\right)\\
    & = d_n\left( (d_n-d_{n-1}-q^{2n-4})(1+q^{n-3}(-1)^n) + I_{n-2}(d^1_{n-1}-d_{n-1}) \right)\\
    & = d_n \left( (-q^{2n-5} + q^{n-3}(-1)^n)(1+q^{n-3}(-1)^n)\right.\\
    & \qquad \quad \left. + (q^{2n-5}-1  + q^{n-2}(-1)^n-q^{n-3}(-1)^n)q^{n-3}(-1)^n \right)\\
    & = 0.
\end{align*}
\end{proofofcase}

\noindent\textbf{Case 2.} $l_4(\perp) = - \sum_{k=0}^3 c_k l_k(\perp)$, that is, $\sum_{k=0}^4 c_k l_k(\perp) = 0$.

\begin{proofofcase}[Proof of Case 2:]
By Theorem \ref{theoremwalks4} and the relations between $l_3(\perp)$, $l_3(\text{ND})$ and $l_3(\text{D})$ in Table \ref{tab:walks3}, we have:
\begin{align*}
l_4 & (\perp) = (d_n + l_3(\perp))d_{n-1} + l_3(\text{D}) I_{n-2} + l_3(\text{ND})(d_n-1-d_{n-1}-I_{n-2})\\
& = (d_n + l_3(\text{ND})+q^{2n-4})d_{n-1} + \left(l_3(\text{ND}) + q^{2n-6}\right )I_{n-2}+ l_3(\text{ND})\left (d_n - 1 - d_{n-1} - I_{n-2}\right )\\
& = d_n d_{n-1} + q^{2n-4} d_{n-1} + q^{2n-6} I_{n-2} + d_n l_3(\text{ND})  - l_3(\text{ND}) \\
& = d_n l_3(\text{ND}) - q^{3n-8}(-1)^n  + q^{2n-6} + q^{2n-4} d_{n-1} + q^{2n-6} I_{n-2}\\
& = d_n l_3(\text{ND}) - q^{3n-8}(-1)^n + q^{2n-6} + q^{2n-4} d_{n-1} + q^{4n-10} - q^{2n-6} - d_{n-1}q^{2n-6}(q^2-1)\\
& = d_n l_3(\text{ND}) - q^{3n-8}(-1)^n  + q^{4n-10} +  d_{n-1}q^{2n-6}.
\end{align*}
On the other hand, the term $c_3 l_3(\perp)+c_2 l_2(\perp)+c_1 l_1(\perp) + c_0 l_0(\perp)$ is given by:
\begin{align*}
    - (d_n + & q^{n-3}(-1)^n)(l_3(\text{ND}) + q^{2n-4}) + (d_nq^{n-3}(-1)^n-q^{2n-4})d_{n-1} + d_n q^{2n-4}+q^{3n-7} = \\
    & = -d_n l_3(\text{ND}) - q^{n-3}(-1)^n l_3(\text{ND}) + d_n d_{n-1} q^{n-3}(-1)^n - d_{n-1}q^{2n-4}\\
    & = -d_n l_3(\text{ND}) -q^{n-3}(-1)^n(d_n d_{n-1} + q^{3n-8}(-1)^n-q^{2n-6})\\
    & \qquad + d_n d_{n-1} q^{n-3}(-1)^n - d_{n-1}q^{2n-4}\\
    & = -d_n l_3(\text{ND}) - q^{4n-11} + q^{3n-9}(-1)^n - d_{n-1}q^{2n-4}\\
    & = - l_4(\perp) + \left(q^{4n-10}-q^{4n-11} + q^{3n-9}(-1)^n-q^{3n-8}(-1)^n - d_{n-1}q^{2n-6}(q^2-1) \right)\\
    & = - l_4(\perp) + (q-1)\left( q^{4n-11} - q^{3n-9}(-1)^n - q^{3n-9}(q^{n-2}-(-1)^n) \right)\\
    & = -l_4(\perp).
\end{align*}
\end{proofofcase}

\noindent \textbf{Case 3.} $l_4(\text{ND}) = - \sum_{k=0}^3 c_k l_k(\text{ND})$, that is, $\sum_{k=0}^4 c_k l_k(\text{ND}) = 0$.
Here $q\neq 2$ by Lemma \ref{remarkCaseqEquals2}.

\begin{proofofcase}[Proof of Case 3:]
By Theorem \ref{theoremwalks4}:
\begin{align*}
l_4(\text{ND}) & = l_3(\perp)d_{n-1} + l_3(\text{ND})\left(d_n-d_{n-1}\right ) + (q^{2n-5} - q^{n-3}(-1)^n)\left(l_3(\text{D}) - l_3(\text{ND}) \right)\\
& = l_3(\perp)d_{n-1} + ( l_3(\perp) - q^{2n-4})(d_n-d_{n-1}) + (q^{2n-5} - q^{n-3}(-1)^n)q^{2n-6}\\
& =  l_3(\perp)d_n - q^{2n-4}(d_n-d_{n-1}) + (q^{2n-5} - q^{n-3}(-1)^n)q^{2n-6}.
\end{align*}
The term $-c_3 l_3(\text{ND})-c_2 l_2(\text{ND})-c_1 l_1(\text{ND}) - c_0 l_0(\text{ND})$ is given by:
\begin{align*}
(d_n + q^{n-3}(-1)^n) l_3 & (\text{ND}) - (d_nq^{n-3}(-1)^n-q^{2n-4}) d_{n-1} = \\
& = (d_n + q^{n-3}(-1)^n) (l_3(\perp) - q^{2n-4}) - (d_nq^{n-3}(-1)^n-q^{2n-4}) d_{n-1} \\
& = d_n l_3(\perp) - q^{2n-4}d_n + q^{n-3}(-1)^n(d_n d_{n-1} + q^{3n-8}(-1)^n - q^{2n-6})\\
&\qquad - (d_nq^{n-3}(-1)^n-q^{2n-4}) d_{n-1}\\
& = d_n l_3(\perp) - q^{2n-4}(d_n - d_{n-1}) + q^{n-3}(-1)^n( q^{3n-8}(-1)^n - q^{2n-6})\\
& = d_n l_3(\perp) - q^{2n-4}(d_n - d_{n-1}) + q^{2n-6}( q^{2n-5} - q^{n-3}(-1)^n)\\
& = l_4(\text{ND}).
\end{align*}
\end{proofofcase}

\noindent\textbf{Case 4.}  $l_4(\text{D}) = - \sum_{k=0}^3 c_k l_k(\text{D})$, that is, $\sum_{k=0}^4 c_k l_k(\text{D}) = 0$.

\begin{proofofcase}[Proof of Case 4:]
By Theorem \ref{theoremwalks4}:
\begin{align*}
l_4(\text{D}) & = l_3(\perp)d^1_{n-1} + l_3(\text{ND}) \left(d_n-d^1_{n-1}\right)+ q^{2n-5} (l_3(\text{D})-l_3(\text{ND}))\\
& = (l_3(\text{ND}) + q^{2n-4}) d^1_{n-1} + l_3(\text{ND}) \left(d_n-d^1_{n-1}\right)+ q^{2n-5} q^{2n-6}\\
& = d_n l_3(\text{ND}) + d^1_{n-1} q^{2n-4} + q^{4n-11}.
\end{align*}
On the other hand, $-c_3 l_3(\text{D})-c_2 l_2(\text{D})-c_1 l_1(\text{D}) - c_0 l_0(\text{D})$ is given by:
\begin{align*}
(d_n + q^{n-3}&(-1)^n) l_3(\text{D}) - (d_n q^{n-3}(-1)^n - q^{2n-4}) d^1_{n-1}\\
& = (d_n + q^{n-3}(-1)^n)(l_3(\text{ND}) + q^{2n-6}) - (d_n q^{n-3}(-1)^n - q^{2n-4}) d^1_{n-1}\\
& = d_n l_3(\text{ND}) + d^1_{n-1} q^{2n-4} + d_n q^{2n-6}  + q^{n-3}(-1)^n(d_n d_{n-1} + q^{3n-8}(-1)^n)\\
& \qquad - d_n d^1_{n-1} q^{n-3}(-1)^n\\
& = l_4(\text{D}) + d_n q^{2n-6} + d_n d_{n-1} q^{n-3}(-1)^n - d_n d^1_{n-1} q^{n-3}(-1)^n\\
& = l_4(\text{D}) + d_n q^{n-3}(-1)^n \left( q^{n-3}(-1)^n + d_{n-1} - d^1_{n-1}\right)\\
& = l_4(\text{D}).
\end{align*}
\end{proofofcase}

Up to this point, we have proved that $m(n,q)$ is a monic polynomial that annihilates the adjacency matrix of $\G(V)$.
As a consequence, the minimal polynomial divides $m(n,q)$.
Moreover, $\mu_1=d_n$, the degree of the graph, is a root of the minimal polynomial.

On the other hand, recall that (see \cite[Lemma 8.12.1]{GR}) a connected graph of diameter $d$ has at least $d+1$ eigenvalues.
Hence, by Lemma \ref{walksLength1and2} and Theorem \ref{theoremwalks3Dim3orMore} we have
\begin{itemize}
    \item if $n\geq 4$, $\G(V)$ has at least $3$ eigenvalues, 
    \item if $n= 3$ with $q > 2$, $\G(V)$ has at least $4$ eigenvalues.
\end{itemize}
Therefore $m(n,q)$ is the minimal polynomial of $A_n$ when $n=3$ and $q > 2$.

Suppose that $n \geq 4$ and $q>2$.
Then $\G(V)$ is connected and the number of common neighbours between two different non-adjacent vertices is given by the values $l_2(\text{ND}) = d_{n-1}$ and $l_2(\text{D}) = d^1_{n-1}$ in view of Lemma \ref{walksLength1and2}.
Since both cases arise and $d_{n-1}\neq d^1_{n-1}$, we see that $\G(V)$ is a connected regular graph which is not strongly regular. 
A classical characterization \cite[Lemma 10.2.1]{GR} of strongly regular graphs shows then that the adjacency matrix of a connected regular but not strongly regular graph must have at least $4$ eigenvalues.
This shows that $m(n,q)$ is the minimal polynomial when $n \geq 3$ and $q > 2$.

We move now to the case $q = 2$ and $n \geq 4$.
Here the case $l_2(\text{ND})$ does not arise by Lemma \ref{remarkCaseqEquals2}.
Since the number of common neighbors of two adjacent vertices is
always $d_{n-1}$ it follows that $\G(V)$ is connected regular graph of diameter $2$ which is also strongly regular, so by \cite[Section 10.2]{GR} its adjacency matrix has exactly $3$ eigenvalues given by:
\begin{align*}
    \lambda_1 & = \text{ the degree } = d_n,\\
    \lambda_2 & =  \frac{1}{2}\left( d_{n-1}-d^1_{n-1} + \sqrt{(d_{n-1}-d^1_{n-1})^2+4(d_n-d^1_{n-1})}\right)\\
    & =  \frac{1}{2}\left( -q^{n-3}(-1)^n + \sqrt{q^{2n-6}+4(q^{2n-4}-q^{2n-5})}\right)\\
    & = \begin{cases}
    q^{n-3} & n \text{ even,}\\
    q^{n-2} & n \text{ odd.}
    \end{cases}\\
    \lambda_2 & =  \frac{1}{2}\left( d_{n-1}-d^1_{n-1} - \sqrt{(d_{n-1}-d^1_{n-1})^2+4(d_n-d^1_{n-1})}\right)\\
    & =  \frac{1}{2}\left( -q^{n-3}(-1)^n - \sqrt{q^{2n-6}+4(q^{2n-4}-q^{2n-5})}\right)\\
    & = \begin{cases}
    -q^{n-2} & n \text{ even,}\\
    -q^{n-3} & n \text{ odd.}
    \end{cases}
\end{align*}

When $n = 3$ and $q = 2$, the computation is straightforward from Lemma \ref{walksLength1and2}, Lemma \ref{remarkCaseqEquals2} and the identities in Appendix \ref{appendixCountingVectorsAndSubspaces}.

\end{proof}

Next, we compute the multiplicities of the eigenvalues.

\begin{corollary}
\label{multiplicitiesEigenvaluesDimAtLeast3}
Let $V$ be a unitary space of dimension $n\geq 3$ over $\GF{q^2}$.
Let
\[ \alpha_2:= \frac{d_nd_{n+1}}{2 q^{2n-3}}\frac{ (q^2-q-1-(-1)^n)}{(q-1)}; \quad \alpha_3:= \frac{d_n d_{n-1}}{ q^{2n-8}(q-1)},\]
\[\alpha_4:=\frac{d_n d_{n+1}}{2 q^{2n-3}} \frac{(q^2-q - 1 + (-1)^n)}{(q-1)}.\]
Then the multiplicities of the eigenvalues of the adjacency matrix of $\G(V)$ are given in Table \ref{tab:multiplicities}.

\begin{table}[ht]
    \centering
    \begin{tabular}{|c|c|c|c|c|}
    \hline
         & $\mu_1 = d_n$ & $\mu_2 = q^{n-2}$ & $\mu_3 = q^{n-3}(-1)^n$ & $\mu_4 = -q^{n-2}$ \\
    \hline
        $q\neq 2, n\geq 3$ & $1$ & $\alpha_2$ & $\alpha_3$ & $\alpha_4$\\
        $q = 2, n$ even & $1$ & $0$ & $\alpha_3$ & $\alpha_2+\alpha_4$\\
        $q = 2, n>3$ odd & $1$ & $\alpha_2+\alpha_4$ & $\alpha_3$ & $0$\\
        $q = 2, n = 3$ & $4=1 + \alpha_2+\alpha_4$ & $0$ & $8 = \alpha_3$ & $0$\\
    \hline
    \end{tabular}
    
    \medskip
    
    \caption{Multiplicities of the eigenvalues of $\G(V)$ for $n\geq 3$.}
    \label{tab:multiplicities}
\end{table}



\end{corollary}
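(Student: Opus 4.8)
The plan is to read off the multiplicities from the first few trace (power-sum) identities of the adjacency matrix $A_n$ of $\G(V)$, using that the eigenvalues are already known from Theorem~\ref{theoremEigenvalues} and that the diagonal entries of the powers of $A_n$ were computed in Section~\ref{sec:walk}. Set $N:=\#\G(V)=d_{n+1}$. Since the $(S,S)$-entry of $A_n^k$ equals $l_k(=)$, we have $\operatorname{tr}(A_n^k)=N\,l_k(=)$ for every $k\geq 0$, and by Lemma~\ref{walksLength1and2} and Theorem~\ref{theoremwalks3Dim3orMore} the relevant values are $l_0(=)=1$, $l_1(=)=0$, $l_2(=)=d_n$ and $l_3(=)=d_nd_{n-1}$. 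Writing $m_i$ for the multiplicity of $\mu_i$ (with $m_i=0$ when $\mu_i$ is not an eigenvalue), these identities read $\sum_i m_i\mu_i^k=N\,l_k(=)$.

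First I would dispose of the disconnected case $(n,q)=(3,2)$ directly. Here $d_3=q(q-1)=2$ and $N=d_4=q^2(q^2-q+1)=12$, and by Lemma~\ref{doubleJumpDimensionInField2} the graph $\G(V)$ has four connected components; being $2$-regular on $12$ vertices, each component is a triangle, so $\G(V)\cong 4K_3$, whose spectrum is $2$ with multiplicity $4$ and $-1=\mu_3$ with multiplicity $8$. This matches the last row of Table~\ref{tab:multiplicities}, since at $(n,q)=(3,2)$ one has $1+\alpha_2+\alpha_4=4$ and $\alpha_3=8$.

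In every other case $\G(V)$ is connected by Theorem~\ref{connectedFramePoset}, so $m_1=1$ by the Perron--Frobenius theorem. When $q\neq 2$ there are exactly four pairwise distinct eigenvalues $\mu_1,\mu_2,\mu_3,\mu_4$, and the identities for $k=0,1,2$,
\[
m_2+m_3+m_4=N-1,\qquad \mu_2m_2+\mu_3m_3+\mu_4m_4=-d_n,\qquad \mu_2^2m_2+\mu_3^2m_3+\mu_4^2m_4=Nd_n-d_n^2,
\]
form a linear system whose coefficient matrix is a nonsingular Vandermonde matrix in the distinct values $\mu_2,\mu_3,\mu_4$; solving it and simplifying via the closed formulas for $d_n$, $d_{n-1}$ and $I_{n-2}$ from Appendix~\ref{appendixCountingVectorsAndSubspaces} yields $m_2=\alpha_2$, $m_3=\alpha_3$, $m_4=\alpha_4$. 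When $q=2$ and $n\geq 4$ there are only three distinct eigenvalues, namely $\mu_1=d_n$, $\mu_3=(-1)^nq^{n-3}$ and $(-1)^n\mu_4$, which equals $\mu_4=-q^{n-2}$ for $n$ even and $\mu_2=q^{n-2}$ for $n$ odd; now $k=0,1$ already give a $2\times 2$ system, and its solution — again after simplification — assigns multiplicity $\alpha_3$ to $\mu_3$ and $\alpha_2+\alpha_4$ to the surviving value $\pm q^{n-2}$, with the $k=2$ identity available as a consistency check. Alternatively, for $q=2$ one may invoke that $\G(V)$ is strongly regular (as observed in the proof of Theorem~\ref{theoremEigenvalues}) and read off the two non-principal multiplicities from the standard strongly-regular formula.

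The only genuine difficulty here is bookkeeping rather than conceptual: one must repeatedly substitute $d_m=q^{m-2}(q^{m-1}-(-1)^{m-1})/(q+1)$ and the kindred counting quantities into the solutions of the above linear systems and massage them into the compact form of $\alpha_2,\alpha_3,\alpha_4$, all the while tracking the parity of $n$ and the coincidence $\mu_2=-\mu_4$ that forces the merging of two eigenvalues precisely when $q=2$.
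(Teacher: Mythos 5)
Your proposal is correct and follows essentially the same route as the paper: fix $m_1=1$ by connectedness, use the trace identities $\sum_i m_i\mu_i^k=\operatorname{tr}(A_n^k)$ for $k=0,1,2$ (with $\operatorname{tr}(A_n^2)=d_{n+1}d_n$), and solve the resulting nonsingular linear system, handling $q=2$ separately because of the merged eigenvalue and treating the disconnected case $(n,q)=(3,2)$ by direct inspection. The paper's proof does exactly this and likewise leaves the algebraic simplification to the reader.
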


\begin{proof}
Suppose first that $q\neq 2$.
By Theorem \ref{theoremEigenvalues}, the eigenvalues are:
\[\mu_1 = d_n,\quad \mu_2=q^{n-2},\quad \mu_3=q^{n-3}(-1)^n = \mu_2 q^{-1}(-1)^n,\quad \mu_4 =-q^{n-2} = -\mu_2.\]
Write $\mu :=q^{n-2}=\mu_2$.
Let $a_i$ be the multiplicity of $\mu_i$.
Note that $a_1=1$ since $\G(V)$ is connected by Theorem \ref{connectedFramePoset}.
Let $A_n$ be the adjacency matrix of $\G(V)$.
Then the following relations show that $a_i = \alpha_i$ for $i=2,3,4$:
\begin{equation*}
\label{eqMultiplicities}
    \begin{cases}
    1 + a_2 + a_3 + a_4 = \dim(A_n) = d_{n+1},\\
    d_n + a_2 \mu + a_3 \mu q^{-1}(-1)^n + a_4(-\mu) = \Tr(A_n) = 0,\\
    d_n^2 + a_2\mu^2 + a_3\mu^2q^{-2} + a_4\mu^2 = \Tr(A_n^2) = d_{n+1}d_n.
    \end{cases}
\end{equation*}
We leave the simple but tedious calculation proving that the solution takes
the asserted form to the reader.

The case $q = 2$ follows from an analogous computation.

\end{proof}

\begin{remark}
Since $\GU(V)$ acts on the eigenspaces of the adjacency matrix, we get representations of the unitary groups on complex vector spaces whose dimensions are given in the previous theorem.
It would be interesting to analyze which irreducible representations arise.
\end{remark}

\subsection{The low dimensional case $n=2$}

For the sake of completeness, we include the computation of the eigenvalues of the graph  $\G(V)$ when $V$ is a unitary space of dimension $n=2$ over $\GF{q^2}$.
In this case, the eigenvalues are $\mu_1 = 1$ and $\mu_4=-1$.
The conclusion is straightforward from the fact that $\G(V)$ has $q(q-1)/2$ connected components, each of them consisting of a single edge.

\begin{proposition}
\label{walksAndEigenvaluesLowDim2}
Let $V$ be a unitary space of dimension $n=2$ over $\GF{q^2}$.
Then Table \ref{tab:caseDimension2} records the number of walks between vertices $S,W\in\G(V)$.
\begin{table}[ht]
    \centering
    \begin{tabular}{|c|c|c|c|}
    \hline
                & $l_0(S,W)$ & $l_1(S,W)$ & $l_2(S,W)$ \\
    \hline
        $S = W$ & $1$        & $0$        & $1$ \\
    $S \perp W$ & $0$        & $1$        & $0$ \\
$S \neq W$, $S\not \perp W$ & $0$        & $0$        & $0$ \\
    \hline
    \end{tabular}
    
    \medskip
    
    \caption{Counting of walks in dimension $n = 2$.}
    \label{tab:caseDimension2}
\end{table}
In particular, we see that the minimal polynomial $m(2,q)$ of the adjacency matrix of $\G(V)$ is \[m(2,q) = X^2-1 = (X-\mu_1)(X-\mu_4),\]
where $\mu_1=d_n=d_2=1$ and $\mu_4=-1$.

Moreover, the multiplicity of $\mu_1$ equals the number of connected components, which is $q(q-1)/2$,
so the multiplicity of $\mu_4$ is $d_3 - q(q-1)/2 = q(q-1)/2$.
\end{proposition}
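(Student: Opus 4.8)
The plan is to reduce the whole statement to the fact that, for $n=2$, the graph $\G(V)$ is a disjoint union of edges. First I would record the relevant numerology from the definitions and Appendix~\ref{appendixCountingVectorsAndSubspaces}: one has $d_2 = 1$ and $d_3 = q(q-1)$, so $\G(V)$ is a $d_2 = 1$-regular graph on $d_3 = q(q-1)$ vertices. A $1$-regular graph is a perfect matching, hence $\G(V)$ is a disjoint union of $q(q-1)/2$ edges; in particular it has $q(q-1)/2$ connected components (this also matches the count of $2$-frames appearing in the Claim inside the proof of Theorem~\ref{connectedFramePoset}, since the components of $\G(V)$ are in bijection with the $2$-frames of $V$).

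Next I would fill in Table~\ref{tab:caseDimension2}. The entry $l_0(S,W) = \delta_{S,W}$ is immediate from the definition, and $l_1(S,W)$ is exactly the value given by Lemma~\ref{walksLength1and2}. For $l_2$, Lemma~\ref{walksLength1and2} gives $l_2(S,W) = \#\F((S+W)^\perp)_1$, which equals $d_2 = 1$ when $S=W$. The only point requiring care is that case (D) cannot occur in dimension $2$: if $S \neq W$ then $\dim(S+W) = 2 = n$, hence $S+W = V$ is non-degenerate and $(S+W)^\perp = 0$, so $l_2(S,W) = 0$ whenever $S \neq W$, regardless of whether $S \perp W$ or not. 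This yields the table.

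From the table, the $(S,W)$-entry of $A_2^2$ is $l_2(S,W) = \delta_{S,W}$, i.e.\ $A_2^2 = \Id$. Hence $X^2 - 1 = (X-\mu_1)(X-\mu_4)$ annihilates $A_2$, so the minimal polynomial $m(2,q)$ divides it. Since $\Tr(A_2) = 0$ while $A_2$ is a non-zero $q(q-1)\times q(q-1)$ matrix, neither $+1$ nor $-1$ alone can be the spectrum, so both occur and $m(2,q) = X^2-1$ exactly. (Alternatively one may just observe that on each $2\times 2$ matching block $\bigl(\begin{smallmatrix}0&1\\1&0\end{smallmatrix}\bigr)$ the minimal polynomial is $X^2-1$.)

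Finally, for the multiplicities I would invoke the standard fact that the multiplicity of the top eigenvalue $k$ of a connected $k$-regular graph is $1$, so for a $k$-regular graph it equals the number of connected components: here $k = d_2 = 1 = \mu_1$, giving multiplicity $q(q-1)/2$ for $\mu_1$. Since the eigenvalues counted with multiplicity total $d_3 = q(q-1)$, the remaining $q(q-1) - q(q-1)/2 = q(q-1)/2$ of them equal $\mu_4 = -1$. There is no genuine obstacle in this low-dimensional case; the only subtlety is the exclusion of case (D), already handled above.
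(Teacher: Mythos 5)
Your proof is correct and follows essentially the same route as the paper, which derives everything from the observation that $\G(V)$ is a disjoint union of $q(q-1)/2$ edges (the paper simply declares the resulting computations straightforward). Your write-up just makes explicit the details the paper omits, including the correct exclusion of case (D) in dimension $2$.
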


\begin{proof}
These computations are straightforward.
\end{proof}

\section{Application of Garland's method}
\label{sec:garland}

In this section we use the computation of eigenvalues of $\G(V)$ from Theorem \ref{theoremEigenvalues}, along with the list of connected cases from Theorem \ref{connectedFramePoset}, to deduce vanishing results about the homology groups with coefficients in a field $\kk$ of characteristic $0$ of the frame complex $\F(V)$ in the case of a finite field $\KK = \GF{q^2}$.
We will arrive at these conclusions by applying Garland's method.

We begin by stating the version of Garland's method given in \cite{BS} (the original idea can be found in \cite{Garland}).

\begin{theorem}
[{Garland's method}]
\label{theoremGarland}
Let $K$ be a $d$-dimensional simplicial complex and let $0 \leq i \leq  d-1$ be fixed.
Suppose that for every $(i-1)$-simplex $\sigma$ of $K$ we have that:
\begin{itemize}
\item[(G1)] $\Lk_K(\sigma)$ is connected of dimension $\geq 1$;
\item[(G2)] if $\lambda_{\min}$ denotes the smallest non-zero eigenvalue of the normalized Laplacian of the $1$-skeleton of $\Lk_K(\sigma)$, then $\lambda_{\min} >\frac{i}{i+1}$.
\end{itemize}
Then $\tilde{H}_i(K;\kk) = 0$ for any field $\kk$ of characteristic $0$.
\end{theorem}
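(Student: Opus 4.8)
The plan is to prove the statement over $\RR$ via weighted Hodge theory following Garland \cite{Garland,BS}, and then deduce it for an arbitrary field $\kk$ of characteristic $0$ by flat base change. First I would reduce to the $(i+1)$-skeleton: since $\tilde H_i(K)$ (with any coefficients) depends only on $K^{i+1}$, and since for an $(i-1)$-simplex $\sigma$ we have $\Lk_{K^{i+1}}(\sigma)=\big(\Lk_K(\sigma)\big)^1$, the hypotheses (G1)--(G2) pass to $K^{i+1}$; so I may assume $\dim K=i+1$, and then every link $\Lk_K(\sigma)$ of an $(i-1)$-simplex is a connected graph with at least one edge (hence at least two vertices, hence no isolated vertex). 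The case $i=0$ is trivial, since then $\sigma=\emptyset$, $\Lk_K(\emptyset)=K$, and (G1) already says $K$ is connected; so assume $i\geq1$. On the real cochain spaces $C^{i-1},C^i,C^{i+1}$ I put the weighted inner products $\langle f,g\rangle=\sum_\tau w(\tau)f(\tau)g(\tau)$, where $w(\tau)$ is the number of $(i+1)$-simplices containing $\tau$; by (G1) we have $w>0$ on all three levels (otherwise some link of an $(i-1)$-simplex would be edgeless or have an isolated vertex). Weighted Hodge theory identifies $\tilde H_i(K;\RR)$ with the space $\mathcal H^i=\ker\delta\cap\ker\delta^*\subseteq C^i$ of harmonic $i$-cochains, so it suffices to show $\mathcal H^i=0$.

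The heart of the argument is Garland's localization. For an $(i-1)$-simplex $\sigma$ put $G_\sigma:=\Lk_K(\sigma)$; its vertex set is $\{v:\sigma\cup\{v\}\in K\}$ and $\deg_{G_\sigma}(v)=w(\sigma\cup\{v\})$, so the inner product induced by $w$ on $C^0(G_\sigma)$ is the degree-weighted one, and the Rayleigh quotients of the combinatorial Laplacian of $G_\sigma$ agree with those of its normalized Laplacian $\mathcal L_\sigma$; let $\lambda(\sigma)$ be the smallest non-zero eigenvalue of $\mathcal L_\sigma$. For $f\in C^i$ let $f_\sigma\in C^0(G_\sigma)$ be the localized cochain $f_\sigma(v)=\pm f(\sigma\cup\{v\})$, the sign being the parity of the position of $v$ in $\sigma\cup\{v\}$. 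I would then prove three identities, each by reindexing a double sum over simplices and keeping track of orientation signs:
\begin{itemize}
\item[(A)] $\sum_\sigma\|f_\sigma\|^2=(i+1)\,\|f\|^2_{C^i}$;
\item[(B)] $(\delta^*f)(\sigma)=\pm\,w(\sigma)^{-1}\langle f_\sigma,\mathbf 1\rangle$, so $\delta^*f=0$ if and only if each $f_\sigma$ is orthogonal to the constants;
\item[(C)] $\sum_\sigma\langle\mathcal L_\sigma f_\sigma,f_\sigma\rangle=\|\delta f\|^2_{C^{i+1}}+i\,\|f\|^2_{C^i}$ (Garland's identity).
\end{itemize}

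With (A)--(C) in hand the proof closes quickly. Let $f\in\mathcal H^i$, so $\delta f=0$ and $\delta^*f=0$. By (B) each $f_\sigma$ is orthogonal to $\mathbf 1$; since $G_\sigma$ is connected, $\mathbf 1$ spans $\ker\mathcal L_\sigma$, whence $\langle\mathcal L_\sigma f_\sigma,f_\sigma\rangle\geq\lambda(\sigma)\|f_\sigma\|^2$, with strict inequality whenever $f_\sigma\neq 0$ because (G2) gives $\lambda(\sigma)>\tfrac{i}{i+1}$. Summing over $\sigma$ and using (C) and then (A),
\[ i\,\|f\|^2_{C^i}=\|\delta f\|^2_{C^{i+1}}+i\,\|f\|^2_{C^i}=\sum_\sigma\langle\mathcal L_\sigma f_\sigma,f_\sigma\rangle\geq\tfrac{i}{i+1}\sum_\sigma\|f_\sigma\|^2=i\,\|f\|^2_{C^i}. \]
Thus equality holds throughout, forcing $f_\sigma=0$ for every $\sigma$ (else the middle inequality would be strict), and hence $f=0$. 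Therefore $\mathcal H^i=0$ and $\tilde H_i(K;\RR)=0$; consequently $\tilde H_i(K;\QQ)=0$ (as $\QQ\hookrightarrow\RR$ is faithfully flat), and finally $\tilde H_i(K;\kk)\cong\tilde H_i(K;\QQ)\otimes_\QQ\kk=0$.

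The only genuinely delicate point is the sign and constant bookkeeping in (A)--(C). The constant in (C) must be exactly $i$ --- not $i+1$ or $i+2$ --- so that, combined with the normalization $(i+1)$ appearing in (A), the two ends of the displayed chain above coincide; a careless count easily yields the wrong constant. One either invokes \cite{BS,Garland} directly for these identities, or performs the computation once with a fixed orientation convention, using a small model (for instance $K$ a single $(i+1)$-simplex) to pin down the signs.
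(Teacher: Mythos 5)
The paper does not prove this statement; it is quoted as a known result from Ballmann--\'Swi\k{a}tkowski \cite{BS} (going back to \cite{Garland}), so there is no internal proof to compare against. Your proposal is a correct reconstruction of the standard argument from that source: the reduction to the $(i+1)$-skeleton, the positivity of the weights $w$ forced by (G1), the localization identities (A)--(C) — whose constants $(i+1)$ and $i$ do check out — and the equality-forcing Rayleigh-quotient estimate for harmonic cochains orthogonal to the constants on each connected link are all sound, as is the final base change from $\RR$ to an arbitrary field of characteristic $0$.
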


Here by normalized Laplacian of a graph $\G$ we mean the matrix $\Id - D^{-1} A$ with rows and columns indexed by the
vertices of $\G$, where $D$ is the 
diagonal matrix with diagonal given by the degrees of the vertices of $\G$, and $A$ is the adjacency matrix of $\G$.

\bigskip

We apply Garland's method to the frame complex $\F(V)$ of a unitary space $V$ of dimension $n$ over some $\GF{q^ 2}$.
Denote by $A_n$ the adjacency matrix of $\G(V)$ and let $D_n=d_n\Id$.

From now on, we assume that $n = \dim(V) \geq 3$, and $q > 2$ (we treat the case $q = 2$ in the second part of this section).

Let $\sigma\in \F(V)$ be an $i$-frame for some $i\leq n-3$.
Then $\gen{\sigma}^\perp$ has dimension $n-i\geq 3$, and by Theorem \ref{connectedFramePoset} it follows that the link 
$\Lk_{\F(V)}(\sigma) \cong \F(\gen{\sigma}^\perp)$ is connected of dimension $\geq 1$. This shows that
condition (G1) of Theorem \ref{theoremGarland} is satisfied.

As a further consequence, the eigenvalues of the adjacency matrix of $\G(\gen{\sigma}^\perp)$ coincide with the eigenvalues of $A_{n-i}$.
By Theorem \ref{theoremEigenvalues}, these are: $d_{n-i}$, $q^{n-i-2}$, $(-1)^{n-i}q^{n-i-3}$, $-q^{n-i-2}$.

On the other hand, the eigenvalues of the normalized Laplacian $\Id -D_{n-i}^{-1}A_{n-i}$, are of the form $\lambda = 1 - (d_{n-i})^{-1}\mu$, for $\mu$ in the spectrum of $A_{n-i}$.
We see then that $\lambda$ takes the following values:
\[ 0,\ 1-(d_{n-i})^{-1}q^{n-i-2},\ 1-(d_{n-i})^{-1}(-1)^{n-i}q^{n-i-3},\ 1+(d_{n-i})^{-1}q^{n-i-2}.\]
Elementary arguments yield that the smallest positive eigenvalue $\lambda_{\min}$ is
\begin{align*}
    1-(d_{n-i})^{-1}q^{n-i-2} & = 1 - \frac{(q+1) q^{n-i-2}}{q^{n-i-2}(q^{n-i-1}-(-1)^{n-i-1})}\\
    & = 1-\frac{q+1}{q^{n-i-1}-(-1)^{n-i-1}}.
\end{align*}
Now $\lambda_{\min}$
satisfies (G2) 
if and only if:
\[  1 - \frac{q+1}{q^{n-i-1}-(-1)^{n-i-1}} = \lambda_{\min}> \frac{i}{i+1} = 1 - \frac{1}{i+1},\]
which is equivalent to
\begin{equation*}
     \frac{q^{n-i-1}-(-1)^{n-i-1}}{q+1}-i-1 > 0.
\end{equation*}

It is straightforward to verify that the left-hand side of the preceding inequality is indeed monotone in $i$.

\begin{lemma}
[{Monotone-bound}]
\label{lemmaMonotoneBound}
Let $0\leq i < j \leq n-3$ and $q\geq 2$.
Then
\begin{equation}
    \label{monotoneCondition}
    \frac{q^{n-i-1} - (-1)^{n-i-1}}{q+1} - i - 1 > \frac{q^{n-j-1} - (-1)^{n-j-1}}{q+1} - j - 1.
\end{equation}

\end{lemma}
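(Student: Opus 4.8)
The plan is to rewrite the inequality in terms of the function
\[
f(i) \;:=\; \frac{q^{n-i-1}-(-1)^{n-i-1}}{q+1}-i-1,
\]
so that \eqref{monotoneCondition} is exactly the statement $f(i) > f(j)$ for $0 \le i < j \le n-3$. I would prove this by estimating the difference directly, writing
\[
f(i)-f(j) \;=\; \frac{q^{n-i-1}-q^{n-j-1}}{q+1}\;-\;\frac{(-1)^{n-i-1}-(-1)^{n-j-1}}{q+1}\;+\;(j-i),
\]
and bounding the three summands separately.

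For the first summand, $q^{n-i-1}-q^{n-j-1}=q^{n-j-1}\!\left(q^{\,j-i}-1\right)\ge q^{n-j-1}(q-1)$ since $j-i\ge 1$ and $q\ge 2$, and moreover $q^{n-j-1}\ge q^2$ since $j\le n-3$; hence this summand is at least $q^2(q-1)/(q+1)$. The second summand is controlled crudely: $(-1)^{n-i-1}-(-1)^{n-j-1}\in\{-2,0,2\}$, so $-\big((-1)^{n-i-1}-(-1)^{n-j-1}\big)\ge -2$. The third summand is $j-i\ge 1$. Combining these three estimates gives
\[
f(i)-f(j)\;\ge\;\frac{q^2(q-1)}{q+1}-\frac{2}{q+1}+1\;=\;\frac{q^3-q^2+q-1}{q+1}\;=\;\frac{(q-1)(q^2+1)}{q+1}\;>\;0
\]
for $q\ge 2$, which is precisely \eqref{monotoneCondition}.

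I do not expect any real obstacle here: the estimate is elementary and self-contained. The only points that need care are that the bound $q^{n-j-1}\ge q^2$ genuinely uses the hypothesis $j\le n-3$ (so $n-j-1\ge 2$), and that handling the alternating-sign term by the blunt bound $2$ is already enough once the $q^2(q-1)$ lower bound for the power difference is available (a parity case split is possible but unnecessary). Alternatively one could telescope, reducing to the case $j=i+1$ and checking $f(i)-f(i+1) = \big(q^{\,n-i-2}(q-1)+2(-1)^{\,n-i-2}\big)/(q+1) - 1 > 0$ for $2\le n-i-2\le n-2$, but the direct estimate above avoids the parity bookkeeping.
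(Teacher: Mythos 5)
Your proposal is correct and follows essentially the same route as the paper: both form the difference $f(i)-f(j)$, bound the power term below by $q^{n-j-1}(q^{j-i}-1)\ge q^2(q-1)$ using $j-i\ge 1$ and $j\le n-3$, and bound the alternating-sign term crudely by $2$. The only cosmetic difference is that you retain the $j-i\ge 1$ contribution while the paper discards it and instead checks $q^2(q-1)>2$ directly; both close the argument for all $q\ge 2$.
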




The following lemma now summarizes what the argument above shows.

\begin{lemma}
\label{lem:garlandconsequence}
Let $V$ be a unitary space of dimension $n$
over $\GF{q^2}$ for some $q \neq 2$ and $\kk$ a field of
characteristic $0$. Then 
\begin{itemize}
    \item[(i)] For $0 \leq i \leq n-3$ we have that $\F(V)$ is $\kk$-homologically $i$-connected if
\begin{equation*}
    \label{generalBound}
     \frac{q^{n-i-1}-(-1)^{n-i-1}}{q+1}-i-1 > 0.
\end{equation*}
   \item[(ii)] For $3 \leq j \leq n$ we have that $\F(V)$ is
   $\kk$-homologically $(n-j)$-connected if 
  \begin{equation}
\label{altGeneralBound}
    P_j(q) := \frac{q^{j-1} - (-1)^{j-1}}{q+1} + j - 1 > n.
\end{equation}
\end{itemize}
\end{lemma}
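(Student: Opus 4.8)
The plan is to feed the spectral data assembled in the discussion preceding the lemma into Garland's method (Theorem~\ref{theoremGarland}), using the Monotone-bound lemma (Lemma~\ref{lemmaMonotoneBound}) to propagate a single inequality down to all smaller indices. Concretely, for part~(i) I would fix $i$ with $0\le i\le n-3$, assume $\frac{q^{n-i-1}-(-1)^{n-i-1}}{q+1}-i-1>0$, and prove the apparently stronger statement that $\tilde H_m(\F(V);\kk)=0$ for \emph{every} $m$ with $0\le m\le i$, by applying Garland's method at each such index $m$ in turn. The index constraint $m\le\dim\F(V)-1=n-2$ is automatic from $m\le n-3$. An $(m-1)$-simplex of $\F(V)$ is a partial $m$-frame $\sigma$, and $\Lk_{\F(V)}(\sigma)\cong\F(\gen{\sigma}^\perp)$ with $\dim\gen{\sigma}^\perp=n-m\ge 3$; since $q>2$ the ground field is not $\GF{2^2}$, so Theorem~\ref{connectedFramePoset} gives that this link is connected of dimension $\ge 1$, establishing hypothesis~(G1).

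For hypothesis~(G2) I would note that the $1$-skeleton of $\Lk_{\F(V)}(\sigma)$ is $\G(\gen{\sigma}^\perp)$, whose adjacency matrix has the same spectrum as $A_{n-m}$; since $n-m\ge 3$ and $q>2$, Theorem~\ref{theoremEigenvalues}(ii) applies, and, exactly as computed above, the smallest positive eigenvalue of the associated normalized Laplacian is $\lambda_{\min}=1-\frac{q+1}{q^{n-m-1}-(-1)^{n-m-1}}$ — this is genuinely positive and genuinely the smallest positive one because $q^{n-m-1}-(-1)^{n-m-1}\ge q^2-1>q+1$ for $q>2$ and because the remaining Laplacian eigenvalues $1+\frac{q+1}{q^{n-m-1}-(-1)^{n-m-1}}$ and $1-(-1)^{n-m}\frac{q+1}{q(q^{n-m-1}-(-1)^{n-m-1})}$ are strictly larger. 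The inequality $\lambda_{\min}>\frac{m}{m+1}$ then rearranges, as already recorded above, to $\frac{q^{n-m-1}-(-1)^{n-m-1}}{q+1}-m-1>0$. Lemma~\ref{lemmaMonotoneBound} states precisely that $m\mapsto\frac{q^{n-m-1}-(-1)^{n-m-1}}{q+1}-m-1$ is strictly decreasing on $0\le m\le n-3$, so positivity at $m=i$ — our hypothesis — forces positivity at every $m\le i$. Hence (G1) and (G2) hold at every index $m\le i$, and Garland's method yields $\tilde H_m(\F(V);\kk)=0$ for all such $m$, i.e. $\F(V)$ is $\kk$-homologically $i$-connected.

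Part~(ii) I would deduce from part~(i) by the change of variable $j=n-i$: the range $0\le i\le n-3$ becomes $3\le j\le n$, ``$\kk$-homologically $i$-connected'' becomes ``$\kk$-homologically $(n-j)$-connected'', and substituting $i=n-j$ into $\frac{q^{n-i-1}-(-1)^{n-i-1}}{q+1}-i-1>0$ and clearing the denominator gives $\frac{q^{j-1}-(-1)^{j-1}}{q+1}+j-1>n$, i.e. $P_j(q)>n$.

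I do not anticipate a real obstacle, since all the substantive work — the connectivity classification of Theorem~\ref{connectedFramePoset}, the full eigenvalue computation of Theorem~\ref{theoremEigenvalues}, the Laplacian translation carried out above, and the monotonicity estimate of Lemma~\ref{lemmaMonotoneBound} — is already available. The only things needing care are routine: keeping track of the index ranges so that $n-m\ge 3$ and $m\le n-2$ hold throughout, confirming that $\lambda_{\min}$ is indeed the smallest positive Laplacian eigenvalue, and — the one genuinely necessary point — observing that applying Garland at the single index $i$ does not suffice, so that the monotonicity of Lemma~\ref{lemmaMonotoneBound} is really needed to cover all $m\le i$ at once.
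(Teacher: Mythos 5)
Your proposal is correct and follows exactly the paper's argument: the paper's proof of part (i) simply cites Lemma \ref{lemmaMonotoneBound} "together with the arguments preceding the lemma," which are precisely the Garland setup you reconstruct (link of an $m$-frame is $\F(\gen{\sigma}^\perp)$ with $\dim\gen{\sigma}^\perp = n-m\ge 3$, connected since $q\neq 2$; smallest positive normalized Laplacian eigenvalue $1-\tfrac{q+1}{q^{n-m-1}-(-1)^{n-m-1}}$; monotonicity propagating the single inequality at $i$ down to all $m\le i$), and part (ii) is the same substitution $i=n-j$. Your explicit check that $\lambda_{\min}$ really is the smallest positive Laplacian eigenvalue is a welcome detail the paper only asserts.
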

\begin{proof}
   Part (i) follows from Lemma \ref{lemmaMonotoneBound} together with the arguments preceding the lemma.
   Part (ii) follows directly from (i) when replacing $i$ by
   $n-j$.
\end{proof}



Theorem \ref{coroHomologyCohenMacaulay} is now a special case.

\begin{proof}[Proof of Theorem \ref{coroHomologyCohenMacaulay}]
Applying Lemma \ref{lem:garlandconsequence} to $j=n-3$ shows that $\F(V)$ is $\kk$-homo\-lo\-gically $(n-3)$-connected if $n < q+1$. 

It remains to show that $\widehat{\F}(V)$ is a Cohen-Macaulay poset over $\kk$. This follows from the first part of the
theorem together with Lemma \ref{lemmaHatPoset} applied to 
open intervals in $\widehat{\F}(V)$. 
\end{proof}

We now state another set of homological connectivity results:

\begin{theorem}
\label{thm:otherconresults}
Let $V$ be a unitary space of dimension $n$
over $\GF{q^2}$ and $\kk$ a field of
characteristic $0$. Then:

\begin{itemize}
\item[(i)] If $2\leq n < q+1$, then $\F(V)$ is $\kk$-homologically $(n-3)$-connected.

\item[(ii)] If $q\geq 3$, $4 \leq n < q^2-q+4$ then 
$\F(V)$ is $\kk$-homologically $(n-4)$-connected.

\item[(iii)] If $n \geq 4$ and $q \geq 3$ then:
\begin{itemize}
    \item[(a)] If $n\in \{4,5,6\}$ and $n\leq q$, then $\F(V)$ is $\kk$-homologically $(n-3)$-connected.
    
    \item[(b)] If $n\in \{4,5,6\}$ and $q < n$, then $\F(V)$ is $\kk$-homologically $(n-4)$-connected.
    
    \item[(c)] If $n\geq 7$, then $\F(V)$ is $\kk$-homologically $[n/2]$-connected.
\end{itemize}

\end{itemize}
\end{theorem}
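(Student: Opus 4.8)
The plan is to derive Theorem \ref{thm:otherconresults} as a collection of consequences of Lemma \ref{lem:garlandconsequence}, together with the low-dimensional homotopy computations already established. Recall that part (ii) of that lemma says that $\F(V)$ is $\kk$-homologically $(n-j)$-connected whenever $P_j(q) = \frac{q^{j-1}-(-1)^{j-1}}{q+1} + j - 1 > n$, and part (i) gives the equivalent bound in terms of $i$. So the entire proof amounts to: for each target connectivity level, identify the right value of $j$ (or $i$), evaluate $P_j(q)$, and check the inequality $P_j(q) > n$ under the hypotheses given.

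First I would handle (i): take $j = 3$, so that $P_3(q) = \frac{q^2 - 1}{q+1} + 2 = q+1$, and the condition $P_3(q) > n$ reads exactly $n < q+1$. This is literally Theorem \ref{coroHomologyCohenMacaulay} reproved. For (ii) I would take $j = 4$: then $P_4(q) = \frac{q^3+1}{q+1} + 3 = q^2 - q + 1 + 3 = q^2 - q + 4$, so $P_4(q) > n$ is exactly $n < q^2-q+4$, giving $(n-4)$-connectivity. For (iii)(a) and (iii)(b), with $n \in \{4,5,6\}$, one observes that the bound $n \leq q$ already forces $n < q+1$, so (i) applies and yields $(n-3)$-connectivity; when instead $q < n \leq 6$ we fall back to the $j=4$ bound of (ii), noting that $n \leq 6 < q^2 - q + 4$ holds for all $q \geq 3$, which gives $(n-4)$-connectivity. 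Part (iii)(c) for $n \geq 7$ requires choosing $j$ roughly $n/2$ and checking that $P_j(q)$ grows exponentially in $j$ while $n$ is linear; since $q \geq 3$, we have $P_j(q) \geq \frac{3^{j-1}-1}{4} + j - 1$, and a direct estimate shows this exceeds $n$ once $j \geq \lceil n/2 \rceil + 1$ or so, giving $(n - j)$-connectivity with $n - j \geq [n/2] - 1$; one must be slightly careful with the exact floor/ceiling bookkeeping to land on $[n/2]$-connectivity as stated, possibly adjusting the choice of $j$ by one.

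The main obstacle I anticipate is purely bookkeeping: matching the stated connectivity ranges $\{4,5,6\}$, the strict-versus-non-strict inequalities ($n \leq q$ versus $n < q+1$), and the floor function $[n/2]$ in part (c) precisely against what the monotone bound in Lemma \ref{lemmaMonotoneBound} and the formula for $P_j(q)$ deliver. There is no new geometry or topology here — everything reduces to evaluating the rational function $P_j(q) = \frac{q^{j-1}-(-1)^{j-1}}{q+1} + j - 1$ at small $j$ and verifying elementary inequalities in $q$ and $n$. I would organize the write-up as a short case analysis, one bullet per claim, each invoking Lemma \ref{lem:garlandconsequence} with an explicitly named value of $j$, and relegate the arithmetic verifications of $P_j(q) > n$ to inline computations. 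The only place where a genuine (if still elementary) argument is needed is (iii)(c), where the exponential growth of $q^{j-1}/(q+1)$ must be pitted against linear $n$; here I would prove a clean auxiliary inequality of the form $\frac{q^{j-1}-(-1)^{j-1}}{q+1} \geq 2j$ valid for $j \geq 3$ and $q \geq 3$, which immediately gives $P_j(q) \geq 3j - 1 > n$ whenever $j > (n+1)/3$, and then check that one can choose such a $j$ with $n - j \geq [n/2]$ whenever $n \geq 7$.
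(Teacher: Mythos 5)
Your treatment of (i), (ii), (iii)(a) and (iii)(b) is exactly the paper's argument: (i) is Theorem \ref{coroHomologyCohenMacaulay} (equivalently Lemma \ref{lem:garlandconsequence} with $j=3$, plus the trivial observation that $\F(V)\neq\emptyset$ covers $n=2$, where $j=3>n$ and the lemma does not literally apply), (ii) is the $j=4$ instance since $P_4(q)=q^2-q+4$, and (iii)(a),(b) follow by comparing $n$ with $q+1$ and $q^2-q+4$. The paper likewise proves (iii)(c) by taking $j=n-[n/2]=\lceil n/2\rceil$ and checking $P_j(q)>n$, so your overall route is the intended one.

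However, the auxiliary inequality you propose for (iii)(c) is false in exactly the boundary cases you would need it. You claim $\frac{q^{j-1}-(-1)^{j-1}}{q+1}\geq 2j$ for all $j\geq 3$, $q\geq 3$; but for $j=4$, $q=3$ the left side is $\frac{27+1}{4}=7<8$, and for $j=3$ it equals $q-1$, which is $<6$ for all $q\leq 6$. The case $(j,q)=(4,3)$ is precisely the one forced on you when $n\in\{7,8\}$ and $q=3$ (there $\lceil n/2\rceil=4$), so your chain $P_j(q)\geq 3j-1>n$ breaks: $P_4(3)=10$, not $\geq 11$. The conclusion is still true because $P_4(3)=10>8\geq n$ directly, so the gap is repairable either by checking $n\in\{7,8\}$, $q=3$ by hand or by using a weaker uniform bound. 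The paper avoids the issue by writing $n=2n'+e$ and bounding
\[
P_{n'+e}(q)-n\;\geq\;\frac{q^{n'+e-1}-1-(q+1)(n'+1)}{q+1},
\]
whose numerator is shown positive via the function $h(x)=q^x-1-(q+1)(x+2)$, which is positive for all $x\geq 3$ and $q\geq 3$ (note $h(3)=q^3-5q-6=6$ at $q=3$); this estimate does cover $n=7,8$ with $q=3$. So: same strategy, but your specific estimate must be weakened or the two small cases handled separately.
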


\begin{proof}

\begin{itemize}
    \item[(i)] If $n = 2$, then $\F(V)$ is non-empty, and in particular $(-1)$-connected for any $q\geq 2$.
    For $n \geq 3$, and hence also $q\geq 3$, the claim is part of
Theorem \ref{coroHomologyCohenMacaulay}.
\item[(ii)]
This is an immediate consequence of Lemma \ref{lem:garlandconsequence}(ii) for $j = 4$.
\item[(iii)] 
Claims (a) and (b) are straightforward consequences of (i) and (ii). 

Next consider claim (c) and hence assume that $n\geq 7$.
Then it is routine to check that $[n/2] \leq n-3$.
Write $n = 2n'+e$, where $e \in \{0,1\}$.
So $[n/2] = n'$ and $j:=n - [n/2] = n' + e$.
Then, in view of the discussion in Lemma \ref{lem:garlandconsequence}, we show that \eqref{altGeneralBound} holds.
We compute:
\begin{align*}
    \frac{q^{j-1} - (-1)^{j-1}}{q+1} + j - 1 - n  & = \frac{q^{n'+e -1} + (-1)^{n'+e}}{q+1} + n'+e -1 - n\\
    & \geq \frac{q^{n' + e-1} - 1}{q+1} - n' - 1\\
    & = \frac{q^{n' + e-1} - 1 - (q+1)(n'+1)}{q+1}.
\end{align*}
To conclude the proof, we show that the numerator of the latter fraction
\begin{equation}
\label{positiveNumerator}
    q^{n' + e-1} - 1 - (q+1)(n'+1)
\end{equation}
is positive.
Define $h(x) = q^x-1 - (q+1)(x+2)$.
From basic analysis it can be seen that $h(x) > 0$ for any $x\geq 3$ and $q\geq 3$.
Let $x := n'+e -1$, which is at least $3$ by the
condition $n\geq 7$.
Then:
\[ q^{n'+e-1} - 1 -(q+1)(n'+1) = q^{x} - 1 -(q+1)(x+2-e) = h(x) + e (q+1) > 0.\]
Therefore, \eqref{positiveNumerator} is positive for any $q\geq 3$.
This implies that \eqref{altGeneralBound} holds, so Lemma \ref{lem:garlandconsequence}(ii) applies.
\end{itemize} 
\end{proof}

\begin{remark}
Indeed, if $q\geq 11$ and $n\geq 8$, then $\F(V)$ is $\kk$-homologically $(n-[n/4]-1)$-connected.
This follows by direct application of the bound (\ref{altGeneralBound}).

Write $n = 4n'+e$, $e\in \{0,1,2,3\}$, and let $j = [n/4]+1=n'+1$.
Then, for $n'\geq 2$, we have:
\begin{align*}
    \frac{q^{j-1}-(-1)^{j-1}}{q+1}+j-1-n & = \frac{q^{n'}-(-1)^{n'}}{q+1}-3n'-e\\
    & \geq q^{n'-2}(q-1)-3n'-3\\
    & \geq 11^{n'-2}10-3n'-3 >0.
\end{align*}
\end{remark}

It would be interesting to see if this observed
$\kk$-homological connectivity lifts to the homotopy level.
For example, if $n\geq 7$ and $q\geq 3$, we wonder if it is true that $\F(V)$ is $[n/2]$-connected.

Note that when $q = 2,3$ and $n = 4$, we have $\pi_1(\F(V))\neq 1$ and indeed $H_1(\F(V),\kk)\neq 0$.
So the excluded cases in the above theorem constitute exceptions, and the bound given by Garland's method is sharp there.
We did not check the other cases, but we expect to encounter similar situations.

A very interesting scenario arises when $n = 7$ and $q = 3$:

\begin{example}
Let $n = 7$, $q = 3$ and $\kk$ a field of characteristic $0$.
Here the reduced Euler characteristic of $\F(V)$ is $1582997389326080$.
Since Theorem \ref{thm:otherconresults}(iii) yields  $\kk$-homological $[n/2] = 3$-connectivity for $\F(V)$
and fields $\kk$ of characteristic $0$, we see that a possible contribution to non-zero free homology has to come from the degrees $4$ and $5$.
Indeed the positivity of the Euler characteristic shows that $H_4(\F(V),\kk)\neq 0$.
Nevertheless, it is not know whenever $H_5(\F(V),\kk)$ vanishes or not.
\end{example}

\begin{example}
Let $n = 6$, $q = 3$ and $\kk$ a field of characteristic $0$.
Now $\tilde{\chi}(\F(V)) = 19557643832$.
Since by Theorem \ref{thm:otherconresults}(ii) we get that $\F(V)$ is $2$-connected, we see that a contribution to the positivity of the Euler characteristic can only come from the homology group of degree $n-2=4$ (and indeed a ``negative'' contribution can only come from $H_3(\F(V),\kk)$.)
Then we conclude that $H_4(\F(V),\kk) \neq 0$.
But we cannot decide if $H_3(\F(V),\kk)$ vanishes or not.
\end{example}


Now we deal with the case $q = 2$.
In order the emphasize the structure of the polynomials in $q = 2$ that will appear, we will always write $q$ instead of $2$.

Recall that, since $q=2$, $\F(V)$ has the homotopy type of a complex of dimension at most $n-3$.
We assume now that $n\geq 4$.
The eigenvalues for the adjacency matrix are $d_n, (-1)^n q^{n-3}, -(-1)^nq^{n-2}$.
Let $0\leq i \leq n-4$, and the link of an $i$-frame corresponds to the frame complex over a unitary space of dimension $n-i\geq 4$ over $\GF{q^2}$.
Now the eigenvalues of the normalized Laplacian of the $1$-skeleton of the link of an $i$-frame are
\[ 0, 1 - (d_{n-i})^{-1} (-1)^{n-i} q^{n-i-3}, 1 + (d_{n-i})^{-1} (-1)^{n-i} q^{n-i-2}.\]
The smallest non-zero eigenvalue above now depends on the parity of $n-i$.
So we divide the analysis in cases:

\medskip

\noindent\textbf{Case $n-i$ even.}

Here the smallest eigenvalue for the link is 
\[ 1 - (d_{n-i})^{-1} (-1)^{n-i} q^{n-i-3}  = 1 - (d_{n-i})^{-1} q^{n-i-3} = 1 - \frac{(q+1)}{q(q^{n-i-1} + 1)} .\]
And we should ask this eigenvalue to be strictly greater than $\frac{i}{i+1} = 1 - \frac{1}{i+1}$.
This bound holds if and only if
\begin{equation*}
     \frac{q(q^{n-i-1}+1)}{q+1} - i - 1 > 0 .
\end{equation*}

\medskip

\noindent\textbf{Case $n-i$ odd.}

Now the smallest eigenvalue for the link is 
\[ 1 + (d_{n-i})^{-1} (-1)^{n-i} q^{n-i-2}  = 1 - (d_{n-i})^{-1} q^{n-i-2} = 1 - \frac{q+1}{q^{n-i-1} -1} .\]
Then this eigenvalue is strictly greater than $\frac{i}{i+1} = 1 - \frac{1}{i+1}$ if and only if 
\begin{equation*}
     \frac{q^{n-i-1}-1}{q+1}  - i - 1 > 0 .
\end{equation*}

For $q = 2$, $n\geq 4$ and $0\leq i\leq n-4$, define
\begin{equation*}
    Q_n(q,i) := \begin{cases}
         \frac{q(q^{n-i-1}+1)}{q+1} - i - 1 & n-i \text{ is even},\\
         \frac{q^{n-i-1}-1}{q+1}  - i - 1 & n-i \text{ is odd}.
    \end{cases}
\end{equation*}

We get an analogous monotone-bound as in the case $q\geq 3$:

\begin{lemma}
\label{lemmaMonotoneBound2Case}
Let $q = 2$, and let $0\leq i \leq j \leq n-4$.
Then $Q_n(q,i) \geq Q_n(q,j)$.

In particular, if $Q_n(q,j) > 0$ then $\F(V)$ is $\kk$-homologically $j$-connected.
\end{lemma}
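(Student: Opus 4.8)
The plan is to mirror the proof of Lemma \ref{lemmaMonotoneBound}, but now one must cope with the fact that $Q_n(q,i)$ is defined by a two-case formula according to the parity of $n-i$. Since increasing $i$ by $1$ flips the parity, the monotonicity cannot be reduced to a single clean algebraic inequality; instead I would first establish the ``unit step'' inequality $Q_n(q,i) \geq Q_n(q,i+1)$ for every admissible $i$, and then chain these steps to get $Q_n(q,i) \geq Q_n(q,j)$ for all $0 \leq i \leq j \leq n-4$. The unit step itself naturally splits into two subcases.

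First I would treat the case where $n-i$ is even (so $n-(i+1)$ is odd). Here one must show
\[
\frac{q(q^{n-i-1}+1)}{q+1} - i - 1 \;\geq\; \frac{q^{n-i-2}-1}{q+1} - i - 2,
\]
i.e. after clearing $q+1$ and cancelling, $q^{n-i}+q \geq q^{n-i-2} - 1 - (q+1)$, which is obvious since $q^{n-i} > q^{n-i-2}$ and $q > 0$. Second, the case $n-i$ odd (so $n-(i+1)$ is even): one needs
\[
\frac{q^{n-i-1}-1}{q+1} - i - 1 \;\geq\; \frac{q(q^{n-i-2}+1)}{q+1} - i - 2,
\]
i.e. $q^{n-i-1} - 1 \geq q^{n-i-1} + q - (q+1)$, which reads $-1 \geq -1$, an equality. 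So in both subcases the step inequality holds (for $q=2$, indeed for any $q \geq 2$), and iterating gives the first assertion.

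For the ``in particular'' clause I would invoke the Garland-method discussion immediately preceding the lemma: if an $i$-frame $\sigma$ has $0 \leq i \leq n-4$, then $\Lk_{\F(V)}(\sigma) \cong \F(\langle\sigma\rangle^\perp)$ is a frame complex of a unitary space of dimension $n-i \geq 4$ over $\GF{2^2}$, which is connected of dimension $\geq 1$ by Theorem \ref{connectedFramePoset}; so (G1) of Theorem \ref{theoremGarland} holds. The smallest positive eigenvalue of the normalized Laplacian of its $1$-skeleton was computed above to be governed precisely by the sign of $Q_n(q,i)$: condition (G2), namely $\lambda_{\min} > i/(i+1)$, is equivalent to $Q_n(q,i) > 0$. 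By the monotonicity just proved, $Q_n(q,j) > 0$ forces $Q_n(q,i) > 0$ for every $i \leq j$, hence (G1) and (G2) hold for all $i$-frames with $i \leq j$; Garland's method (Theorem \ref{theoremGarland}) then yields $\tilde H_i(\F(V);\kk) = 0$ for all $0 \leq i \leq j$, i.e. $\F(V)$ is $\kk$-homologically $j$-connected.

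The only mildly delicate point — and the main thing to be careful about — is bookkeeping of the parity shift at the boundary of the index range and making sure the two subcase computations genuinely cover the step $i \mapsto i+1$ for all $0 \leq i \leq n-5$ (the case $i=n-4$ being the top of the range, where nothing needs to be proved beyond $Q_n(q,n-4) \geq Q_n(q,n-4)$). Since both subcase inequalities reduce to trivial polynomial facts, there is no real analytic obstacle here, in contrast with Lemma \ref{lemmaMonotoneBound} where a genuine estimate $q^{n-j-1}(q-1) > 2$ was needed; for $q=2$ that estimate would fail for small exponents, which is exactly why the two-case formula and this separate lemma are required.
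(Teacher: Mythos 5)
Your proof is correct and follows essentially the same route as the paper's: establish the unit step $Q_n(q,i)\geq Q_n(q,i+1)$ by a two-case parity computation (one case gives equality, the other a strictly positive difference $q^{n-i-2}+2$), chain the steps, and then deduce the connectivity statement from the Garland setup preceding the lemma. The only difference is cosmetic indexing (you split on the parity of $n-i$ where the paper splits on the parity of $n-j$ with $i=j-1$), and your identification of $Q_n(q,i)>0$ with condition (G2) together with the dimension count $n-i\geq 4$ for (G1) matches the paper's intended argument.
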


\begin{proof}
The consequence about homological connectedness 
follows from the monotony of the bound and from Garland's Theorem \ref{theoremGarland}.

Now we show the monotony condition $Q_n(q,i) \geq Q_n(q,j)$, or equivalently that $Q_n(q,i) - Q_n(q,j)\geq 0$.
Note that it is enough to consider the case $i = j-1$.
Then we divide in cases:

\medskip

\noindent\textbf{Case 1:} if $n-j$ is even, then $Q_n(q,j) = Q_n(q,j+1)$.
\begin{proofofcase}[Proof of Case 1.]
Then $n-i=n-j+1$ is odd, and so:
\begin{align*}
    Q_n(q,i) - Q_n(q,j) & = Q_n(q,j-1) - Q_n(q,j)\\
    & = \frac{q^{n-j+1-1}-1}{ q+1} - (j-1)-1 - \left( \frac{q(q^{n-j-1}+1)}{q+1} - j-1 \right)\\
    & = \frac{q^{n-j} -1 - q^{n-j} - q + (q+1) }{q+1}\\
    & = 0.
\end{align*}
So $Q_n(q,i) - Q_n(q,j) = 0$ for this case.
\end{proofofcase}


\noindent\textbf{Case 2:} if $n-j$ is odd, then $Q_n(q,j) < Q_n(q,j+1)$.

\begin{proofofcase}[Proof of Case 2.]
Now $n-i=n-j+1$ is even.
We compute:
\begin{align*}
    Q_n(q,i) - Q_n(q,j) & = Q_n(q,j-1) - Q_n(q,j)\\
    & = \frac{q(q^{n-(j-1)-1}+1}{q+1} - (j-1)-1 - \left( \frac{q^{n-j-1}-1}{q+1} - j-1 \right)\\
    & = \frac{q^{n-j+1} + q - q^{n-j-1} + 1 + (q+1) }{q+1}\\
    & = q^{n-j-1} + 2 > 0.
\end{align*}
So $Q_n(q,i) - Q_n(q,j) > 0$ for this case.
\end{proofofcase}

This finishes the proof of the lemma.
\end{proof}

\begin{remark}
\label{remarkBoundPolynomial2Case}
We see that if we instead take $j = n-i$, $4\leq j\leq n$, then the bound $Q_n(q,n-j) > 0$ is equivalent to:
\begin{equation*}
    \begin{cases}
    \frac{q(q^{j-1}+1)}{q+1} + j-1 > n & \text{ if $j$ is even,}\\
    \frac{q^{j-1}-1}{q+1} + j-1 > n & \text{ if $j$ is odd.}
    \end{cases}
\end{equation*}
Analogously, we define $P_j(q) := Q_n(q,n-j) + n$.
Replacing with $q = 2$, we get:
\begin{equation*}
P_j(2) = \begin{cases}
\frac{2^j-1}{3} + j & \text{ if $j$ is even,}\\
\frac{2^{j-1}-1}{3} + j-1 & \text{ if $j$ is odd.}
\end{cases}
\end{equation*}
Note that, if $j$ is even, then $P_j(2) = P_{j+1}(2)$ in view of the above equation (see Case 1 in the proof of Lemma \ref{lemmaMonotoneBound2Case}).

Some quick computations give:
\begin{align*}
    P_4(2) = P_5(2) & = 9,\\
    P_6(2) = P_7(2) & = 27,\\
    P_8(2) = P_9(2) & = 93,\\
    P_{10}(2) = P_{11}(2) & = 351.
\end{align*}
\end{remark}

The proof of the following theorem is now straightforward from the above computations.

\begin{theorem}
\label{coroGarlandCase2}
Let $q = 2$, $n\geq 4$, $V$ a unitary space of dimension $n$ over $\GF{2^2}$, and $\kk$ a field of characteristic $0$.
Let $4\leq j\leq n$.
Then $\F(V)$ is $\kk$-homologically $(n-j)$-connected if $P_j(2) = \frac{2^{j}-1}{3} + j > n$ and $j$ is even.
If $j$ is odd and $P_j(2) = P_{j-1}(2) > n$ then $\F(V)$ is $\kk$-homologically $(n-j+1)$-connected.

In particular, if $n\geq 11$ or $n=7,8$, then $\F(V)$ is $\kk$-homologically $[n/2]$-connected.
\end{theorem}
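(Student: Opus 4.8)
The plan is to run Garland's method (Theorem~\ref{theoremGarland}) for $K=\F(V)$ exactly as in the case $q\ge 3$, but feeding in the $q=2$ eigenvalue list. Fix $0\le i\le n-4$ and let $\sigma$ be an $(i-1)$-simplex of $\F(V)$; by definition $\sigma$ is an $i$-frame, so $\gen{\sigma}^\perp$ is a unitary space of dimension $n-i\ge 4$ and $\Lk_{\F(V)}(\sigma)\cong\F(\gen{\sigma}^\perp)$. By Theorem~\ref{connectedFramePoset} this link is connected, and by Remark~\ref{rem:elemproperties}(v) it has dimension $n-i-1\ge 1$; thus $(G1)$ holds for all $(i-1)$-simplices whenever $0\le i\le n-4$.

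For $(G2)$ I would compute the spectrum of the normalized Laplacian of the $1$-skeleton of $\Lk_{\F(V)}(\sigma)$. Since $\G(\gen{\sigma}^\perp)$ is $d_{n-i}$-regular and $n-i>3$, Theorem~\ref{theoremEigenvalues}(iv) (in dimension $n-i$) gives adjacency eigenvalues $d_{n-i}$, $(-1)^{n-i}q^{n-i-3}$, $-(-1)^{n-i}q^{n-i-2}$, hence normalized Laplacian eigenvalues $0$, $1-(-1)^{n-i}q^{n-i-3}/d_{n-i}$ and $1+(-1)^{n-i}q^{n-i-2}/d_{n-i}$. Substituting $d_{n-i}=q^{n-i-2}(q^{n-i-1}-(-1)^{n-i-1})/(q+1)$ and splitting on the parity of $n-i$, the smallest positive eigenvalue $\lambda_{\min}$ is $1-(q+1)/(q(q^{n-i-1}+1))$ when $n-i$ is even and $1-(q+1)/(q^{n-i-1}-1)$ when $n-i$ is odd (in the odd case positivity uses $n-i\ge 5$). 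A direct rearrangement then identifies $\lambda_{\min}>i/(i+1)$ with the inequality $Q_n(q,i)>0$ for the quantity $Q_n(q,i)$ defined just before Lemma~\ref{lemmaMonotoneBound2Case}.

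Now I would invoke the monotonicity Lemma~\ref{lemmaMonotoneBound2Case}: if $Q_n(2,m)>0$ for some $0\le m\le n-4$, then $Q_n(2,i)>0$ for all $0\le i\le m$, so $(G1)$ and $(G2)$ hold for every $(i-1)$-simplex with $0\le i\le m$, and Theorem~\ref{theoremGarland} yields $\tilde H_i(\F(V);\kk)=0$ for $0\le i\le m$, i.e.\ $\F(V)$ is $\kk$-homologically $m$-connected. Re-indexing by $j=n-m$ turns $Q_n(2,n-j)>0$ into $P_j(2)>n$ with $P_j(2)=Q_n(2,n-j)+n$, and Remark~\ref{remarkBoundPolynomial2Case} gives $P_j(2)=\tfrac{2^j-1}{3}+j$ for even $j$; this is the first assertion. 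For odd $j$ one has $P_j(2)=P_{j-1}(2)$ with $j-1$ even, so the hypothesis $P_j(2)>n$ is the even-case hypothesis at index $j-1$, producing $\kk$-homological $(n-(j-1))=(n-j+1)$-connectivity, the second assertion. For the last claim I would take $j$ to be $n-[n/2]$ if this is even and $n-[n/2]-1$ otherwise, so that $j\ge 4$ (as $n\ge 7$) and $n-j\ge[n/2]$, and check $P_j(2)>n$: with $j=4$ one has $P_4(2)=9>n$ for $n=7,8$, and for $n\ge 11$ the chosen $j=2k$ satisfies $2k\ge 6$, so $P_j(2)=\tfrac{4^k-1}{3}+2k$ exceeds the largest admissible $n$ (which is $\le 4k+2$) because $4^k>6k+7$ for $k\ge 3$; this is also why $n=9,10$ must be excluded.

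No serious obstacle remains, since the eigenvalue list, the monotonicity of $Q_n(2,\cdot)$ and Garland's theorem are all already in place; the only delicate bookkeeping is tracking the parity of $n-i$ when identifying $\lambda_{\min}$ (and hence when moving between the ``even $j$'' and ``odd $j$'' forms of the conclusion), together with the elementary final estimate $P_{\,n-[n/2]}(2)>n$ pinning down the exact range $n\ge 11$ or $n\in\{7,8\}$.
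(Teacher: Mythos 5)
Your proposal is correct and follows essentially the same route as the paper: the text preceding Theorem~\ref{coroGarlandCase2} carries out exactly the $q=2$ eigenvalue computation for the links, the parity split defining $Q_n(2,i)$, the monotonicity Lemma~\ref{lemmaMonotoneBound2Case}, and the reindexing $P_j(2)=Q_n(2,n-j)+n$ of Remark~\ref{remarkBoundPolynomial2Case}, after which the paper declares the theorem straightforward. Your explicit verification of the ``in particular'' clause (including why $n=9,10$ fall outside the range, since $P_4(2)=P_5(2)=9$) matches the intended bookkeeping.
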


In the following example, we give a concrete application of Garland's method in the frame complex $\F(V)$ for $q=2$ and $n=6$.
In this case, the degree $1$ homology group has trivial free part by Garland's Theorem, but a non-zero torsion part.

\begin{example}
Recall from Theorem \ref{simplyConnectedFramePoset} that if $q=2$ and $n = 6$, then $\pi_1(\F(V)) = C_2\times C_2$.
Here, for $j = 4$, $P_j(2) = 9 > 6$, so $\F(V)$ is $\kk$-homologically $(n-j) = 2$-connected by Corollary \ref{coroGarlandCase2}.
That is, $H_1(\F(V),\kk) = 0 = H_2(\F(V),\kk)$ for any field $\kk$ of characteristic $0$.
However, we have $H_1(\F(V),\ZZ) = \ZZ_2\oplus \ZZ_2 \neq 0$.
This shows that Garland's method definitely cannot rule out torsion in homology.
\end{example}

\section{Further applications}
\label{sec:further}

In this section, we give some applications of the results on the frame complex to the homology and homotopy type of the poset of non-degenerate subspaces, the poset of orthogonal decompositions, and the Quillen poset of non-trivial elementary abelian $p$-subgroups of a unitary group.
At the end of the section, we include also some additional comments on related applications.
From now on, $\kk$ will denote a field of characteristic $0$.

\subsection{Poset of non-degenerate subspaces}
In this subsection, we apply the result of the previous section to the poset $\redS(V)$ of proper non-zero non-degenerate subspaces of a unitary space $V$.

\begin{proposition}
\label{coro:applicationOtherPosets}
Let $V$ a unitary space of dimension $n\geq 2$ over $\GF{q^2}$.
The following assertions hold:
\begin{itemize}
    \item[(i)] If $n < q+1$ then $\redS(V)$ is Cohen-Macaulay over $\kk$.
    
    \item[(ii)] If $q+1 \leq n < 2 (q+1)$, then $\redS(V)$ is $\kk$-homologically $(n-4)$-connected. 
    
    \item[(iii)] If $q\geq 3$ and $n \geq 7$ then $\redS(V)$ is $\kk$-homologically $[n/2]$-connected.
\end{itemize}
\end{proposition}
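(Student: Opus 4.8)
The plan is to analyze $\redS(V)$ through the poset map $\phi\co\F(V)^{n-2}\to\redS(V)$, $\{S_1,\dots,S_r\}\mapsto S_1\oplus\cdots\oplus S_r$, already used in Corollary \ref{corollarySVSimplyConnected}, feeding the homological connectivity bounds for $\F(V)$ from Section \ref{sec:garland} through the generalized Quillen fiber theorem (Proposition \ref{quillensFiberTheoremGeneral}). Recall from the proof of Corollary \ref{corollarySVSimplyConnected} that for $S\in\redS(V)$ one has $\phi^{-1}(\redS(V)_{\le S})=\F(S)$ and $\redS(V)_{>S}\cong\redS(S^\perp)$, so the relevant fiber term is $Z_S=\F(S)\ast\redS(S^\perp)$; if every $Z_S$ is $\kk$-homologically $m$-connected, then $\phi$ is a $\kk$-homological $(m+1)$-equivalence, and since $\F(V)^{n-2}\hookrightarrow\F(V)$ is an $(n-2)$-equivalence this transports the $\kk$-homological connectivity of $\F(V)$ from Section \ref{sec:garland} to $\redS(V)$. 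A useful subtlety is that an $(m+1)$-equivalence is in particular an epimorphism on $\tilde H_{m+1}$, so if one separately knows $\tilde H_{m+1}(\F(V);\kk)=0$ one gains one extra degree of connectivity beyond the bare fiber-theorem bound; this is what allows the otherwise tight estimates below to close.

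The core is an induction on $n=\dim V$ establishing, uniformly, the three connectivity assertions, with (i) read as ``$\redS(V)$ is $\kk$-homologically $(n-3)$-connected when $n<q+1$''. For the inductive step I would bound $Z_S$ by the join estimate $\operatorname{conn}_\kk(Z_S)\ge\operatorname{conn}_\kk(\F(S))+\operatorname{conn}_\kk(\redS(S^\perp))+2$, estimating the first summand via Theorems \ref{coroHomologyCohenMacaulay} and \ref{thm:otherconresults} applied to the non-degenerate space $S$ of dimension $\dim S\le n-1$, and the second by the inductive hypothesis applied to $S^\perp$, of dimension $n-\dim S\le n-1$; the degenerate extremes $\dim S=1$ (where $\F(S)$ is a point, so $Z_S\simeq\redS(S^\perp)$) and $\dim S=n-1$ (where $\redS(S^\perp)=\emptyset$, so $Z_S\simeq\F(S)$) are checked by hand. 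The key point is that, in each of the three regimes, the hypothesis on $n$ prevents $\dim S$ and $n-\dim S$ from simultaneously landing in the ``worse'' ranges of the available $\F$-bounds, so the two summands always add up to the required total — for instance in case (ii) one uses that $\dim S\ge q+1$ together with $\dim(S^\perp)\ge q+1$ would force $n\ge 2(q+1)$, which is excluded.

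Once the connectivity is in place, assertion (i) follows formally: using $V=S\oplus S^\perp$ from Proposition \ref{dimensionTheorem} one identifies every open interval of the poset $\S(V)$ with some $\redS(W)$ for a non-degenerate $W$ of dimension $<n$ (namely $(0,S)\cong\redS(S)$, $(S,V)\cong\redS(S^\perp)$, and $(S,T)\cong\redS(T\cap S^\perp)$ for $S<T$), so the induction together with the $(n-3)$-connectivity of $\redS(V)$ itself gives that $\redS(V)$ is $\kk$-homologically Cohen-Macaulay, joins of Cohen-Macaulay complexes being Cohen-Macaulay (see \cite{Bjo95}). The sporadic cases left aside by the ranges above — notably $q=2$ in (ii), where $n\in\{3,4,5\}$ — are handled directly by Corollaries \ref{corollarySVConnected} and \ref{corollarySVSimplyConnected}.

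The main obstacle I expect is precisely the bookkeeping in the inductive step. The homological connectivity bounds for $\F(V)$ coming from Garland's method have three different shapes according to how $\dim S$ compares with $q$ (with a further $q=2$ split), so verifying that $\operatorname{conn}_\kk(\F(S))+\operatorname{conn}_\kk(\redS(S^\perp))+2$ always reaches the target is a fussy case analysis, and since the estimates are sharp one genuinely needs the interplay between the fiber-theorem bound and the direct vanishing for $\F(V)$ to recover the last degree. Keeping a single uniform inductive statement across (i)--(iii) while the input $\F$-bounds are not uniform is the delicate part.
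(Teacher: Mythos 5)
Your architecture is exactly the paper's: the same map $\phi\co\F(V)^{n-2}\to\redS(V)$, the same identification of the fibers as $Z_S=\F(S)\ast\redS(S^\perp)$, the generalized Quillen fiber theorem, induction on $n$ with the join connectivity estimate, the observation in case (ii) that $\dim S$ and $\dim S^\perp$ cannot both be $\geq q+1$, and the same disposal of the $q=2$ cases via Corollaries \ref{corollarySVConnected} and \ref{corollarySVSimplyConnected}. The interval analysis for Cohen--Macaulayness in (i) also matches.

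There is, however, one concrete point in part (iii) where your plan as stated breaks down, and it is not just ``fussy bookkeeping''. For $q=3$, $n=8$ and $\dim S=4$ (so $\dim S^\perp=4$), both factors land in the weak regime: $\F(S)$ is only $\kk$-homologically $0$-connected (indeed $H_1(\F(S),\kk)$ has rank $70$ by Example \ref{exampleDim4Field3}), and the inductive hypothesis only gives that $\redS(S^\perp)$ is $0$-connected, so your join estimate yields $2$-connectivity of $Z_S$ where $[n/2]-1=3$ is required. Your ``extra degree from the epimorphism'' device does not help here — it applies to $\redS(V)$ at the top of the induction, not to the fiber $Z_S$. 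The paper closes this case by Künneth: $H_3(Z_S,\kk)\cong H_1(\F(S),\kk)\otimes H_1(\redS(S^\perp),\kk)$, and the second factor vanishes because $\pi_1(\redS(W))\cong C_3\times C_3$ for a $4$-dimensional unitary space $W$ over $\GF{3^2}$ by the main result of \cite{Horn}, so its $H_1$ dies in characteristic $0$. Since $H_1(\F(S),\kk)\neq 0$, this external input is genuinely needed and is not derivable from the Garland bounds plus induction that your proposal relies on. Your claim that the hypothesis on $n$ ``prevents $\dim S$ and $n-\dim S$ from simultaneously landing in the worse ranges'' is false precisely at this one case.
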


\begin{proof}
If $q = 2$, item (i) is clear and item (ii) follows from Corollaries \ref{corollarySVConnected} and  \ref{corollarySVSimplyConnected}.
Therefore, in the remaining of the proof we suppose that $q\geq 3$.
Note that item (i) follows from Theorem \ref{coroHomologyCohenMacaulay} and Proposition \ref{prop:CMimplicationFrames}.

These assertions (ii) and (iii) will follow by a standard application of Quillen's fiber-Theorem to the map $\phi:\F(V)^{n-2}\to \redS(V)$ that sends a frame to its span, in conjunction with Theorem \ref{thm:otherconresults}.


Assume that $q+1 \leq n < 2(q+1)$.
Note that in particular $n < P_4(q)$.
If $s = \dim(S)$ for $S\in \redS(V)$, then either $s < q+1$ or $n-s < q+1$.
In the former case, $\F(S)$ is $\kk$-homologically $(s-3)$-connected and $\redS(S^\perp)$ is at least $\kk$-homologically $(n-s-4)$-connected by induction (since $(n-s) < 2(q+1)$).
Thus the join $\F(S) * \redS(S^\perp)$ is $\kk$-homologically $(s-3)+(n-s-4)+2=(n-5)$-connected.
Now suppose that $n-s<q+1$.
Then $s < n < 2(q+1)$, so $s < 2(q+1) \leq P_4(q)$ for any $q$.
Then, $\F(S)$ is $\kk$-homologically $(s-4)$-connected while $\redS(S^\perp)$ is $\kk$-homologically $(n-s-3)$-connected by item (1).
Thus the join $\F(S) * \redS(S^\perp)$ is $\kk$-homologically $(n-5)$-connected.
By Quillen's fiber-Theorem, $\phi$ is a $\kk$-homologically $(n-4)$-equivalence, and since $\F(V)$ is $\kk$-homologically $(n-4)$-connected, we conclude that $\redS(V)$ is $\kk$-homologically $(n-4)$-connected.
This finishes the proof of  item (ii).

The proof of item (iii) follows similar ideas, but it involves a lengthy case-by-case analysis.
We sketch the main argument of the proof.

The aim is to show that the join $\F(S) * \redS(S^\perp)$ is at least $\kk$-homologically $([n/2]-1)$-connected, and then conclude as above by Quillen's fiber-Theorem that $\phi$ is a $\kk$-homologically $[n/2]$-equivalence.
In conjunction with Theorem \ref{thm:otherconresults}(iii), this will provide the desired conclusion.

We apply Theorem \ref{thm:otherconresults}(iii) to $\F(S)$ when $s:=\dim(S) \geq 7$ to get $[s/2]$-connectivity on $\F(S)$, and induction on $\redS(S^\perp)$ when $n-s \geq 7$.
It can be seen that $[s/2] + [(n-s)/2] \geq [n/2]-1$, so the join $\F(S) *  \redS(S^\perp)$ is at least $\kk$-homologically $([n/2]+1)$-connected when $s,n-s\geq 7$.
However, when $s \leq 6$ or $n-s\leq 6$, we have to look for alternative ways of showing connectivity.
We proceed by applying (i) and (ii) of this theorem and Theorem \ref{thm:otherconresults}.

If $s\leq 6$ and $n-s\leq 6$, we perform a case-by-case analysis.
Note that if $q\geq 7$, then $s,n-s < q+1$, so $\F(S)*\redS(S^\perp)$ is $\kk$-homologically $(s-3)+(n-s-3)+2=n-4$ connected, and $n-4 \geq [n/2]$ by the hypothesis $n\geq 7$.
So the case-by-case inspection should be carried out for $q =3,4,5$.
The idea is to check if the bounds $s,n-s < q+1$ and $s,n-s <2(q+1)$ hold in order to apply (i) and (ii) of this theorem and Theorem \ref{thm:otherconresults}(i)
and (ii).
If $q = 4,5$, we will always get that $\F(S) * \redS(S^\perp)$ is at least $\kk$-homologically $([n/2]-1)$-connected.
However, if $q = 3$, this argument may fail only in the case $s=4$ and $n=8$: a priori we just get that $\F(S) * \redS(S^\perp)$ is $\kk$-homologically $2$-connected, but we need to achieve $3$-connected.
Nevertheless, we claim that this is indeed the case.
The classical formula for the homology of a join of spaces yields
\[H_3(\F(S)*\redS(S^\perp),\kk) \groupiso H_1(\F(S),\kk)\otimes H_1(\redS(S^\perp),\kk), \]
since both $\F(S)$ and $\redS(S^\perp)$ are connected.
We have that $\pi_1(\redS(S^\perp)) \groupiso C_3\times C_3$ by the main result of \cite{Horn}, so $H_1(\redS(S^\perp),\kk) = 0$ since $\kk$ is a field of characteristic $0$.
Therefore we conclude that $\F(S)*\redS(S^\perp)$ is $\kk$-homologically $3$-connected, and $3 = [n/2]-1$.

Next, if $s \leq 6$ and $n-s\geq 7$, we apply induction to $\redS(S^\perp)$ and Theorem \ref{thm:otherconresults}(i) and (ii) to $\F(S)$.
Similarly, for $s \geq 7$ and $n-s\leq 6$ we apply Theorem \ref{thm:otherconresults}(iii) to $\F(S)$, and items (i) and (ii) of this theorem to $\redS(S^\perp)$.
In any situation, we will get that $\F(S)*\redS(S^\perp)$ is at least $\kk$-homologically $([n/2]-1)$-connected.
\end{proof}

For $q = 2$, a similar proof as above, with case-by-case inspection in low dimensions and invoking Lemma \ref{lemmaMonotoneBound2Case} and Quillen's fiber-Theorem, shows that:

\begin{corollary}
Let $V$ be a unitary space of dimension $n$ over $\GF{2^2}$. Then:
\begin{enumerate}
    \item If $n \in \{ 5,6 \}$, $\redS(V)$ is $\kk$-homologically $1$-connected.
    \item If $n\in \{7,8,9,10\}$, $\redS(V)$ is $\kk$-homologically $([n/2]-1)$-connected.
    \item If $n\geq 11$, $\redS(V)$ is $\kk$-homologically $[n/2]$-connected.
\end{enumerate}
\end{corollary}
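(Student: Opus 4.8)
The plan is to mimic the proof of Proposition \ref{coro:applicationOtherPosets} but now using the $q=2$ homological connectivity estimates for $\F(V)$ from Theorem \ref{coroGarlandCase2} together with Lemma \ref{lemmaMonotoneBound2Case}, and to handle the small dimensions by hand since the exponential bounds $P_j(2)$ grow slowly. As before, the tool is Quillen's fiber-Theorem (Proposition \ref{quillensFiberTheoremGeneral}) applied to the span map $\phi\colon \F(V)^{n-2}\to\redS(V)$, for which $\phi^{-1}(\redS(V)_{\le S})=\F(S)$ and $\redS(V)_{>S}=\redS(S^\perp)$, so the fiber term is the join $\F(S)*\redS(S^\perp)$. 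Since $\F(V)^{n-2}\hookrightarrow\F(V)$ is an $(n-2)$-equivalence and $\F(V)$ is (by Theorem \ref{coroGarlandCase2}, with $j=4$ giving $P_4(2)=9$ for $n\le 8$, and with larger $j$ for larger $n$) $\kk$-homologically $(n-3)$- or $[n/2]$-connected in the relevant ranges, the only work is to bound the connectivity of the join fibers.

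First I would record the ``input data'': by Theorem \ref{coroGarlandCase2}, $\F(W)$ for $\dim W=m$ over $\GF{2^2}$ is $\kk$-homologically $(m-3)$-connected for $m\le 8$ (using $P_4(2)=P_5(2)=9$, $P_6(2)=P_7(2)=27$), and $\kk$-homologically $[m/2]$-connected for $m\ge 7$; also $\F(W)$ is non-empty for $m\ge 2$ and connected for $m\ge 4$ (Theorem \ref{connectedFramePoset}), while $\redS(W)$ is connected for $m\ge 4$ (Corollary \ref{corollarySVConnected}) and simply connected for $m\ge 5$ (Corollary \ref{corollarySVSimplyConnected}). For $n\in\{5,6\}$: for every $S\in\redS(V)$ with $s=\dim S$, one of $s,n-s$ is $\ge\lceil n/2\rceil\ge 3$, so $\F(S)$ or $\redS(S^\perp)$ is connected, and the other factor is at least $(-1)$-connected (non-empty), whence $\F(S)*\redS(S^\perp)$ is $0$-connected; if $s\in\{1,2\}$ one checks $\F(S)*\redS(S^\perp)$ is at least $0$-connected directly and in fact more. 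Pushing the exact join connectivity formula $\mathrm{conn}(X*Y)\ge \mathrm{conn}(X)+\mathrm{conn}(Y)+2$ with $s$ and $n-s$ small, one gets $\F(S)*\redS(S^\perp)$ is $\kk$-homologically $0$-connected for $n=5,6$, so $\phi$ is a $1$-equivalence and, since $\F(V)$ is $\kk$-homologically $1$-connected by Theorem \ref{coroGarlandCase2} ($P_4(2)=9>6\ge n$, giving $(n-4)$-connectivity which is $\ge 1$), item (1) follows.

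Next, for $n\in\{7,8,9,10\}$ I would again split on $s=\dim S$. When both $s\ge 7$ and $n-s\ge 7$ is impossible in this range, so at least one of $s,n-s$ is $\le 6$; say $s\le 6$ (the symmetric case uses that $\redS(S^\perp)$-style bounds come from induction / the previous item). Then $\F(S)$ is $\kk$-homologically $(s-3)$-connected, and $\redS(S^\perp)$ (dimension $n-s$) is $\kk$-homologically $(n-s-3)$-connected when $n-s\le 6$ — i.e.\ when $n\le 12$, always here — by the already-treated small-dimension cases or by Corollary \ref{corollarySVConnected}/\ref{corollarySVSimplyConnected} for $n-s\le 6$. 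Hence the join is $\kk$-homologically $(s-3)+(n-s-3)+2=(n-4)$-connected, which is $\ge[n/2]-1$ for $n\ge 7$; thus $\phi$ is a $\kk$-homologically $[n/2]$-equivalence, and since $P_4(2)=9>n$ for $n\in\{7,8\}$ (so $\F(V)$ is $(n-4)$-connected $\supseteq[n/2]$-connected) and $P_6(2)=27>n$ for $n\in\{9,10\}$ (so $\F(V)$ is $(n-6)$-connected $=[n/2]-1$- or $[n/2]$-connected; one must double-check $n-6\ge[n/2]-1$, true for $n\ge 9$), item (2) follows. For $n\ge 11$, one needs $\F(S)*\redS(S^\perp)$ to be $\kk$-homologically $([n/2]-1)$-connected: when $s,n-s\ge 7$, induction plus Theorem \ref{coroGarlandCase2} gives each factor $[s/2]$- resp.\ $[(n-s)/2]$-connected and $[s/2]+[(n-s)/2]+2\ge[n/2]+1$; when one of them is $\le 6$, the small-dimension estimates above bound that factor by its dimension minus $3$, which combined with induction on the other factor again yields $\ge[n/2]-1$; then $\phi$ is a $\kk$-homologically $[n/2]$-equivalence and $\F(V)$ is $\kk$-homologically $[n/2]$-connected by Theorem \ref{coroGarlandCase2} (since $P_j(2)>n$ for suitable even $j$ with $n-j\ge[n/2]$, e.g.\ $P_8(2)=93$, $P_{10}(2)=351$), giving item (3).

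The main obstacle will be the low-dimensional bookkeeping: unlike the $q\ge 3$ case the bound $P_4(2)=9$ is modest, so around $n=8,9,10$ one is close to the edge of what Garland's method gives, and the join-connectivity inequalities $(s-3)+(n-s-3)+2\ge[n/2]-1$ and $[s/2]+[(n-s)/2]+2\ge[n/2]-1$ must be verified to be tight enough across all splittings $s+ (n-s)=n$, with a few genuinely borderline $(s,n)$ pairs (e.g.\ $s=4$ and larger $n$, analogous to the $q=3,s=4,n=8$ exception in Proposition \ref{coro:applicationOtherPosets}) potentially requiring an extra input such as simple connectivity of the relevant $\redS(S^\perp)$ or a direct $\pi_1$-computation; here one can lean on Corollary \ref{corollarySVSimplyConnected}, which gives $\pi_1(\redS(W))=1$ for $\dim W\ge 5$ over $\GF{2^2}$, to kill any stray $H_1$ contribution in the Künneth formula for the join. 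This is exactly the ``lengthy case-by-case analysis'' already flagged in the proof of Proposition \ref{coro:applicationOtherPosets}, so the write-up would explicitly invoke that proof and only indicate the differences arising from the $q=2$ bounds.
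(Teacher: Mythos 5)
Your overall strategy --- Quillen's fiber theorem applied to the span map $\phi\colon\F(V)^{n-2}\to\redS(V)$, with the $q=2$ Garland bounds feeding the join estimates and induction on dimension --- is exactly the paper's intended route, so the architecture is fine. The problem is the quantitative input. You assert that $\F(W)$ is $\kk$-homologically $(m-3)$-connected for $m=\dim W\le 8$ over $\GF{2^2}$, but Theorem \ref{coroGarlandCase2} with $j=4$ (the smallest admissible $j$) only yields $(m-4)$-connectivity, and $(m-3)$-connectivity is genuinely false: for $m=4$ the complex $\F(W)$ is a wedge of $81$ circles (Example \ref{exampleDim4Field2}), so $\tilde{H}_1\neq 0$, and for $m=3$ it is four points. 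The same off-by-one infects your bounds for $\redS(W)$ in low dimensions: for $m=3$ it is disconnected (four components), for $m=4$ only connectedness is known over $\GF{2^2}$ (Corollary \ref{corollarySVSimplyConnected} excludes this case), and for $m=5,6$ only $1$-connectivity is available --- not the $(m-3)$-connectivity that your join computation ``$(s-3)+(n-s-3)+2=(n-4)$'' relies on.

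With the corrected inputs, parts (1) and (2) can still be pushed through by an honest case-by-case check over all splittings $s+(n-s)=n$ (the worst fibers turn out to be exactly $([n/2]-2)$-connected, which is just enough to make $\phi$ a $([n/2]-1)$- resp.\ $[n/2]$-equivalence). But part (3) does not close as written: for $n=12$ and $\dim S=3$ the fiber is $\F(S)*\redS(S^\perp)\simeq\bigvee_{3}\Sigma\,\redS(S^\perp)$ with $\dim S^\perp=9$, and one needs this join to be $5$-connected, i.e.\ $\tilde{H}_4(\redS(S^\perp),\kk)=0$; the available bounds give only $3$-connectivity of $\redS$ in dimension $9$ (Garland stalls at degree $3$ there because $P_4(2)=9$ is not $>9$), and the symmetric splitting $\dim S=9$, $\dim S^\perp=3$ has the same defect. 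So an additional argument for the $9$-dimensional factor (or a different treatment of the $3$-dimensional one) is needed before part (3) is proved; your sketch conceals this borderline case precisely because of the inflated connectivity constants.
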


The following remark shows that the bound on $n$ and $q$ for homotopical Cohen-Macau\-layness given in \cite{DGM} can be improved when asking for 
Cohen-Macaulay\-ness over fields $\kk$ of characteristic $0$.
We do not know whether the same conclusions apply for homotopy Cohen-Macaulayness.

Recall from the results of \cite{DGM} that if $2^{n-2}<\frac{q^2}{q+1}$ then $\redS(V)$ is homotopically Cohen-Macaulay.
This inequality is equivalent to:
\begin{equation}
    \label{boundDGM}
    n < \frac{\ln(q^2) - \ln(q+1)}{\ln(2)} + 2.
\end{equation}
Now, from basic analysis, it is not hard to check that for any $q\geq 2$ we have:
\[ \frac{\ln(q^2) - \ln(q+1)}{\ln(2)} + 2 < q + 1.\]
Moreover, the quotient $(q+1)/\left( \frac{\ln(q^2) - \ln(q+1)}{\ln(2)} + 2\right)$ goes to infinity as $q$ goes to infinity.
Therefore the bound provided in Proposition \ref{coro:applicationOtherPosets}(i) covers more cases on Cohen-Macaulayness over $\kk$ than the bound \eqref{boundDGM}.
For example, for $q = n = 4$ or $q = n = 5$, \eqref{boundDGM} fails while $n < q+1$ holds.

\subsection{Poset of elementary abelian $p$-subgroups}
In this subsection we give some applications of our results to achieve $\kk$-homological connectivity 
for two posets:

\begin{itemize}
    \item The poset $\D(V)$ of all orthogonal decompositions of $V$ and $\mathring{\D}(V) = \D(V)-\{V\}$ was introduced in Section \ref{sec:basics}. 
    \item For a prime $p$ and a finite group $G$,
    let $\Ap(G)$ be the \textit{poset of all non-trivial elementary
    abelian $p$-subgroups} of $G$.
\end{itemize}

Even though refinement is the natural order
    on $\D(V)$, in our applications we will often use $\D(V)^ \op$ which is the same poset but with the
    reversed order relation. Of course, the order complexes of the two posets are identical.
    
The next proposition summarizes the relation between these two posets, which was noticed first by Aschbacher-Smith in \cite[Section 4]{AS93}, as well as consequences on the $\kk$-homological connectivity of these posets.

\begin{proposition}
\label{coro:applicationDecompApPosets}
Let $n\geq 3$, $q\geq 2$, and $V$ a unitary space of dimension $n$ over $\GF{q^2}$.
Suppose that $p$ is a prime number dividing $q+1$, and let $Z$ denote the unique central subgroup of order $p$ in $\GU_n(q)$.
Then for a field $\kk$ of characteristic $0$ the following assertions hold:
\begin{itemize}
    \item[(i)] By \cite{AS93}, there is an inclusion map of posets $\iota : \mathring{\D}(V)^{\op} \hookrightarrow \Ap(\GU_n(q))_{>Z}$ together with a poset-retraction $r$ such that $\iota \circ r \geq \Id$ and $r \circ \iota = \Id$.
    In particular, $\iota$ is a homotopy equivalence.
    
    \item[(ii)] If $n < q+1$ then $\D(V)$ and $\Ap(\GU_n(q))_{>Z}$ are Cohen-Macaulay over $\kk$.
    
    \item[(iii)] If $q+1 \leq n < 2 (q+1)$ then $\mathring{\D}(V)$ and $\Ap(\GU_n(q))_{ > Z}$ are $\kk$-homologically $(n-4)$-connected.
    
    \item[(iv)] If $q\geq 3$ and $n\geq 7$ then $\mathring{\D}(V)$ and $\Ap(\GU_n(q))_{ > Z}$ are $\kk$-homologically $[n/2]$-connected.
\end{itemize}
\end{proposition}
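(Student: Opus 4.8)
The plan is to deduce the whole proposition from the properties of $\redS(V)=\mathring{\S}(V)$ already established in Proposition~\ref{coro:applicationOtherPosets}, the wedge decomposition of $\mathring{\S}(V)$ in Proposition~\ref{prop:decomp}, and the Aschbacher--Smith comparison.

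\emph{Part (i)} is in substance \cite[Section~4]{AS93}, and I would spell out the two maps and check the closure-operator formalism. Since $p\mid q+1$, the scalars of order $p$ in $\GU_n(q)$ form the unique central subgroup $Z\cong C_p$. For $\pi=V_1\oplus\cdots\oplus V_r\in\mathring{\D}(V)$ (so $r\geq 2$) let $\iota(\pi)\cong C_p^{\,r}$ be the group of unitary maps acting as a scalar in $\mu_p$ on each block $V_i$; it properly contains $Z$, so $\iota(\pi)\in\Ap(\GU_n(q))_{>Z}$. Conversely, for $E\in\Ap(\GU_n(q))_{>Z}$ let $r(E)$ be the decomposition of $V$ into the joint eigenspaces $V_\chi$ of $E$; because $p\mid q+1$ we have $\tau(\zeta)=\zeta^{-1}$ for every $p$-th root of unity $\zeta$, and the $\Psi$-invariance of $E$ then forces $V_\chi\perp V_{\chi'}$ for $\chi\neq\chi'$, so the $V_\chi$ are mutually orthogonal and non-degenerate; moreover $r(E)\neq\{V\}$, since an $E\leq\GU_n(q)$ acting by scalars would lie in the cyclic group $\mu_{q+1}\cdot\Id$ and hence equal $Z$. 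One checks $r\circ\iota=\Id$ and $\iota\circ r\geq\Id$, so $\iota r$ is a closure operator on $\Ap(\GU_n(q))_{>Z}$ whose image is a subposet isomorphic to $\mathring{\D}(V)^{\op}$; by \cite[Corollary~10.12]{Bjo95} this gives the asserted homotopy equivalence and in particular $|\Ap(\GU_n(q))_{>Z}|\simeq|\mathring{\D}(V)|$.

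\emph{The decomposition poset.} The key observation for (ii)--(iv) is that in Proposition~\ref{prop:decomp} the summand indexed by the trivial decomposition $\pi=V$ equals $S^{-1}\ast\D(V)_{<V}=\mathring{\D}(V)$; thus $\mathring{\D}(V)$ is, up to homotopy, a wedge summand of $\mathring{\S}(V)=\redS(V)$, so $\widetilde H_\ast(\mathring{\D}(V);\kk)$ is a direct summand of $\widetilde H_\ast(\redS(V);\kk)$. Consequently every $\kk$-homological connectivity bound proved for $\redS(V)$ passes verbatim to $\mathring{\D}(V)$: Proposition~\ref{coro:applicationOtherPosets}(ii) gives part (iii), and Proposition~\ref{coro:applicationOtherPosets}(iii) (together with the corresponding $q=2$ statement) gives part (iv). For part (ii) I would instead use Corollary~\ref{coro:decomp}(i): since $\dim W\leq n<q+1$ for every non-degenerate $W\leq V$, Proposition~\ref{coro:applicationOtherPosets}(i) shows $\widetilde H_\ast(\redS(W);\kk)$ is concentrated in top degree, and combining this with the interval description of Proposition~\ref{prop:elementary} (intervals $\D(V)_{\geq\pi}\cong\Pi_r$ are Cohen--Macaulay over any ring, intervals $\D(V)_{\leq\pi}\cong\prod_i\D(V_i)$ by induction) yields that $\D(V)$, hence $\mathring{\D}(V)$, is $\kk$-homologically Cohen--Macaulay.

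\emph{Transfer to the subgroup poset and the main obstacle.} For (iii) and (iv) the homotopy equivalence of part (i) transports the connectivity of $\mathring{\D}(V)$ directly to $\Ap(\GU_n(q))_{>Z}$, $\kk$-homological connectivity being a homotopy invariant. The delicate case is (ii), since Cohen--Macaulayness is a local property and not a homotopy invariant: for $P:=\Ap(\GU_n(q))_{>Z}$ I would argue interval by interval. If $E<F$ in $P$, the open interval $(E,F)$ and the lower interval $P_{<E}=\{A : Z<A<E\}$ are isomorphic to the proper parts of the subspace lattices of the $\FF_p$-spaces $F/E$, respectively $E/Z$, hence Cohen--Macaulay over $\kk$ (see \cite{Bjo95}), while $P_{>E}$ is, via the retraction $r$ of part (i), homotopy equivalent to $\mathring{\D}(V)_{<r(E)}$, whose $\kk$-homology is concentrated in top degree by part (ii) for $\D(V)$ and Proposition~\ref{prop:elementary}. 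Verifying that this last identification respects the grading, so that every link of $P$ indeed has its reduced $\kk$-homology only in top degree and the relevant complexes are pure — that is, that the Cohen--Macaulay property of $\mathring{\D}(V)$ restricts correctly to all intervals of $P$ through the Aschbacher--Smith retraction — is the step I expect to demand the most care; the connectivity assertions (iii) and (iv), by contrast, follow formally from the wedge-summand observation.
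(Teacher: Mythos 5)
Your proposal follows essentially the same route as the paper: the explicit Aschbacher--Smith maps $\iota$ and $r$ for (i), transfer of connectivity from $\redS(V)$ to $\mathring{\D}(V)$ for (iii)--(iv) (the paper phrases this via the surjectivity of the decomposition map of Lemma \ref{lem:fiber}, while you read off $\mathring{\D}(V)$ as the wedge summand of Proposition \ref{prop:decomp} indexed by the trivial decomposition $\pi=V$ --- an equivalent and arguably cleaner observation), and an interval-by-interval analysis of $\Ap(\GU_n(q))_{>Z}$ for (ii), using Corollary \ref{coro:decomp}(i) for the decomposition poset exactly as the paper does.

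The one inaccuracy is the blanket claim that every upper interval $P_{>E}$ is homotopy equivalent to $\mathring{\D}(V)^{\op}_{>r(E)}$ via $r$. This is only true when $E=\iota\circ r(E)$; if $E<\iota\circ r(E)$ the retraction does not restrict (one has $r(\iota r(E))=r(E)$, not a strict refinement), and indeed $P_{>E}$ can be non-empty and contractible while $\mathring{\D}(V)^{\op}_{>r(E)}$ is empty. The paper resolves this by splitting into the two cases: for $E=\iota r(E)$ the restriction of $\iota$ and $r$ gives the equivalence with $\mathring{\D}(V)^{\op}_{>r(E)}$, which is spherical by Corollary \ref{coro:decomp}(i); for $E<\iota r(E)$ the interval $P_{>E}$ is conically contractible via $T\leq \iota r(T)\geq \iota r(E)$, hence has vanishing reduced homology and causes no obstruction to Cohen--Macaulayness. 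You correctly flagged this as the delicate step; the case distinction above is the missing ingredient, together with the separate verification (also in the paper) that $\Ap(\GU_n(q))_{>Z}$ is pure of dimension $n-2$.
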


\begin{proof}
The proof of (i) is based on the construction given in \cite[pp. 514-515]{AS93}.
We review their argument and fill in the details left to the reader in \cite{AS93}.

Fix a generator $\lambda$ of $Z$.
We have a function $\iota:\mathring{\D}(V)^{\op} \to \Ap(\GU_n(q))_{ > Z}$ that maps an orthogonal decomposition $d = \{V_1, \ldots , V_r\}$ to the elementary abelian $p$-subgroup $A_1 \oplus \ldots \oplus A_r$, where $A_i$ is generated by the order-$p$ unitary-map $a_i$ which acts as multiplication by $\lambda$ on $V_i$ and as the identity on $V_i^{\perp}$.
Then $\iota$ is a well-defined poset map: since any orthogonal decomposition $\pi \in \mathring{\D}(V)$ yields a non-central elementary abelian $p$-subgroup $\iota(\pi)$ of rank $|\pi|$, and $\iota$ is order-preserving.

On the other hand, if $A\in \Ap(\GU_n(q))_{ > Z}$, then there exists an orthogonal decomposition $V_1\oplus \ldots \oplus V_r$ of $V$ corresponding to the weight spaces of $A$.
This allows us to define a retraction $r: \Ap(\GU_n(q))_{ > Z} \to \mathring{\D}(V)^{\op}$ (note that if $A < B$ then $B$ gives rise to a finer decomposition $r(B)$, so $r(A) \leq r(B)$ and $r$ is a poset map).
It is not hard to check that $\iota \circ r(A) \geq A$ and $r\circ \iota(\pi) = \pi$.
This shows that $\mathring{\D}(V)^{\op}$ is a strong homotopy retract of $\Ap(\GU_n(q))_{ > Z}$ and $\iota \circ r\geq \Id$, $r \circ \iota = \Id$.
Then $\iota$ is an inclusion map of posets and $\iota,r$ are homotopy equivalences (see \cite[p.103]{Qui78}).
This finishes the proof of (i).

Using the considerations above we can also show that $\Ap(\GU_n(q))_{ > Z}$ is pure of dimension $n-2$.
Let $A\in \Ap(\GU_n(q))_{ > Z}$ be a maximal element.
Then $\iota \circ r(A) \geq A$ implies that $\iota \circ r(A) =A$.
So, if $r(A) < \pi'$, $A = \iota \circ r(A) < \iota(\pi')$ since $\iota$ is injective.
The maximality of $A$ implies that we cannot have $r(A) < \pi'$, so $r(A)$ is a maximal element of $\mathring{\D}(V)^{\op}$ (and necessarily a full frame, i.e. an orthogonal decomposition of size $n$).
This means that $A = \iota \circ r(A)$ is an elementary abelian $p$-subgroup of rank $n$.
Thus $\Ap(\GU_n(q))_{ > Z}$ is pure of dimension $n-2$.

 On the other hand, by Lemma \ref{lem:fiber}, there exists a continuous map
 between the geometric realizations $|\redS(V)| \to |\mathring{\D}(V)|$ which induces an epimorphism in every homotopy and homology group (with any coefficient ring).
Since $\mathring{\D}(V)^{\op} \simeq \Ap(\GU_n(q))_{>Z}$, by applying Proposition \ref{coro:applicationOtherPosets}(iii), we derive the conclusions of (iii).
In a similar way, we get the conclusions of item (iv).

Next, we suppose that $n < q+1$ and show that item (ii) holds.
Then $\mathring{\D}(V)$ is Cohen-Macaulay over $\kk$ by Proposition \ref{coro:applicationOtherPosets} and Corollary \ref{coro:decomp}.
By the explanation given in the previous paragraph, we also see that $\Ap(\GU_n(q))_{>Z}$ is spherical over $\kk$ of dimension $n-2$.
To conclude Cohen-Macaulayness over $\kk$ for $\Ap(\GU_n(q))_{>Z}$, we have to show that every interval is spherical over $\kk$.

We have three kinds of intervals in the poset $\Ap(\GU_n(q))_{ > Z}$. Regard $Z$ as the minimum $\hat{0}$.
Then we have:
\begin{itemize}
    \item The interval $(\hat{0},\hat{1}) = \Ap(\GU_n(q))_{ > Z}$ is spherical over $\kk$ since $\mathring{\D}(V)^{\op}$ is.
    \item If $B\in \Ap(\GU_n(q))_{ > Z}$ and $A\in \Ap(\GU_n(q))_{ \geq Z}$, then the interval $(A,B)$ is just the poset of proper subspaces of $B/A$, which is spherical.
    \item If $B\in \Ap(\GU_n(q))_{ > Z}$, then we have to show that the interval $$(B,\hat{1}) = \Ap(\GU_n(q))_{ > B}$$ is spherical over $\kk$.
    Clearly, it has the correct dimension.
    On the other hand, 
    \[ \iota^{-1}( \Ap(\GU_n(q))_{ \geq B} ) = \mathring{\D}(V)^{\op}_{\geq r(B) }.\]
    If we have $B = \iota \circ r(B)$, then 
    \[ \iota^{-1}( \Ap(\GU_n(q))_{ > B} ) = \mathring{\D}(V)^{\op}_{> r(B) }\]
    which is spherical over $\kk$.
    And indeed in this case, the restriction $\iota:\mathring{\D}(V)^{\op}_{>r(B)}\to \Ap(\GU_n(q))_{>B}$ is a homotopy equivalence with retraction given by the restriction $r:\Ap(\GU_n(q))_{>B} \to \mathring{\D}(V)^{\op}_{>r(B)}$.
    Thus the upper-interval $\Ap(\GU_n(q))_{>B}$ is spherical over $\kk$.
    
    Now assume that $B < \iota \circ r(B)$.
    In this case, we get that $B_1:= \iota\circ r(B)\in \Ap(\GU_n(q))_{ > B}$, and so, this upper-interval is conically-contractible via the homotopy $T \leq \iota\circ r(T) \geq B_1$.
    In particular, $\Ap(\GU_n(q))_{ > B}$ is spherical.
\end{itemize}
We have proved that every interval in $\Ap(\GU_n(q))_{ > Z}$ is spherical over $\kk$.
This shows that $\Ap(\GU_n(q))_{ > Z}$ is Cohen-Macaulay over $\kk$.
This concludes the proof of (ii) and hence the proof of the proposition.
\end{proof}

Note that the proofs of Proposition \ref{coro:applicationDecompApPosets} (ii) and (iii) on the homological connectivity of the poset $\Ap(\GU_n(q))_{>Z}$ 
can be extended to derive homotopical connectivity, sphericity, or Cohen-Macaulayness, once we know that $\redS(V)$ has the corresponding property.
That is, the above proof also shows that:

\begin{corollary}
\label{coro:connectionCM}
Let $V$ be a unitary space of dimension $n\geq 3$ over a finite field $\GF{q^2}$ and let $p$ be a prime dividing $q+1$.
The following hold:
\begin{enumerate}
    \item $\mathring{\D}(V)$ is Cohen-Macaulay (over $\kk$), if and only if $\Ap(\GU_n(q))_{>Z}$ is Cohen-Macaulay (over $\kk$).
    \item If $\redS(V)$ is ($\kk$-homologically) $m$-connected, spherical (over $\kk$), or Cohen-Macaulay (over $\kk$), then $\mathring{\D}(V)$ and $\Ap(\GU_n(q))_{>Z}$ are ($\kk$-homologically) $m$-connected, spherical (over $\kk$), or Cohen-Macaulay (over $\kk$), resp.
\end{enumerate}
\end{corollary}
\begin{proof}
Item (1) is deduced from the proof of Proposition \ref{coro:applicationDecompApPosets}.
Item (2) follows from item (1), Proposition \ref{prop:decomp} and Corollary \ref{coro:decomp}.
\end{proof}

In the following example, we apply Proposition \ref{coro:applicationOtherPosets} to show that the top-degree homology groups of $\mathring{\D}(V)$ and $\Ap(\GU_n(q))_{>Z}$ are non-zero when $q=3$, $n=7$ and $p = 2$.

\begin{example} \label{ex:qdp}
Let $q = 3$, $n = 7$, $j=4$, and $V$ a unitary space of dimension $n$ over $\GF{q^2}$.
Then $n < P_4(q) = q(q-1)+4$ and $q+1 < n < 2(q+1)$.
By item (iii) of the previous proposition, we see that $\mathring{\D}(V)$ is $\kk$-homologically $n-4= 3$-connected.
Since $\mathring{\D}(V)$ has dimension $7-2=5$, this means that the homology groups that contribute to the reduced Euler characteristic are $H_5(\mathring{\D}(V),\kk)$ and $H_4(\mathring{\D}(V),\kk)$.
Now, since $\tilde{\chi}(\mathring{\D}(V)) = -507209080872632320$, we see that necessarily $H_5(\A_2(\GU_n(q))_{>Z},\kk) \groupiso H_5(\mathring{\D}(V),\kk) \neq 0$.

On the other hand, by Proposition \ref{propQuillenPosetIsomorphisms},
\[ \A_2(\GU_n(q))_{>Z} \equiv \A_2(\PGU_n(q)) = \A_2(\PSU_n(q)).\]
Thus we conclude that
\[0\neq H_5(\A_2(\GU_n(q))_{>Z},\kk) \equiv H_5(\A_2(\PSU_n(q)),\kk)=H_5(\A_2(\PGU_n(q)),\kk).\]
This means that $\PSU_n(q)$ and $\PGU_n(q)$ satisfy the \textit{Quillen dimension property} as defined in \cite{AS93} (see Conjecture 4.1 in \cite{AS93}).
\end{example}

For the following proposition, we adopt the usual convention that $\PSL^{+1}_n(q) = \PSL_n(q)$, $\PSL^{-1}_n(q) = \PSU_n(q)$, and similarly with the possibilities $\SL^{\pm 1}, \GL^{\pm 1}, \PGL^{\pm 1}$.

\begin{proposition}
\label{propQuillenPosetIsomorphisms}
Let $n\geq 2$, $q$ a prime power, $\epsilon \in \{\pm 1\}$ and $p$ a prime number such that $p\mid q- \epsilon$ but $p\nmid n$.
Then we have natural poset isomorphisms:
\[ \A_p(\PSL^{\epsilon}_n(q)) \equiv \A_p(\PGL^{\epsilon}_n(q)) \equiv \A_p(\SL^{\epsilon}_n(q)) \equiv \A_p(\GL^{\epsilon}_n(q))_{>Z}, \]
where $Z$ is the unique cyclic subgroup of order $p$ contained in the center of $\GL^\epsilon_n(q)$.

In particular, $m_p(\PSL^{\epsilon}_n(q)) = m_p(\PGL^{\epsilon}_n(q)) = m_p(\SL^{\epsilon}_n(q)) = m_p(\GL^{\epsilon}_n(q))-1=n-1$.
\end{proposition}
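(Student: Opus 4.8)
The plan is to build the four poset isomorphisms by analyzing the covering maps relating the relevant linear groups, and the key structural input is that $p \mid q-\epsilon$ but $p \nmid n$. First I would set up notation: write $G = \GL^\epsilon_n(q)$, $S = \SL^\epsilon_n(q)$, $\bar G = \PGL^\epsilon_n(q)$, $\bar S = \PSL^\epsilon_n(q)$, and let $Z \leq Z(G)$ be the unique subgroup of order $p$; such a $Z$ exists and is unique because $Z(G)$ is cyclic of order $q-\epsilon$ and $p \mid q-\epsilon$. The central quotients fit into the commutative square with $S \hookrightarrow G$, $S \twoheadrightarrow \bar S$, $G \twoheadrightarrow \bar G$, and I would record that $|Z(S)| = \gcd(n, q-\epsilon)$, which is coprime to $p$ by hypothesis $p \nmid n$. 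Hence a Sylow $p$-subgroup of $Z(G)$ maps isomorphically onto a Sylow $p$-subgroup of $\bar G$, and $S$ contains no central element of order $p$.

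The heart of the argument is a sequence of standard lemmas about how $\A_p$ behaves under quotients by central $p'$-groups and under small central $p$-extensions. First, if $N \trianglelefteq H$ with $p \nmid |N|$, then the quotient map $H \to H/N$ induces a poset isomorphism $\A_p(H) \equiv \A_p(H/N)$ (this is elementary: a $p$-subgroup of $H$ maps isomorphically to its image, and elementary abelian subgroups correspond bijectively, preserving inclusion). Applying this with $H = S$ and $N = Z(S)$ gives $\A_p(\SL^\epsilon_n(q)) \equiv \A_p(\PSL^\epsilon_n(q))$. For the step $\A_p(\GL^\epsilon_n(q))_{>Z} \equiv \A_p(\PGL^\epsilon_n(q))$, I would use the quotient $G \to \bar G = G/Z(G)$ and the sublemma that when a central cyclic $Z(G)$ has cyclic Sylow $p$-subgroup $Z$, the map $A \mapsto AZ/Z$ gives a poset isomorphism between $\A_p(G)_{>Z}$ (the elementary abelian $p$-subgroups strictly containing $Z$) and $\A_p(\bar G)$; injectivity and surjectivity follow because every elementary abelian $p$-subgroup of $\bar G$ pulls back to one containing $Z$ (the preimage is abelian of exponent $p$ since $Z$ is central of order $p$ and the quotient is elementary abelian, using again $p \nmid$ index considerations), and the correspondence is inclusion-preserving. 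Finally, to connect $\bar S$ with $\bar G$: since $p \nmid n$, we have $\bar G / \bar S \cong G/(S\cdot Z(G))$ of order dividing $\gcd(\cdot)$ with $p \nmid |\bar G : \bar S|$ — more precisely $\bar G/\bar S$ has order $(q-\epsilon)/\gcd(n,q-\epsilon)$ which, combined with the fact that $\bar S$ contains a full Sylow $p$-subgroup of $\bar G$, lets me argue that the inclusion $\bar S \hookrightarrow \bar G$ induces $\A_p(\bar S) \equiv \A_p(\bar G)$: every elementary abelian $p$-subgroup of $\bar G$ is conjugate into a Sylow $p$-subgroup, which lies in $\bar S$, and $\bar S \trianglelefteq \bar G$ so conjugation stays inside — giving the isomorphism on the level of posets after checking that $\A_p(\bar S) \to \A_p(\bar G)$ is already a bijection because $\bar S$ is normal of $p'$-index.

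Chaining these four identifications yields $\A_p(\PSL^\epsilon_n(q)) \equiv \A_p(\PGL^\epsilon_n(q)) \equiv \A_p(\SL^\epsilon_n(q)) \equiv \A_p(\GL^\epsilon_n(q))_{>Z}$, and the statement about $p$-ranks follows immediately: $m_p(\GL^\epsilon_n(q)) = n$ (realized by the diagonal subgroup of order-$p$ scalar-type matrices supported on the $n$ coordinate axes, i.e. the image of a full frame under the map $\iota$ from Proposition \ref{coro:applicationDecompApPosets}), so $\A_p(\GL^\epsilon_n(q))_{>Z}$ has a maximal element of rank $n$ hence dimension $n-2$, and the other three posets inherit $m_p = n-1$. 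The main obstacle I anticipate is being careful with the central-$p$-extension step $\A_p(G)_{>Z} \equiv \A_p(\bar G)$: one must verify that preimages of elementary abelian subgroups of $\bar G$ are genuinely elementary abelian (not merely abelian of exponent $p^2$), which is where $Z$ being central of order exactly $p$ and the image having exponent $p$ combine — the extension $1 \to Z \to p^{-1}(\bar A) \to \bar A \to 1$ is central with $|Z|=p$, so the preimage has exponent dividing $p^2$, and one rules out exponent $p^2$ by noting $\bar A$ lifts compatibly because elements of $\bar A$ have order-$p$ lifts (again using $p \nmid n$ to control $\det$). Making this lift-existence precise, possibly by choosing lifts inside a common Sylow $p$-subgroup, is the step that needs the most care.
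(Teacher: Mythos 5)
Your first two identifications ($\A_p(\SL^\epsilon_n(q))\equiv\A_p(\PSL^\epsilon_n(q))$ via the central $p'$-quotient, and $\A_p(\PSL^\epsilon_n(q))=\A_p(\PGL^\epsilon_n(q))$ via normality and $p'$-index) coincide with the paper's. For the key third step, however, you take a genuinely different route: you relate $\A_p(\GL^\epsilon_n(q))_{>Z}$ to $\A_p(\PGL^\epsilon_n(q))$ directly through the quotient by $Z(G)$, whereas the paper relates it to $\A_p(\SL^\epsilon_n(q))$ by intersecting with $L=\SL^\epsilon_n(q)$: writing $1=kn+lp$ and setting $z=\det(a)^k\Id$ for $a\in A$ of order $p$, one gets $z\in Z$ and $z^{-1}a\in A\cap L$, hence $A=Z\times(A\cap L)$, and $A\mapsto A\cap L$, $B\mapsto Z\times B$ are mutually inverse poset maps. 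The paper's route is cleaner because injectivity and surjectivity are immediate from the explicit splitting.

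Your route has a genuine gap at exactly the point you flag. The assertion that ``every elementary abelian $p$-subgroup of $\bar G$ pulls back to one containing $Z$ (the preimage is abelian of exponent $p$ \dots)'' is false as stated: the full preimage of $\bar A\leq \bar G=G/Z(G)$ contains all of $Z(G)$, which has order $q-\epsilon$ and is not a $p$-group, and the preimage is a priori only nilpotent of class $2$, not abelian. What you actually need for surjectivity is that some elementary abelian subgroup of the preimage, containing $Z$, surjects onto $\bar A$. This can be rescued with the same Bezout identity the paper uses, but applied twice: (a) if $a$ lifts $\bar a$ with $a^p=\mu\Id$, then $\det(a)^p=\mu^n$ and $1=kn+lp$ give $\mu=(\det(a)^k\mu^l)^p$, so a scalar correction produces an order-$p$ lift; (b) for lifts $a_i,a_j$ of commuting elements, $[a_i,a_j]=\nu\Id$ satisfies $\nu^n=1$ (determinant) and $\nu^p=[a_i,a_j]^p=[a_i^p,a_j]=1$ (class $2$), whence $\nu=1$ and the lifts commute. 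Neither (a) nor (b) appears in your sketch, and (b) in particular cannot be skipped. You should also correct the side remark that $Z$ is the Sylow $p$-subgroup of $Z(G)$ (it is only the unique subgroup of order $p$; the Sylow $p$-subgroup of $Z(G)$ may be larger), and note that injectivity of $A\mapsto AZ(G)/Z(G)$ uses that any $a'=az$ of order $p$ with $z\in Z(G)$ forces $z^p=1$, hence $z\in Z\leq A$. The rank statement at the end is fine.
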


\begin{proof}
Recall that $|\PGL^{{\epsilon}}_n(q):\PSL^{{\epsilon}}_n(q)| = \gcd(n,q-{\epsilon})$, which has order prime to $p$ by hypothesis.
This shows that $\Ap(\PGL^{{\epsilon}}_n(q)) = \Ap(\PSL^{{\epsilon}}_n(q))$.

We also have that $\PSL^{\epsilon}_n(q) = \SL^{\epsilon}_n(q)/Z^*$, where $Z^*$ is a central subgroup (of scalar matrices) of order $\gcd(n,q-{\epsilon})$.
Since $Z^*$ is central and $p$ does not divide its order, we get that the quotient map $\SL^{\epsilon}_n(q)\to \PSL^{\epsilon}_n(q)$ induces indeed an isomorphism of posets
\[ \Ap(\SL^{\epsilon}_n(q))\to \Ap(\PSL^{\epsilon}_n(q)) = \Ap(\PGL^{\epsilon}_n(q)).\]
Therefore it remains to show that $\Ap(\GL^{\epsilon}_n(q))_{>Z}\equiv \Ap(\SL^{\epsilon}_n(q))$.

Let $L:=\SL^{\epsilon}_n(q)$.
We prove the following claim.

\bigskip

\noindent\textbf{Claim.} If $A\in \Ap(\GL^{\epsilon}_n(q))_{>Z}$ then $A = Z\times (A\cap L)$, and $A\cap L > 1$.

\begin{proofofcase}[Proof of Claim.]
Note that if $\lambda \in Z \cap L$, then $\lambda^n = 1$ and $\lambda^p = 1$, so $1=\lambda^{\gcd(n,p)} = \lambda$.
This shows that $Z\times (A\cap L)$ is indeed a direct product.
Now we prove that $A = Z\times (A\cap L)$.
Let $a\in A$.
Consider a combination $1 = k n+ l p$, and set $z:=\det(a)^{k}\Id_n \in Z$.
Thus $\det(z) = \det(a)$, $z\in Z\leq A$ and $z^{-1}a\in A\cap L$.
This proves that $a\in Z\times (A\cap L)$.

Finally, $A > Z$ implies that $A\cap L > 1$.
This finishes the proof of the claim.
\end{proofofcase}

Now we can produce the desired isomorphism:
\begin{align*}
r: \Ap(\GL^{\epsilon}_n (q))_{>Z} & \to \Ap(\SL^{\epsilon}_n(q)),\\
    A & \mapsto r(A) = A\cap L\\
i: \Ap(\SL^{\epsilon}_n (q)) & \to \Ap(\GL^{\epsilon}_n(q))_{>Z},\\
    B & \mapsto i(B) = Z\times B.
\end{align*}
Clearly $r,i$ are well-defined poset maps by the above Claim.
We also get that:
\[ ri(B) = r(Z\times B) = r(Z \times B\cap L) = B\cap L = B,\]
\[ ir(A) = i(A\cap L) = Z\times A\cap L = A.\]
This shows that $r,i$ are isomorphisms (one inverse of the other).
This finishes the proof of the isomorphisms.

Finally, the assertion on the $p$-ranks follows by computing the height of these posets and noting that $m_p(\GL^{\epsilon}_n(q)) = n$.
This concludes the proof of the proposition.
\end{proof}


\begin{example}
Let $n = 3$, $q$ a prime power, and $p$ a prime dividing $q+1$ with $p\neq 3$.
Note that this forces $q\neq 2$.

Let $V$ be a unitary space of dimension $3$ over $\GF{q^2}$, and denote by $Z$  the unique central cyclic subgroup of order $p$ of $\GU(V)$.
Observe that $\redF(V) \cong \mathring{\D}(V)^{\op}$.
Thus, by Corollary \ref{coro:applicationDecompApPosets}(1) and Proposition \ref{propQuillenPosetIsomorphisms},
 \[ \redF(V) =\mathring{\D}(V)^{\op} \simeq \Ap(\GU(V))_{>Z} = \Ap(\SU(V)) = \Ap(\PGU(V)) = \Ap(\PSU(V)).\]
By Proposition \ref{lowDimensionalCases}, these $\Ap$-posets are homotopy equivalent to a bouquet of
\begin{equation}
    \label{spheresApDim1}
    \frac{1}{3} (q^6 - 2 q^5 - q^4 + 2 q^3 - 3 q^2 + 3)
\end{equation}
sphere of dimension $1$.

On the other hand, the value in equation \eqref{spheresApDim1} equals $-\tilde{\chi}(\Ap(\PSU(V)))$.
By standard arguments, since $\Ap(\PSU(V))$ has dimension $1$, this Euler characteristic can also be computed as follows:
\[ \tilde{\chi}(\Ap(\PSU(V))) = -1 + \#\{ A\leq \PSU(V) \tq |A| = p\} - p \cdot\# \{A\in \Ap(\PSU(V))\tq |A|=p^2\}.\]
Thus, the number of elements of order $p$ in $\PSU(V)$ is
\[ (p-1)\cdot\left( p \cdot \# \{A\in \Ap(\PSU(V))\tq |A|=p^2\} - \frac{1}{3} (q^6 - 2 q^5 - q^4 + 2 q^3 - 3 q^2) \right).\]
\end{example}

\subsection{Final remarks}

We close this section by including some final comments.

\begin{lemma}[Contractibility of the orbit-space]
  Let $V$ be a finite dimensional unitary space. Then the orbit
  space $|\F(V)|/\GU(V)$ of the geometric realization $|\F(V)|$
  of $\F(V)$ is contractible.
  
  In particular, $\tilde{H}_*(\F(V),\kk)^{\GU(V)} \groupiso \tilde{H}_*(|\F(V)|/\GU(V),\kk) = 0$, where $\kk$ is a field of characteristic $0$ or prime to $|\GU(V)|$.
  That is, $\tilde{H}_*(\F(V),\kk)$ does not contain the trivial representation.
  \end{lemma}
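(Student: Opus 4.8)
The plan is to show that the quotient $|\F(V)|/\GU(V)$ is contractible by exhibiting it as the geometric realization of a poset with a maximum element, or more directly, by using the flag-transitivity of the $\GU(V)$-action together with a cone point. First I would pass from the simplicial complex $\F(V)$ to its barycentric subdivision, i.e. to the order complex of $\F(V)$ regarded as a poset ordered by inclusion; the $\GU(V)$-action is simplicial on this subdivision and the orbit space is unchanged. A simplex of the subdivision is a chain $\sigma_0 \subsetneq \sigma_1 \subsetneq \cdots \subsetneq \sigma_k$ of partial frames, and by flag-transitivity (Remark \ref{rem:elemproperties}(ii)) two such chains lie in the same $\GU(V)$-orbit if and only if the sequences of dimensions $(\dim\langle\sigma_0\rangle, \ldots, \dim\langle\sigma_k\rangle) = (|\sigma_0|, \ldots, |\sigma_k|)$ agree. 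Hence the orbit poset is isomorphic to the poset of chains in $\{1, 2, \ldots, n\}$, i.e.\ to the barycentric subdivision of the simplex on vertex set $\{1,\dots,n\}$.

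The key point is then that this orbit space is itself the order complex of a poset with a maximum element (the chain $\{1 < 2 < \cdots < n\}$ is above every other chain in the refinement/containment order on chains of $[n]$), and the order complex of a poset with a maximum (or minimum) element is a cone, hence contractible. So the main steps in order are: (1) replace $\F(V)$ by its barycentric subdivision and note the orbit space is preserved; (2) identify $\GU(V)$-orbits of chains of partial frames with chains in $\{1,\dots,n\}$ via the dimension vector, using flag-transitivity and Witt's lemma to show any two chains with the same dimension data are conjugate; (3) check that the natural simplicial structure on the orbit space coincides with the order complex of the poset of chains in $[n]$; (4) conclude contractibility since that poset has a greatest element. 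The ``in particular'' statement then follows from the standard fact that for a finite group $\GU(V)$ acting simplicially and a field $\kk$ of characteristic $0$ or coprime to $|\GU(V)|$, one has $\tilde H_*(|\F(V)|/\GU(V);\kk) \cong \tilde H_*(\F(V);\kk)^{\GU(V)}$ (the transfer argument: averaging over the group gives a splitting of $\tilde H_*(\F(V);\kk)$ into the invariants plus a complement, and the invariants compute the homology of the quotient); since the left side vanishes, the invariant part of $\tilde H_*(\F(V);\kk)$ — equivalently the multiplicity of the trivial representation — is zero.

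The step I expect to be the main obstacle is step (2): verifying carefully that flag-transitivity on $\F(V)$ upgrades to the statement that the orbit of a full flag of partial frames is determined exactly by its dimension vector, and in particular that the action on the barycentric subdivision has no ``inversions'' or unexpected stabilizer behavior that would make the naive quotient fail to be a simplicial complex in the usual sense. Concretely, one must check that if a group element $g$ fixes a simplex of the subdivision setwise then it fixes it pointwise — this holds because $g$ must preserve each $\sigma_i$ in the chain (as it is the unique term of its dimension), hence fixes each vertex of the subdivision-simplex. Once this is in place the quotient is an honest simplicial complex and the identification with the chain poset of $[n]$ is routine. A clean alternative that sidesteps subdivision bookkeeping is to invoke directly that $|\F(V)|/\GU(V)$ is homeomorphic to $|\F(V)/\GU(V)|$ where $\F(V)/\GU(V)$ is the quotient poset (valid precisely because stabilizers fix faces pointwise), and then that $\F(V)/\GU(V)$ is the poset of nonempty subsets of $[n]$ under inclusion — again a poset with a maximum, hence with contractible order complex.
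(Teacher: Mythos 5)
Your proposal is correct and follows essentially the same route as the paper: pass to the barycentric subdivision, use flag-transitivity to see that the action is regular (stabilizers fix chains of partial frames pointwise, since the terms of a chain have distinct sizes) and that orbits of chains are classified by their size vectors, so the orbit complex is an $(n-1)$-simplex, hence contractible; the homology statement then follows by the standard transfer argument. No substantive differences from the paper's proof.
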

  \begin{proof}
This is
a straightforward application of the fact that $\GU(V)$ acts flag-transitively on $\F(V)$ (see Remark \ref{rem:elemproperties}(vi)).

Consider the first barycentric subdivision $\sd(\F(V))$ of $\F(V)$.
Observe that since the action of $\GU(V)$ is flag-transitive on $\F(V)$,
the action of $\GU(V)$ on $\sd(\F(V))$ is regular in the sense of \cite[Definition 1.2]{Bredon}.
In particular, we have a transitive action of $\GU(V)$ on the maximal simplices of $\sd(\F(V))$.
Thus, the \textit{orbit-complex} $\sd(\F(V))/\GU(V)$ as defined in \cite[p.117]{Bredon} is a simplex of dimension $\dim(V)-1$.
From this, we conclude that:
\[ |\F(V)|/\GU(V) = |\sd(\F(V))|/\GU(V) = |\sd(\F(V))/\GU(V)|\simeq *,\]
where the second equality holds since the action is regular.

The In particular part follows from well-known general facts about the
action of automorphism groups on homology.
\end{proof}

Note that $\SU(V)$, $\PSU(V)$ and $\PGU(V)$ are also flag-transitive on $\F(V)$, so in the previous lemma we get the same conclusions if we replace $\GU(V)$ by one of these groups.

\medskip

The following remark shows that $\redF(V)$ is not Cohen-Macaulay if the dimension is big enough.

\begin{remark}
\label{rk:nonCM}
    Let $q\geq 3$ be a prime power and let $n \geq 2$.
    If $\redF(V)$ was a wedge of spheres of the maximal possible dimension, namely $n-2$, then its reduced Euler characteristic would have the sign of $(-1)^n$.
    However, we show that, in general, this is not the case since the sign of $\tilde{\chi}(\redF(V))$ is $(-1)^{n-1}$ if $q(q-1)+1 \leq n \leq q^3(q-1)+q$.
    Furthermore, $H_{n-3}(\F(V),\kk)\neq 0$ where $\kk$ is a field of characeristic $0$.
    
    Let $f_m$ denote the number of $m$-frames, with $f_0 = 1$.
    We exhibit values of $n$ for which the inequalities
    \begin{equation}
        \label{eq:inqualitiesWrongSign}
        f_{m+1} - f_{m} >0 \quad \text{ and }
    \quad f_n-f_{n-1}+f_{n-2}-f_{n-3} > 0
    \end{equation}
    hold for all $0\leq m\leq n-3$.
    
    For the first inequality of \eqref{eq:inqualitiesWrongSign}, note that
    \begin{align*}
        f_{m+1} - f_m & = \frac{|\GU_n(q)|}{(q+1)^{m+1}(m+1)! |\GU_{n-m-1}(q)|} - \frac{|\GU_n(q)|}{(q+1)^{m}m! |\GU_{n-m}(q)|}\\
        & = \frac{q^{\binom{n}{2}} {\prod_{i=n-m+1}^n (q^i-(-1)^i)} }{(q+1)^m (m+1)! q^{ \binom{n-m-1}{2} }} \left( \frac{(q^{n-m} - (-1)^{n-m})}{q+1} - \frac{m+1}{q^{n-m-1}}\right).
    \end{align*}
    Thus $f_{m+1} - f_m > 0$ if and only if 
    \begin{equation}
        \label{eq:inequalitiesWrongSignConsecutive}
        h_q(m) := \frac{q^{n-m-1}(q^{n-m} - (-1)^{n-m})}{q+1} - (m+1)>0.
    \end{equation}
    Since $h_q(m)$ is monotone nonincreasing on $m$, $h_q(m) > 0$ if and only if $h_q(n-3)>0$, if and only if $q^2 (q(q-1)+1) + 2 > n$.

    A similar computation shows that $f_n-f_{n-1}+f_{n-2}-f_{n-3} > 0$ if and only if
    \[ q^2(q(q-1)+1)n > n(n-2) + q^3(q(q-1)+1)(q-1).\]
    This inequality holds, for instance, if $q(q-1)+1 \leq n \leq q^3(q-1)+q$.
    Moreover, it is not hard to conclude that this in fact implies $H_{n-3}(\F(V),\kk)\neq 0$.
    
    Finally, if \eqref{eq:inqualitiesWrongSign} holds, the following formula shows that the sign of the Euler characteristic is $(-1)^{n-1}$:
    \begin{align*}
       (-1)^{n-1} \tilde{\chi}(\F(V)) 
       & = f_n-f_{n-1}+f_{n-2}-f_{n-3} + \begin{cases}
           1 + \sum_{k=1}^{n/2-2} f_{2k}-f_{2k-1} & \text{$n$ even,}\\
           \sum_{k=0}^{(n-1)/2-2} f_{2k+1}-f_{2k} & \text{$n$ odd.}
       \end{cases}
    \end{align*}
    In summary, we have proved that, if $n \geq q(q-1)+1$ then:
    \begin{enumerate}
        \item $\widetilde{H}_{n-3}(\F(V),\kk)\neq 0$ if $q^2(q(q-1)+1)n > n(n-2) + q^3(q(q-1)+1)(q-1)$, which in particular holds for $n \leq q^3(q-1)+q$.
        \item If $n \leq q^3(q-1)+q$ then $\tilde{\chi}(\F(V))$ has sign $(-1)^{n-1}$.
        \item In particular, $\redF(V)$ is not Cohen-Macaulay over any field.
    \end{enumerate}
\end{remark}

\medskip

We end up with some remarks on the fundamental group of the frame complex.

\begin{remark}
The simple connectivity of $\F(V)$ is related to amalgams of the special unitary group: since $\SU(V)$ is flag-transitive on $\F(V)$ (which we can regard as a Tits geometry), we can take the \textit{amalgam of parabolics}.
This is the amalgam of the stabilizers of the non-empty sub-flags of a fixed maximal flag.
A maximal flag is just a chain of size $n$ of the form $(\{V_1\} <  \{V_1,V_2\} < \ldots < \{V_1,\ldots,V_n\})$, where $\{V_1,\ldots,V_n\}$ is a maximal frame of $V$.
When $\F(V)$ is connected, $\SU(V)$ is generated by the stabilizers of these sub-flags by Lemma 1.4.2 of \cite{Ivanov}.
Moreover, by Tits' Lemma, $\F(V)$ is simply connected if and only if the natural epimorphism from the universal enveloping group of the amalgam of parabolics is indeed an isomorphism (that is, the amalgam is $\SU(V)$).
In general, the ``only if'' part of the equivalence is used.
Nevertheless, for the case $n = 4$ and $q > 3$, one might attempt to apply the ``if'' part of Tits' Lemma to get simple connectivity of $\F(V)$.
An analogous situation arises if we consider $\redF(V)$.
\end{remark}

\begin{remark}
\label{rk:propertyT}
On the other hand, by the results of \cite{Zuk} we see that for $n=4$ and $q>3$, i.e. $n<q+1$, $\pi_1(\F(V))$ satisfies Kazhdan Property T.
\end{remark}

\appendix

\section{Counting formulas for finite Hermitian spaces}
\label{appendixCountingVectorsAndSubspaces}

In this appendix we collect some well-known formulas counting various types of objects in unitary spaces.

Let $V$ be a unitary space of dimension $n$ over $\GF{q^2}$.
Recall that
\[ |\GU(V)| = |\GU_n(q)| = q^{ \binom{n}{2} } \prod_{i=1}^n (q^i-(-1)^i).\]

By Witt's lemma, every isometry $\alpha:S\to T$ between subspaces $S,T\leq V$ extends to an isometry of $V$ (see for example Section 20 of \cite{AscFGT}).
This implies that $\GU(V)$ acts transitively on the set
\[ \S_m(V) := \{ S \in \S(V) \tq \dim(S) = m \}. \]

Denote by $\F(V)_m$ the set of $m$-frames of $V$.
Fix $\sigma\in\F(V)_m$.
The stabilizer of $\sigma$ under the action of $\GU(V)$ is
\[\Stab_{\GU(V)}(\sigma) \groupiso \left( \GU_1(q) \wr \Sym_m \right) \times \GU_{n-m}(q).\]
Here $\Sym_m$ denotes the symmetric group on $m$ letters, and $A\wr \Sym_m$ is the wreath product of the group $A$ by $\Sym_m$.

Then, we have an isomorphism of $\GU(V)$-sets:
\[ \F(V)_m \equiv \GU(V) / \Stab_{\GU(V)}(\sigma).\]
In particular we get:
\begin{equation}
\label{eqNumberMFrames}
|\F(V)_m| = \frac{|\GU_n(q)|}{(q+1)^m m! |\GU_{n-m}(q)|}.
\end{equation}
From the above equation we can compute the Euler characteristic of $\F(V)$.
Note that $|\F(V)_0| = 1$, which corresponds to the unique $0$-frame.

\begin{lemma}
\label{eulerCharacteristicFrameComplex}
Let $V$ be a unitary space of dimension $n$ over $\GF{q^2}$.
Then
\[\tilde{\chi}\left(\F(V)\right) = \sum_{m=0}^n (-1)^{m+1} |\F(V)_m| = \sum_{m=0}^n (-1)^{m+1} \frac{|\GU_n(q)|}{(q+1)^m m! |\GU_{n-m}(q)|}.\]
\end{lemma}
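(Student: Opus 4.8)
The plan is to read off $\tilde\chi(\F(V))$ directly from the face numbers of $\F(V)$ and then substitute the count of frames. Recall that for a finite simplicial complex $K$ one has $\tilde\chi(K) = \sum_{i \ge -1} (-1)^i f_i(K)$, where $f_i(K)$ denotes the number of $i$-dimensional faces of $K$ and $f_{-1}(K) = 1$ records the empty face. Since $V$ is finite-dimensional over the finite field $\GF{q^2}$, the complex $\F(V)$ has finitely many faces, so this sum is finite.

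The first step is to match faces with frames. By the very definition of $\F(V)$, a simplex of dimension $m-1$ is precisely a partial $m$-frame, that is, an element of $\F(V)_m$; in particular $f_{-1}(\F(V)) = |\F(V)_0| = 1$ corresponds to the empty frame, and by Remark \ref{rem:elemproperties}(v) the complex is pure of dimension $n-1$, so the relevant indices range over $0 \le m \le n$. Hence $f_{m-1}(\F(V)) = |\F(V)_m|$, and setting $i = m-1$ in the formula above gives
\[ \tilde\chi(\F(V)) = \sum_{m=0}^{n} (-1)^{m-1} |\F(V)_m| = \sum_{m=0}^{n} (-1)^{m+1} |\F(V)_m|, \]
which is the first claimed equality.

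The second step is simply to insert the closed form $|\F(V)_m| = \dfrac{|\GU_n(q)|}{(q+1)^m\, m!\, |\GU_{n-m}(q)|}$ from \eqref{eqNumberMFrames}, which was obtained there via Witt's lemma and the identification of $\Stab_{\GU(V)}(\sigma)$ with $(\GU_1(q) \wr \Sym_m) \times \GU_{n-m}(q)$. Substituting term by term yields the stated expression. I do not expect any real obstacle: the only point needing care is the index shift between the dimension of a simplex and the cardinality of the corresponding frame, together with the convention that the empty frame is the unique $(-1)$-dimensional face contributing the leading term of $\tilde\chi$.
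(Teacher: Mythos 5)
Your proof is correct and follows the same (essentially immediate) argument the paper intends: identify the $(m-1)$-dimensional faces of $\F(V)$ with partial $m$-frames (including the empty face for $m=0$), apply the definition of the reduced Euler characteristic, and substitute the orbit–stabilizer count of $m$-frames from \eqref{eqNumberMFrames}. The index shift and the sign $(-1)^{m-1}=(-1)^{m+1}$ are handled correctly, so there is nothing to add.
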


From now on, assume that $V$ is equipped with a possibly degenerate Hermitian form.
Table \ref{tab:generalCountingIsotropicAndNonDegenerate} lists formulas for the number of isotropic vectors/subspaces and
non-degenerate subspaces.
Recall that
\[ d_{n+1} = \frac{q^{n-1}(q^{n}-(-1)^{n})}{q+1} \]
records the number of $1$-dimensional non-degenerate subspaces in a unitary space of dimension $n$.




\begin{table}[ht]
    \centering
    \begin{tabular}{|c|c|}
    \hline
        Number of &  \\
    \hline
    &\\
        $I^R_n = $ non-zero isotropic vectors & $q^{2n}-1-q^{2R} \cdot d_{n-R+1}\cdot(q^2-1)$\\
        $1$-dim totally isotropic subspaces & $\frac{q^{2n}-1}{q^2-1}-q^{2R}\cdot d_{n-R+1}$\\
        non-isotropic vectors & $q^{2R}\cdot d_{n-R+1}\cdot(q^2-1)$\\
        $d^R_{n+1} = \,1$-dim non-degenerate subspaces & $q^{2R}\cdot d_{n-R+1}$\\
        &\\
        \hline
    \end{tabular}
    
    \medskip
    
    \caption{Counting vectors and $1$-dimensional subspaces in an $n$-dimensional $\GF{q^2}$-vector space with a Hermitian form whose radical has dimension $R\geq 0$.}
    \label{tab:generalCountingIsotropicAndNonDegenerate}
\end{table}

With the notation of Table \ref{tab:generalCountingIsotropicAndNonDegenerate}, $d_{n+1} = d^0_{n+1}$.
We also let $I_n := I_n^0$, and $d^1_1 := 0$.
From Table \ref{tab:generalCountingIsotropicAndNonDegenerate}, we derive some useful identities:

\begin{lemma} \label{lem:identities}
\begin{enumerate}[label=(\roman*)]
\item $I^k_n = q^{2n}-1 - d^k_{n+1}(q^2-1)$,
\item $I_{n-2} = q^{2n-5}-1 + (q-1)q^{n-3}(-1)^n$,
\item $\left(I_{n-1}-I_{n-2}\right)\frac{1}{q^2-1} = q^{2n-5} - q^{n-3}(-1)^{n} = d_{n-1}(q+1)$,
\item $\left(I_{n-1}-I^1_{n-2}\right)\frac{1}{q^2-1} = q^{2n-5}$,
\item $d^1_{n-1} = d_n - q^{2n-4} + q^{2n-5}$,
\item $d_n = q^{2n-4} - q d_{n-1}$,
\item $d_n - d_{n-1} = q^{2n-4} - q^{2n-5} + q^{n-3}(-1)^{n}$,
\item $d^1_{n-1} - d_{n-1} = q^{n-3}(-1)^{n}$.
\end{enumerate}
\end{lemma}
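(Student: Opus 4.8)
The plan is to reduce all eight identities to two basic inputs and then a short chain of algebraic substitutions. The inputs are: the closed formula $d_{m+1} = \frac{q^{m-1}(q^m-(-1)^m)}{q+1}$ recalled just before Lemma~\ref{eulerCharacteristicFrameComplex}, and the two entries of Table~\ref{tab:generalCountingIsotropicAndNonDegenerate} stating that for an $n$-dimensional Hermitian space whose radical has dimension $R$ one has $d^R_{n+1} = q^{2R} d_{n-R+1}$ and $I^R_n = q^{2n}-1 - q^{2R} d_{n-R+1}(q^2-1)$. With these in hand, (i) is immediate: substituting $d^k_{n+1} = q^{2k} d_{n-k+1}$ into the displayed value of $I^k_n$ from the table yields exactly $I^k_n = q^{2n}-1 - d^k_{n+1}(q^2-1)$.

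Next I would establish the purely ``$d$''-identities (vi), (vii), (viii) by direct computation from the closed formula, using only $(-1)^{n-1} = -(-1)^n$ and $(-1)^{n-2} = (-1)^n$ to keep the signs straight. For (vi) one checks $q^{2n-4}(q+1) - q^{n-2}(q^{n-2}-(-1)^n) = q^{n-2}(q^{n-1}+(-1)^n) = (q+1)d_n$; then (vii) follows since $d_n - d_{n-1} = q^{2n-4} - (q+1)d_{n-1}$ by (vi) and $(q+1)d_{n-1} = q^{2n-5} - q^{n-3}(-1)^n$. Identity (v) comes from the $R=1$ case of the table, $d^1_{n-1} = q^2 d_{n-2}$, together with the closed form for $d_{n-2}$ (equivalently, one reconciles it with (vi)--(vii)); and (viii) then drops out of (v) and (vii) by cancellation.

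Finally I would derive (ii), (iii), (iv) by feeding (i) back in. For (ii), expand $I_{n-2} = I^0_{n-2} = q^{2n-4}-1 - d_{n-1}(q^2-1)$ and simplify using $(q+1)d_{n-1} = q^{2n-5} - q^{n-3}(-1)^n$. For (iii) and (iv), subtract the appropriate instances of (i): the difference $I_{n-1} - I_{n-2}$ (resp. $I_{n-1} - I^1_{n-2}$) equals $(q^{2n-2}-q^{2n-4}) - (q^2-1)(d_n - d_{n-1})$ (resp. with $d^1_{n-1}$ in place of $d_{n-1}$), the first bracket divides cleanly by $q^2-1$ to give $q^{2n-4}$, and the remaining term reduces via (vii) (resp. (v)); the second equality in (iii) is just $(q+1)d_{n-1} = q^{2n-5}-q^{n-3}(-1)^n$. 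I do not expect any genuine obstacle: the only thing requiring care is the bookkeeping of the $(-1)^n$ signs and of the index shift built into the definition $d^R_{m+1}$, so the whole argument is routine once the order of deduction (i)$\to$(vi)$\to$(vii)$\to$(v)$\to$(viii), then (ii), (iii), (iv) is fixed.
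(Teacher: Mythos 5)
Your proposal is correct and takes essentially the same route as the paper, which states Lemma~\ref{lem:identities} without proof as a direct consequence of Table~\ref{tab:generalCountingIsotropicAndNonDegenerate} and the closed formula for $d_{n}$ — precisely the reduction you carry out, and all of your algebraic steps (including the sign bookkeeping via $(-1)^{n-1}=-(-1)^n$ and the index shift in $d^R_{m+1}=q^{2R}d_{m-R+1}$) check out.
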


\end{document}